\newcommand{\Z}{\mathbb{Z}}
\newcommand{\R}{\mathbb{R}}
\newcommand{\N}{\mathbb{N}}
\DeclareMathOperator{\curl}{curl}
\DeclareMathOperator{\divr}{div}
\newtheorem{theorem}{Theorem}
\newtheorem{lemma}[theorem]{Lemma}
\newtheorem{proposition}[theorem]{Proposition}
\begin{document}

\title{Global stability for a nonlinear system of \\ anisotropic wave equations}
\author{John Anderson}
\date{\today}

\maketitle

\begin{abstract}
    In this paper, we initiate the study of global stability for anisotropic systems of quasilinear wave equations. Equations of this kind arise naturally in the study of crystal optics, and they exhibit birefringence. We introduce a physical space strategy based on bilinear energy estimates that allows us to prove decay for the nonlinear problem. This uses decay for the homogeneous wave equation as a black box. The proof also requires us to interface this strategy with the vector field method and take advantage of the scaling vector field. A careful analysis of the spacetime geometry of the interaction between waves is necessary in the proof.
\end{abstract}

\tableofcontents

\section{Introduction} \label{sec:introduction}
In this paper, we initiate the study of the global stability properties of anisotropic systems of wave equations. With fixed parameters $\lambda_1$ and $\lambda_2$, a good example to keep in mind is

\begin{equation} \label{eq:anisotropicintro}
\begin{aligned}
\Box \psi = -\partial_t^2 \psi + \partial_x^2 \psi + \partial_y^2 \psi = (\partial_t \psi)^2 (\partial_t \phi)
\\ \Box' \phi = - \partial_t^2 \phi + \lambda_1^{-2} \partial_x^2 \phi + \lambda_2^{-2} \partial_y^2 \phi = (\partial_t \phi)^2 (\partial_t \psi)
\end{aligned}
\end{equation}
in $2 + 1$ dimensions (see Theorem~\ref{thm:mainthm} and the discussion thereafter for a precise description of the kinds of admissible nonlinearities). We shall specifically be concerned with proving stability of the trivial solution. Because the problem is in $2 + 1$ dimensions and waves decay at a rate of $t^{-{1 \over 2}}$, this system is critical in terms of decay. We show that, nonetheless, the trivial solution is globally stable assuming that a kind of \emph{null condition} is present. This null condition will require that $\lambda_1 \ne 1$ and $\lambda_2 \ne 1$ in the above, guaranteeing that the light cones cones for the equations intersect transversally. Interactions involving at least one factor of each wave will then satisfy a kind of null condition (see the discussion after Theorem~\ref{thm:mainthm}).

This problem is motivated by trying to understand phenomena related to \emph{birefringence}, which is associated with different waves moving at different speeds in different directions. The study of anisotropic systems of wave equations was introduced to the author by Sergiu Klainerman in the larger context of studying hyperbolic equations whose characteristics have multiple sheets (see Sections~\ref{sec:anisotropicintro} and \ref{sec:multiplecharacteristics}).  The anisotropic system we study here is closely related to a system of equations that arises from applying a symmetry reduction to the equations governing the propagation of light in a \emph{biaxial crystal} (see Section~\ref{sec:multiplecharacteristics}).

In order to prove global nonlinear stability, we shall introduce a strategy for proving estimates based on bilinear energy estimates and a duality argument. This paper also aims to describe this strategy. One of the main difficulties in this problem is that the equations do not share many symmetries, and even though both equations are in reality the $\Box$ operator associated to the flat metric, the Lorentz operators associated to both equations are different. One can check that the only weighted commutator (see \eqref{eq:VFs}) from the vector field method that can effectively be used is the scaling vector field $S = t \partial_t + r \partial_r$. Commuting with other weighted vector fields creates bad terms because, for example, the commutator adapted to $\psi$ could hit $\phi$, introducing growing weights in $t$. We must still use the fact that $S$ can effectively be used as a commutator in a fundamental way.


We now briefly describe the role of billinear energy estimates. If $\psi$ and $f$ are such that $\Box \psi = \Box f = 0$, then $\int_{\Sigma_t} \partial_t \psi \partial_t f + \partial^i \psi \partial_i f d x$ is independent of $t$. Thus, solutions of the homogeneous wave equation have infinitely many conserved quantities because their inner products with other solutions are constant in $t$. A more geometric way of saying this is that pairs of solutions to the wave equation obey bilinear spacetime integration by parts formulas, and these formulas are in fact available on general globally hyperbolic Lorentzian manifolds (see Section~\ref{sec:bilinearestimates}). Our strategy is, then, to prove estimates for $\psi$ by using these bilinear estimates and making a good choice for the data for the other solution $f$ (see Section~\ref{sec:decayestimates} where this is described in more detail). We believe that this idea can be used in other settings, but we focus on using this to prove decay and stability statements in this paper. The class of data chosen for $f$ can be thought of as testing $\psi$ against smoothed out fundamental solutions.

We require three ingredients in order to apply this strategy to global stability. The first tool consists of bilinear integration by parts formulas like the bilinear energy estimates (see Section~\ref{sec:bilinearestimates}). The second is decay rates for solutions to the homogeneous wave equation with a sufficiently large class of data. The third is a duality argument. We now elaborate.

\begin{enumerate}
\item Everything is purely physical space, relying on bilinear energy estimates. Bilinear energy estimates are well known, but we shall use them to track how much energy of the solution is present in various scale $1$ balls (see Section~\ref{sec:decayintro}). These estimates and their counterparts in general globally hyperbolic Lorentzian manifolds are discussed in Section~\ref{sec:bilinearestimates}.
\item The method uses decay for solutions to the homogeneous wave equation as a black box. Decay for the homogeneous wave equation may be established using whatever method one prefers. The rates must only be sufficiently strong for the application at hand.
\item Pointwise decay follows from the bilinear energy estimates and pointwise decay for solutions to the homogeneous wave equation arising from a sufficiently large class of test functions. Indeed, controlling the integral of $\psi$ against a large class of test functions gives estimates on norms of $\psi$ using a duality argument. We will also use the fact that we can freely commute with translation vector fields.
\end{enumerate}
The proof of global stability for anisotropic systems of wave equations will require us to also use the scaling vector field $S = t \partial_t + r \partial_r$ in a fundamental way. This follows the philosophy introduced by Klainerman in \cite{Kl85} showing the importance of taking advantage of operators that commute with the equation (see Sections~\ref{sec:history}, \ref{sec:anisotropicintro}, and \ref{sec:multiplecharacteristics}).

In practice, most of the work goes into controlling the nonlinear errors (see Section~\ref{sec:decayestimates} to see how these terms appear). In order to better describe the method, we shall prove two easier stability results in before turning to study anisotropic systems of wave equations. We believe that these examples will be instructive in how bilinear energy estimates can be effectively used.

The rest of the introduction consists of a discussion of existing work and how it relates to this paper (Section~\ref{sec:history}), a more detailed description of the strategy used to prove decay (Section~\ref{sec:decayintro}), a description of the first simple applications (Section~\ref{sec:simpleintro}), a description of anisotropic systems of wave equations (Section~\ref{sec:anisotropicintro}), and a discussion of some related directions (Section~\ref{sec:relateddirections}). We end this introductory part with a discussion in Section~\ref{sec:multiplecharacteristics} specifically about how the study of anisotropic wave equations fits into a larger research program concerning hyperbolic equations with multiple characteristics. Then, Section~\ref{sec:decayestimates} uses the bilinear estimates we have described and a duality argument to prove the decay estimates we shall use in the rest of the paper. Section~\ref{sec:simpleapps} uses the decay estimates in two simple nonlinear settings. Finally, Section~\ref{sec:anisotropic} proves global stability of the trivial solution for an anisotropic system of wave equations satisfying the null condition. This final section is the most substantial, and contains the main Theorem in the paper (Theorem~\ref{thm:mainthm}).

\subsection{Nonlinear wave equations and related results} \label{sec:history}
We shall now provide a short history of stability results for nonlinear wave equations, and we shall also discuss some previous results using techniques which are similar in spirit to some of the strategies used in this paper. The reader may wish to look at this in conjunction with Section~\ref{sec:decayintro} in order to better understand the connections with previous work.

The study of global stability for nonlinear wave equations started with the work of Klainerman in \cite{Kla80}. The stability mechanism comes from the fact that solutions to the wave equation arising from localized initial data decay at quantitative rates as $t \rightarrow \infty$. This dispersive behavior allows one to control the contribution of the nonlinearity and treat it perturbatively. In this pioneering work, Klainerman used the dispersive estimate to take advantage of decay, and this allowed him to prove global stability for a large class of nonlinear wave equations. The proof was later simplified by Klaineramn-Ponce in \cite{KlaPon83}. Then, in the groundbreaking work \cite{Kl85}, Klainerman introduced the \emph{vector field method}, allowing him to prove decay for solutions to nonlinear equations by using a collection of weighted vector fields that satisfy good commutation properties with $\Box$. He was able to use this method to show global stability of the trivial solution for cubic nonlinearities in $3 + 1$ dimensions, and for quadratic nonlinearities in $n + 1$ dimensions for $n \ge 4$. More specifically, the family of weighted vector fields consisted of the Lorentz and scaling vector fields, which can be represented by
\begin{equation} \label{eq:VFs}
    \begin{aligned}
    S = t \partial_t + r \partial_r \hspace{5 mm} \text{and} \hspace{5 mm} \Omega_{\mu \nu} = x_\mu \partial_\nu - x_\nu \partial_\mu,
    \end{aligned}
\end{equation}
where $x^\mu$ represent coordinates on $n + 1$ dimensional Minkowski space. Using these weighted vector fields leads to a weighted Sobolev inequality called the \emph{Klainerman-Sobolev inequality}, which reads
\begin{equation} \label{eq:KlaSob}
    \begin{aligned}
    |f| (t,r,\omega) \le {C \over (1 + t + r)^{{n - 1 \over 2}} (1 + |t - r|^{{1 \over 2}})} \sum_{|\alpha| \le \lceil n / 2 \rceil} \Vert \Gamma^\alpha f \Vert_{L^2 (\Sigma_t)},
    \end{aligned}
\end{equation}
where $\Sigma_t$ denotes the constant $t$ hypersurface, and where $\Gamma^\alpha$ represents strings of the weighted vector fields introduced above along with the translation vector fields. As was described in Section~\ref{sec:introduction}, we will not have access to all of the weighted commutators in \eqref{eq:VFs}. Nonetheless, we do have access to the scaling vector field, and this plays a key role in the proof of the main Theorem, Theorem~\ref{thm:mainthm}, in Section~\ref{sec:anisotropic}.

This restriction in $3 + 1$ dimensions is fundamental, as was shown by John in \cite{Joh81}. In this work, John showed that general quadratic nonlinearities can lead to blow up in finite time, even when the data are arbitrarily small (but nonzero) and compactly supported. An example of an equation that experiences this blow up is
\[
\Box \phi = -(\partial_t \phi)^2.
\]
The work of Klainerman \cite{Kl85} was preceded by work of John-Klainerman in \cite{JohKla84} where they were able to establish almost global existence results for nonlinear wave equations having quadratic nonlinearities in $3 + 1$ dimensions. This work involved using the fundamental solution for spherically symmetric solutions to the wave equation. The error from being spherically symmetric was controlled using the rotation vector fields, and this is one of the first papers where the usefulness of weighted vector fields for global problems was apparent. As was described above, the class of data we allow for the auxiliary multipliers makes them behave like smoothed out fundamental solutions, so the error integrals we must control have similarities with the integrals encountered by John-Klainerman in this work.

Several physical systems in $3 + 1$ dimensions are modeled by hyperbolic equations having quadratic nonlinearities, meaning that any global stability result for these systems would have to have some mechanism that avoids the examples studied by John in \cite{Joh81}. These equations often have nonlinearities which exhibit some kind of \emph{null condition}, originally introduced by Klainerman in \cite{Kla82}. In the context of nonlinear wave equations in $3 + 1$ dimensions, quadratic nonlinearities obeying the null condition are better behaved and satisfy improved estimates. These improved estimates lead to global stability of the trivial solution (see \cite{Kla86} and \cite{Chr86}), as opposed to the examples studied by John in \cite{Joh81}. Then, in the monumental work \cite{ChrKl93}, Christodoulou-Klainerman were able to show that Minkowski space is globally nonlinearly stable with respect to sufficiently small and localized perturbations as a solution to the Einstein vacuum equations. Their work heavily used the vector field method, and it also required identifying a kind of null condition present in the Einstein vacuum equations.

These themes have continued to be the subject of intense study ever since. One prominent direction among much of this recent work is that it involves situations in which we do not have access to all of the vector fields in \eqref{eq:VFs} for one reason or another. The first such work was by Klainerman-Sideris in \cite{KlaSid96} in which they developed a version of the vector field method to study hyperbolic equations without the use of boosts, motivated by studying physical systems without this symmetry. We note that the scaling vector field played a fundamental role in that work, similar to this one. In addition, we mention the exciting recent work concerning black hole stability (see, for example, \cite{DafRodShl16}, \cite{DafHolRod19}, and \cite{KlaSze20} and the references therein). These works required extending the philosophy of using weighted vector fields as commutators and multipliers to nontrivial backgrounds.

New manifestations of some kind of null condition have also been found in various equations. In the context of nonlinear wave equations, Lindblad-Rodnianski identified a weaker form of the null condition called the \emph{weak null condition} which is present in the Einstein vacuum equations in wave gauge. This allowed them to provide a different proof of the stability of Minkowski space based on wave gauge in the works \cite{LinRod03}, \cite{LinRod05}, and \cite{LinRod10} (see also recent generalizations by Keir in \cite{Kei18}).

In addition to these purely physical space methods, there have been other approaches to study wave equations and other equations with a dispersive mechanism. One approach, called the method of \emph{spacetime resonances}, involves using the Duhamel formula for the propagator to study the nonlinear interactions in a very precise way. This method was introduced in \cite{GerMasSha09}, and it takes advantage of both oscillations in the nonlinearity and differences in group velocity. The main difficulty is then in understanding what happens in regions where the oscillations match with the oscillations of the linear flow (time resonances) and in regions where wave packets interact for a long time (space resonances). Among the vast literature concerning the spacetime resonances method, we mention in particular the hyperbolic applications found in the works \cite{PusSha13} and \cite{DenPus20}. The first concerns the long time dynamics of solutions to equations satisfying the null condition, while the second concerns the long time dynamics of solutions to equations satisfying the weak null condition.

We mention also the wave packet approach to studying wave equations. Using phase space decompositions such as wave packets has proven to be very useful in the study of wave equations (we note in particular the celebrated result \cite{SmiTat05} of Smith-Tataru which provides a sharp local well posedness result in $H^s$ spaces for general quasilinear wave equations). Wave packets are often constructed by tilings in phase space which saturate the uncertainty principle, and the tilings are chosen such that the wave equation approximately becomes a transport equation on each piece for some time scale. Other constructions of wave packets are given in the work \cite{KlaRodTao02} of Klainerman-Rodnianski-Tao. This paper provides novel proofs of bilinear estimates which were motivated by applications to low regularity results, and it also contains two wave packet constructions different from the one described above. One of these wave packet constructions involves ``two time cutoffs" in which a solution to the wave equation is truncated at both ends of a tube in order to construct a wave packet adapted to that tube. This construction provided inspiration for the strategy used in this paper.

The work of Ifrim-Tataru in \cite{IfrTat15} describes how one can use wave packets to study global problems for nonlinear dispersive equations. They derive effective ODEs for inner products with wave packets, and this allows them to prove modified scattering for the solution. This philosophy of testing against other functions is also used in this paper (see Section~\ref{sec:decay}).

Another use of testing against wave packets is found in the paper \cite{JeoOh19} by Jeong-Oh. Unlike the other examples described above, this paper proves ill posedness for certain regimes of the Hall and electron magnetohydrynamic equations, so this may seem out of place. However, the proof of ill posedness involves tracking inner products with wave packets solving the adjoint equation to a certain linearized equation, providing another example of the philosophy of studying an equation by testing its solutions against other functions (see Section~\ref{sec:decay}).

As will be described in Section~\ref{sec:anisotropicdescription}, a careful analysis of both the geometry of interaction (in particular the volume of interaction) and frame decompositions is important when studying anisotropic systems of wave equations (the geometry in general is important throughout the paper). We note that the author has previously worked on other problems where the same kinds of ideas have been useful. The work \cite{AndPas19} joint with Pasqualotto studied the stability of the trivial solution to nonlinear wave equations with respect to perturbations localized around several points, while the work \cite{AndZba20} joint with Zbarsky studied the stability and instability of plane wave solutions.

\subsection{Proving decay using bilinear energy estimates} \label{sec:decayintro}
Let us now describe the strategy we will use to study the solutions to the equations in question. We shall use the notation $\Sigma_t$ to denote the usual constant $t$ spacelike hypersurfaces, and we shall denote by $r$ the usual ``spatial" radial coordinate. For this schematic discussion, we shall assume that we want to study the solutions to the equation
\[
\Box \psi = F
\]
in $n + 1$ dimensions. Because we are interested in proving decay for localized data, we shall assume that the data for $\psi$ is smooth and compactly supported in the unit ball in $\Sigma_0$. Moreover, because $F$ is a nonlinearity involving $\psi$ in practice, we shall take $F$ to be an arbitrary smooth function supported where $t - r \ge -1$.

We shall use test functions and a duality argument (along with commuting the equation for $\psi$ with translation vector fields) in order to prove statements about the solution to the equation. The goal is to show that $\psi$ behaves like a solution to the homogeneous equation. Solutions to the homogeneous wave equation arising from sufficiently regular data supported in the unit ball (i.e., the case of $F = 0$ above) follow a specific profile. The solution is concentrated along the light cone $t = r$ in a spherically symmetric way that is consistent with conservation of $\partial_t$ energy, where we recall that the $\partial_t$ energy of $f$ through $\Sigma_t$ is given by
\[
{1 \over 2} \int_{\Sigma_t} (\partial_t \psi)^2 + \sum_{i = 1}^n (\partial_i \psi)^2 d x.
\]
An annular region $A_t \subset \Sigma_t$ of thickness $2$ and inner radius $t - 1$ has volume comparable to $t^{n - 1}$. This is precisely the region along the light cone $t = r$ in $\Sigma_t$ where we expect all of the energy of the solution to be concentrated if we apply the heuristic principle that waves propagate at the speed of light. In order to be consistent with energy conservation and being almost spherically symmetric, the solution must be of size $r^{-{n - 1 \over 2}} \approx t^{-{n - 1 \over 2}}$ along the light cone. A spacelike ball $B \subset \Sigma_t$ of radius comparable to $1$ whose center lies in $A_t$ should, therefore, contain energy comparable to $t^{-(n - 1)}$. This is because the ball has volume comparable to $1$, and it would take roughly $t^{n - 1}$ such balls to cover the annular region. Thus, because the energy in each ball should be roughly the same, each ball should have energy roughly comparable to $t^{-(n - 1)}$. See Figure~\ref{fig:energyannulus} for a picture of this configuration. The fact that each such ball has energy comparable to $t^{-(n - 1)}$ is usually a consequence of commuting the equation with rotation vector fields.

\begin{figure}
    \centering
    \begin{tikzpicture}
    \draw[very thick] (0,0) circle (4);
    \draw[very thick] (0,0) circle (3.7);
    \draw (3.8,0) circle (0.2);
    \draw (3.85,0.3) circle (0.2);
    \draw (3.8,0.55) circle (0.2);
    \draw (3.78,0.8) circle (0.2);
    
    \end{tikzpicture}
    \caption{This annulus depicts the region containing most of the energy of a solution to the homogeneous wave equation in $2 + 1$ dimensions arising from smooth data supported in the unit disk in $\Sigma_t$. This annulus is a subset of $\Sigma_t$, has an outer radius of $t + 1$, and has an inner radius of $t - 1$. Thus, the annulus has area roughly $t$. We have covered a small portion of the annulus with disks of radius comparable to the thickness of the annulus, which is comparable to $1$. It would take roughly $t$ such disks to cover this annulus.}
    \label{fig:energyannulus}
\end{figure}
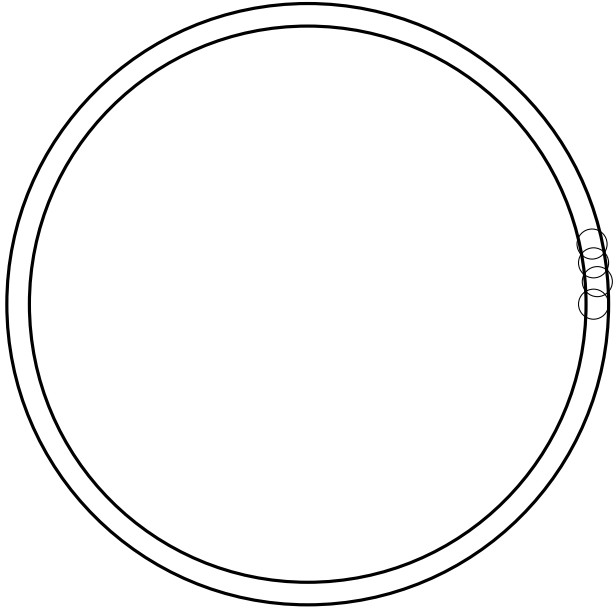

The motivation of the strategy used in this paper comes from trying to directly show that the energy will equidistribute among the balls in this way (every ball should roughly have energy $t^{n - 1}$). Because this ball is of radius comparable to $1$, a unit scale Sobolev embedding will then imply a pointwise decay rate of $t^{{n - 1 \over 2}}$, which is the expected rate. For applications to anisotropic systems of wave equations as in Section~\ref{sec:anisotropic}, this is a suitable way to show decay because we are still able to commute with translation vector fields. This procedure of showing that the energy equidistributes in this way among the balls can be thought of as replacing commuting with rotation vector fields.

In reality, the energy of the solution will not be located only along the light cone $t = r$, but it will also be present within the light cone. However, it is well known that the energy should decay away from the light cone. Thus, in addition to showing directly that the energy roughly equidistributes as if the solution was spherically symmetric, we shall also show that the energy in a unit sized ball decays as the center of the ball moves away from the light cone $t = r$. This replaces showing decay in $u$ by commuting with weighted vector fields (although we shall have to figure out how to combine this method with commuting with the scaling vector field to study anisotropic wave equations in Section~\ref{sec:anisotropic}).

We now describe how bilinear estimates allow us to show that the energy of the solution behaves in this way. Suppose we are given a scale $1$ ball $B \subset \Sigma_s$. We let $\tau$ denote the $u$ coordinate of the center of this ball. Thus, the spacetime coordinates of the center of the ball $B$ are $(s,x_0^1,\dots,x_0^n)$ where $s - \sqrt{(x_0^1)^2 + \dots + (x_0^n)^2} = \tau$. Now, suppose that we wish to show that the energy of the solution present in this ball is of the expected size. This means that we wish to show that
\begin{equation} \label{eq:introendec}
    \begin{aligned}
    \Vert \partial \psi \Vert_{L^2 (B)}^2 \approx {C \over (1 + s)^{n - 1} (1 + |\tau|)^{2 p}},
    \end{aligned}
\end{equation}
where $p$ measures the decay away from the light cone of the solution and depends on the application.

Now, let $f$ be an arbitrary solution to the homogeneous wave equation $\Box f = 0$. Because $\psi$ solves the equation $\Box \psi = F$, we note that $\Box \partial^\alpha \psi = \partial^\alpha F$. The estimates we shall use are of the form
\begin{equation} \label{eq:testest}
\begin{aligned}
\int_{\Sigma_s} (\partial_t \partial^\alpha \psi) (\partial_t f) + \sum_{i = 1}^n (\partial_i \partial^\alpha \psi) (\partial_i f) d x
\\ = \int_{\Sigma_0} (\partial_t \partial^\alpha \psi) (\partial_t f) + \sum_{i = 1}^n (\partial_i \partial^\alpha \psi) (\partial_i f) d x - \int_0^s \int_{\Sigma_t} (\partial^\alpha F) (\partial_t f) d x d t.
\end{aligned}
\end{equation}
These integration by parts identities are simply bilinear energy estimates, and we note that versions of this hold in general globally hyperbolic Lorentzian manifolds (see Section~\ref{sec:decay}). When $F = 0$, we note that this identity reduces to the fact that the inner product in $\dot{H}^1$ is a conserved quantity for pairs of solutions to the homogeneous wave equation.

This identity contains a lot of freedom. Because we are interested in getting estimates for $\psi$, the data for $\psi$ and $F$ are both rigid. However, we have not specified the data for $f$, and this identity holds regardless of how we pick data for $f$. Because we are interested in proving estimates for $\psi$ in $B$, it is natural to allow the data for $f$ to be supported in, say, $2 B$, which is the ball of twice the radius of $B$ that is centered at the same point. Let us denote the trace of $f$ in $\Sigma_s$ by $f_0$, and let us denote the trace of its time derivative by $f_1$. The functions $f_0$ and $f_1$ can be freely specified to give a unique solution $f$ to the homogeneous wave equation. We are thus restricting $f_0$ and $f_1$ to be supported in $2 B$. In this case, the above identity becomes
\begin{equation} \label{eq:introlocen}
\begin{aligned}
\int_{2 B} (\partial_t \partial^\alpha \psi) (f_1) + \sum_{i = 1}^n (\partial_i \partial^\alpha \psi) (\partial_i f_0) d x
\\ = \int_{\Sigma_0} (\partial_t \partial^\alpha \psi) (\partial_t f) + \sum_{i = 1}^n (\partial_i \partial^\alpha \psi) (\partial_i f) d x - \int_0^s \int_{\Sigma_t} (\partial^\alpha F) (\partial_t f) d x d t.
\end{aligned}
\end{equation}
By this choice of data for the auxiliary multiplier $f$, we note that can use a bilinear energy estimate to bound averages of derivatives of $\psi$ against the functions $f_0$ and $f_1$. If we allow $f_0$ and $f_1$ to vary among a suitable class of functions which are supported in $2 B$, a duality argument will result in estimates for $\psi$ and its derivatives localized to $B$ (see Section~\ref{sec:dualityargument}). These localized estimates for $\psi$ are what we set out to prove.

We now briefly discuss one way in which \eqref{eq:introlocen} can be used to show \eqref{eq:introendec}. If we allow for $f_0$ and $f_1$ to vary among all $C^k$ and $C^{k - 1}$ functions, respectively, which are compactly supported in $2 B$, then sufficient commutation with translation vector fields and a duality argument can lead to
\[
\Vert \partial \psi \Vert_{L^2 (B)} \le C \sup_{f_0, f_1, |\alpha| \le K(k)} \int_{2 B} (\partial_t \partial^\alpha \psi) f_1 + \sum_{i = 1}^n (\partial_i \partial^\alpha \psi) (\partial_i f_0) d x.
\]
Thus, in order to show \eqref{eq:introendec}, it suffices to show that the right hand side of \eqref{eq:introlocen} is bounded by ${C \over (1 + s)^{{n - 1} \over 2} (1 + |\tau|)^p}$ for all admissible $f_0$, $f_1$, and $\alpha$.

Let us now assume that $k$ is chosen sufficiently large so that we have pointwise decay rates for solutions to the homogeneous wave equation in that class. More precisely, we choose $k$ so large such that we have that
\[
|\partial f| \le {C \over (1 + s - t)^{{n - 1 \over 2}}} {1 \over 1 + |s - t - \sqrt{(x^1 - x_0^1)^2 + \dots + (x - x_0^n)^2}|^p}
\]
for $t \le s - t$. Such a rate can be proven in several different ways, and we recall that the sharp value of $p$ depends on the dimension. For example, the Klainerman-Sobolev Inequality implies this with $p = {1 \over 2}$ in arbitrary dimensions. As another example, in the case of $n = 2$, the sharp value of $p$ is ${3 \over 2}$, as can be seen using the fundamental solution.

After choosing $k$ such that this is true, we can now examine the right hand side of \eqref{eq:introlocen}. The integral over $\Sigma_0$ is controlled in terms of $f$, which we have information on, and the data for $\psi$. Because the data for $\psi$ is compactly supported in the unit ball, an examination of the geometry involved tells us that
\[
{1 \over (1 + s - t)^{{n - 1} \over 2} (1 + |s - t - \sqrt{(x - x_0^1)^2 + \dots + (x - x_0^n)^2}|)^p}
\]
is comparable to
\[
{1 \over (1 + s)^{{n - 1} \over 2} (1 + |\tau|)^p}
\]
in the support of the data for $\psi$ (see Figure~\ref{fig:lightconesaux} in Section~\ref{sec:decay}). Thus, we see that
\[
\int_{\Sigma_0} (\partial_t \partial^\alpha \psi) (\partial_t f) + \sum_{i = 1}^n (\partial_i \partial^\alpha \psi) (\partial_i f) d x \le {C \over (1 + s)^{{n - 1} \over 2} (1 + |\tau|)^p} \Vert \psi \Vert_{C^{|\alpha| + 1} (\Sigma_0)}.
\]
Thus, assuming that we can write
\begin{equation} \label{eq:errorreqboundintro}
    \begin{aligned}
    \int_0^s \int_{\Sigma_t} (\partial^\alpha F) (\partial_t f) d x d t \le {C \over (1 + s)^{{n - 1} \over 2} (1 + |\tau|)^p} \Vert \psi \Vert_{C^{|\alpha| + 1}, (\Sigma_0)}
    \end{aligned}
\end{equation}
we will have shown \eqref{eq:introendec}, which is the decay estimate we set out to prove.

These considerations are all carried out in detail in Section~\ref{sec:decay}. In practice, the most difficult thing to do is of course to control the error integrals involving $F$. This is done in a few simple examples in Section~\ref{sec:simpleapps}, and it is done for a class of anisotropic systems of wave equations in Section~\ref{sec:anisotropic}. We note that several aspects of this strategy can be modified as needed (see Section~\ref{sec:relateddirections} for a discussion of some of these potential modifications).

\subsection{Simple applications} \label{sec:simpleintro}
Before applying thee ideas to anisotropic systems of wave equations, we shall discuss two simple applications which we believe to be instructive. The first consists of the following problem. Let $\chi$ be a smooth cutoff function equal to $1$ for $|x| \le 1$ and equal to $0$ for $|x| \ge 2$. Then, working in $3 + 1$ dimensions, we take $\gamma$ to be the null curve $(t,t,0,0)$ in the $t$, $x$ plane, and we define $r_\gamma^2 (t,x,y,z) = (x - t)^2 + y^2 + z^2$. We then take the semilinear wave equation
\begin{equation} \label{eq:simpappintro1}
    \begin{aligned}
    \Box \phi = -\chi (r_\gamma) (\partial_t \phi)^2.
    \end{aligned}
\end{equation}
We have, thus, taken one of the examples studied by John in \cite{Joh81} (see Section~\ref{sec:history} for more of a discussion on this) and have multiplied the nonlinearity by a function which localizes it to a tubular neighborhood of the null geodesic $\gamma$. The asymptotic system (see \cite{Hor97}) for this equation blows up in finite time, and yet we shall show in Section~\ref{sec:simpleapps} that the trivial solution of this equation is globally nonlinearly stable. In addition to showing how the error integrals that arise when using this method can be controlled, this example shows the effect that angular localization can have when studying wave equations, a phenomenon which is present in anisotropic systems of wave equations (see Section~\ref{sec:anisotropicintro}). Indeed, we note that the cutoff function means that the nonlinearity only takes effect in a single angular direction from the origin. In fact, an analysis of the proof in Section~\ref{sec:simpleapps} shows that we still have global stability for the equation
\[
\Box \phi = -\chi \left (r_\gamma \left (x,{y \over (1 + t)^\omega},{z \over (1 + t)^\omega},t \right ) \right ) (\partial_t \phi)^2
\]
for any $\omega < {1 \over 2}$.

The next simple application will be proving global stability for a cubic nonlinear wave equation in $3 + 1$ dimensions. The proof will actually give global stability and decay rates for an equation of the form
\begin{equation} \label{eq:simpappintro2}
    \begin{aligned}
    \Box \phi = h (\partial_t \phi)^3,
    \end{aligned}
\end{equation}
where $h$ is any smooth function with bounded $C^k$ norm for $k$ sufficiently large. This example will show how an analysis of null geometry is necessary for controlling the error integrals in a simpler setting.

In both cases, the proofs use a bootstrap argument. The bootstrap argument involves decay estimates and energy estimates. The decay estimates are shown using the bilinear energy estimates as described in Section~\ref{sec:decayintro}. There is an energy level corresponding to $N$ derivatives in $L^2$, and we prove pointwise decay for only, say, the first ${3 N \over 4}$ derivatives in order to close. The energy estimates are interpolated with the pointwise decay estimates in order to close the pointwise estimates. Closing the pointwise estimates requires an analysis of the geometry of the interaction between the nonlinearity and the auxiliary multipliers $f$ described in Section~\ref{sec:decayintro}. This is carried out in Section~\ref{sec:simpleapps}.

\subsection{Applications to anisotropic systems of wave equations} \label{sec:anisotropicintro}
Anisotropic systems of wave equations arise as the simplest model problem before studying more complicated equations having characteristics with multiple sheets. Notable physical examples of such hyperbolic equations include the equations of crystal optics (see \cite{CouHil62}), compressible magnetohydrodynamics (see \cite{CouHil62}), and crystalline materials (see \cite{Chr98}). Characteristics with multiple sheets are related to the physical phenomenon of birefringence. If we wish to understand the solutions of these equations, it is beneficial to try to study the effects of having multiple characteristics in a setting which does not require us to understand the other novel phenomena they exhibit (such as \emph{conical refraction} in the case of a biaxial crystal in crystal optics). Thus, we study anisotropic wave equations in order to try to isolate the effects birefringence in these equations from the other phenomena they exhibit (see Section~\ref{sec:multiplecharacteristics} for more discussion on the role of anisotropic wave equations in the larger context of studying hyperbolic equations with multiple characteristics). In fact, anisotropic systems of wave equations arise naturally in the study of \emph{uniaxial crystals}, and also in the study of biaxial crystals under a certain symmetry reduction. We postpone a more thorough discussion of these issues and more generally of hyperbolic equations having multiply sheeted characteristics to Section~\ref{sec:multiplecharacteristics}.

For a nonlinear equation such as \eqref{eq:anisotropicintro}, one mathematical difficulty arises from the fact that the two wave equations only share the scaling symmetry. The proof of global stability will combine the ideas described in Section~\ref{sec:decayintro} with the vector field method. We shall only have access to the scaling vector field $S$. A rough version of the main Theorem is given below (see Theorem~\ref{thm:mainthm} for the precise statement):
\begin{theorem}[rough version of the main Theorem] The trivial solution to nonlinear wave equations of the form \eqref{eq:anisotropicintro} is asymptotically stable as long as $\lambda_1 \ne 1$ and $\lambda_2 \ne 1$.
\end{theorem}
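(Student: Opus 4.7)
The plan is to run a bootstrap argument organized around two hierarchies: a high-order weighted energy hierarchy and a lower-order pointwise decay hierarchy, with the decay obtained using the bilinear testing strategy described in Section~\ref{sec:decayintro}. For the commutator set, I would use only translations $\partial_\mu$ and the scaling vector field $S = t\partial_t + r\partial_r$, which commutes with both $\Box$ and $\Box'$ up to a constant; Lorentz boosts adapted to $\Box$ fail to commute with $\Box'$ precisely because $\lambda_1,\lambda_2 \ne 1$, so their use would produce weights of size $t$ in the other equation and would destroy the scheme. The bootstrap would assume energies of the form $\sum_{|\alpha|+k\le N}\|\partial \partial^\alpha S^k (\psi,\phi)\|_{L^2(\Sigma_t)} \lesssim \varepsilon (1+t)^{\delta}$ for some small $\delta$, together with pointwise decay $|\partial\partial^\alpha S^k \psi|,|\partial\partial^\alpha S^k \phi| \lesssim \varepsilon (1+t)^{-1/2}(1+|u_\bullet|)^{-p}$ at lower regularity, where $u_\bullet$ denotes the optical function adapted to each operator individually.

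To upgrade decay, for a unit ball $B\subset \Sigma_s$ centered at $(x_0,y_0)$ with $\Box$-optical distance $\tau$, I would apply the bilinear identity \eqref{eq:introlocen} with auxiliary multiplier $f$ solving $\Box f = 0$ and data supported in $2B$, and use the duality argument of Section~\ref{sec:decay} to convert this into localized $L^2$ control of $\partial \psi$ on $B$. The parallel argument for $\phi$ tests against $g$ solving $\Box' g = 0$, so the two solutions are each analyzed relative to their own characteristic cones. The main content is the bound \eqref{eq:errorreqboundintro} for the nonlinear error
\[
\int_0^s\int_{\Sigma_t} \partial^\alpha\!\left[(\partial_t\psi)^2 \partial_t \phi\right]\, \partial_t f \, dx\, dt,
\]
and its counterpart for $\phi$. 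After distributing derivatives and $S$ weights via the good commutation relations, every factor either carries pointwise decay from the bootstrap or has its $L^2$ norm controlled by the energy hierarchy.

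The crucial geometric input is the null condition coming from transversality of the two characteristic cones. In each error integral for $\Box\psi$, at least one factor of $\partial\phi$ appears, and $\partial_t f$ behaves like a smoothed fundamental solution of $\Box$; the former concentrates near the $\Box'$ cone while the latter concentrates near a translate of the $\Box$ cone. Because these cones intersect transversally for $\lambda_i \ne 1$, the spacetime region where both are simultaneously of size comparable to their sup is of codimension two rather than one. I would exploit this by decomposing spacetime with respect to the $\Box$-optical function $u$ and the $\Box'$-optical function $u'$, which form a genuine coordinate system in the relevant region, and bounding the interaction integral by a change of variables in $(u,u',\omega)$. The scaling vector field $S$, which is common to both geometries and controls $(t+r)$-weighted derivatives in all directions, is used to trade an integrable factor of $(1+u)^{-1}$ or $(1+u')^{-1}$ for one derivative and thereby recover decay transverse to each cone. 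For pure self-interactions (three $\psi$ factors, or three $\phi$ factors) one instead uses the cubic nature of the nonlinearity in the spirit of the cubic example \eqref{eq:simpappintro2} in Section~\ref{sec:simpleintro}.

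The main obstacle is that $2+1$ dimensions is critical for a quadratic wave equation, so the basic $\partial_t\psi\cdot\partial_t\psi\cdot\partial_t\phi$ nonlinearity sits exactly at the threshold where the free decay rate $t^{-1/2}$ is not integrable against itself. There is no margin: every $\varepsilon$ of decay must be recovered from either the cubic structure, the null condition via cone transversality, or the scaling improvement. The delicate accounting takes place when the auxiliary multiplier $f$ is centered on or near the $\Box$ cone at late times, since then $f$ and one of the $\psi$ factors can remain correlated on long tubes; here I expect to need a further splitting based on the angular location of the center of $B$ relative to the $\Box'$ cone, together with the scaling weight, to produce the borderline logarithmic gain. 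Once \eqref{eq:errorreqboundintro} is established with a constant depending polynomially on $\varepsilon$, the standard improvement $\varepsilon \to C\varepsilon^{3/2}$ closes the bootstrap and yields global existence together with the asserted pointwise decay, completing the proof of the rough main theorem.
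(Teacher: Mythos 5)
Your overall framework matches the paper: a bootstrap argument organized around a high-order $L^2$ hierarchy and a low-order pointwise hierarchy, decay recovered via the bilinear-testing/duality strategy of Section~\ref{sec:decayestimates}, commutation restricted to translations and $S$ (with the correct explanation that boosts adapted to one operator ruin the other), and transversality of the two light cones supplying the essential nonlinear gain. You also correctly identify the frame role of $S$ with respect to the auxiliary multiplier's cone and the need to trade $S$ for decay transverse to the cone by integrating by parts.

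There is, however, a genuine gap in your treatment of pure self-interactions. You propose to handle terms with three $\psi$ factors (or three $\phi$ factors) ``in the spirit of the cubic example \eqref{eq:simpappintro2}.'' That example lives in $3+1$ dimensions, where the free decay rate is $t^{-1}$ and cubic nonlinearities are strictly subcritical; it does not transfer to $2+1$ dimensions, where the free rate is only $t^{-1/2}$ and cubic is exactly critical. The null condition in this paper has a second half beyond $\lambda_1,\lambda_2\ne 1$: \emph{the same wave must not appear cubed}. The equations \eqref{eq:anisotropicintro} contain only the mixed cubic terms $(\partial_t\psi)^2(\partial_t\phi)$ and $(\partial_t\phi)^2(\partial_t\psi)$; a term $(\partial_t\psi)^3$ is simply excluded. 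If you attempted your scheme on $\Box\psi=(\partial_t\psi)^3$ in $2+1$ dimensions, neither cone transversality nor the scaling integration-by-parts gives any gain, because all three factors and the multiplier $f$ concentrate near the \emph{same} light cone, and you are back in the bad regime of John's counterexample. Only the quartic self-interaction $(\partial_t\psi)^4$ is subcritical enough to survive (with the help of the more delicate geometric Lemma~\ref{lem:annuliu'dec}).

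A secondary point you glide past is how to interpolate the energy hierarchy against $S$-weighted pointwise bounds. The constant-$t$ energy norm is not directly suited to interpolation with $S$, since $S$ is not tangent to $\Sigma_t$. The paper introduces a weighted spacetime $L^2$ estimate (Lemma~\ref{lem:spacetimeL2}, a cheap Morawetz substitute) and an interpolation Lemma~\ref{lem:interpolation} carried out in the $(\rho,X,Y)$ coordinate system where $S = \rho\partial_\rho$; without this step the intermediate-order pointwise bounds on the factor carrying the most derivatives are not available, and the error integrals in \eqref{eq:errorreqboundintro} cannot be estimated with the losses you allow in your hierarchy.
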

The main Theorem will actually cover equations more general than \eqref{eq:anisotropicintro}, and will allow for higher order nonlinearities, quasilinear terms, and other cubic nonlinearities (see Section~\ref{sec:anisotropic}). The null condition can be summarized as saying that cubic interactions do not have the same wave for all three factors, and that $\lambda_1 \ne 1$ and $\lambda_2 \ne 1$. This restriction on $\lambda_1$ and $\lambda_2$ guarantees that the light cones intersect transversally, and it will allow us to control the nonlinear interaction.

We shall now provide only a very brief description of the proof (see Section~\ref{sec:anisotropicdescription} for a more thorough description). The proof will once again follow from using a bootstrap argument. Pointwise estimates and $L^2$ based energy estimates will be propagated. There are two key ideas that must be used.
\begin{enumerate}
    \item Controlling the error integrals will require a fine analysis of the geometry. This requires understanding the geometry of the intersections of various cones. The condition that $\lambda_1 \ne 1$ and $\lambda_2 \ne 1$ will be required to prove estimates on the geometry which allow us to effectively control the nonlinear interactions.
    \item The scaling vector field $S$ must be used. It will help in getting good powers of $\tau$ when controlling the error integrals (see \eqref{eq:errorreqboundintro}). The use of this operator is fundamental to making the proof work, and this follows in the philosophy introduced by Klainerman in \cite{Kl85}.
\end{enumerate}
The details of all of these considerations are carried out in Section~\ref{sec:anisotropic}.

\subsection{Related directions} \label{sec:relateddirections}
We now briefly discuss some potential directions for further study related to this paper.

For the anisotropic system of wave equations studied in Section~\ref{sec:anisotropic}, there are several questions and improvements one may wish to pursue.

\begin{enumerate}
    \item One natural improvement is to remove compact support and to instead have data decaying away from the origin. The estimates within the light cone from this paper should still be applicable in this case, but they would have to be supplemented by an analysis of the geometry of the region outside of the light cones. Specifically, it seems as though the main new ingredient would be bounds on the behavior of the geometry of the scaling vector field and how it interacts with the geometry of the various cones in question in this region. We believe that the analysis in Section~\ref{sec:SGeometry} could be useful in understanding the behavior of the scaling vector field in this region. We discuss this further in the remarks following the statement of Theorem~\ref{thm:mainthm}.
    \item Another natural direction is to investigate the problem in $3 + 1$ dimensions with a quadratic nonlinearity satisfying the analogous null condition. The analysis of the geometry becomes more complicated because of the increase in dimension. However, we believe that the techniques in this paper could be applicable, and we hope that this will be the topic of future work.
    \item It is also natural to consider the problem consisting of a system of more than two wave equations with more than two different light cones. When the analogous null condition is satisfied, we believe that the techniques in this paper would be applicable. In fact, having a product of 3 waves with different light cones seems to be even better behaved, particularly when the three light cones do not intersect in the same four lines.
    \item The fact that the null condition we consider suppresses most parallel interactions could lead to improved low regularity local well posedness results. This would be analogous to results concerning nonlinear wave equations satisfying the classical null condition (see, for example, \cite{KlaMac93} for the improved estimates that null forms satisfy in a low regularity setting, and for a description of how this can lead to improved local well posedness results).
\end{enumerate}

The strategy of controlling the equation using bilinear estimates also has further flexibility that has not been used in this paper. An example of this flexibility is that we may control integrals over different regions using a similar strategy. Performing a bilinear energy estimate between data on $\Sigma_0$ and, say, a null hypersurface is one instance of this. The data for the auxiliary multiplier can be posed on this null hypersurface in an appropriate way. We also believe that this could be useful for understanding solutions to wave equations in other contexts. Also, this kind of strategy could be used for other equations having some kind of bilinear energy structure.


Another possible use is that we can use the bilinear estimates in Section~\ref{sec:bilinearestimates} in a more refined way in order to provide a finer analysis of the phase space behavior of solutions to wave equations (in order to show, for example, improved decay of good derivatives). We can also further interface these ideas with existing techniques, such as more fully using weighted vector fields. We hope that exploiting this finer phase space analysis will be the topic of forthcoming work with Samuel Zbarsky.

\subsection{Hyperbolic equations with characteristics having multiple sheets} \label{sec:multiplecharacteristics}
The problem of studying the stability of systems of anisotropic wave equations was posed to the author by Sergiu Klainerman in the larger context of studying hyperbolic equations with multiple characteristics. In \cite{AndKla21}, Klainerman and the author discuss several interesting directions related to this program. In addition to the equations of crystal optics mentioned above, other physically relevant examples include compressible magnetohydrodynamics and the equations governing elasticity (see the work of Christodoulou \cite{Chr98} where he presents a general framework for these kinds of systems). In this introduction, we focus mostly on the role of anisotropic systems of wave equations in the context of studying crystal optics, but we encourage the reader to look at \cite{AndKla21} for a much more thorough discussion.

In terms of stability, we run into obstacles when applying the vector field method in the usual way to hyperbolic systems of equations with multiple characteristics because different components in the system can have different natural operators associated to them. However, there are still some operators present for anisotropic systems of wave equations, namely the translation vector fields and the scaling vector field. The strategy of using bilinear energy estimates can interface with commuting operators from the vector field method, and we in fact have to use the scaling vector field in a fundamental way in the proof of the main Theorem in Section~\ref{sec:anisotropic}. This more broadly follows the philosophy introduced by Klainerman in \cite{Kl85} which emphasizes the importance of being able to take advantage of good commutators (and in particular weighted commutators for decay). Moreover, as is discussed in \cite{AndKla21}, physically significant hyperbolic systems with multiple characteristics (such as crystal optics) still often exhibit a compatible scaling symmetry. Thus, the fact that anisotropic systems of wave equations still have access to the scaling vector field is representative of interesting physical situations.

As is also discussed in \cite{AndKla21}, it is natural to look for other operators that satisfy good commutation properties with equations exhibiting this birefringent behavior. This is an important part of the larger goal of robustly extending the vector field method to these kinds of equations. Similar to vector fields, we believe that these operators could be very valuable in this more exotic context. These operators require much study themselves, and are a key part of providing a fine tuned analysis of the linear equations of, say, crystal optics. These operators may no longer be vector fields, but may instead be, for example, second order operators. We note that second order operators were used by Andersson--Blue in \cite{AndBlu15} in the study of the wave equation on black hole spacetimes.

The system \eqref{eq:anisotropicintro} admits such an operator. We now describe Klainerman's procedure in this particular case. If we look at the symbols of $\Box$ and $\Box'$ by taking the spacetime Fourier transform, they are given by $\tau^2 - |\xi|^2$ and $\tau^2 - \lambda_1^{-2} \xi_1^2 - \lambda_2^{-2} \xi_2^2$. Level sets of these two functions are hyperboloids in frequency space. These hyperboloids are two dimensional in a three dimensional space, and they intersect transversally along families of curves. Because the symbol acts by multiplication, vector fields in frequency space which are tangent to level sets of the symbol annihilate the symbol, and thus commute with the equation because they annihilate the symbol. We can thus find an operator that commutes with both equations by looking at the vector field generated by the family of curves formed by intersecting these level sets. This vector field in frequency space can essentially be found by taking the cross product between the normal vector fields of the two families of hyperboloids. One can check that this results in the operator
\[
-\Lambda_1 \tau \xi_1 \partial_{\xi_2} - \Lambda_2 \tau \xi_2 \partial_{\xi_1} - \Lambda_3 \xi_1 \xi_2 \partial_\tau,
\]
subject to the constraints that $\Lambda_1 + \Lambda_2 - \Lambda_3 = 0$ and $\lambda_2^{-2} \Lambda_1 + \lambda_1^{-2} \Lambda_2 - \Lambda_3 = 0$ (of course, multiplying this operator by any constant still commutes with both equations as well). This vector field in frequency space can be seen to correspond to the second order differential operator
\[
\Lambda_1 y \partial_t \partial_x + \Lambda_2 x \partial_t \partial_y + \Lambda_3 t \partial_x \partial_y
\]
in physical space. We can then hope to use operators of this kind to further study solutions to anisotropic systems of wave equations, and we can look for similar kinds of operators in other systems. For example, the weights in these operators lead to weighted Sobolev inequalities, similar to how the weights in the Lorentz and scaling vector fields lead to the weighted Klainerman-Sobolev inequality (see \eqref{eq:VFs} and \eqref{eq:KlaSob}). We refer the reader to \cite{AndKla21} for a much more thorough discussion of this circle of ideas. For now, we just note that we hope to further explore uses of these operators together with Klainerman.

We end this Section with a more precise description of the relation between anisotropic systems of wave equations and crystal optics. These are a simpler counterpart to biaxial crystals. While they still exhibit birefringent behavior, they do not display other complicated phenomenon, such as conical refraction.

In the general case, it may be that the electrical and magnetic properties of the media themselves change as a function of the electric and magnetic fields. This corresponds to having the \emph{dielectric tensor} $\epsilon$ and \emph{magnetic permeability} $\mu$ can depend on the electric and magnetic fields. This leads to a nonlinear system of equations. For this discussion, we shall greatly simplify the situation by considering a constant, diagonal matrix $\epsilon$ and a constant $\mu$. More precisely, we take the dielectric tensor $\epsilon$ to be given by
\[
\epsilon = \begin{pmatrix} \epsilon_1 & 0 & 0 \\ 0 & \epsilon_2 & 0 \\ 0 & 0 & \epsilon_3 \end{pmatrix}.
\]
The three constants $\epsilon_i$ in this matrix are called the \emph{dielectric constants}. When they are all equal, we are left with the usual Maxwell equations governing the propagation of light in an isotropic media. When two of the constants are equal to each other but distinct from the third, the resulting equations then govern the propagation of light in a uniaxial crystal. Finally, when all three of the constants are distinct from each other, the resulting equations govern the propagation of light in a biaxial crystal. The magnetic permeability $\mu$ is taken to be a multiple of the identity, corresponding to a magnetically isotropic material. This can be thought to be $1$ for simplicity (see \cite{CouHil62}, \cite{Lie91}, \cite{Lie89}, and \cite{AndKla21} for a more thorough discussion).

Let us now write down the equations in the most general form. We shall denote by $E$ the electric field and $B$ the magnetic field. Thus, in $3 + 1$ dimensions, both the electric and magnetic fields have $3$ components. The Maxwell equations are then given by
\begin{equation} \label{eq:dielectricMaxwell}
    \begin{aligned}
    -\partial_t (\epsilon E) + \curl(B) = 0
    \\ -\partial_t (\mu B) - \curl(E) = 0
    \\ \divr(\epsilon E) = \divr(\mu B) = 0.
    \end{aligned}
\end{equation}
It is easy to see that the last set of equation can be thought of as constraints because they propagate in the evolutionary equations as long as they hold initially.

Commuting the evolutionary equations in \eqref{eq:dielectricMaxwell} with $\partial_t$, expanding them out in components, and using the equations to get rid of the magnetic field $B$ gives us the system of equations
\begin{equation}
    \begin{aligned}
    \epsilon_1 \partial_t^2 E^1 = \partial_y^2 E^1 + \partial_z^2 E^2 - \partial_x \partial_y E^2 - \partial_x \partial_z E^3
    \\ \epsilon_2 \partial_t^2 E^2 = \partial_z^2 E^2 + \partial_x^2 E^2 - \partial_y \partial_z E^3 - \partial_y \partial_x E^1
    \\ \epsilon_3 \partial_t^2 E^3 = \partial_x^2 E^3 + \partial_y^2 E^3 - \partial_z \partial_x E^1 - \partial_z \partial_y E^2
    \\ \epsilon_1 \partial_x E^1 + \epsilon_2 \partial_y E^2 + \epsilon_3 \partial_z E^3 = 0.
    \end{aligned}
\end{equation}
Let us now make the symmetry assumption that the electric field does not depend on $z$, meaning that all $\partial_z$ derivatives vanish. This results in the system of equations
\begin{equation}
    \begin{aligned}
    \epsilon_1 \partial_t^2 E^1 = \partial_y^2 E^1 - \partial_x \partial_y E^2
    \\ \epsilon_2 \partial_t^2 E^2 = \partial_x^2 E^2 - \partial_y \partial_x E^1
    \\ \epsilon_3 \partial_t^2 E^3 = \partial_x^2 E^3 + \partial_y^2 E^3
    \\ \epsilon_1 \partial_x E^1 + \epsilon_2 \partial_y E^2 = 0.
    \end{aligned}
\end{equation}
We can now use the divergence free condition, which we know propagates in the evolution as long as it holds initially, to write the equations as
\begin{equation}
    \begin{aligned}
    -\partial_t^2 E^1 + \epsilon_2^{-1} \partial_x^2 E^1 + \epsilon_1^{-1} \partial_y^2 E^1 = 0
    \\ -\partial_t^2 E^2 + \epsilon_2^{-1} \partial_x^2 E^2 + \epsilon_1^{-1} \partial_y^2 E^2 = 0
    \\ -\partial_t^2 E^3 + \epsilon_3^{-1} \partial_x^2 E^3 + \epsilon_3^{-1} \partial_y^2 E^3 = 0
    \\ \epsilon_1 \partial_x E^1 + \epsilon_2 \partial_y E^2 = 0.
    \end{aligned}
\end{equation}
The evolutionary equations listed here can be compared with \eqref{eq:anisotropicintro}, and we see that they are of a similar form. Indeed, we have a system of three equations. One equation (the one for $E^3$) has circular light cones while the other two equations (the ones for $E^1$ and $E^2$) have elliptical light cones. Thus, the equations we study in Section~\ref{sec:anisotropic} are related to this symmetry reduction for biaxial crystals. Moreover, we can easily see that the null condition that $\lambda_1 \ne 1$ and $\lambda_2 \ne 1$ is analogous to the condition that the $\epsilon_i$ all be distinct, meaning that the crystal is biaxial.\footnote{This is the first half of the null condition we need to prove global stability. It would be interesting to see if the second half, that the same wave not be cubed, corresponds to some physical property of the material.} When the crystal is uniaxial, we see that there are two possibilities. If $\epsilon_1 = \epsilon_2$, the equations become an isotropic system of wave equations with multiple speeds. These kinds of equations can be treated using the techniques introduced by Klainerman--Sideris in \cite{KlaSid96} (see also \cite{Sogge08}). If $\epsilon_1 = \epsilon_3$ or $\epsilon_2 = \epsilon_3$, the equations become anisotropic, but it is like the situation in which one of $\lambda_1$ or $\lambda_2$ is equal to $1$.

Although the results in Section~\ref{sec:anisotropic} thus seem to be relevant for the study of biaxial crystals under this symmetry reduction, we emphasize that the problem outside of symmetry is much more difficult and requires a much finer analysis. This symmetry reduction has made the equations for each component of the electric field decouple, which does not occur in the general case of a biaxial crystal. This decoupling removes many of the interesting difficulties present in biaxial crystals, such as conical refraction. The first step in analyzing these more complicated phenomena was undertaken by Liess in \cite{Lie91}, resulting in a global stability statement in \cite{Lie89} arising from using machinery similar to that introduced by Klainerman--Ponce in \cite{KlaPon83}. The linear analysis provided by Liess in \cite{Lie91} shows that the situation is much more difficult for these kinds of systems than for the usual wave equation. The stability result in \cite{Lie89} does not cover the class of nonlinearities considered in this paper, and in particular, it does not exploit any kind of null condition (it treats fourth order and higher nonlinearities). To our knowledge, this is the only global stability result for a biaxial crystal outside of symmetry assumptions. Providing sharper global stability statements requires a finer analysis. We believe this to be an extremely interesting direction for future work, and we refer the reader once again to \cite{AndKla21} for a more thorough discussion.

\subsection{Acknowledgements}
The author is extremely indebted to his advisor, Sergiu Klainerman, for making the author aware of this problem. The author is grateful to him both for his extremely valuable suggestions, especially concerning the importance of scaling, and for discussions with him about the role of this problem in a larger research program for studying hyperbolic equations with multiple characteristics. The author is also very thankful for several useful conversations with Mihalis Dafermos, Ross Granowski, Christoph Kehle, Jonathan Luk, Sung-Jin Oh, Federico Pasqualotto, Igor Rodnianski, Yakov Shlapentokh-Rothman, and Samuel Zbarsky.

\section{Decay estimates} \label{sec:decayestimates}
In this section, we shall prove the estimates that will lead to decay. Decay comes from bilinear integration by parts formulas which show that certain quantities are conserved at the linear level. The energy estimate is a special case of these estimates. These bilinear estimates are simple, but they are quite general, with versions being true on general Lorentzian manifolds. We record these geometric estimates in Section~\ref{sec:bilinearestimates}. In this paper, when we are interested in controlling $\psi$ where $\Box \psi = F$, the estimates are applied by taking an auxiliary solution of the homogeneous wave equation $\Box f = 0$ and using this as a multiplier for $\psi$.


To go from these bilinear estimates to pointwise decay statements, we use a duality argument. Controlling integral averages of a function $\psi$ against a sufficiently large class of test functions results in estimates on various norms for $\psi$. Thus, by allowing the data to vary appropriately for $f$, we prove that $\psi$ decays pointwise through a duality argument. The duality argument is described in Section~\ref{sec:dualityargument}, and the duality argument and bilinear estimates are combined to prove pointwise estimates in Section~\ref{sec:decay}. The decay rates this gives depend on decay rates for the homogeneous wave equation. Decay rates for solutions to the homogeneous wave equation can be taken as a black box, and can be proven in whatever way one prefers.

\subsection{Basic bilinear estimates} \label{sec:bilinearestimates}
We begin with the estimates on Minkowski space $\R^{n + 1}$. The first is a bilinear energy estimate and the second comes from integrating $\phi \Box \psi$ by parts. In this paper, we shall only use estimates between two time slices $\Sigma_{t_1}$ and $\Sigma_{t_2}$, but we note that we are of course free to do estimates in regions with different geometry (such as, for example, null hypersurfaces). This is expressed in the more geometric versions of these integration by parts formulas below which simply show that we can integrate appropriate divergence and use Stokes' Theorem.

\begin{lemma} \label{lem:bilinenM}
Let $\psi$ and $\phi$ be smooth functions decaying sufficiently rapidly at infinity with $\Box \psi = F$ and with $\Box \phi = G$. Then, we have that
\begin{equation}
\begin{aligned}
\int_{\Sigma_{t_2}} \partial_t \psi \partial_t \phi + \partial^i \psi \partial_i \phi d x + \int_{t_1}^{t_2} \int_{\Sigma_s} F \partial_t \phi d x d s
\\ = \int_{\Sigma_{t_1}} \partial_t \psi \partial_t \phi + \partial^i \psi \partial_i \phi d x - \int_{t_1}^{t_2} \int_{\Sigma_s} G \partial_t \psi d x d s.
\end{aligned}
\end{equation}
More generally, we have that
\begin{equation}
\begin{aligned}
\int_{\Sigma_{t_2}} \partial_t \partial^\alpha \psi \partial_t \phi + \partial^i \partial^\alpha \psi \partial_i \phi d x + \int_{t_1}^{t_2} \int_{\Sigma_s} \partial^\alpha F \partial_t \phi d x d s
\\ = \int_{\Sigma_{t_1}} \partial_t \partial^\alpha \psi \partial_t \phi + \partial^i \partial^\alpha \psi \partial_i \phi d x - \int_{t_1}^{t_2} \int_{\Sigma_s} G \partial^\alpha \partial_t \psi d x d s.
\end{aligned}
\end{equation}
\end{lemma}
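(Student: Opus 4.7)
The plan is to reduce the identity to a direct space-time integration by parts argument, which is in fact just the standard energy identity for a pair of solutions rather than a single one. Once the first identity is established, the second follows immediately by applying it with $\psi$ replaced by $\partial^\alpha \psi$, using that partial derivatives commute with each other and with $\Box$ on Minkowski space.

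Concretely, I would define the bilinear energy density
\[
E(s) = \int_{\Sigma_s} \left( \partial_t \psi \, \partial_t \phi + \partial^i \psi \, \partial_i \phi \right) dx
\]
and compute $\frac{d}{ds} E(s)$. Differentiating under the integral (justified by the rapid decay hypothesis) produces four terms. Two of them, $\partial_t \psi \, \partial_t^2 \phi$ and $\partial_t^2 \psi \, \partial_t \phi$, remain as they are. The two mixed terms $\partial_t \partial^i \psi \, \partial_i \phi$ and $\partial^i \psi \, \partial_t \partial_i \phi$ are handled by integration by parts on each fixed $\Sigma_s$, converting them into $-\partial_t \psi \, \Delta \phi$ and $-\Delta \psi \, \partial_t \phi$ respectively. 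The boundary terms at spatial infinity vanish thanks to the decay assumption on $\psi$ and $\phi$. Collecting everything gives
\[
\frac{d}{ds} E(s) = \int_{\Sigma_s} (\partial_t^2 \psi - \Delta \psi) \partial_t \phi \, dx + \int_{\Sigma_s} \partial_t \psi (\partial_t^2 \phi - \Delta \phi) \, dx = -\int_{\Sigma_s} F \, \partial_t \phi \, dx - \int_{\Sigma_s} G \, \partial_t \psi \, dx,
\]
using $\Box = -\partial_t^2 + \Delta$ and $\Box \psi = F$, $\Box \phi = G$. Integrating this from $t_1$ to $t_2$ and rearranging yields exactly the claimed identity.

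For the second (higher-order) identity, I would simply observe that since $\Box$ commutes with the constant coefficient operator $\partial^\alpha$, the function $\tilde\psi := \partial^\alpha \psi$ satisfies $\Box \tilde\psi = \partial^\alpha F$ and still decays rapidly. Applying the first identity to the pair $(\tilde\psi, \phi)$, and using $\partial_t \partial^\alpha = \partial^\alpha \partial_t$ on the $\psi$ factors in the spacetime integral on the right, reproduces the stated generalized form verbatim.

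There is no real obstacle here; the only point requiring any care is the justification for discarding spatial boundary terms and for interchanging $\frac{d}{ds}$ with the integral over $\Sigma_s$, both of which are routine under the stated rapid decay hypothesis. Conceptually, the identity is just the statement that the $\dot H^1$ bilinear pairing of two wave solutions is conserved in time, perturbed by source terms on each side; this is the most elementary instance of the bilinear energy estimates whose geometric generalizations are discussed in Section~\ref{sec:bilinearestimates}.
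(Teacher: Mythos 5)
Your proof is correct and is essentially the same argument as the paper's: the paper writes $\Box \psi \, \partial_t \phi$ pointwise as a spacetime divergence plus $-\partial_t \psi \, \Box \phi$ and integrates over the slab, which is just your computation of $\tfrac{d}{ds}E(s)$ followed by spatial integration by parts, reorganized. Your treatment of the higher-order identity (commuting $\partial^\alpha$ with $\Box$ and applying the first identity to $\partial^\alpha \psi$) is also exactly what the paper does.
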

\begin{proof}
We compute the following:
\begin{equation}
\begin{aligned}
\Box \psi \partial_t \phi = - \partial_t^2 \psi \partial_t \phi + \partial_i \partial^i \psi \partial_t \phi = - \partial_t (\partial_t \psi \partial_t \phi) + \partial_t \psi \partial_t^2 \phi + \partial_i (\partial^i \psi \partial_t \phi) - \partial^i \psi \partial_i \partial_t \phi
\\ = \partial_t (-\partial_t \psi \partial_t \phi) + \partial_t \psi \partial_t^2 \phi + \partial_i (\partial^i \psi \partial_t \phi) - \partial_t (\partial^i \psi \partial_i \phi) + \partial_t \partial^i \psi \partial_i \phi
\\ = \partial_t (-\partial_t \psi \partial_t \phi) + \partial_t \psi \partial_t^2 \phi + \partial_i (\partial^i \psi \partial_t \phi) - \partial_t (\partial^i \psi \partial_i \phi) + \partial^i (\partial_t \psi \partial_i \phi) - \partial_t \psi \partial^i \partial_i \phi
\\ = \partial_t (-\partial_t \psi \partial_t \phi) + \partial_t (-\partial^i \psi \partial_i \phi) + \partial_i (\partial^i \psi \partial_t \phi) + \partial^i (\partial_t \psi \partial_i \phi) - \partial_t \psi \Box \phi.
\end{aligned}
\end{equation}
Integrating between $\Sigma_{t_1}$ and $\Sigma_{t_2}$ gives us that
\begin{equation}
\begin{aligned}
\int_{t_1}^{t_2} \int_{\Sigma_s} \Box \psi \partial_t \phi d x d s
\\ = \int_{\Sigma_{t_2}} - \partial_t \psi \partial_t \phi - \partial^i \psi \partial_i \psi d x - \int_{\Sigma_{t_1}} - \partial_t \psi \partial_t \phi - \partial^i \psi \partial_i \psi d x - \int_{t_1}^{t_2} \int_{\Sigma_s} \partial_t \psi \Box \phi d x d s.
\end{aligned}
\end{equation}
Using the fact that $\Box \psi = F$ and that $\Box \phi = G$ gives us the first estimate. The second estimate comes from commuting the equation for $\psi$ with $\partial^\alpha$.
\end{proof}

\begin{lemma} \label{lem:bilinintM}
Let $\psi$ and $\phi$ be smooth functions decaying sufficiently rapidly at infinity with $\Box \psi = \Box \phi = 0$. Then, we have that
\begin{equation}
\begin{aligned}
\int_{\Sigma_{t_2}} \partial_t \psi \phi - \psi \partial_t \phi d x + \int_{t_1}^{t_2} F \phi d x d s
\\ = \int_{\Sigma_{t_1}} \partial_t \psi \phi - \psi \partial_t \phi d x + \int_{t_1}^{t_2} G \psi d x d s.
\end{aligned}
\end{equation}
More generally, we have that
\begin{equation}
\begin{aligned}
\int_{\Sigma_{t_2}} \partial_t \partial^\alpha \psi \phi - \partial^\alpha \psi \partial_t \phi d x + \int_{t_1}^{t_2} \partial^\alpha F \phi d x d s
\\ = \int_{\Sigma_{t_1}} \partial_t \partial^\alpha \psi \phi - \partial^\alpha \psi \partial_t \phi d x + \int_{t_1}^{t_2} G \partial^\alpha \psi d x d s.
\end{aligned}
\end{equation}
\end{lemma}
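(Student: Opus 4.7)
The plan is to prove the identity by the same divergence-form computation used in Lemma~\ref{lem:bilinenM}, but adapted to capture the antisymmetric quantity $\partial_t\psi\,\phi - \psi\,\partial_t\phi$ rather than the symmetric energy current. First I will note that the statement as written with $\Box\psi=\Box\phi=0$ must be understood in the inhomogeneous form with $\Box\psi=F$ and $\Box\phi=G$, since $F$ and $G$ appear in the identity; the claim then reduces to a direct integration by parts of $\phi\,\Box\psi - \psi\,\Box\phi$.

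The key computation is to observe that
\begin{equation*}
\phi\,\Box\psi - \psi\,\Box\phi = \partial_t\bigl(-\partial_t\psi\,\phi + \psi\,\partial_t\phi\bigr) + \partial_i\bigl(\partial^i\psi\,\phi - \psi\,\partial^i\phi\bigr),
\end{equation*}
which one verifies simply by expanding each term on the right and cancelling the mixed products $\partial_t\psi\,\partial_t\phi$ and $\partial^i\psi\,\partial_i\phi$. This is the well-known fact that the expression $\phi\,\Box\psi - \psi\,\Box\phi$ is a spacetime divergence, which is the Lagrangian origin of the bilinear pairing in the statement. I will then integrate this identity over the slab $[t_1,t_2]\times\R^n$; the spatial divergence integrates to zero on each $\Sigma_s$ by the rapid decay assumption on $\psi$ and $\phi$, while the time divergence produces the boundary terms on $\Sigma_{t_1}$ and $\Sigma_{t_2}$. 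Substituting $\Box\psi=F$ and $\Box\phi=G$ and rearranging yields the first identity.

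For the more general identity with $\partial^\alpha$, I will simply apply the first identity to $\partial^\alpha\psi$ in place of $\psi$, using that the translation vector fields commute with $\Box$, so that $\Box(\partial^\alpha\psi) = \partial^\alpha F$; the function $\phi$ is unchanged and still satisfies $\Box\phi = G$. There is essentially no obstacle here, since the whole proof is a one-line divergence identity followed by Stokes' theorem — the only thing one must be slightly careful about is the sign convention in $\Box = -\partial_t^2 + \partial_i\partial^i$ and the book-keeping between $\partial_t\psi\,\phi$ and $\psi\,\partial_t\phi$ so that the antisymmetric combination appears with the correct sign on the two boundary slices.
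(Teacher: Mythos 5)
Your proposal is correct and follows essentially the same route as the paper: the paper likewise rewrites $\phi\,\Box\psi - \psi\,\Box\phi$ as an exact spacetime divergence, integrates over the slab between $\Sigma_{t_1}$ and $\Sigma_{t_2}$, substitutes $\Box\psi = F$ and $\Box\phi = G$, and obtains the commuted version by applying the identity to $\partial^\alpha\psi$ with $\Box\partial^\alpha\psi = \partial^\alpha F$. Your reading of the hypothesis (that the identity is really the inhomogeneous statement, the homogeneous case being the conserved pairing) matches the paper's intent, and your sign bookkeeping checks out.
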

\begin{proof}
We compute the following:
\begin{equation}
\begin{aligned}
\Box \psi \phi = - \partial_t^2 \psi \phi + \partial_i \partial^i \psi \phi = - \partial_t (\partial_t \psi \phi) + \partial_t \psi \partial_t \phi + \partial_i (\partial^i \psi \phi) - \partial^i \psi \partial_i \phi
\\ = - \partial_t (\partial_t \psi \phi) + \partial_t (\psi \partial_t \phi) - \psi \partial_t^2 \phi + \partial_i (\partial^i \psi \phi) - \partial^i (\psi \partial_i \phi) + \psi \partial^i \partial_i \phi
\\ = - \partial_t (\partial_t \psi \phi) + \partial_t (\psi \partial_t \phi) + \psi \Box \phi + \partial_i (\partial^i \psi \phi) - \partial^i (\psi \partial_i \phi).
\end{aligned}
\end{equation}
Integrating between $\Sigma_{t_1}$ and $\Sigma_{t_2}$ gives us that
\begin{equation}
\begin{aligned}
\int_{t_1}^{t_2} \int_{\Sigma_s} \Box \psi \phi d x d s = \int_{\Sigma_{t_2}} - \partial_t \psi \phi + \psi \partial_t \phi d x - \int_{\Sigma_{t_1}} - \partial_t \psi \phi + \psi \partial_t \phi d x + \int_{t_1}^{t_2} \int_{\Sigma_s} \psi \Box \phi d x d s.
\end{aligned}
\end{equation}
Using the fact that $\Box \psi = F$ and $\Box \phi = G$ gives us the desired result.

\end{proof}

In order to more accurately show the geometric character of these identities, we provide versions of these formulas which are available on arbitrary globally hyperbolic Lorentzian manifolds $(M,g)$.

\begin{lemma} \label{lem:bilinint}
Let $D$ be a domain in $M$. Moreover, let $\Box_g \psi = F$ and let $\Box_g \phi = G$. We have that
\[
\int_D div(\nabla \psi \phi) - div(\nabla \phi \psi) d vol(g) = \int_D \phi F - \psi G d vol(g).
\]
\end{lemma}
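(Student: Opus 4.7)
The plan is to prove this as a direct consequence of the Leibniz (product) rule for the covariant divergence, together with the geometric definition $\Box_g = \operatorname{div}_g \circ \nabla$, the latter being the usual way the wave operator is expressed on a Lorentzian manifold. The identity is a pointwise identity integrated over $D$, so no appeal to Stokes' theorem is needed here; this lemma sets up the pointwise divergence structure that later applications of Stokes will then convert into boundary integrals.

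First I would recall the standard divergence-product identity: for any smooth function $f$ and smooth vector field $X$ on $(M,g)$, one has $\operatorname{div}_g(f X) = f \operatorname{div}_g(X) + g(\nabla f, X)$. Applying this with $f=\phi$, $X=\nabla \psi$ gives
\[
\operatorname{div}_g(\phi \, \nabla \psi) \;=\; \phi \, \operatorname{div}_g(\nabla \psi) + g(\nabla \phi, \nabla \psi) \;=\; \phi \, \Box_g \psi + g(\nabla \phi, \nabla \psi) \;=\; \phi F + g(\nabla \phi, \nabla \psi),
\]
and symmetrically, swapping the roles of $\phi$ and $\psi$,
\[
\operatorname{div}_g(\psi \, \nabla \phi) \;=\; \psi \, \Box_g \phi + g(\nabla \psi, \nabla \phi) \;=\; \psi G + g(\nabla \phi, \nabla \psi).
\]

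Next I would subtract these two identities pointwise. The symmetric bilinear gradient term $g(\nabla \phi, \nabla \psi)$ cancels, leaving
\[
\operatorname{div}_g(\phi \, \nabla \psi) - \operatorname{div}_g(\psi \, \nabla \phi) \;=\; \phi F - \psi G.
\]
Integrating this equality against the volume form $d\mathrm{vol}(g)$ over the domain $D$ yields the claimed identity. There is essentially no obstacle: the whole content of the proof is the Leibniz rule plus the identification $\Box_g \psi = \operatorname{div}_g(\nabla \psi)$, which holds on any (pseudo-)Riemannian manifold by the very definition of the Laplace--Beltrami / d'Alembertian operator in geometric form. The only minor point to be careful about is that smoothness of $\phi$, $\psi$, $F$, $G$ and integrability on $D$ are assumed implicitly in the statement, so the pointwise identity may be integrated termwise without further justification.
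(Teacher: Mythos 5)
Your proof is correct and follows essentially the same route as the paper's: apply the Leibniz rule $\operatorname{div}(\phi\,\nabla\psi) = \phi\,\Box_g\psi + g(d\psi,d\phi)$, subtract the symmetric counterpart so the gradient pairing cancels, and integrate over $D$. No gaps.
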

\begin{proof}
We can write the wave equation $\Box_g \psi = F$ as $div(\nabla \psi) = tr (\nabla^2 \psi) = F$, where $\nabla$ is the covariant derivative operator associated with the Lorentzian metric $g$. Now, we have that
\begin{equation}
\begin{aligned}
div(\nabla \psi \phi) = \phi div (\nabla \psi) + g(d \psi,d \phi) = \phi \Box_g \psi + g(d \psi,d \phi).
\end{aligned}
\end{equation}

Similarly, we have that
\begin{equation}
\begin{aligned}
div(\nabla \phi \psi) = \psi div (\nabla \phi) + g(d \psi,d \phi) = \psi \Box_g \phi + g(d \psi,d \phi)
\end{aligned}
\end{equation}

Subtracting the second equation from the first gives us that
\begin{equation}
\begin{aligned}
\phi \Box_g \psi - \psi \Box_g \phi = div(\nabla \psi \phi) - div(\nabla \phi \psi).
\end{aligned}
\end{equation}
This gives us the desired result.
\end{proof}
Integrating this identity and using the divergence theorem can give us fluxes analogous to Lemma~\ref{lem:bilinintM} on a general Lorentzian manifold.

\begin{lemma}
Let $T [\psi,\phi]$ denote the bilinear energy momentum tensor given by
\begin{equation}
    \begin{aligned}
    T [\psi,\phi] = d \psi \otimes d \phi + d \phi \otimes d \psi - g (d \psi,d \phi) g.
    \end{aligned}
\end{equation}
Then, we have that
\[
div(T [\psi,\phi]) = (\Box \psi) d \phi + (\Box \phi) d \psi.
\]
\end{lemma}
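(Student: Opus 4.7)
The plan is to prove the identity by a direct tensorial computation in local coordinates, exploiting two structural facts: that $\nabla g = 0$, and that the covariant Hessian $\nabla^2 f$ of a scalar function is symmetric (equivalently, $\nabla_\mu \nabla_\nu f = \nabla_\nu \nabla_\mu f$, since torsion vanishes). With those two tools, every contracted second-derivative term that appears will either be recognized as $\Box \psi$ or $\Box \phi$, or will cancel against its twin.

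First I would write the tensor in components as
\[
T_{\mu\nu}[\psi,\phi] = \nabla_\mu \psi \, \nabla_\nu \phi + \nabla_\mu \phi \, \nabla_\nu \psi - g_{\mu\nu} \, g^{\alpha\beta} \nabla_\alpha \psi \, \nabla_\beta \phi,
\]
and then apply $\nabla^\mu = g^{\mu\sigma}\nabla_\sigma$ to each term. Because $\nabla g = 0$, the metric and its inverse pass freely through all covariant derivatives; this lets me rewrite the divergence as
\[
\nabla^\mu T_{\mu\nu} = (\Box \psi)\nabla_\nu \phi + (\Box \phi)\nabla_\nu \psi + g^{\mu\sigma}\nabla_\sigma \psi \nabla_\mu \nabla_\nu \phi + g^{\mu\sigma}\nabla_\sigma \phi \nabla_\mu \nabla_\nu \psi - \nabla_\nu\bigl(g^{\alpha\beta}\nabla_\alpha \psi \nabla_\beta \phi\bigr).
\]
The first two terms are exactly the right-hand side written in components. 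So the task reduces to showing that the remaining three terms cancel.

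Expanding the last term by the product rule gives
\[
\nabla_\nu\bigl(g^{\alpha\beta}\nabla_\alpha \psi \nabla_\beta \phi\bigr) = g^{\alpha\beta}\nabla_\nu \nabla_\alpha \psi \nabla_\beta \phi + g^{\alpha\beta}\nabla_\alpha \psi \nabla_\nu \nabla_\beta \phi.
\]
Using the symmetry of the covariant Hessian on scalars, the mixed term $g^{\mu\sigma}\nabla_\sigma \psi \nabla_\mu \nabla_\nu \phi$ equals $g^{\alpha\beta}\nabla_\alpha \psi \nabla_\nu \nabla_\beta \phi$, which exactly matches the second piece above. Similarly, $g^{\mu\sigma}\nabla_\sigma \phi \nabla_\mu \nabla_\nu \psi$ matches the first piece after relabeling. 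Thus all three remaining terms cancel in pairs, and I am left with $(\Box\psi)d\phi + (\Box\phi)d\psi$, which is the claim.

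There is essentially no obstacle here; the only thing to be careful about is bookkeeping with indices and making sure the commuting of covariant derivatives is applied only to scalars (where it is valid without curvature terms). Since both $\psi$ and $\phi$ are scalar fields, this is justified, and the identity holds on any pseudo-Riemannian manifold without any assumption of global hyperbolicity. As a sanity check, setting $\psi = \phi$ recovers the standard identity $\text{div}(T[\psi,\psi]) = 2(\Box\psi)d\psi$, which is the usual divergence formula for the scalar field energy-momentum tensor up to the conventional factor.
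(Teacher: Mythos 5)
Your proposal is correct and follows essentially the same route as the paper: compute $\nabla^\mu T_{\mu\nu}$ in components, use metric compatibility to pass $g$ through derivatives, and cancel the cross terms against the derivative of $g(d\psi,d\phi)$ via the symmetry of the covariant Hessian on scalars. The remarks about validity on any pseudo-Riemannian manifold and the consistency check $\psi=\phi$ are fine additions but do not change the argument.
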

\begin{proof}
We begin by noting that $T [\psi,\phi]$ is symmetric, so the divergence is well defined. This can be written as
\begin{equation}
    \begin{aligned}
    T_{\mu \nu} [\psi,\phi] = \partial_\mu \psi \partial_\nu \phi + \partial_\nu \psi \partial_\nu \phi - g_{\alpha \beta} \partial^\alpha \psi \partial^\beta \phi g_{\mu \nu}.
    \end{aligned}
\end{equation}
Computing the divergence means contracting with $\nabla^\mu$, where $\nabla$ is the connection associated with $g$. We have that
\begin{equation}
    \begin{aligned}
    \nabla^\mu T_{\mu \nu} [\psi,\phi] = (\nabla^\mu \partial_\mu \psi) \partial_\nu \phi + \partial_\mu \psi \nabla^\mu \partial_\nu \phi + \nabla^\mu \partial_\nu \psi \partial_\mu \phi + \partial_\nu \psi (\nabla^\mu \partial_\mu \phi)
    \\ - g_{\alpha \beta} \nabla^\mu \partial^\alpha \psi \partial^\beta \phi g_{\mu \nu} - g_{\alpha \beta} \partial^\alpha \psi \nabla^\mu \partial^\beta \phi g_{\mu \nu}.
    \end{aligned}
\end{equation}

Now, we note that $\partial_\mu \psi \nabla^\mu \partial_\nu \phi = g_{\alpha \beta} \partial^\alpha \psi \nabla^\mu \partial^\beta \phi g_{\mu \nu} = \nabla^2_{\mu \nu} \phi \partial^\mu \psi = (\nabla^2 \phi) (\nabla \psi,\cdot)$, where we have used the fact that the Hessian of a function is symmetric (we have also used the musical isomorphism induced by the metric between covariant and contravariant tensors). We have an analogous equality for the other terms with $\psi$ and $\phi$ interchanged. Putting everything together gives us that
\[
\nabla^\mu T_{\mu \nu} [\psi,\phi] = (\nabla^\mu \partial_\mu \psi) \partial_\nu \phi + (\nabla^\mu \partial_\mu \phi) \partial_\nu \psi.
\]
This is exactly the statement that
\[
div(T [\psi,\phi]) = (\Box \psi) d \phi + (\Box \phi) d \psi,
\]
as desired.
\end{proof}

Because the divergence of $T [\psi,\phi]$ can be written in terms of $\Box_g \psi$ and $\Box_g \phi$, it can be contracted with appropriate vector fields to find useful identities after integrating over a spacetime domain and applying Stokes' Theorem, just as is the case with the usual energy momentum tensor ${1 \over 2} T [\psi,\psi]$. The resulting identities will then also depend on the deformation tensors of the vector fields. For example, the identities in Lemma~\ref{lem:bilinenM} which are specific to Minkowski space correspond to contracting this bilinear energy momentum tensor with the vector field $\partial_t$, which is a Killing field (i.e., it has a vanishing deformation tensor). We refer the reader to, for example, \cite{Ali10} for more thorough description of integrating the contraction of energy momentum tensors with vector fields in order to derive useful quantities.

\subsection{Duality argument} \label{sec:dualityargument}
The estimates from Section~\ref{sec:bilinearestimates} lead us to think that we should be able to control the averages of $\psi$ and its derivatives weighted by functions $f$ as long as we can control the error integrals involving $F$ in \eqref{eq:testest}. Moreover, because we can commute with unit derivatives in practice, the same is true of $\partial^\alpha \psi$. If $f$ is allowed to vary in a sufficiently large class and if $|\alpha|$ is allowed to be sufficiently large, it is well known that control over these averages will imply bounds for $\psi$ and some of its derivatives. Because we require pointwise decay, we have chosen to immediately prove pointwise estimates using these averages. Of course, these averages can be used to prove estimates for other $L^p$ spaces, including the energy space. Thus, we could modify the following argument to first prove an energy bound, and then prove pointwise decay using the usual Sobolev embedding.


We shall now provide a proof of this duality argument for completeness. Although this result is far from sharp, it suffices for our applications. We assume that we are on some $\Sigma_t$, and that we have that $\left |\int_{\Sigma_t} \partial^\alpha \psi f \right | d x \le M$ where $M$ is some explicit constant. Moreover, we assume that $\psi$ is some smooth function, and that $f$ is allowed to be any function with $C^k$ norm bounded by another constant $D$. Our goal is to show that we, in fact, have control over $\Vert \psi \Vert_{L^\infty}$ explicitly in terms of the constants $M$ and $D$ as long as we can take all $\omega$ with $|\omega|$ sufficiently large depending on $k$.

\begin{proposition}
Let $\psi$ be some smooth function on $\R^n$ such that, for any smooth function $f$ supported in any ball of radius $1$ having $C^k$ norm at most $D$, we have that
\begin{equation}
\begin{aligned}
\left |\int \partial^\alpha \psi(x) f(x) d x \right | \le M
\end{aligned}
\end{equation}
for all $|\alpha| \le k + n + 1$. Then, there exists some universal constant $C$ depending only on $k$ and $n$ such that
\begin{equation}
\begin{aligned}
\Vert \psi \Vert_{L^\infty} \le {C M \over D}.
\end{aligned}
\end{equation}
\end{proposition}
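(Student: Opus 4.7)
The plan is to reconstruct $\psi(x_0)$ pointwise from integral averages of $\partial^\alpha\psi$ against fixed smooth bump functions supported in $B_1(x_0)$, and then apply the hypothesis to each such integral. The machinery is Taylor's theorem of order $N:=k+n$ combined with a single fixed unit-scale bump $\rho$, designed so that the Taylor weight $(1-t)^N$ exactly compensates the $C^k$-norm blowup of the remainder's test function after rescaling. Note that by linearity the hypothesis is equivalent to $|\int\partial^\alpha\psi\cdot f|\le(M/D)\Vert f\Vert_{C^k}$ for all smooth $f$ supported in a unit ball and all $|\alpha|\le k+n+1$.

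First I would fix $x_0\in\R^n$, choose a smooth nonnegative $\rho$ supported in $B_1(0)$ with $\int\rho=1$, set $N=k+n$, and expand $\psi(x_0)$ by Taylor's formula (with integral remainder) about a variable point $y$. Multiplying by $\rho(y-x_0)$ and integrating in $y$, the resulting identity has the form
\[
\psi(x_0)=\sum_{|\alpha|\le N}\int\partial^\alpha\psi(y)\,\frac{(x_0-y)^\alpha}{\alpha!}\,\rho(y-x_0)\,dy+\mathrm{Rem},
\]
where $\mathrm{Rem}$ is a sum over $|\alpha|=N+1$ of double integrals in $y$ and an auxiliary parameter $t\in[0,1]$ carrying the Taylor weight $(1-t)^N$. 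For each polynomial term, the test function $(x_0-y)^\alpha\rho(y-x_0)/\alpha!$ is smooth, supported in $B_1(x_0)$, and has $C^k$-norm bounded by a constant depending only on $k,n,\rho$, so each contributes at most $CM/D$ directly from the hypothesis.

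The main obstacle is the remainder, where the composition $\partial^\alpha\psi(y+t(x_0-y))$ concentrates the effective test function at a small scale as $t\to 1$. After the change of variables $z=y+t(x_0-y)$, the inner integral becomes $\partial^\alpha\psi(z)$ tested against a function supported in $B_{1-t}(x_0)$ whose $C^k$-norm is at most $C(1-t)^{-k-n}$: the cancellation $|x_0-z|^{|\alpha|}\le(1-t)^{|\alpha|}$ available in the support absorbs $|\alpha|=N+1$ of the $n+|\alpha|+k$ powers of $(1-t)^{-1}$ that rescaling and $k$ differentiations produce. Applying the hypothesis at the maximal derivative count $|\alpha|=k+n+1$ then yields inner bound $CM(1-t)^{-k-n}/D$, and multiplying by the Taylor weight $(1-t)^{k+n}$ reduces the $t$-integral to $\int_0^1 1\,dt$, which is finite. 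This marginal integrability is exactly why the hypothesis must extend to derivatives of order $k+n+1$ and why one takes $N=k+n$: one fewer derivative would force a smaller $N$ and produce a divergent $t$-integral.

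Summing the polynomial and remainder contributions would give $|\psi(x_0)|\le CM/D$ with $C=C(k,n,\rho)$, and since $x_0$ is arbitrary this yields the claimed $L^\infty$ bound. The bookkeeping in the remainder estimate, in particular the exact matching of powers of $(1-t)$ between the rescaled test function and the Taylor weight, is the step that requires the most care.
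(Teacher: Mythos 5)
Your proposal is correct, and it takes a genuinely different route from the paper. The paper localizes $\psi$ with a cutoff, views $\chi\psi$ on a large torus, tests $\partial^\alpha\psi$ against renormalized complex exponentials $\tfrac{D}{A}\Vert l\Vert_{\ell^\infty}^{-k}e^{\frac{1}{5}\pi i l\cdot x}$ to extract decay $|c_l|\le C M E A/(D\Vert l\Vert_{\ell^\infty}^{n+1})$ of the Fourier coefficients (using $k+n+1$ derivatives taken in the direction of the largest component of $l$), and then sums the Fourier series. You instead build a pointwise reproducing formula: Taylor expansion of $\psi(x_0)$ about a variable base point $y$, averaged against a fixed unit-scale bump $\rho$, with the hypothesis (rescaled by homogeneity to $|\int\partial^\alpha\psi\, f|\le (M/D)\Vert f\Vert_{C^k}$) applied to each polynomial term and to the rescaled remainder kernel. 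Your bookkeeping in the remainder is right: after the substitution $z=y+t(x_0-y)$ the test function is supported in $B_{1-t}(x_0)$ and, distributing derivatives between the monomial $(x_0-z)^\alpha$ and the bump, every term balances to $C(1-t)^{-n-k}$, which the Taylor weight $(1-t)^{k+n}$ exactly cancels, so the $t$-integral is finite precisely because derivatives up to order $k+n+1$ are available. What each approach buys: the paper's Fourier argument slots naturally into its surrounding remark that the same averages control other $L^p$ and energy norms (via the coefficients), while your argument is purely local and physical-space, avoids the torus identification and series summation entirely, and makes the role of the derivative count $k+n+1$ transparent as the threshold for integrability of the Taylor remainder; both give a constant depending only on $k$, $n$, and the fixed auxiliary function, and neither is sharp in regularity, which the paper does not require.
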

\begin{proof}
It suffices to get control over $\psi$ near the origin, as control over any other location follows in the same way. We fix a smooth, radially symmetric cutoff function $\chi$ equal to $1$ in the ball of radius ${1 \over 2}$, decaying monotonically to $0$. The function is identically equal to $0$ outside of the ball of radius $1$. Let $\Vert \chi \Vert_{C^{2 k + n + 1}} = E$. Then, we have that ${1 \over E} \chi$ has $C^{2 k + n + 1}$ norm controlled by $1$.

Now, we take the cube centered at the origin with side lengths equal to $10$. We note that this cube contains the support of $\chi$. By identifying the faces of this cube in the usual way, we can think of the function $\chi \psi$ as living on the flat torus. We take the functions $e^{{1 \over 5} \pi l \cdot x}$ for $l \in \Z^n$. Integrating against these functions give us the Fourier coefficients of a given function.

The bounds on integrals against test functions are not directly useful when integrating against the functions $e^{{1 \over 5} \pi l \cdot x}$ because they have growing $C^k$ norm as $\Vert l \Vert_{\ell^\infty}$ increases. Thus, we divide by $\Vert l \Vert_{\ell^\infty}^k$, giving us the functions ${1 \over \Vert l \Vert_{\ell^\infty}^k} e^{{1 \over 5} \pi l \cdot x}$. These functions have $C^k$ norm controlled by some constant $A$ independent of $l$. Thus, multiplying by $D$ and dividing by $A$, we get that the functions $f_l (x) := {D \over A} {1 \over \Vert l \Vert_{\ell^\infty}^k} e^{{1 \over 5} \pi l \cdot x}$ have $C^k$ norm controlled by $D$. We now consider the Fourier coefficients of the function $\chi \psi$. We shall use the regularity of this function to show that its Fourier coefficients decay by integrating against the functions $f_l$.

By the assumptions given above, we have that
\begin{equation}
\begin{aligned}
{D \over E A \Vert l \Vert_{\ell^\infty}^k} |c_l| = \left |\int {1 \over E} \chi(x) \psi(x) f_l (x) d x \right | \le M C_1,
\end{aligned}
\end{equation}
where $c_l$ is the Fourier coefficient of $\chi \psi$ associated to $l$, and where $C_1$ depends on $k$ (this constant arises from controlling the $C^k$ norm of ${1 \over E} \chi f_l$ in terms of the $C^k$ norm of $\chi$ and the $C^k$ norm of $f_l$).

Now, we have that
\begin{equation}
\begin{aligned}
\left ({\pi \over 5} \right )^{|\alpha|} {\Vert l \Vert_{\ell^\infty}^{|\alpha|} D \over E A \Vert l \Vert_{\ell^\infty}^k} |c_l| = \left |\int {1 \over E} \partial^\alpha (\chi(x) \psi(x)) f_l (x) d x \right | \le M C_2,
\end{aligned}
\end{equation}
where $C_2$ depends on $|\alpha|$ and $k$, and where the partial derivatives in $\partial^\alpha$ are all taken in the direction corresponding to the largest component of $l$. Taking $|\alpha| = k + n + 1$ gives us that

\begin{equation}
\begin{aligned}
\left ({\pi \over 5} \right )^{|\alpha|} \Vert l \Vert_{\ell^\infty}^{n + 1} {D \over E A} |c_l| = \left |\int {1 \over E} \partial^\alpha (\chi(x) \psi(x)) f_l (x) d x \right | \le M C''.
\end{aligned}
\end{equation}

This gives us that
\begin{equation}
\begin{aligned}
|c_l| \le C {M E A \over D \Vert l \Vert_{\ell^\infty}^{n + 1}},
\end{aligned}
\end{equation}
where the constant $C$ depends on the $C^k$ norms of the $f_l$ (we recall that this is $A$, which is independent of $l$), and the numbers $k$ and $|\alpha| = k + n + 1$. Now, we have that
\begin{equation}
\begin{aligned}
\chi(x) \psi(x) = \sum_{l \in \Z^n} c_l e^{{1 \over 5} \pi l \cdot x}.
\end{aligned}
\end{equation}
Plugging in the above estimates gives us the quantitative estimate
\begin{equation}
\begin{aligned}
|\chi(x) \psi(x)| \le \sum_{l \in \Z^n} |c_l| \le {C M \over D},
\end{aligned}
\end{equation}
where $C$ is some absolute constant depending only on $n$, $k$, and the functions $\chi$ and $f_l$. This is the desired result.
\end{proof}

\subsection{Decay estimates} \label{sec:decay}
We now proceed to one specific application of the above considerations. We shall use the bilinear estimates in Minkowski space from Section~\ref{sec:bilinearestimates} and the duality argument from Section~\ref{sec:dualityargument} to prove pointwise decay for solutions to wave equations. We shall include inhomogeneities in the estimates. In our applications, these inhomogeneities are nonlinearities.

Let $\Box \psi = F$, and suppose that $\psi$ has compactly supported data in the unit ball. Suppose that we want to show pointwise decay for $\psi$. We then take auxiliary solutions of the homogeneous wave equation $\Box f = 0$, and decay will be a result of applying the bilinear estimates from Section~\ref{sec:bilinearestimates} to $\psi$ and $f$, allowing the data for $f$ to vary in an appropriate class, and then applying the duality argument from Section~\ref{sec:dualityargument}.

The data for $f$ is chosen as follows. Suppose we want to show decay for $\psi$ at some point $(s,x_0^i)$. We then consider the ball $B$ of radius $1$ centered at $x_0^i$ in $\Sigma_s$, and we allow the data $f_0 (x) = f(s,x)$ and $f_1 (x) = -\partial_t f(s,x)$ for $f$ to vary among all smooth functions whose $C^k$ norm (where $k$ will be specified shortly) is of size at most $10$ and which are supported in $B \subset \Sigma_s$. More specifically, we have chosen the nonlinearities to only involve $\partial_t$ derivatives for simplicity, so we shall always set $f_0 = 0$. If other derivatives are involved instead, this argument can be modified to treat those cases.

The estimate from Lemma~\ref{lem:bilinenM} says that

\[
\int_{\Sigma_s} \partial_t \psi f_1 d x = \int_{\Sigma_0} \partial_t \psi \partial_t f + \partial^i \psi \partial_i f d x - \int_0^s \int_{\Sigma_t} F \partial_t f d x d t.
\]

Thus, we have that
\[
\left |\int_{\Sigma_s} \partial_t \psi f_1 d x  \right | \le \left |\int_{\Sigma_0} \partial_t \psi \partial_t f + \partial^i \psi \partial_i f d x \right | + \left |\int_0^s \int_{\Sigma_t} F \partial_t f d x d t \right |
\]

We denote by $\tau$ the $u$ coordinate of $(s,x_0^i)$ (i.e., $\tau = s - \sqrt{\sum_{i = 1}^n (x_0^i)^2}$). In practice, we know that solutions to the homogeneous wave equation with compactly supported and sufficiently regular data (like $f$) decay in a certain way. Let us thus assume that $k$ is chosen so large such that
\[
|\partial f| \le {C \over (1 + s - t)^{{n - 1 \over 2}}} {1 \over 1 + |s - t - \sqrt{(x^1 - x_0^1)^2 + \dots + (x - x_0^n)^2}|^p}
\]
for $0 \le t \le s$. For example, when $n = 2$, we can take $k = 2$ and $p = {3 \over 2}$ using the fundamental solution (this is of course not sharp).

As a consequence of this, we have that
\begin{equation} \label{eq:auxdec}
    \begin{aligned}
    \left |\int_{\Sigma_0} \partial_t \psi \partial_t f + \partial^i \psi \partial_i f d x \right | \le {C \over (1 + s)^{{n - 1 \over 2}} (1 + |\tau|^p)} \Vert \partial \psi \Vert_{L^1 (\Sigma_0)},
    \end{aligned}
\end{equation}
where $p > 0$ measures the decay away from the light cone. Here, we are using the fact that the data for $\psi$ is compactly supported in the unit ball in $\Sigma_0$. Indeed, the symmetry of the configuration guarantees that the distance from the center of the unit ball in $\Sigma_0$ to the light cone associated with $f$ is comparable to $\tau$, and clearly the height of this cone is given by $s$. This can be seen in Figure~\ref{fig:lightconesaux}.

\begin{figure}
    \centering
    \begin{tikzpicture}
    \draw[dashed] (0,0) -- (4,4);
    \draw[ultra thick] (4,4) -- (5,5);
    \draw[dashed] (0,0) -- (-1,1);
    \draw[ultra thick] (-1,1) -- (-5,5);
    \draw[ultra thick] (3,5) -- (2.7,4.7);
    \draw[dashed] (2.7,4.7) -- (-1,1);
    \draw[ultra thick] (-1,1) -- (-2,0);
    \draw[ultra thick] (3,5) -- (3.27,4.73);
    \draw[ultra thick] (4,4) -- (8,0);
    \draw[dashed] (3.27,4.73) -- (4,4);
    \draw[dotted] (3,5) -- (5,5);
    \node (tau) at (4,5.2) {$\tau$};
    \draw[dotted] (3,0) -- (3,5);
    \node (s) at (3.2,2) {$s$};
    \draw[dotted] (0,0) -- (3,0);
    \node (a) at (1.5,0.2) {$s - \tau$};
    \draw[thick,domain=-180:-85] plot ({1.75*cos(\x)},{1.75+0.3*sin(\x)});
    \draw[dashed,domain=-85:0] plot ({1.75*cos(\x)},{1.75+0.3*sin(\x)});
    \draw[dashed,domain=0:180] plot ({1.75*cos(\x)},{1.75+0.3*sin(\x)});
    \draw[dashed,domain=-180:-105] plot ({3+1.75*cos(\x)},{3.25+0.3*sin(\x)});
    \draw[thick,domain=-105:0] plot ({3+1.75*cos(\x)},{3.25+0.3*sin(\x))});
    \draw[dashed,domain=0:180] plot ({3+1.75*cos(\x)},{3.25+0.3*sin(\x))});
    \draw[thick] (0,5) ellipse (5 and 0.35);
    \draw[thick] (-2,0) arc(-180:0:5 and 0.35);
    \draw[dashed] (-2,0) arc(-180:-360:5 and 0.35);
    
    \draw[color=red,rotate around={32:(1.5,2.5)}] (-1.36,2.5) arc(-180:0:2.88 and 0.2);
    \draw[dashed,color=red,rotate around={32:(1.5,2.5)}] (-1.36,2.5) arc(-180:-360:2.88 and 0.2);
    \end{tikzpicture}
    \caption{A configuration of light cones for $\psi$ and $f$. The light cone for $\psi$ is the upward opening cone, and the light cone for $f$ is the downward opening cone. The vertical distance between the tips of the cones is $s$, and the horizontal distance from the tip of the downward opening cone and the upward opening cone is $\tau$. The intersection of the two cones is a tilted spacelike ellipse whose major axis is comparable to $s$ and whose minor axis is comparable to $\sqrt{\tau} \sqrt{s}$ (this follows from the results in Sections~\ref{sec:geometry} and \ref{sec:SGeometry}). This ellipse appears in red in the above figure. We also refer the reader to Figure~\ref{fig:psifSigma_t} in Section~\ref{sec:geometry} to see what this looks like in each $\Sigma_t$.}
    \label{fig:lightconesaux}
\end{figure}
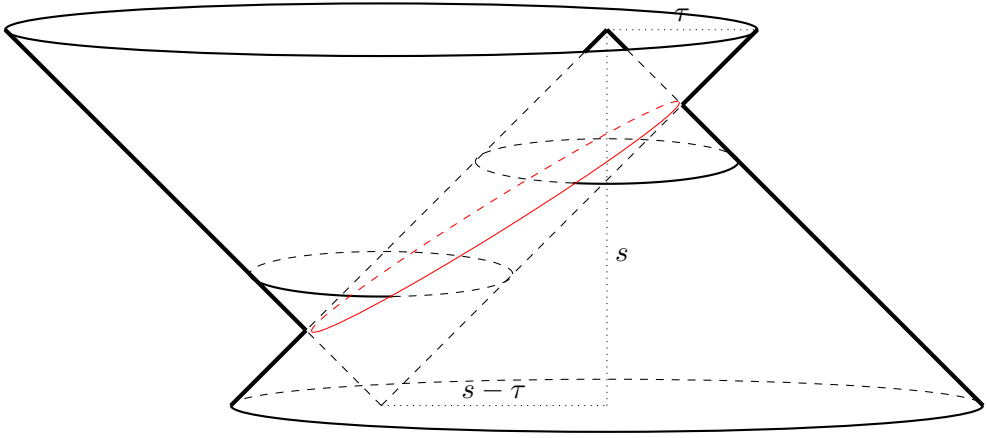

Thus, we have the estimate
\begin{equation} \label{eq:decayest}
    \begin{aligned}
    \left |\int_{\Sigma_s} \partial_t \psi f_1 d x  \right | \le {C \over (1 + s)^{{n - 1 \over 2}} (1 + |\tau|^p)} \Vert \partial \psi \Vert_{L^1 (\Sigma_0)} + \left |\int_0^s \int_{\Sigma_t} F \partial_t f d x d t \right |
    \end{aligned}
\end{equation}
This is the form of the estimate we shall use. When combined with commutation with translation vector fields and the duality argument from Section~\ref{sec:dualityargument}, we get pointwise decay for $\partial_t \psi$ assuming that
\begin{equation} \label{eq:reqerrorbounds}
    \begin{aligned}
    \left |\int_0^s \int_{\Sigma_t} F \partial_t f d x d t \right | \le {C \over (1 + s)^{{n - 1} \over 2} (1 + |\tau|^p)}.
    \end{aligned}
\end{equation}
In this paper, we note that $\psi$ is of size $\epsilon$ and $F$ is schematically of size $\epsilon^q$ for some $q > 1$ in practice. We will be able to recover these estimates in a bootstrap argument.

The decay estimate resulting from \eqref{eq:decayest} and the duality argument in Section~\ref{sec:dualityargument} is recorded in the following proposition.

\begin{proposition} \label{prop:decay}
Let
\[M [\psi] (s,x_0^i) = \sup_{|\alpha| \le k + n + 1} \sup_{f_1} \left ({C \over (1 + s)^{{n - 1 \over 2}} (1 + |\tau|^p)} \Vert \partial \partial^\alpha \psi \Vert_{L^1 (\Sigma_0)} + \left |\int_0^s \int_{\Sigma_t} (\partial^\alpha F) \partial_t f d x d t \right | \right ),
\]
where $(s,x_0^i)$ denotes the center of the unit ball in $\Sigma_s$ where $f_1$ is supported, where $\tau = s - \sqrt{\sum_{i = 1}^n (x_0^i)^2}$, and where the $C^k$ norm of $f_1$ is bounded by $10$. Then, we have that
\[
|\partial_t \psi| (s,x_0^i) \le C M [\psi] (s,x_0^i).
\]
\end{proposition}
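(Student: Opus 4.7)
The plan is to combine three ingredients that are already in place: the bilinear energy identity from Lemma~\ref{lem:bilinenM}, the assumed pointwise decay for the homogeneous wave equation $\Box f = 0$ recorded just above the proposition, and the duality argument of Section~\ref{sec:dualityargument}. The proposition is essentially a packaging of these tools at the point $(s, x_0^i)$.

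First I would fix a multi-index $\alpha$ with $|\alpha| \le k + n + 1$ and commute $\Box \psi = F$ with $\partial^\alpha$, so that $\Box (\partial^\alpha \psi) = \partial^\alpha F$. For any admissible test function $f_1$ (supported in the unit ball around $x_0^i$ in $\Sigma_s$ with $C^k$ norm at most $10$), I would take $f$ to be the solution of $\Box f = 0$ with data $f(s,\cdot) = 0$ and $-\partial_t f(s, \cdot) = f_1$. Applying Lemma~\ref{lem:bilinenM} to the pair $(\partial^\alpha \psi, f)$ between $\Sigma_0$ and $\Sigma_s$ and using $f(s,\cdot) = 0$ isolates $\int_{\Sigma_s} \partial_t \partial^\alpha \psi \cdot f_1 \, dx$ as the only surviving top boundary term, yielding the identity displayed just before \eqref{eq:decayest}.

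Next I would bound the $\Sigma_0$ boundary term. By the assumed decay rate $|\partial f|(t,x) \le C (1 + s - t)^{-(n-1)/2} (1 + |s - t - |x - x_0||)^{-p}$, and by the fact that the data for $\partial^\alpha \psi$ is supported in the unit ball around the origin, the geometric observation already made in the discussion before the proposition (and illustrated in Figure~\ref{fig:lightconesaux}) shows that on this support the weight is comparable to $(1 + s)^{-(n-1)/2} (1 + |\tau|)^{-p}$. This produces exactly the first term inside $M[\psi](s, x_0^i)$, while the spacetime error integral is exactly the second term. Taking the supremum over admissible $f_1$ and using that partial derivatives commute with $\partial_t$ gives, for every $|\alpha| \le k + n + 1$,
\[
\sup_{f_1} \left| \int_{\Sigma_s} \partial^\alpha(\partial_t \psi) \cdot f_1 \, dx \right| \le M[\psi](s, x_0^i).
\]

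Finally I would invoke the duality proposition of Section~\ref{sec:dualityargument}, applied to the function $\partial_t \psi$ localized near $x_0^i$ in $\Sigma_s$, with upper bound $M = M[\psi](s, x_0^i)$ and with $D = 10$ controlling the $C^k$ norms of test functions. Since the proof of that proposition localizes to a single ball, the conclusion it yields is precisely the pointwise estimate $|\partial_t \psi|(s, x_0^i) \le C M[\psi](s, x_0^i)$. There is no substantive obstacle in this packaging step; the real difficulty, which is postponed to later sections, lies in verifying the error bound \eqref{eq:reqerrorbounds}, i.e.\ controlling the spacetime integral of $\partial^\alpha F \cdot \partial_t f$ inside $M[\psi]$, and this is the content of Sections~\ref{sec:simpleapps} and~\ref{sec:anisotropic}.
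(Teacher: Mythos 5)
Your proposal is correct and follows essentially the same route as the paper: choosing $f$ with data $f_0 = 0$, $-\partial_t f(s,\cdot) = f_1$ supported near $(s,x_0^i)$, applying the bilinear energy estimate of Lemma~\ref{lem:bilinenM} to $\partial^\alpha\psi$ and $f$, bounding the $\Sigma_0$ term via the assumed decay of $f$ and the light-cone geometry, and concluding with the duality argument of Section~\ref{sec:dualityargument}. This is precisely how the paper assembles \eqref{eq:decayest} into Proposition~\ref{prop:decay}.
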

The value of $p$ will be specified in each application.

We end this Section by remarking that while Proposition~\ref{prop:decay} is based on using duality and the bilinear energy estimate from Lemma~\ref{lem:bilinenM}, there is a similar estimate coming from Lemma~\ref{lem:bilinint}. Proposition~\ref{prop:decay} is used in Section~\ref{sec:anisotropic} to study anisotropic wave equations. In Section~\ref{sec:simpleapps}, we shall actually use a version of Proposition~\ref{prop:decay} adapted to Lemma~\ref{lem:bilinint} instead. The statement and proof are analogous.

\section{Simple applications} \label{sec:simpleapps}
In this Section, we shall use the strategy described in Section~\ref{sec:decayestimates} in order to prove global stability of the trivial solution $\psi = 0$ in a few specific cases. The problems are artificial, but they show how to use Section~\ref{sec:decayestimates} in simplified settings, and they help us examine some of the behavior of solutions to nonlinear wave equations. Because the main purpose of these applications is to give an idea of how to handle anisotropic systems of wave equations, less detail will be given than is found in Section~\ref{sec:anisotropic}. The first result of this Section, Proposition~\ref{prop:localizednonlinearity}, deals with a localized nonlinearity. The second result, Proposition~\ref{prop:cubic}, deals with a general cubic nonlinearity in $3 + 1$ dimensions. Both of these problems were described in Section~\ref{sec:simpleintro}.

Both of the following results are in $3 + 1$ dimensions. We shall use Proposition~\ref{prop:decay} for the application in Section~\ref{sec:localizednonlinearity}, and we shall use the version of Propisition~\ref{sec:decay} adapted to Lemma~\ref{lem:bilinint} in Section~\ref{sec:cubic} (this is described after Proposition~\ref{sec:decay}). In order to do so, we need to assume pointwise decay of solutions to the homogeneous wave equation for a suitable class of initial data. We shall use the rates given by the fundamental solution, meaning that we shall assume that solutions arising from, say, $C^2$ initial data supported on the ball of radius $R$ are supported where $|u| \le R$ and decay like ${1 \over (1 + t)}$. This is like taking $n = 3$ and, schematically, $p = \infty$ in \eqref{eq:auxdec}. We do not require decay this strong in $u$ for solutions to the homogeneous equation, but we shall assume it for simplicity.

Before turning to these applications, we note that these examples require similar analysis to that of anisotropic systems of wave equations, which are considered in Section~\ref{sec:anisotropic}. Both examples require an analysis of the geometry involved, which is the most technically involved part of Section~\ref{sec:anisotropic}. More precisely, the example in Section~\ref{sec:localizednonlinearity} exhibits how angular localization can help in control nonlinear effects, and the example in Section~\ref{sec:cubic} exhibits how it can be important to understand the geometry of intersecting cones. There are analogous considerations that go into analyzing anisotropic systems of wave equations in Section~\ref{sec:anisotropic}.

\subsection{Global stability with a localized nonlinearity} \label{sec:localizednonlinearity}

We recall the setting of \eqref{eq:simpappintro1}. The function $r_\gamma^2 (t,x,y,z) = (x - t)^2 + y^2 + z^2$ denotes a radial distance from the null geodesic in the $t$, $x$ plane emanating from the spacetime origin. Then, for $\chi$ a smooth cutoff function equal to $1$ for $|x| \le 1$ and equal to $0$ for $|x| \ge 2$, we are interested in solving \eqref{eq:simpappintro1}, which we recall is given by
\begin{equation} \label{eq:beameq}
    \begin{aligned}
    \Box \psi = -\chi(r_\gamma) (\partial_t \psi)^2
    \end{aligned}
\end{equation}
for small data compactly supported in the unit ball. We have the following Proposition.

\begin{proposition} \label{prop:localizednonlinearity}
There exists $\epsilon_0 > 0$ and $N \in \N$ such that, for all $\epsilon \le \epsilon_0$, the trivial solution to \eqref{eq:beameq} is globally nonlinearly stable with respect to perturbations $\psi(0,x) = \psi_0 (x)$ and $\partial_t \psi(0,x) = \psi_1 (x)$ which are compactly supported in the unit ball and have that $\Vert \partial \psi_0 \Vert_{H^N} + \Vert \psi_1 \Vert_{H^N} \le \epsilon$. Moreover, the solutions to this equation satisfy the decay estimate
\[
|\partial_t \partial^i \psi| \le {C \epsilon \over 1 + t}
\]
for all $|i| \le {3 N \over 4}$.
\end{proposition}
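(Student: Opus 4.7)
The plan is to close a bootstrap argument controlling simultaneously a high-order $L^2$ energy and a lower-order pointwise decay. With data of size $\epsilon$ in $H^N$ compactly supported in the unit ball, I assume on some interval $[0,T]$ that (i) $\Vert \partial \partial^i \psi \Vert_{L^2(\Sigma_t)} \le A_1 \epsilon (1+t)^{\delta}$ for all $|i| \le N$ and (ii) $|\partial_t \partial^i \psi|(t,x) \le A_2 \epsilon / (1+t)$ for all $|i| \le 3N/4$, with a small $\delta > 0$, and I aim to improve the constants $A_1, A_2$. Local well-posedness in $H^N$ opens such an interval, and finite propagation speed keeps $\psi$ supported in $\{r \le t+1\}$.

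The energy bound is propagated by commuting \eqref{eq:beameq} with strings of translation derivatives $\partial^\alpha$ and applying the standard $\partial_t$ multiplier. The Leibniz rule distributes derivatives across $\chi(r_\gamma)$ and the two factors of $\partial_t \psi$; the bootstrap pointwise decay always controls at least one factor by $A_2 \epsilon/(1+t)$, leaving a Gr\"onwall error with time-integrable coefficient after absorbing one $(1+t)^\delta$ factor, which closes the $L^2$ bound for any sufficiently small $\delta > 0$.

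The pointwise decay is the substantive step. By Proposition~\ref{prop:decay} with $n = 3$, it suffices to control, uniformly over auxiliary solutions $f$ with $f_1$ of unit $C^k$-size supported in the unit ball around $x_0 \in \Sigma_s$ and uniformly in $|\alpha|$ up to a fixed order, the error integrals
\[
I_\alpha \;=\; \int_0^s \int_{\Sigma_t} \partial^\alpha\!\bigl( \chi(r_\gamma) (\partial_t \psi)^2 \bigr) \, \partial_t f \, dx \, dt
\]
by $C \epsilon / \bigl((1+s)(1 + |\tau|^p)\bigr)$, where $\tau = s - |x_0|$. By the strict Huygens principle in $3+1$ dimensions, $\partial_t f$ is supported in the width-$O(1)$ annular shell about the sphere of radius $s-t$ around $x_0$, with $|\partial_t f| \le C/(1+s-t)$; the bootstrap gives $|(\partial_t \psi)^2| \le C \epsilon^2/(1+t)^2$ on the support of $\chi$. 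The initial-slice contribution is already $\le C\epsilon/\bigl((1+s)(1+|\tau|^p)\bigr)$ by the compact support of the data and the decay of $f$.

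The main obstacle, and the whole point of the example given John's blow-up for the unlocalized equation, is to exploit the geometry of the interaction between the tube around $\gamma$ and the backward light cone of $f$ to obtain both the $s$-decay and the $\tau$-decay. On $\Sigma_t$, the support of $\chi(r_\gamma)$ is an $O(1)$-ball around $(t,0,0)$, while $\partial_t f$ is supported in the annular shell around $x_0$ above; the two overlap only when $|(t,0,0) - x_0| = s - t + O(1)$, an equation pinning $t$ near a single critical value $t_* = (s^2 - |x_0|^2)/\bigl(2(s - x_0^1)\bigr)$, and both the width of the admissible $t$-window and the volume of the intersection shrink as $x_0$ moves transversely away from the $(1,0,0)$-direction of $\gamma$, producing the $\tau$-decay factor. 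In the worst case $\tau = 0$, where $x_0$ lies on $\gamma$ itself, the intersection has $O(1)$ volume for every $t \in [0, s]$ and the estimate reduces to
\[
|I_\alpha| \;\le\; C \epsilon^2 \int_0^s \frac{1}{(1+t)^2 (1 + s - t)} \, dt \;\le\; \frac{C \epsilon^2}{1+s},
\]
the last step obtained by splitting the integral at $t = s/2$ so that the quadratic bootstrap decay defeats the logarithmic growth from $(1+s-t)^{-1}$. Higher-order $\partial^\alpha$ expansions are handled by Leibniz: factors $\partial^\beta \psi$ with $|\beta| > 3N/4$ are treated in $L^2$ by interpolating against the energy bound via Gagliardo--Nirenberg on $\Sigma_t$ (using $3N/4 < N$), at the cost of an absorbable $(1+t)^{\delta'}$ loss. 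Combined with the duality argument of Section~\ref{sec:dualityargument}, this closes the pointwise bootstrap and finishes the proof.
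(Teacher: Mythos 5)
Your proposal is correct and follows essentially the same route as the paper: a bootstrap coupling a high-order energy (closed by the commuted $\partial_t$ energy estimate and Gr\"onwall) with lower-order pointwise decay recovered through Proposition~\ref{prop:decay}, where the key point is that the cutoff $\chi(r_\gamma)$ confines the interaction to an $O(1)$ region in each $\Sigma_t$, so that after interpolating the over-differentiated factor against the energy one bounds the error integral by $C\epsilon^2\int_0^s (1+t)^{-2+\delta}(1+s-t)^{-1}\,dt \le C\epsilon^2/(1+s)$, exactly as in the paper. The only cosmetic difference is that you announce a target with an extra $(1+|\tau|^p)$ gain and sketch the transverse-displacement geometry for it, which is not needed for the stated estimate (no $u$-decay is claimed) and which your uniform worst-case bound at $\tau=0$ already renders superfluous.
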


\begin{proof}
We shall take as bootstrap assumptions an energy bound on $\psi$ as well as a pointwise bound. We let $T$ be the maximal real number such that
\begin{enumerate}
    \item We have that
    \begin{equation} \label{eq:bootstrapenergylocalized}
        \begin{aligned}
        \sup_{0 \le t \le T} \Vert \partial \psi \Vert_{H^N (\Sigma_t)} \le C \epsilon^{{3 \over 4}} (1 + t)^{\sqrt{\epsilon}}.
        \end{aligned}
    \end{equation}
    \item We have that
    \begin{equation} \label{eq:bootstrappointwiselocalized}
        \begin{aligned}
        |\partial_t \partial^i \psi| (t,x) \le {C \epsilon^{{3 \over 4}} \over 1 + t}
        \end{aligned}
    \end{equation}
    for all $0 \le t \le T$ and $|i| \le {3 N \over 4}$.
\end{enumerate}

We shall now recover both of these bootstrap assumptions. The energy will be recovered using an energy estimate, Gronwall's inequality, and the pointwise bootstrap assumptions, while the pointwise bootstrap assumptions will be recovered using Proposition~\ref{prop:decay}.

We first recover the bootstrap assumptions for the energy. Let $0 \le s \le T$. We commute the equation \eqref{eq:beameq} with $\partial^i$ for $|i| \le N$. Then, a $\partial_t$ energy estimate gives us that
\begin{equation} \label{eq:enestbeam}
    \begin{aligned}
    \Vert \partial \partial^i \psi \Vert_{L^2 (\Sigma_s)}^2 = \Vert \partial \partial^i \psi \Vert_{L^2 (\Sigma_0)}^2 + 2 \int_0^s \int_{\Sigma_t} -\partial^i (\chi (r_\gamma) (\partial_t \psi)^2) (\partial_t \partial^i \psi) d x d t.
    \end{aligned}
\end{equation}
We note that at least one factor of $\partial_t \psi$ will have fewer than ${3 N \over 4}$ derivatives on it after applying the product rule to $\partial^\alpha$. Thus, using the pointwise bootstrap assumptions in \eqref{eq:enestbeam} gives us that
\[
E(s) \le E(0) + C \epsilon^{{3 \over 4}} \int_0^s {1 \over 1 + t} E(t) d t,
\]
where
\[
E(t) = \sum_{j \le i} \int_{\Sigma_t} (\partial_t \partial^j \psi)^2 + (\partial_x \partial^j \psi)^2 + (\partial_y \partial^j \psi)^2 + (\partial_z \partial^j \psi)^2 d x.
\]
An application of Gronwall's inequality then gives us that $E(s) \le 2 \epsilon^2 (1 + t)^{\sqrt{\epsilon}}$ for $\epsilon$ sufficiently small, as desired.

We shall now recover the pointwise bootstrap assumptions. By Proposition~\ref{prop:decay}, it suffices to show that
\[
M [\partial^i \psi] (s,x) \le {C \epsilon \over 1 + s}
\]
for all $0 \le s \le T$ and $|i| \le {3 N \over 4}$, where $M$ is as in Propsition~\ref{prop:decay}. Now, because we are assuming \eqref{eq:bootstrappointwiselocalized} for the pointwise decay of the auxiliary functions $f$, we see that it suffices to control
\[
\int_0^s \int_{\Sigma_t} -\partial^j \partial_i (\chi (r_\gamma) (\partial_t \psi)^2) (\partial_t f) d x d t
\]
because the terms involving initial data in $M$ in Proposition~\ref{prop:decay} are controlled by ${C \epsilon \over 1 + t}$.

Now, after interpolating between the pointwise bootstrap assumptions and the energy bootstrap assumptions, we have that
\[
|\partial^j \partial^i (\chi (r_\gamma) (\partial_t \psi)^2)| (t,x) \le {C \epsilon^{3 \over 2} \over (1 + t)^{2 - \delta}}
\]
for any $\delta > 0$ as long as $N$ is taken sufficiently large as a function of $\delta$. Thus, taking $\tilde{\chi} (x)$ to be a smooth, positive function equal to $1$ for $|x| \le 5$ and equal to $0$ for $|x| \ge 10$, we have that
\begin{equation} \label{eq:errorintlocalized}
    \begin{aligned}
    \left |\int_0^s \int_{\Sigma_t} -\partial^j \partial_i (\chi (r_\gamma) (\partial_t \psi)^2) (\partial_t f) d x d t \right | \le C \epsilon^{{3 \over 2}} \int_0^s \int_{\Sigma_t} \tilde{\chi} (r_\gamma) {1 \over (1 + t)^{2 - \delta}} {1 \over 1 + s - t} d t
    \\ \le C \epsilon^{{3 \over 2}} \Vert \tilde{\chi} \Vert_{L^1} \int_0^s {1 \over (1 + t)^{2 - \delta}} {1 \over 1 + s - t} d t \le C \epsilon^{{3 \over 2}} {1 \over 1 + s},
    \end{aligned}
\end{equation}
as desired.

\end{proof}

Examining this proof, we see that the localization given by $\chi$ is fundamental. It is interesting to note that we can allow the support of $\chi$ to expand in $t$ in the angular directions as long as the rate is smaller than $t^{{1 \over 2}}$, which is the wave packet scaling. An examination of these terms in the absence of the cutoff shows that the wave packet scaling also appears naturally when analyzing the example
\[
\Box \phi = -(\partial_t \phi)^2
\]
studied by John in \cite{Joh81} which leads to blow up. This gain in localization is similar to what will happen for anisotropic systems of wave equations in Section~\ref{sec:anisotropic}. We recall that this corresponds to an equation of the form
\[
\Box \phi = -\chi \left (r_\gamma \left (x,{y \over (1 + t)^\omega},{z \over (1 + t)^\omega},t \right ) \right ) (\partial_t \phi)^2,
\]
where $\omega$ denotes the rate at which the support of $\chi$ expands.

If we allow the support of $\chi$ to expand at a rate of $t^{{1 \over 2}}$ in the angular directions (i.e., if we take $\omega = {1 \over 2}$), it still appears as though there is a substantial gain in the volume of interaction. Indeed, the total volume of worst interaction (i.e., the volume of intersection of the sphere with the cutoff) will be $t$ instead of $t^2$, which is what it usually is. However, wave packets that propagate along the null geodesic at the center of the cutoff will still experience amplification from interactions on a set of maximal measure. This can be seen by examining the integrals in \eqref{eq:errorintlocalized} that come from picking auxiliary multipliers whose data are posed along this null geodesic. Thus, because these wave packets behave as if the nonlinearity was just $-(\partial_t \phi)^2$, the example that John studied, we cannot close the argument, and we believe that this may in fact still lead to blow up.

We note that we have allowed the support of $\chi$ to expand only in the angular directions in the sense that the $y$ and $z$ directions are roughly angular in the region of the support of $\chi$. If we allow the support to expand in the radial direction as well (this corresponds roughly to $x$), the analysis must be slightly different, as the solution should decay faster away from the light cone.

\subsection{Global stability with a general cubic nonlinearity} \label{sec:cubic}

We now turn to the second of the simple applications. We have the following proposition.


\begin{proposition} \label{prop:cubic}
Let $\psi$ satisfy the nonlinear wave equation
\begin{equation}
\begin{aligned}
\Box \psi = (\partial_t \psi)^3
\end{aligned}
\end{equation}
in $\R^{3 + 1}$ with initial data given by $\psi(0,x) = \psi_0 (x)$ and $\partial_t \psi(0,x) = \psi_1 (x)$. We assume that $\psi_0 (x)$ and $\psi_1 (x)$ are smooth functions supported in the unit ball in $\Sigma_0$ with $H^{N + 1}$ norm of size $\epsilon$, where the minimum size of $N$ is determined by the proof. As long as $\epsilon$ is sufficiently small, the above perturbations converge back to the trivial solution pointwise at a rate of ${1 \over 1 + t}$.
\end{proposition}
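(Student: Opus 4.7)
The plan is a bootstrap argument combining a standard $\partial_t$-energy estimate at high derivative order with a pointwise decay estimate at lower order obtained from the Lemma~\ref{lem:bilinint}-version of Proposition~\ref{prop:decay}, exactly along the template of Proposition~\ref{prop:localizednonlinearity}, but with the role played there by the cutoff $\chi(r_\gamma)$ played here by a direct geometric analysis of the intersection of the auxiliary cone with the forward cone from the data. The bootstrap assumptions on $[0,T]$ are
\begin{equation*}
\|\partial\psi\|_{H^N(\Sigma_t)}\le 2\epsilon,\qquad |\partial_t\partial^i\psi|(t,x)\le\frac{C\epsilon}{1+t}\text{ for }|i|\le 3N/4.
\end{equation*}

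For the energy bound I commute with $\partial^\alpha$, $|\alpha|\le N$. Expanding $\partial^\alpha((\partial_t\psi)^3)$ by Leibniz produces a sum of triple products $\prod_{j=1}^3\partial\partial^{i_j}\psi$ with $\sum|i_j|\le N$; at most one factor can carry more than $N/2$ derivatives, so the other two inherit the pointwise bound $\epsilon/(1+t)$ and the third is $L^2$-controlled. A $\partial_t$-energy estimate gives $\frac{d}{dt}\|\partial\partial^\alpha\psi\|_{L^2}^2\lesssim\epsilon^2(1+t)^{-2}\|\partial\partial^\alpha\psi\|_{L^2}^2$, and since $(1+t)^{-2}$ is integrable Gronwall closes the energy bootstrap.

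For the pointwise bound I appeal to the Lemma~\ref{lem:bilinint}-version of Proposition~\ref{prop:decay}, reducing matters to
\begin{equation*}
\left|\int_0^s\int_{\Sigma_t}\partial^{\alpha+i}\bigl((\partial_t\psi)^3\bigr)\,\phi\,dx\,dt\right|\le\frac{C\epsilon}{1+s}
\end{equation*}
uniformly over homogeneous solutions $\phi$ with $C^k$-bounded data supported in the unit ball about $x_0$ in $\Sigma_s$; by $3+1$-dimensional Huygens $|\phi|\le C/(1+s-t)$ and $\operatorname{supp}\phi\subset\{\bigl|s-t-|x-x_0|\bigr|\le 1\}$. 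The bootstrap plus standard interpolation (placing all but one factor in $L^\infty$) yields $|\partial^{\alpha+i}((\partial_t\psi)^3)|\le C\epsilon^{3-\sigma}(1+t)^{-3+\sigma}$ on $\operatorname{supp}\psi\subset\{|x|\le t+1\}$ for any small $\sigma>0$ and $N$ large. The task thus reduces to bounding $V(t):=\operatorname{vol}(\operatorname{supp}\phi\cap\{|x|\le t+1\})$ for each $t$. With polar coordinates about $x_0$ and $\tau:=s-|x_0|$, a direct calculation gives $V(t)=0$ for $t<(\tau-1)/2$, the transverse-regime bound $V(t)\le C(1+t)(1+s-t)(1+\tau)/(1+s-\tau)$ for $(\tau-1)/2\le t\le s-(\tau+1)/2$, and the interior-regime bound $V(t)\le C(1+s-t)^2$ once the entire aux shell lies inside the forward cone.

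Substituting the transverse bound and computing the primitive $\int(1+t)^{-2}dt=-(1+t)^{-1}$ explicitly between the endpoints $(\tau-1)/2$ and $s-(\tau+1)/2$, the transverse contribution to the error integral is at most
\begin{equation*}
\frac{C\epsilon^{3-\sigma}(1+\tau)}{1+s-\tau}\cdot\frac{4(s-\tau)}{(1+\tau)(1+2s-\tau)}=\frac{4C\epsilon^{3-\sigma}(s-\tau)}{(1+s-\tau)(1+2s-\tau)}\lesssim\frac{C\epsilon^{3-\sigma}}{1+s}
\end{equation*}
uniformly in $\tau\in[-1,s+1]$, and the interior regime contributes an analogous $O(\epsilon^{3-\sigma}(1+\tau)^2/(1+s)^{3-\sigma})=O(\epsilon^{3-\sigma}/(1+s))$; the pointwise bootstrap then closes for $\epsilon$ sufficiently small. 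The main obstacle is this uniform geometric estimate: each of $(1+t)$, $(1+s-t)$, $(1+\tau)$ can individually be of size $s$, and only the compensating denominator $1+s-\tau$ together with the collapse of the $t$-integration window to width $s-\tau$ for $\tau$ near $s$ produces the final $1/(1+s)$ decay. This is the analog, in a simpler setting, of the cone-intersection analysis required in Section~\ref{sec:anisotropic} for anisotropic systems, simplified here because both cones are copies of the standard light cone.
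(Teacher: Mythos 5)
Your scheme (energy at top order, duality/testing at lower order, and a volume-of-intersection count for the backward shell against the forward solid cone) is a legitimate variant of the paper's argument, but as written it has a genuine gap: the arithmetic in your error integral is exactly critical, and the interpolation loss you yourself introduce destroys it in the regime $\tau \sim s$. Concretely, to apply the Proposition~\ref{prop:decay}-type estimate you must commute roughly $6$ derivatives beyond the order $3N/4$ at which you assume pointwise decay, so the best you can say about the commuted nonlinearity is $|\partial^{\alpha+i}((\partial_t\psi)^3)|\lesssim \epsilon^{3-\sigma}(1+t)^{-3+\sigma}$ with some $\sigma>0$ (this is the same interpolation step the paper performs). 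Your explicit computation of the transverse contribution uses the primitive of $(1+t)^{-2}$, i.e.\ it silently sets $\sigma=0$. Redoing it with the loss, the worst case is $\tau$ comparable to $s$ (observation point deep inside the cone): the $t$-window has length $s-\tau$ around $t\approx s/2$, and one gets a bound of size $(1+s)^{-1+\sigma}$, not $(1+s)^{-1}$; likewise your interior-regime estimate $(1+\tau)^2(1+s)^{-3+\sigma}$ is $O((1+s)^{-1+\sigma})$, not $O((1+s)^{-1})$ as you wrote. Since your bootstrap carries no decay in $u$, there is no slack anywhere to absorb $(1+s)^{\sigma}$, so the pointwise bootstrap at the rate $1/(1+t)$ does not close; set up self-consistently, your argument yields the rate $(1+t)^{-1+\sigma}$, which is weaker than the rate $1/(1+t)$ asserted in the proposition.

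This is exactly the point where the paper's proof differs in substance. Its pointwise bootstrap is $|\partial^\alpha\psi|\lesssim \epsilon^{3/4}(1+t)^{-1}(1+|u|^{1-\delta})^{-1}$, i.e.\ it tracks decay away from the light cone, and the target for the error integrals is $\frac{C\epsilon}{(1+s)(1+|\tau|^{1-\delta})}$; all interpolation losses are then dumped into the $u$- and $\tau$-exponents, where there is room (the solution actually decays like $|u|^{-5/2}$ there), while the $t$-rate stays exactly $1/(1+t)$. The price is that one can no longer bound the nonlinearity uniformly over the solid cone and count volume: the paper instead uses the integrability in $u$ to localize to a unit shell and then integrates in the adapted coordinates $(t,r,r',\phi)$ and $(t,u,u',\phi)$ with the Jacobian $rr'/R$, splitting into the regimes $\tau\le 9s/10$ and $\tau$ comparable to $s$. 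If you want to keep your volume-count route and still reach the sharp rate, you would have to either incorporate interior decay into your bootstrap (and then your uniform-over-the-ball bound on the nonlinearity is no longer the natural object) or find a way to perform the top-order estimate without the $(1+t)^{\sigma}$ loss, e.g.\ by placing the highest-order factor in $L^2$ on the shell, which changes the geometric step substantially.
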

We note that the same proof works for a nonlniearity of $h (\partial_t \psi)^3$ where $h$ is an arbitrary function whose $C^N$ norm is bounded.
\begin{proof}
We shall take as bootstrap assumptions energy and pointwise bounds on $\psi$. Let $\delta > 0$ be some sufficiently small real number. We let $T$ be the maximal real number such that, for all $0 \le t \le T$,
\begin{enumerate}
\item we have that
\begin{equation} \label{eq:cubicenbootstrap}
    \begin{aligned}
    |\partial^\alpha \psi| (t,x) \le {C \epsilon^{{3 \over 4}} \over (1 + t) (1 + |u|^{1 - \delta})}
    \end{aligned}
\end{equation}
for $|\alpha| \le {N \over 2} + 1$ and where $\partial^\alpha$ contains at most a single $t$ derivative
\item we have that
\begin{equation} \label{eq:cubicpointwisebootstrap}
    \begin{aligned}
    \Vert \partial^{\alpha} \partial \psi \Vert_{L^2 (\Sigma_t)} \le C \epsilon^{{3 \over 4}}
    \end{aligned}
\end{equation}
for all $|\alpha| \le N$.
\end{enumerate}
We shall recover that, for all $t$ for which the above estimates hold true, we in fact have the stronger estimates coming from replacing $\epsilon^{{3 \over 4}}$ in the bootstrap assumptions with $C \epsilon$. This will close the bootstrap argument, which will complete the proof of Proposition~\ref{prop:cubic}.

In order to effectively analyze this problem, it will be useful to introduce some notation. We shall denote by $(t,r,\theta)$ usual polar coordinates on $\R^{3 + 1}$ where $t$ is the usual time coordinate, $r^2 = x^2 + y^2 + z^2$, and $\theta$ denotes coordinates on $S^2$. There are of course null coordinates $(v,u,\theta)$ which are defined by $v = t + r$ and $u = t - r$.

Because we will use auxiliary multipliers as is described in Section~\ref{sec:decay}, it will also be useful to introduce coordinates adapted to these auxiliary multipliers. We take some point $(s,x_0,y_0,z_0)$ as the center of the ball in $\Sigma_s$ where the data for the auxiliary multiplier will be supported. Following the notation in Section~\ref{sec:decayestimates}, we denote by $\tau$ the $u$ coordinate of this point, meaning that $\tau = s - \sqrt{x_0^2 + y_0^2 + z_0^2}$. We shall denote by $(t,r',\theta')$ polar coordinates adapted to this point (meaning that $(r')^2 = (x - x_0)^2 + (y - y_0)^2 + (z - z_0)^2$), and we shall denote by $(v',u',\theta')$ null coordinates adapted to this point. Because we are thinking of solving for $f$ backwards in time, we are actually setting $v' = s - t + r'$ and $u' = s - t - r'$. Thus, for $0 \le t \le s$, we have that $f$ is compactly supported in $u'$ if we use the strong Huygens principle.

Because $\psi$ decays in $u$ and $f$ decays in $u'$, it is additionally useful to introduce coordinates which are well adapted to how both of these functions decay. Thus, it makes sense to use coordinates given by $(t,r,r',\phi)$ and $(t,u,u',\phi)$. Given level sets of $r$ and $r'$, we have that the intersection of these two spheres is either empty, a circle, or a point. When it is a circle, $\phi$ denotes an angular coordinate on this circle. The coordinates degenerate when the intersection is a point, but this is a set of measure $0$. The volume form of $\R^{3 + 1}$ is comparable to
\begin{equation} \label{eq:dtdrdr'dphi}
    \begin{aligned}
    {r r' \over R} d t \wedge d r \wedge d r' \wedge d \phi,
    \end{aligned}
\end{equation}
and also to
\begin{equation} \label{eq:dtdudu'dphi}
    \begin{aligned}
    {r r' \over R} d t \wedge d u \wedge d u' \wedge d \phi,
    \end{aligned}
\end{equation}
where $R^2 = (x_0)^2 + (y_0)^2 + (z_0)^2$. We refer the reader to \cite{AndPas19} where this fact is proved and where these coordinates are described in more detail.

With these considerations in hand, we can now turn to recovering the bootstrap assumptions.

It is clear that assumed bootstrap assumptions \eqref{eq:cubicenbootstrap} and \eqref{eq:cubicpointwisebootstrap} directly recover the energy bootstrap assumptions. Indeed, if we commute the equation $N$ times with unit partial derivatives in order to propagate $\Vert \partial^\alpha \partial \psi \Vert_{L^2 (\Sigma_t)}$ for all $|\alpha| \le N$, in the nonlinearity, there can be at most ${N \over 2} + 1$ derivatives falling on the term with the second most derivatives. We can, thus, put the term with the most derivatives in $L^2$, and the pointwise decay for the two remaining terms is enough to give us a convergent integral. We must now recover the pointwise bootstrap assumptions.

We will now recover the pointwise bounds on some $\Sigma_s$ with $0 \le s \le T$. We shall use a version of Proposition~\ref{prop:decay} in order to do so. In particular, we shall use one adapted to Lemma~\ref{lem:bilinintM} instead of Lemma~\ref{lem:bilinenM}. This results in pointwise decay for $\psi$ itself and not just $\partial_t \psi$. The necessary result analogous to Proposition~\ref{prop:decay} and the duality argument this requires follow in similar fashions to the ones proved in Section~\ref{sec:decay}. We are, thus, using $f$ as a multiplier for the wave equation as opposed to $\partial_t f$.

In order to implement this strategy, and following the notation above, we take some point $(s,x_0,y_0,z_0)$ in $\Sigma_s$. We recall that $\tau$ is the $u$ coordinate of this point. We consider initial data for an auxiliary solution to the wave equation $f$ having initial data whose support is contained in a unit ball centered at this point in $\Sigma_s$. Now, we must recover pointwise bounds up to order ${s \over 2} + 1$. More precisely, we must show that
\begin{equation}
\begin{aligned}
|\partial^\alpha \psi| \le {C \epsilon \over (1 + s) (1 + |\tau|^{1 - \delta})},
\end{aligned}
\end{equation}
where $|\alpha| \le {N \over 2} + 1$. We must control error integrals of the kind found in \eqref{eq:reqerrorbounds} with $f$ instead of $\partial_t f$. This provides pointwise bounds for $\partial^\alpha \psi$ after commuting with $\partial^\alpha$, and then further commuting by $\partial^\beta$ for all $|\beta| \le k + n + 1 = 6$. Thus, in order to show decay for $|\partial^\alpha \psi|$, we must commute the equation by $\partial^\beta \partial^\alpha$ for all $|\beta| \le 6$ where $\beta$ consists of only spatial derivatives.

We consider the worst term in this commutation, which is when all of the derivatives fall on the same term in the nonlinearity. We have that
\begin{equation}
\begin{aligned}
\Box \partial^\beta \partial^\alpha \psi = N + (\partial_t \psi)^2 (\partial^\beta \partial^\alpha \partial_t \psi),
\end{aligned}
\end{equation}
where the other terms in $N$ can be treated in either the same way as we shall handle this remaining term, or in even easier ways.

Now, we know that we can write
\begin{equation}
\begin{aligned}
\partial^\beta \partial^\alpha \partial_t \psi = \partial^\beta \partial^{\alpha'} \partial_i \partial_t \psi,
\end{aligned}
\end{equation}
where $\partial_i$ is some spatial derivative. If $\partial^{\alpha'}$ contains a time derivative (of which it can have at most one), we can rewrite this as
\begin{equation}
\begin{aligned}
\partial^\beta \partial_i \partial^{\alpha''} \psi + N,
\end{aligned}
\end{equation}
where $N$ consists of higher order nonlinearities after using the equation. These terms are all better, so we disregard them. If it does not contain a time derivative, we can incorporate the $\partial_t$, giving us
\begin{equation}
\begin{aligned}
\partial^\beta \partial_i \partial^{\alpha''} \psi.
\end{aligned}
\end{equation}
In either case, we note that $\partial^{\alpha''} \psi$ is something we can assume pointwise bounds for using the bootstrap assumptions. This means, of course, that taking any ball $B$ of radius $2$ in some $\Sigma_t$ where $t \le s$ having center with some $u$ coordinate $u$, we have that
\begin{equation} \label{eq:cubicenball1}
    \begin{aligned}
    \Vert \partial^{\alpha''} \psi \Vert_{L^2 (B)} \le {C \epsilon^{{3 \over 4}} \over (1 + t) (1 + |u|^{1 - \delta})}.
    \end{aligned}
\end{equation}
Similarly, using the bootstrap assumptions on the energy, we have that
\begin{equation} \label{eq:cubicenball2}
    \begin{aligned}
    \Vert \partial^\gamma \partial^\beta \partial_i \partial^{\alpha''} \psi \Vert_{L^2 (B)} \le C \epsilon^{{3 \over 4}},
    \end{aligned}
\end{equation}
for all $\gamma$ such that $|\gamma| + |\beta| + 1 + |\alpha''| \le N$, where $\partial^\gamma$ consists of only spatial derivatives. Now, using Sobolev embedding in $\R^3$, we know that
\begin{equation}
\begin{aligned}
\Vert \partial^\beta \partial_i (\chi \partial^{\alpha''} \psi) \Vert_{L^\infty} \le \Vert \partial^\beta \partial_i (\chi \partial^{\alpha''} \psi) \Vert_{H^2 (\Sigma_t)},
\end{aligned}
\end{equation}
where $\chi$ is a smooth cutoff function equal to $1$ in the ball of radius $1$ concentric with $B$, decaying monotonically to $0$, and equal to $0$ outside of $B$. Moreover, we have that

\begin{equation}
\begin{aligned}
\Vert \partial^\beta \partial_i (\chi \partial^{\alpha''} \psi) \Vert_{H^2 (\Sigma_t)} \le \Vert \chi \partial^{\alpha''} \psi \Vert_{H^9 (\Sigma_t)},
\end{aligned}
\end{equation}
where we are using the fact that $|\beta| \le 6$. Now, we have that $\Vert \chi \partial^{\alpha''} \psi \Vert_{L^2 (\Sigma_t)} \le {C \epsilon^{{3 \over 4}} \over (1 + t) \left (1 + |u|^{1 - \delta} \right )}$ by \eqref{eq:cubicenball1} above, and we additionally have that $\Vert \chi \partial^{\alpha''} \psi \Vert_{H^{{N \over 2} - 1}} \le C \epsilon^{{3 \over 4}}$ by \eqref{eq:cubicenball2} above. Interpolating and with $N$ sufficiently large in terms of $\delta$ gives us that
\begin{equation}
\begin{aligned}
\Vert \chi \partial^{\alpha''} \psi \Vert_{H^9 (\Sigma_t)} \le {C \epsilon \over (1 + t)^{1 - \delta} \left (1 + |u|^{(1 - \delta)^2} \right )},
\end{aligned}
\end{equation}
meaning that we have that
\begin{equation}
\begin{aligned}
|\partial^\beta \partial_i (\chi \partial^{\alpha''} \psi)| \le {C \epsilon \over (1 + t)^{1 - \delta} \left (1 + |u|^{(1 - \delta)^2} \right )}.
\end{aligned}
\end{equation}
Now, because $\chi = 1$ in a ball of radius $1$ concentric with $B$, this implies that
\begin{equation}
\begin{aligned}
|\partial^\beta \partial_i \partial^{\alpha''} \psi| \le {C \epsilon \over (1 + t)^{1 - \delta} \left (1 + |u|^{(1 - \delta)^2} \right )}
\end{aligned}
\end{equation}
on that same ball. Because this can be done for any such ball, we see that we can use the pointwise estimates for $\partial^\beta \partial^\alpha \psi$ up to a loss of $t^\delta$ and $u^\delta$.

Thus, after commuting the equation by $\partial^\beta \partial^\alpha$ in order to apply Proposition~\ref{prop:decay}, we see that the error integral we must control is of the form
\begin{equation}
\begin{aligned}
\left |\int_0^s \int_{\Sigma_t} N' \phi d x d t \right | + \left |\int_0^s \int_{\Sigma_t} (\partial_t \psi)^2 (\partial^\beta \partial^\alpha \partial_t \psi) f d x d t \right |,
\end{aligned}
\end{equation}
where, once again, the term $N'$ can be controlled in the same way as the other. We can now plug in the pointwise bounds for $\partial_t \psi$ and $\partial^\beta \partial^\alpha \psi = \partial^\beta \partial_i \partial^{\alpha''} \psi$ from the above to get the desired result. Now, we note that $|f| \le {C \over 1 + s - t}$ by the asymptotics we have for the linear wave equation, and we note that it is additionally compactly supported in the $u'$ coordinate. Thus, going into $(t,r,r',\phi)$ coordinates, we have that the error integral is controlled by
\begin{equation}
\begin{aligned}
C \epsilon^{{9 \over 4}} \int_{{\tau \over 10}}^{s - {\tau \over 10}} \int_0^{s + 2} \int_{s - t - 5}^{s - t + 5} \int_0^{2 \pi} {1 \over (1 + t)^{3 - \delta} (1 + |u|^{{5 \over 2}})} {1 \over 1 + s - t} {(1 + s - t) (1 + t) \over s + 1} d \phi d r d r' d t
\end{aligned}
\end{equation}
as long as $\tau \le {9 s \over 10}$ (we are using that, when this is satisfied, we have that $R$ is comparable to $s$ in \ref{eq:dtdrdr'dphi}). We are also assuming that $\delta$ is sufficiently small in order to bound the losses in decay in $u$, which depend on $\delta$, by ${1 \over 2}$. The integral bounds on $r'$ come from the fact that $f$ is compactly supported with respect to $u'$. The fact that the lower bound on the $t$ integral is ${\tau \over 10}$ and the upper bound is $s - {\tau \over 10}$ deserves explanation. We are trying to show that $|\partial^\alpha \psi|$ is controlled on $\Sigma_s$ at a point whose $u$ coordinate is given by this $\tau$. Thus, the initial data for $f$ is prescribed in the ball of radius $1$ centered at this point. Because of this, and using the strong Huygens principle for $\phi$ and a domain of dependence argument for $\psi$, the support of $\phi$ and the support of $\psi$ do not intersect in $\Sigma_t$ for $t \le {\tau \over 10}$. These considerations hold only when $\tau \ge 20$. Otherwise, we simply take $0$ as the lower bound for this integral. These considerations will be worked out more rigorously in an analogous setting in Lemma~\ref{lem:rrbar}.

Using the fact that
\begin{equation}
\begin{aligned}
\int_0^{s + 2} {1 \over (1 + |u|^{{5 \over 2}})} d r \le C,
\end{aligned}
\end{equation}
we can control the above integral by
\begin{equation}
\begin{aligned}
{C \epsilon^{{9 \over 4}} \over 1 + t} \int_{{\tau \over 10}}^{s - {\tau \over 10}} {1 \over (1 + t)^{2 - \delta}} d s \le {C \epsilon^{{9 \over 4}} \over (1 + s) (1 + |\tau|^{1 - \delta})}.
\end{aligned}
\end{equation}

We now consider the region $\tau \ge {9 t \over 10}$. For this, we further decompose into two regions. Without loss of generality, we have that the data for $f$ is supported in a ball whose center has $y$ and $z$ coordinate equal to $0$, and we have that $(r')^2 = (x - a)^2 + y^2 + z^2$. We first consider the case where $a \ge 100$. Using the $(t,u,u',\phi)$ coordinate system, we have that the error integral is controlled by
\begin{equation}
\begin{aligned}
C \epsilon^{{9 \over 4}} \int \int \int \int {1 \over (1 + t)^{3 - \delta} (1 + |u|^{{5 \over 2}})} {1 \over 1 + s - t} {(1 + s - t) (1 + t) \over a} d \phi d u d u' d t,
\end{aligned}
\end{equation}
where we shall establish the bounds of integration shortly. We first note that the integral in $u'$ is can always be taken from $-5$ to $5$ because $f$ is compactly supported in $u'$. Then, we shall break up the integral in $u$ into pieces of length $10$. More specifically, we can control the above integral by
\begin{equation}
\begin{aligned}
C \epsilon^{{9 \over 4}} \sum_{i = 0}^\infty \int_{5 (i - 1)}^{5 (i + 1)} \int \int \int_{-5}^5 {1 \over (s + 1)^{3 - \delta} (1 + |5 i|^{{5 \over 2}})} {1 \over 1 + s - t} {(1 + s - t) (s + 1) \over \overline{u}} d u_2 d s d \theta d u_1.
\end{aligned}
\end{equation}

Now, we note that the bounds for $\phi$ are of course given by $0$ and $2 \pi$. The final range we must establish is that of $t$. We take the line $t = 5 (i + 1) + x$ and $t = s + a - x + 5$ in the plane where $y = z = 0$, and we calculate the $s$ coordinate of the intersection. This is given by $s = {t + a + 5 (i + 2) \over 2}$. Similarly, we take the line $s = 5 (i - 1) - x$ and the line $s = t - a + x - 5$ and calculate the $s$ coordinate of the intersection. This is given by $t = {s - a + 5 (i - 2) \over 2}$. These give us the bounds of integration, as we know that the lowest and highest $t$ coordinates must occur on the plane where $y = z = 0$ (this is considered more carefully in an analogous situation in Lemma~\ref{lem:rtr's-t}). Thus, the integral is controlled by
\begin{equation}
\begin{aligned}
C \epsilon^{{9 \over 4}} \sum_{i = 0}^\infty \int_{5 (i - 1)}^{5 (i + 1)} \int_{{s - a + 5 (i - 2) \over 2}}^{{s + a + 5 (i + 1) \over 2}} \int_0^{2 \pi} \int_{-5}^5 {1 \over (1 + t)^{3 - \delta} \left (1 + |5 i|^{{5 \over 2}} \right )} {1 \over 1 + s - t} {(1 + s - t) (1 + t) \over a} d u' d t d \phi d u.
\end{aligned}
\end{equation}

Now, we note that the length of $t$ integration is always comparable to $a$ by the above bounds (note that $a \ge 100$). Moreover, in this region, both $1 + t$ and $1 + s - t$ are comparable to $s$. Thus, we have that we can control the above integral by
\begin{equation}
\begin{aligned}
C \epsilon^{{9 \over 4}} \sum_{i = 0}^\infty {1 \over 1 + |5 i|^{{5 \over 2}}} {1 \over (1 + s)^{2 - \delta}},
\end{aligned}
\end{equation}
giving us the desired result. The final region where $\overline{u} \le 100$ can be controlled by comparing with a downward pointing light cone of thickness $200$ where the tip lies along the $t$ axis.
\end{proof}

\section{Anisotropic systems of wave equations} \label{sec:anisotropic}
We now precisely describe the setting of the main application for the techniques introduced in this paper. We recall that
\[
\Box = -\partial_t^2 + \partial_x^2 + \partial_y^2,
\]
and we introduce the notation
\[
\Box' = -\partial_t^2 + \lambda_1^{-2} \partial_x^2 + \lambda_2^{-2} \partial_y^2.
\]
We assume that ${1 \over 4} \le \lambda_1^2 \le {1 \over 2}$ and $2 \le \lambda_2^2 \le 4$.

We shall consider the following anisotropic system of wave equations in $2 + 1$ dimensions:
\begin{equation} \label{eq:anisotropic}
\begin{aligned}
\Box \psi + (\partial_t \phi)^2 (\partial_t \psi) (\partial_t^2 \psi) = (\partial_t \phi) (\partial_t \psi)^2 + (\partial_t \psi) (\partial_t \phi)^2 + (\partial_t \psi)^4
\\ \Box' \phi + (\partial_t \psi)^2 (\partial_t \phi) (\partial_t^2 \phi) = (\partial_t \psi) (\partial_t \phi)^2 + (\partial_t \phi) (\partial_t \psi)^2 + (\partial_t \phi)^4.
\end{aligned}
\end{equation}
We take smooth data for these equations $\psi(0,x) = \psi_0 (x)$, $\partial_t \psi(0,x) = \psi_1 (x)$, $\phi(0,x) = \phi_0 (x)$, and $\partial_t \phi(0,x) = \phi_1 (x)$ supported in the ball of radius ${1 \over 10}$. Moreover, we assume that the data are arbitrary functions with $\Vert \psi_0 \Vert_{H^{N + 1}} + \Vert \psi_1 \Vert_{H^N} + \Vert \phi_0 \Vert_{H^{N + 1}} + \Vert \phi_1 \Vert_{H^N} \le \epsilon$ for some $\epsilon$. With this, we are ready to state the main Theorem of this paper.

\begin{theorem} \label{thm:mainthm}
There exist $\epsilon_0 > 0$ sufficiently small and $N_0$ sufficiently large such that the trivial solution of \eqref{eq:anisotropic} is globally nonlinearly asymptotically stable with respect to the perturbations described above whenever $\epsilon \le \epsilon_0$ and $N \ge N_0$.
\end{theorem}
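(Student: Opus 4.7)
The plan is to prove Theorem~\ref{thm:mainthm} by a bootstrap argument that simultaneously propagates $L^2$ energy bounds at a high commutation level $N$ and pointwise decay estimates at a lower level roughly $\tfrac{3N}{4}$. The admissible commutators are the translations $\partial_\mu$ (which commute trivially with both $\Box$ and $\Box'$) together with the scaling vector field $S = t\partial_t + r\partial_r$, which satisfies $[\Box, S] = 2\Box$ and $[\Box', S] = 2\Box'$ and is the unique weighted commutator shared by the two wave operators. The pointwise bootstrap assumption for $\psi$ will read schematically $|\partial \Gamma^\alpha \psi|(t,x) \lesssim \epsilon^{3/4}(1+t)^{-1/2}(1+|u|)^{-p}$ with $u = t - r$, and the analogous bound for $\phi$ is phrased in terms of a $u'$-variable adapted to the $\Box'$ cone. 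The energy bootstrap will permit a slow loss $(1+t)^\delta$.

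The energy bounds are recovered first, assuming the pointwise decay in the bootstrap range. Commuting the system with $\partial^\beta S^k$ and performing a $\partial_t$ energy estimate for each equation, one distributes the derivatives in the cubic and quartic nonlinearities so that the top-order factor carries the $L^2$ norm while at least two factors are controlled pointwise. In $2+1$ dimensions each pointwise factor contributes a $(1+t)^{-1/2}$, so a cubic interaction gives an integrand of size $(1+t)^{-1}$ times the top-order $L^2$ quantity, which Gronwall converts into the $(1+t)^\delta$ growth. The quartic terms and the quasilinear terms $(\partial_t \phi)^2 (\partial_t \psi) \partial_t^2 \psi$ and its counterpart are more forgiving and are handled by the standard reshuffling that realizes them as a perturbation of the principal symbol.

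The pointwise bounds are recovered using Proposition~\ref{prop:decay} applied separately to $\psi$ with auxiliary solutions $f$ of $\Box f = 0$ and to $\phi$ with auxiliary solutions of $\Box' f = 0$. The initial contributions in Proposition~\ref{prop:decay} are directly controlled by the data, so the task reduces to bounding error integrals
\begin{equation*}
\left| \int_0^s \int_{\Sigma_t} \partial^\gamma \Gamma^\alpha \mathcal{N} \cdot \partial_t f \, dx \, dt \right| \lesssim \frac{\epsilon^{3/4}}{(1+s)^{1/2}(1+|\tau|)^{p}},
\end{equation*}
where $\mathcal{N}$ denotes one of the nonlinear terms, $\tau$ is the $u$-coordinate (respectively $u'$-coordinate) of the target point, and $p$ is the decay rate coming from the fundamental solution in $2+1$ dimensions. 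These integrals split into three qualitatively different configurations depending on whether $\mathcal{N}$ contains only $\psi$ factors, only $\phi$ factors, or factors of both types. The pure configurations are essentially the $2+1$-dimensional analogues of Proposition~\ref{prop:cubic}, and the mixed configurations are where the null structure of the system becomes decisive.

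The heart of the proof, and the main obstacle, is the geometric analysis of the mixed error integrals. The strategy is to introduce coordinates $(t, u, u', \varphi)$ simultaneously adapted to two of the three relevant cones, namely the forward $\Box$ cone of $\psi$ from the origin, the forward $\Box'$ cone of $\phi$ from the origin, and the backward cone associated to $f$ from the target, and to compute the Jacobian of the coordinate change to extract volume gains in the error integral. The transversality hypothesis $\lambda_1 \neq 1$, $\lambda_2 \neq 1$ forces the two origin-emanating cones to intersect only along a small collection of null rays, so the joint support of $\psi$ and $\phi$ has quantitatively smaller spatial volume than a generic pair of cones would, and this gain translates into extra decay in the error integral. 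Where this volume gain alone is not sufficient, particularly for target points whose $\tau$ is comparable to $s$, the scaling vector field is used decisively: integrating $S$ by parts off of a factor concentrated on one of the origin-emanating cones trades a derivative for an extra weight of size $t$, which combines with the Jacobian gain to produce the required $(1+|\tau|)^{-p}$ factor. The quantitative estimates on the action of $S$ on the cone foliations will be established in Section~\ref{sec:SGeometry}, and the detailed case analysis of interaction geometries will be carried out in Section~\ref{sec:anisotropic}.
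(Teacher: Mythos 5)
Your overall architecture (bootstrap with energy at level $N$ and pointwise decay at level $\sim \tfrac{3N}{4}$, recovery of the pointwise bounds through Proposition~\ref{prop:decay}, and a case analysis of the error integrals driven by the geometry of the three cones) matches the paper. The genuine gap is in how the scaling vector field is supposed to produce the $(1+|\tau|)^{-3/2+\delta}$ factor. In the paper the mechanism is a frame decomposition of the bad derivative of the \emph{auxiliary multiplier}: along the backward cone of $f$ one writes $\underline{L}' = 2\gamma S + (\cdots) L' + (\cdots)\,\partial_{\theta'}$ (Lemma~\ref{lem:du'S}), where the coefficient $\gamma = -\bigl(t + r' + a(x-a)/r'\bigr)^{-1}$ and also $S(\gamma)$ are shown to be of size $1/\tau$ in the relevant regions (Lemma~\ref{lem:gammaSgammabound}); the good-derivative terms gain from the improved decay \eqref{eq:assumeddecay2}, and the $S f$ term is integrated by parts, with $S$ falling on the nonlinearity (harmless, since $S$ is a commutator and the bootstrap controls $S$-commuted quantities) and on $\gamma$ and the cutoffs. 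The persistent factor $\gamma \sim 1/\tau$ is the entire source of $\tau$-decay in the region along the cones of $\psi$ (resp.\ $\phi$) and $f$. Your substitute --- ``integrating $S$ by parts off of a factor concentrated on one of the origin-emanating cones trades a derivative for an extra weight of size $t$'' --- is not this mechanism and does not yield $\tau$-decay; moreover you locate the difficulty in the wrong regime: $\tau \ge \delta_0 s$ is the easy case (there $\gamma \lesssim 1/s$), while the delicate case is $100 \le \tau \le \delta_0 s$, where the backward cone of $f$ is nearly tangent to the forward cone and the quantitative transversality of $S$ with that cone (Section~\ref{sec:SGeometry}, including Lemmas~\ref{lem:tvartheta} and~\ref{lem:yvartheta}) is what saves the argument. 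Without this piece your error integrals give at best $(1+s)^{-1/2}$ with no gain in $\tau$, and the pointwise bootstrap cannot be closed.

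Two further points. First, the ``pure'' interactions in the system are the quartic terms $(\partial_t\psi)^4$, $(\partial_t\phi)^4$ (same-wave cubics are excluded by the null condition precisely because they are critical in $2+1$ dimensions); even these quartic terms require the $S$ integration by parts together with annulus-intersection estimates weighted by $(1+|u'|)^{-1/2}$ (Lemmas~\ref{lem:annuliu'dec} and~\ref{lem:annuliarea}), so they are not a routine transcription of Proposition~\ref{prop:cubic}. Second, your proposed coordinates $(t,u,u',\varphi)$ have one variable too many in $2+1$ dimensions; the paper instead works with pairs such as $(r,\overline{r})$, $(\theta,u,u')$ and $(\overline{r},r')$ with Jacobian bounds (Lemmas~\ref{lem:rrbar}, \ref{lem:thetauu'}, \ref{lem:dr'drbar}). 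Relatedly, because $S$-commuted quantities must be estimated pointwise with small losses, the bootstrap also needs a spacetime $L^2$ (Morawetz-type) norm (Lemma~\ref{lem:spacetimeL2}) and the interpolation of Lemma~\ref{lem:interpolation} carried out in coordinates adapted to $S$; a pure $L^\infty_t L^2_x$ energy bootstrap as you propose does not suffice for that step.
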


Before proceeding to the proof of this result, we shall make some brief remarks.
\begin{enumerate}
\item \textbf{The fundamental restriction on $\lambda_1$ and $\lambda_2$ is that $\lambda_1 \ne 1$ and $\lambda_2 \ne 1$}. Indeed, the semilinear terms are cubic, which is critical in $2 + 1$ dimensions because solutions to the wave equation only decay like $t^{-{1 \over 2}}$. The condition that $\lambda_1 \ne 1$ and $\lambda_2 \ne 1$ produces a kind of null condition on interactions between these waves. When $\lambda_1 = 1$ or $\lambda_2 = 1$, the light cones are tangent instead of intersecting transversally, and parallel interactions in the tangential direction are not effectively suppressed. We are unsure what happens in this case, but would not be surprised if this situation would lead to blow up. In our setting, however, the null condition is thus that $\lambda_1 \ne 1$, $\lambda_2 \ne 1$, and that the same wave not appear cubed.

\item The other restrictions on $\lambda_1$ and $\lambda_2$ are purely for convenience. We restricted to the case $\lambda_1 < 1 < \lambda_2$ to force the light cones to intersect. In case both $\lambda_1$ and $\lambda_2$ are either smaller than $1$ or greater than $1$, the argument seems to still work, with the integrals appearing to be easier to control. We further restricted to the case of ${1 \over 4} \le \lambda_1^2 \le {1 \over 2}$ and $2 \le \lambda_2^2 \le 4$ simply to make the equations symmetric under the change of coordinates described in Section~\ref{sec:coordinates}. Indeed, an appropriate rescaling of the coordinates makes $\Box'$ become $\Box$ (see Section~\ref{sec:coordinates}), and this restriction ensures that the form of the equation is preserved under this change of coordinates. We do, however, note that the estimates are not uniform in $\lambda_1$ and $\lambda_2$, and a variety of constants may degenerate as $\lambda_1$ or $\lambda_2$ approach $0$, $1$, or $\infty$. When $\lambda_1$ and $\lambda_2$ are uniformly away from these quantities, we believe the estimates can be made uniform.

\item The quasilinear terms in \eqref{eq:anisotropic} are present to show that the method is applicable to quasilinear equations. They can be more general.

\item We have included the fourth order nonlinearity just to show that it can be handled. We can, of course, introduce higher order nonlinearities and higher order mixed nonlinearities and deal with them in the same way. We note, however, that the equation for $\psi$ does not have a nonlinear term depending only on $\phi$, and similarly for the equation for $\phi$. While we believe the techniques used in this paper are applicable to study this problem, they would require a new analysis. In fact, because the light cones for $\psi$ and $\phi$ have different causal structures, introducing these kinds of nonlinearities is related to removing the assumption of compact support, which we discuss below. Also, the derivatives in the nonlinearity are seen to all be $\partial_t$, and Proposition~\ref{prop:decay} is specifically adapted to showing decay for this derivative. Treating other derivatives in the nonlinearity requires relatively minor modifications to the considerations in Section~\ref{sec:decay}.

\item We have not tried to optimize the results in this paper, and they are far from sharp in terms of, for example, regularity. Also, it seems as though the same strategy can work after weakening the assumptions. For example, it seems as though we can assume less decay for solutions to the homogeneous wave equation and a very similar argument will, at least until we assume less than integrable decay in $u$ for first derivatives. With this assumption, the analysis may have to be changed. A very natural way to try to strengthen the result is to remove the assumption that the data are compactly supported (and to replace this with the assumption that the data are localized), and which we now discuss.

\item The proof uses that the data are compactly supported in the unit ball. However, we believe that the strategy followed in the proof of Theorem~\ref{thm:mainthm} can be used in the more general setting of data that decay sufficiently rapidly away from the origin. This would, however, require an analysis of the different geometry arising from this situation (see Section~\ref{sec:closingpointwise} to see where the geometry is necessary, and see Sections~\ref{sec:geometry} and \ref{sec:SGeometry} to see where the geometry involved in the proof of Theorem~\ref{thm:mainthm} is studied). More precisely, we note that it seems as though all of the nonlinear terms except for $(\partial_t \psi) (\partial_t \phi)^2$ in the equation for $\psi$ (and the analogous term in the equation for $\phi$) can be treated using the considerations below along with the observation that, if $\tau < -100$, we in fact have that $|u|$ is comparable to $\tau$ in the entire support of the error integral (see Sections~\ref{sec:decayestimates} and \ref{sec:coordinates} where $u$ and $\tau$ are described). For the remaining term, it seems as though an analysis of how the geometry of the scaling vector field interacts with the light cone of $f$ when $\tau < 0$ is necessary.

\item The estimates on the error integrals involving cubic interactions below are far from optimal. In Section~\ref{sec:anisotropicdescription}, some of the additional gains from studying the geometry more carefully will be described.
\end{enumerate}

\subsection{Description of the proof} \label{sec:anisotropicdescription}
The proof follows the same strategy as the proofs in Section~\ref{sec:simpleapps}. Indeed, we use the strategy outlined in Section~\ref{sec:decayestimates} to prove decay. Thus, we have to control error integrals of the form
\[
\int_0^s \int_{\Sigma_t} F(\psi,\phi) \partial_t f d x d t,
\]
where $f$ is the auxiliary solution to the wave equation (see Proposition~\ref{prop:decay}). Once again, we shall interpolate between the energy and pointwise estimates in order to effectively take advantage of the transversal intersection of the cones. The main differences arise from the fact that we shall combine this method with commuting with weighted vector fields from the vector field method. We shall commute with the scaling vector field $S = t \partial_t + r \partial_r$. Because we need a norm that allows us to interpolate with scaling as well, we must use a Morawetz estimate. To avoid discussing this in detail, we shall actually use a simpler estimate that is very similar to a Morawetz estimate (see Lemma~\ref{lem:spacetimeL2}). The important fact is that this estimate, like the Morawetz estimate, is sharp in terms of decay (up to a loss of $t^\delta$). Controlling the error integrals will require more geometric information than was needed for the simple applications in Section~\ref{sec:simpleapps}. However, as was noted in the remarks after Theorem~\ref{thm:mainthm}, we are still not bounding at least some of the terms optimally. One example of this stems from the fact that tubes formed by considering $u$ and $\overline{u}$ neighborhoods of thickness comparable to $1$ intersect the downward pointing light cone of $f$ in a set of small measure in $\tau$ (see Section~\ref{sec:coordinates} for a description of $\overline{u}$). Facts such as these, which can be seen by examining the intersections of the three cones more carefully, will not be needed in this argument. However, it may be useful to keep in mind for other applications that it seems to be possible to get stronger estimates.

We shall now describe the proof in a bit more detail. We shall first focus on how the scaling vector field is used in the context of the strategy described in Section~\ref{sec:decay}. In order to apply Proposition~\ref{prop:decay} in a bootstrap argument, we must show that the nonlinear error integral has good powers of $\tau$. The vector field $S$ will help us in getting good powers of $\tau$. The main observation is that the downward opening light cone associated to the auxiliary multiplier $f$ is everywhere pierced by $S$ (see Figure~\ref{fig:lightconesaux}). This means that good derivatives of $f$ and $S$ span the tangent space of $\R^{2 + 1}$. We may, thus, write any vector field in terms of the frame consisting of $S$ and good derivatives of $f$. In particular, we may schematically write that $\partial_t f = \overline{\partial}_f f + \gamma S f$ where $\overline{\partial}_f$ denotes good derivatives of $f$. Thus, we may write that
\[
\int_0^s \int_{\Sigma_t} F(\psi,\phi) \partial_t f d x d t = \int_0^s \int_{\Sigma_t} F(\psi,\phi) (\overline{\partial}_f f + \gamma S (f)) d x d t.
\]
The term involving good derivatives of $f$ is better than the one involving $S$, so to a first approximation, we can assume that this term satisfies improved estimates. For the term involving $S$, we can integrate by parts in $S$, giving us that, schematically,
\[
\int_0^s \int_{\Sigma_t} F(\psi,\phi) \gamma S(f) d x d t = \int_0^s \int_{\Sigma_t} S(F(\psi,\phi)) \gamma f + S(\gamma) F(\psi,\phi) f + \gamma F(\psi,\phi) f  d x d t.
\]
Now, because we may commute the equations for $\psi$ and $\phi$ with $S$, we note that $S(F(\psi,\phi)) \approx F(\psi,\phi)$ in the sense that any estimates that $F(\psi,\phi)$ satisfies are, to a first approximation, satisfied by $S(F(\psi,\phi))$. Thus, the gain comes from noting that $\gamma$ and $S(\gamma)$ satisfy good estimates in certain regions. More specifically, they can be shown to be of size ${1 \over \tau}$. Thus, we can use $S$ to gain a power of ${1 \over \tau}$, which helps in showing that the nonlinear error integrals are of the necessary size. Proving these estimates on $\gamma$ and $S(\gamma)$ involves studying the geometry of $S$ in relation to the various cones in question. This is carried out in Section~\ref{sec:SGeometry}.

We now describe how we estimate the geometry involved. To control quartic and higher order nonlinearities, we must estimate the geometry coming from the intersection of a forward openining and downward opening cone. This is because the error integrals are of the form
\[
\int_0^s \int_{\Sigma_t} (\partial_t \psi)^4 (\partial_t f) d x d t,
\]
and because $\psi$ decays away from a forward opening cone while $f$ decays away from a downward opening cone.

To a first approximation, $\psi$ is supported in a forward pointing cone of thickness $1$ and $f$ is supported in a downward pointing cone of thickness $1$. If we take the intersection of this configuration with $\Sigma_t$, we are left with two annuli of thickness $1$, and it is natural to compute the volume of intersection order to control the interaction. Of course, these kinds of estimates depend on the parameters $s$ and $\tau$. These estimates are one example of the kind of analysis that must be done in order to control the nonlinearity. Such estimates are carried out in Section~\ref{sec:geometry}.

The interactions between $\psi$ and $\phi$ can be controlled using the condition that $\lambda_1 \ne 1$ and $\lambda_2 \ne 1$. Geometrically, this means that either one cone is contained within the other, or that the two cones intersect transversally. This paper focuses on the case where the two light cones intersect transversally as this case is more interesting, although we note that a similar proof can be used in the other case. Thus, we restrict ourselves to this case.

There are now two forward opening cones to consider, the one associated to $\psi$ and the one associated to $\phi$. If we take two such cones of thickness $1$, if we look in $\Sigma_t$, we are left with two annuli of thickness $1$. One annulus is circular and the other is elliptical. When the cones intersect transversally, we note that the area of intersection of these annulur regions is comparable to $1$. Thus, decay away from the light cone gives localization to a small set, and the mechanism of improvement becomes similar to the case of the localized nonlinearity which was studied in Section~\ref{sec:localizednonlinearity} above. Indeed, the total area of the set of interaction in $\Sigma_t$ becomes $1$ instead of $t$, which is what it is between waves having the same light cone. Figure~\ref{fig:crosssections} shows the cross sections of the cones for different values of $\lambda_1$ and $\lambda_2$. The left configuration is representative of the case considered in this paper (i.e., $\lambda_1 < 1$ and $\lambda_2 > 1$). The second corresponds to $\lambda_1 = 1$, which we have just discussed. The third is a configuration where $\lambda_1 > 1$ and $\lambda_2 > 1$, and although it is not treated in this paper, we believe that similar ideas can be used in this case.

These considerations also reveal why having $\lambda_1 \ne 1$ and $\lambda_2 \ne 1$ is important. If we consider the case where, say, $\lambda_1 = 1$, we can see that the area of intersection of the two annuli in $\Sigma_t$ is comparable to $\sqrt{t}$. Given that cubic nonlinearities are critical in $2 + 1$ dimensions, one may ask why the gain over $t$ is not enough. However, in this case, there exist wave packets which still experience amplification caused by interactions on sets with maximal measure. Indeed, when $\lambda_1 = 1$, we note that the two light cones intersect in two null lines. Wave packets propagating along those null lines will interact in essentially the same way as if the light cones were the same. This configuration looks very much like a $2 + 1$ dimensional version of the localized nonlinearity studied in Section~\ref{sec:localizednonlinearity} above, where we take $\omega = {1 \over 2}$. Because of this behavior, we believe that this configuration is likely to lead to blow up in finite time.
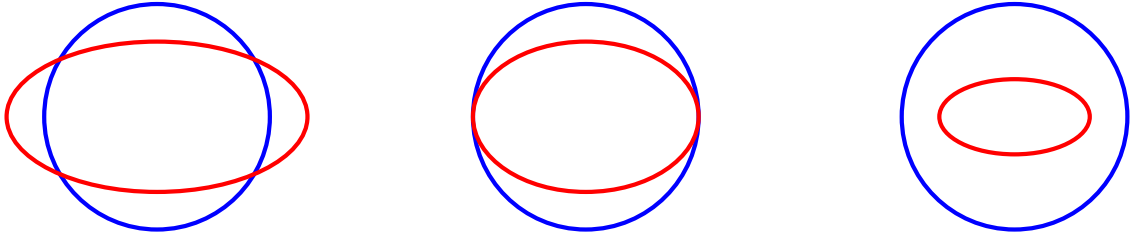
\begin{figure}
    \centering
    \begin{tikzpicture}
    \draw[color=blue,ultra thick] (-2,0) circle (1.5);
    \draw[color=red,ultra thick] (-2,0) ellipse (2 and 1);
    \draw[color=blue,ultra thick] (3.7,0) circle (1.5);
    \draw[color=red,ultra thick] (3.7,0) ellipse(1.5 and 1);
    \draw[color=blue,ultra thick] (9.4,0) circle (1.5);
    \draw[color=red,ultra thick] (9.4,0) ellipse(1 and 0.5);
    \end{tikzpicture}
    \caption{Cross sections of the light cones for $\phi$ and $\psi$ for different $\lambda_1$ and $\lambda_2$. More specifically, the blue circles represent intersections of the light cone of $\psi$ with $\Sigma_t$, and the red ellipses represent intersections of the light cone of $\phi$ with $\Sigma_t$.}
    \label{fig:crosssections}
\end{figure}

We hope it is clear from the above considerations that the geometry of the cones must be effectively analyzed. This was already present in the example considered in Section~\ref{sec:cubic}, and we in fact use some of the same observations that are found therein. However, the geometry requires more substantial analysis in the case of anisotropic systems of wave equations which we are now considering.

\subsection{Notation, coordinates, and the parameters} \label{sec:coordinates}
We take $\R^{2 + 1}$ with the usual $(t,x,y)$ coordinates. In these coordinates, we recall that the equations in question (see \eqref{eq:anisotropic}) are of the form
\begin{equation}
    \begin{aligned}
    \Box \psi = -\partial_t^2 \psi + \partial_x^2 \psi + \partial_y^2 \psi = F(\partial_t \psi,\partial_t \phi,\partial_t^2 \psi),
    \\ \Box' \phi = -\partial_t^2 \psi + \lambda_1^{-2} \partial_x^2 \psi + \lambda_2^{-2} \partial_y^2 \psi = F(\partial_t \psi,\partial_t \phi,\partial_t^2 \psi).
    \end{aligned}
\end{equation}
We shall also use the coordinates $(t,r,\theta)$ and $(t,\overline{r},\overline{\theta})$. These coordinates are defined as follows. The functions $r$ and $\theta$ are the usual polar coordinates on $\R^2$, meaning that $r = \sqrt{x^2 + y^2}$ and $\theta = \arctan \left ({y \over x} \right )$. We now consider the coordinates $(t,\overline{x},\overline{y})$ given by $\overline{x} = \lambda_1 x$ and $\overline{y} = \lambda_2 y$. We then have that $\partial_{\overline{x}} = \lambda_1^{-1} \partial_x$ and that $\partial_{\overline{y}} = \lambda_2^{-1} \partial_y$. Thus, in the $(t,\overline{x},\overline{y})$ coordinates, the equations can be written in the form

\begin{equation}
\begin{aligned}
\Box \psi = -\partial_t^2 \psi + \lambda_1^2 \partial_{\overline{x}}^2 \psi + \lambda_2^2 \partial_{\overline{y}}^2 \psi &= F(\partial_t \psi,\partial_t \phi,\partial_t^2 \psi),
\\ \Box' \phi = -\partial_t^2 \phi + \partial_{\overline{x}}^2 \phi + \partial_{\overline{y}}^2 \phi &= G(\partial_t \psi,\partial_t \phi,\partial_t^2 \phi).
\end{aligned}
\end{equation}

We define $\overline{r} = \sqrt{\overline{x}^2 + \overline{y}^2} = \sqrt{\lambda_1^2 x^2 + \lambda_2^2 y^2}$, and we define $\overline{\theta} = \arctan \left ({\overline{y} \over \overline{x}} \right ) = \arctan \left ({\lambda_2 y \over \lambda_1 x} \right )$. We also introduce the functions $u = t - r$ and $\overline{u} = t - \overline{r}$. We have that
\[
S = t \partial_t + x \partial_x + y \partial_y = t \partial_t + r \partial_r = t \partial_t + \overline{r} \partial \overline{r}.
\]
This is a consequence of the fact that
\[
r \partial_r = x \partial_x + y \partial_y = \lambda_1 x \lambda_1^{-1} \partial_x + \lambda_2 y \lambda_2^{-1} \partial_y = \overline{x} \partial_{\overline{x}} + \overline{y} \partial_{\overline{y}} = \overline{r} \partial_{\overline{r}}.
\]

More geometrically, there is a metric $g$ adapted to $\Box$ and a metric $\overline{g}$ adapted to $\Box'$. In $(t,x,y)$ coordinates, the metric $g$ is given by the constant coefficient diagonal matrix whose entries are $-1$, $1$, and $1$ along the diagonal. The metric $\overline{g}$ is also constant coefficient and diagonal, but its entries are given by $-1$, $\lambda_1^2$, and $\lambda_2^2$. In $(t,\overline{x},\overline{y})$ coordinates, the metric $\overline{g}$ is constant coefficient and diagonal with entries $-1$, $1$, and $1$, while the metric $g$ is constant coefficient and diagonal with entries $-1$, $\lambda_1^{-2}$, and $\lambda_2^{-2}$.

Because it will be necessary to consider the the geometry involving the level sets of $u$ and $\overline{u}$ (these are cones opening to the future with circular and elliptical cross sections, respectively), we note that the level set $u = 0$ and the level set $\overline{u} = 0$ intersect in four straight lines emanating from the spacetime origin. These four straight lines each have a constant $\theta$ coordinate depending only on $\lambda_1$ and $\lambda_2$. These coordinates can be found by solving for $\tan(\theta) = {y \over x}$ in the equations
\[
t^2 = x^2 + y^2 = \lambda_1^2 x^2 + \lambda_2^2 y^2.
\]
We shall denote the $\theta$ coordinates of these lines by $\Theta_1$, $\Theta_2$, $\Theta_3$, and $\Theta_4$, taking $0 < \Theta_1 < {\pi \over 2}$, ${\pi \over 2} < \Theta_2 < \pi$, $\pi < \Theta_3 < {3 \pi \over 4}$, and ${3 \pi \over 4} < \Theta_4 < 2 \pi$. We note that all four of these values are bounded uniformly away from half integer multiples of $\pi$ given the restrictions we have placed on $\lambda_1$ and $\lambda_2$. We similarly use $\overline{\Theta}$ and $\overline{\Theta}_i$ for $1 \le i \le 4$ to denote the $\overline{\theta}$ coordinates of these quantities. These quantities are related to the $\Theta_i$ because
\[
\tan(\overline{\theta}) = {\overline{y} \over \overline{x}} = {\lambda_2 y \over \lambda_1 x} = {\lambda_2 \over \lambda_1} \tan(\theta).
\]

Once again, the function $f$ will always denote an auxiliary solution to the homogeneous wave equation $\Box f = 0$ or $\Box' f = 0$. Which equation $f$ solves will be clear from the context. Because $f$ is the test function we use as a multiplier, it is natural to also introduce coordinates adapted to $f$. We introduce $r'$, $\theta'$, and $u'$ for $f$ when $f$ solves the equation $\Box f = 0$. When it solves $\Box' f = 0$, we introduce $\overline{r}'$, $\overline{\theta}'$, and $\overline{u}'$ instead, and these quantities are defined analogously (see below). Now, after rescaling $x$ and $y$ to $\overline{x}$ and $\overline{y}$, we note that we can always assume that $f$ solves $\Box f = 0$. Thus, in the following discussion, we shall make this assumption.

We denote by $s$ the time slice in which data for $f$ is posed. We then denote by $\tau$ the $u$ coordinate of the center of the ball in $\Sigma_s$ where the data for $f$ is supported, and by $\Theta$ the $\theta$ coordinate of this point. If the center of this ball is given by $(s,a,b)$, we denote by $r'$ the radial coordinate away from this point, that is, $(r')^2 = (x - a)^2 + (y - b)^2$, and we denote by $\theta'$ the angular coordinate for this point normalized such that $\theta' = 0$ along the line $\theta = \Theta$, meaning that $\theta' = \arctan \left ({y - b \over x - a} \right ) - \Theta$. When proving geometric estimates that depend on $\tau$ and $s$, it will often be possible to assume that $b = 0$. When $b \ne 0$, we may of course redefine coordinates to make $b = 0$. We must be cautious, however, about circumstances where the geometry of the other light cone must be taken into account (rotating an ellipse does not produce the same ellipse because it changes the axes). Because we shall often not have to take the geometry of all three cones into account at the same time, this is still a useful observation. The only time we must consider the geometry of all three light cones for a geometric estimate (i.e., the geometry of the light cones for $\psi$, $\phi$, and $f$) is in Lemma~\ref{lem:dr'drbar}. Assuming that $b = 0$ is the same as assuming that $\Theta = 0$. The reader may also wish to consult Figure~\ref{fig:psifSigma_t} where a diagram of some of these quantities is given.

We shall also introduce null coordinates adapted to $\psi$, $\phi$, and $f$. For $\psi$, the null coordinates are given by $(u,v,\theta)$ where $u = t - r$, $v = t + r$, and $\theta = \theta$. The null coordinate systems $(\overline{u},\overline{v},\overline{\theta})$ and $(u',v',\theta')$ are defined analogously. More precisely, we note that $\overline{v} = t + \overline{r}$, $\overline{u} = t - \overline{r}$, $v' = s - t + r'$, and $u' = s - t - r'$. We note that
\[
S = t \partial_t + x \partial_x + y \partial_y = v \partial_v + u \partial_u = \overline{v} \partial_{\overline{v}} + \overline{u} \partial_{\overline{u}}.
\]

Because we often have to consider the functions $\psi$, $\phi$, and $f$, we shall use $\overline{\partial}_\psi$, $\overline{\partial}_\phi$, and $\overline{\partial}_f$ to denote good derivatives for $\psi$, $\phi$, and $f$, respectively. These are precisely the derivatives tangent to the light cones adapted to these functions. We shall also introduce the null frames $L = \partial_t + \partial_r$, $\underline{L} = \partial_t - \partial_r$, and $\partial_\theta = \partial_\theta$. The null frames $\overline{\underline{L}}$, $\overline{L}$, $\partial_{\overline{\theta}}$ and $\underline{L}'$, $L'$, $\partial_{\theta'}$ are defined analogously. We note that these are simply rescaled versions of the null coordinate vector fields (for example, $L$ is a constant rescaling of $\partial_v$ and $\underline{L}$ is a constant rescaling of $\partial_u$). The good derivatives $\overline{\partial}_\psi$ of $\psi$ consist of $L$ and $\partial_\theta$, and similarly for the good derivatives of $\phi$ and $f$.

When doing estimates on the geometry in Sections~\ref{sec:geometry} and \ref{sec:SGeometry}, we can freely assume that $s$ is larger than some fixed constant. In the proof of Theorem~\ref{thm:mainthm}, we use $\epsilon$ to absorb all nonlinear contributions for $t \le {1 \over \delta_0^{10}}$. Thus, when analyzing the geometry in those sections in order to recover the pointwise estimates, we can assume that $s \ge {1 \over \delta_0^{10}}$.

We shall sometimes use $O$ notation, in which $f$ is said to be $O(g)$ if $f \le C g$ for some $C$. There are some instances in which we will have expressions which are $O(\delta_0)$ (see below where the parameter $\delta_0$ is described). In these cases, the constant $C$ will be implied to not depend on $\delta_0$.

We shall have to commute with several vector fields. In the simple applications studied in Section~\ref{sec:simpleapps}, we only commuted with translation vector fields. However, we shall also have to use the scaling vector field. Thus, in this section, the admissible commutation fields will be strings of translation vector fields and scaling vector fields. We shall denote by $\Gamma^\alpha$ such a string of scaling vector fields and translation vector fields. This collection of vector fields is closed under Lie brackets because the commutator of two commutation fields is another commutation field. Indeed, translation vector fields commute with each other and the scaling vector field commutes with itself. We also have that the commutator between $S$ and a translation vector field is the negative of that translation vector field. Thus, for example, we have that
\[
[S,\partial_t] = - \partial_t.
\]
This shows that the commutator of any two of our commutation fields is another commutation field. Thus, by induction, we have that
\[
\Gamma^\alpha (\partial_t h) = (\partial_t \Gamma^\alpha h) + \sum_\gamma c_\gamma (\partial_t \Gamma^\gamma h)
\]
for an appropriate collection of $\gamma$ and where $c_\gamma = \pm 1$, and we moreover have that $|\gamma| \le |\alpha| - 1$. These facts will be used in the proof.

Now, given data supported in the ball of radius $1$ whose $C^2$ norm is bounded by $10$, we note that the solution to the linear wave equation $\Box f = 0$ has that
\begin{equation} \label{eq:assumeddecay0}
    \begin{aligned}
    |f| \le {C \over \sqrt{1 + s - t} (1 + |u'|)^{{1 \over 2}}}.
    \end{aligned}
\end{equation}
This follows from the fundamental solution. Then, by commuting the homogeneous equation with Lorentz fields, we have that
\begin{equation} \label{eq:assumeddecay1}
    \begin{aligned}
    |\partial f| \le {C \over \sqrt{1 + s - t} (1 + |u'|)^{{3 \over 2}}},
    \end{aligned}
\end{equation}
and we have that
\begin{equation} \label{eq:assumeddecay2}
    \begin{aligned}
    |\overline{\partial}_f f| \le {C \over (1 + s - t)^{{3 \over 2}} (1 + |u|^{{1 \over 2}})}.
    \end{aligned}
\end{equation}
Thus, we shall take $n = 2$ and $p = {3 \over 2}$ in Proposition~\ref{prop:decay}. The improved decay of good derivatives is important because we shall decompose $\partial_t$ in terms of $S$ and $\overline{\partial}_f$, as was described in Section~\ref{sec:anisotropicdescription}.

Similarly, we note that the solution to the linear wave equation $\Box' f = 0$ with the same kind of data has that
\begin{equation} \label{eq:assumeddecay3}
    \begin{aligned}
    |\partial_t f| \le {C \over \sqrt{1 + s - t} (1 + |\overline{u}'|)^{{3 \over 2}}},
    \end{aligned}
\end{equation}
and that
\begin{equation} \label{eq:assumeddecay4}
    \begin{aligned}
    |\overline{\partial}_f f| \le {C \over (1 + s - t)^{{3 \over 2}} (1 + |\overline{u}'|^{{1 \over 2}})}.
    \end{aligned}
\end{equation}
We shall use these decay rates, but an examination of the proof below reveals that less decay would be enough. The main result that would require modification is Lemma~\ref{lem:annuliu'dec}, but even there, we believe the modification is not substantial. Moreover, we emphasize once again that it suffices to take decay rates as a black box. Indeed, all that is required is a proof of decay for the homogeneous equation with a sufficiently large class of test functions as data. The test functions we are taking are all smooth functions supported in a ball of radius $1$ with $C^2$ norm bounded by $10$.

We finally note that we make use of three smallness parameters in the following proof. They are $\epsilon$, $\delta$, and $\delta_0$. The parameter $\epsilon$ can be chosen to depend on $\delta$, $\delta_0$, and the constants involved in the problem, and it controls the size of the initial data. It must be chosen sufficiently small for the bootstrap assumptions to be recovered. Because we are allowing $\epsilon$ to depend on $\delta$ and $\delta_0$, we also allow the expressions $C$ and $c$ to depend on $\delta$ and $\delta_0$ as well. The expressions $C$ and $c$ represent positive numbers which may be very large or small depending on some universal constants and on $\delta$ and $\delta_0$. More specifically, $C$ will represent something that may be very large (such as, for example, ${1 \over \delta_0}$), while $c$ will represent something that may be very small (such as $\delta_0$). There are a few places where it will be important to have constants that do not depend on $\delta_0$ so that we can get smallness relative to them by making $\delta_0$ smaller (see, for example, Lemma~\ref{lem:dr'drbar}). Whenever this is necessary, we shall explicitly say so.

Now, the parameters $\delta \le {1 \over 2}$ and $\delta_0 \le {1 \over 2}$ must be chosen sufficiently small to make the following proof work. The parameter $\delta$ controls a loss in decay that comes from interpolation (see Lemma~\ref{lem:interpolation}), and it is also the parameter we have used to denote the (necessary unless restricting to a dyadic region) loss in decay from a modified Morawetz estimate (see Lemma~\ref{lem:spacetimeL2}). Meanwhile, the parameter $\delta_0$ is our universal smallness parameter for geometric estimates. For example, it will be beneficial to consider two regimes, one where $\tau$ is small compared to $s$ and another where it is comparable to $s$. We shall thus consider the cases $\tau \le \delta_0 s$ and $\tau \ge \delta_0 s$. There will be several other cases where we must compare quantities, and $\delta_0$ is the parameter we shall use to make this comparison. There exist bounds for $\delta$ and $\delta_0$ in the form of universal constants that make the following proof work, but we have not computed them. They can, however, be computed in principle.

We finally note the regularity parameter $N$. This denotes the $L^2$ Sobolev space for the initial data. This parameter is allowed to depend on $\delta$, and we note that $N \rightarrow \infty$ as $\delta \rightarrow 0$. This is required in order to effectively interpolate (see Lemma~\ref{lem:interpolation}). However, for every fixed $\delta > 0$, we note that $N$ is a finite positive integer. The dependence of $N$ on $\delta$ can be explicitly computed (meaning that a lower bound for the regularity we require on the initial data can be computed), but we have not done so.

\subsection{Additional linear estimate} \label{sec:Morawetzreplacement}
We only require one additional linear estimate for this problem. In order to get a sharper result, it would be beneficial to use a Morawetz estimate, which schematically says that $\Vert r^{-{1 \over 2}} \partial \psi \Vert_{L_t^2 L_x^2}$ is controlled by the energy (the real estimate either requires a slight loss in the power of $r$ or a restriction to a dyadic region). We shall not discuss this estimate here, but will rather use a very easy estimate which suffices for this problem.

\begin{lemma} \label{lem:spacetimeL2}
We have that
\begin{equation}
\begin{aligned}
\int_{\Sigma_s} (1 + t)^{-{\delta \over 2}} \left [(\partial_t \psi)^2 + (\partial_x \psi)^2 + (\partial_y \psi)^2 \right ] d x + \int_0^s \int_{\Sigma_t} \delta (1 + t)^{-1 - \delta} \left [(\partial_t \psi)^2 + (\partial_y \psi)^2 + (\partial_x \psi)^2 \right ] d x d t
\\ = \int_{\Sigma_0} (1 + t)^{-{\delta \over 2}} \left [(\partial_t \psi)^2 + (\partial_x \psi)^2 + (\partial_y \psi)^2 \right ] d x + \int_0^s \int_{\Sigma_t} (\Box \psi) (1 + t)^{-{\delta \over 2}} (\partial_t \psi) d x d t.
\end{aligned}
\end{equation}
\end{lemma}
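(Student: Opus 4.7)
The plan is to derive the stated identity as a weighted $\partial_t$-energy estimate: multiply the wave equation by $(1+t)^{-\delta/2}\,\partial_t\psi$ and integrate over the spacetime slab $[0,s]\times\R^2$. This parallels the derivation of the classical energy identity (the case of the unweighted multiplier $\partial_t\psi$), with the decaying weight producing an additional spacetime bulk term of favorable sign coming from the time derivative of the weight.

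First, I would establish the underlying pointwise differential identity. Setting $w(t) = (1+t)^{-\delta/2}$ and $E = (\partial_t\psi)^2 + (\partial_x\psi)^2 + (\partial_y\psi)^2$, the key inputs are the one-dimensional identities
\[
-\partial_t^2 \psi \cdot w(t) \partial_t \psi = -\tfrac{1}{2}\partial_t\bigl[w(t) (\partial_t \psi)^2\bigr] + \tfrac{1}{2} w'(t)(\partial_t \psi)^2,
\]
\[
\partial_i^2 \psi \cdot w(t) \partial_t \psi = \partial_i\bigl[w(t) (\partial_i \psi)(\partial_t \psi)\bigr] - \tfrac{1}{2}\partial_t\bigl[w(t) (\partial_i \psi)^2\bigr] + \tfrac{1}{2} w'(t)(\partial_i \psi)^2,
\]
the second of which follows because $w$ has no spatial dependence and the mixed partials of $\psi$ agree. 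Summing these over $i \in \{x,y\}$ and adding gives
\[
(\Box \psi) \, w(t) \, \partial_t \psi = -\tfrac{1}{2}\partial_t\bigl[w(t) E\bigr] + \tfrac{1}{2} w'(t) E + \sum_{i} \partial_i\bigl[w(t)(\partial_i\psi)(\partial_t\psi)\bigr].
\]

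Next, I would integrate this identity over $[0,s]\times\R^2$ and apply the fundamental theorem of calculus in $t$. The total time derivative produces the boundary terms on $\Sigma_s$ and $\Sigma_0$. The spatial divergence contributes nothing: by finite speed of propagation applied to the data, which by assumption is compactly supported in the unit ball, $\psi(t,\cdot)$ has compact support on each $\Sigma_t$, so no boundary term appears at spatial infinity. Since $w'(t) = -\tfrac{\delta}{2}(1+t)^{-1-\delta/2}$ is negative, the bulk contribution $\tfrac{1}{2}w'(t) E$ can be moved to the left-hand side as a positive coercive term $\tfrac{\delta}{4}(1+t)^{-1-\delta/2} E$, which yields the claimed spacetime estimate after rearrangement (up to the precise normalization of the numerical constant and weight exponent in the stated form of the identity).

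This is a direct computation and no genuine obstacle arises; the only care needed is in tracking signs in the weighted integration by parts and in confirming that the spatial boundary terms vanish. In the sequel this estimate plays the role of a substitute for a Morawetz inequality, trading a small loss of $(1+t)^\delta$ for a clean coercive spacetime norm on all of $\partial\psi$, which will be interpolated with the pointwise bootstrap bounds when controlling the nonlinear error integrals.
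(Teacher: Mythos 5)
Your proof is correct and is exactly the paper's approach: the paper proves this lemma precisely by using $(1+t)^{-\delta/2}\partial_t\psi$ as a multiplier for the wave equation, which is what you carry out in detail (including the vanishing of spatial boundary terms by finite speed of propagation). Your observation that the bulk weight produced is $\tfrac{\delta}{4}(1+t)^{-1-\delta/2}$ rather than the stated $\delta(1+t)^{-1-\delta}$ is a fair reading of the lemma's schematic normalization and does not affect how the estimate is used.
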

\begin{proof}
This follows from simply using $(1 + t)^{-{\delta \over 2}} \partial_t \psi$ as a multiplier for the wave equation.
\end{proof}

This estimate is useful because it is sharp in terms of decay up to a power of $t^{{\delta \over 2}}$ (the Morawetz estimate on dyadic regions is sharp). Moreover, it is a spacetime $L^2$ norm, meaning that it allows for interpolation with the scaling vector field $S$.

\subsection{Estimates on geometry} \label{sec:geometry}
We first record several observations which shall be used in Section~\ref{sec:closingpointwise}. These geometric estimates shall be needed when using $\partial_t f$ as a multiplier for the equation for $\psi$. In order to apply Proposition~\ref{prop:decay}, we must show that the error integrals are sufficiently small in terms of $s$ and $\tau$. These estimates will be used to control these quantities. Thus, several of the following estimates depend on the parameters $s$ and $\tau$. We recall that we take data for $f$ supported in a unit disk whose center has coordinates $(s,a,b)$. The data consists of the trace of $f$ and $\partial_t f$ in $\Sigma_s$. Moreover, we recall that $\tau = s - \sqrt{a^2 + b^2}$. Except in Lemma~\ref{lem:dr'drbar}, we shall assume that $b = 0$, meaning that $a + \tau = s$. Lemma~\ref{lem:dr'drbar} is the only result which requires us to simultaneously compare the geometry of light cones adapted to $\psi$, $\phi$, and $f$ at the same time. In order to translate these results to the general case where $b$ is potentially nonzero, we can simply replace instances of $a$ with instances of $s - \tau$.

Many of the following lemmas will deal with a configuration arising from looking at the geometry of the light cone for $\psi$ and the light cone for $f$ in a single $\Sigma_t$ slice. Figure~\ref{fig:psifSigma_t} shows what this looks like. The reader may wish to keep this figure and also Figure~\ref{fig:lightconesaux} in mind throughout this Section, as they both provide valuable intuition for why the results are true.

Throughout this Section, we recall that we can take $s \ge {1 \over \delta_0^{10}}$.

\begin{figure}
    \centering
    \begin{tikzpicture}
    \draw[ultra thick] (0,0) -- (7.4,0);
    \draw[very thick] (0,0) circle (3);
    \draw[very thick] (7.4,0) circle (5);
    \draw (0,0) -- (2.62,1.46);
    \node (r) at (1.21,0.83) {$r$};
    \draw (7.4,0) -- (2.62,1.46);
    \node (r') at (5.06,0.95) {$r'$};
    \node (theta) at (0.75,0.23) {$\theta$};
    \node (theta') at (5.8,0.23) {$\vartheta'$};
    
    \end{tikzpicture}
    \caption{The circles denote the intersection of light cones adapted to $\psi$ and $f$ in some fixed $\Sigma_t$. The line segment connects the centers of the two resulting circles. The length of this line segment is $a = s - \tau$, where we note that we are assuming that $b = 0$. When $b \ne 0$, there is an appropriate modification for this formula. We note that $r = t - u$ and $r' = s - t - u'$. We also recall that $\vartheta = \pi - \theta$ and that $\vartheta' = \pi - \theta'$. For the spacetime picture, we refer the reader to Figure~\ref{fig:lightconesaux}. The two points at which the circles intersect correspond to the two points of intersection of the red ellipse with some $\Sigma_t$. Of course, it is possible for this intersection to be a single point when the circles are tangent to each other, or to be empty when the light cones are no longer intersecting.}
    \label{fig:psifSigma_t}
\end{figure}
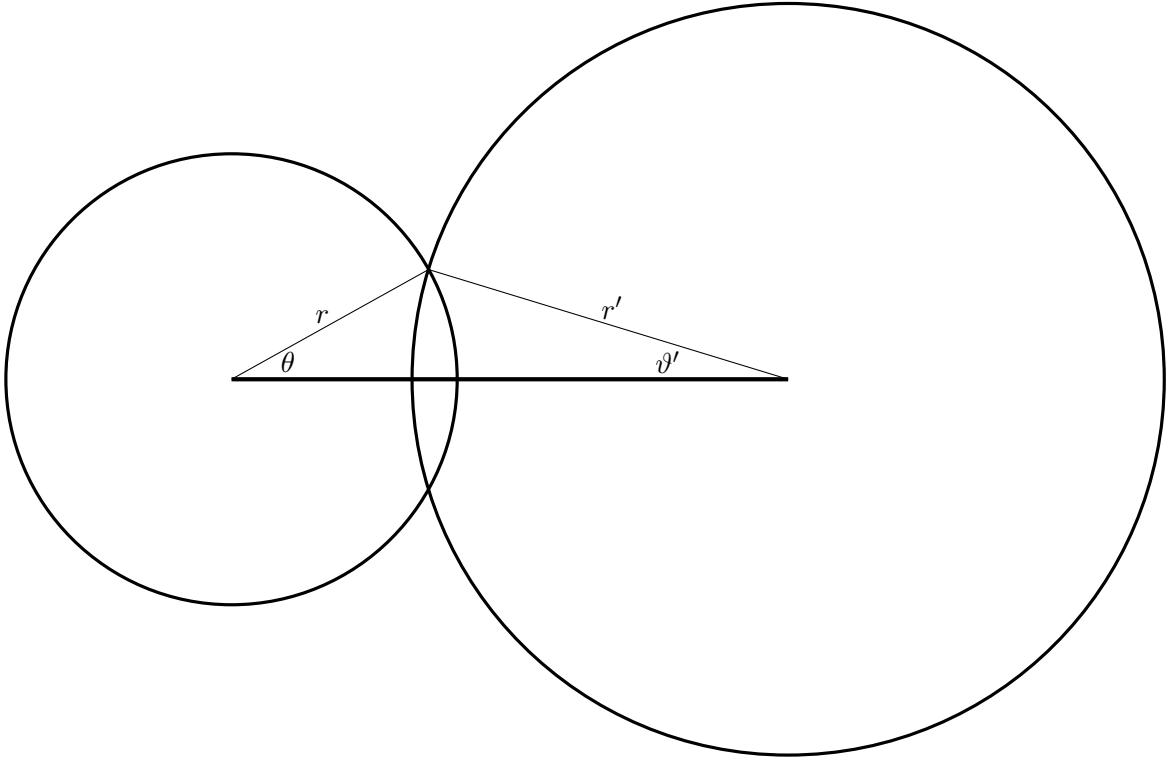

The first lemma says that $r$ is comparable to $t$ and $r'$ is comparable to $s - t$ when we are close to the light cones for $\psi$ and $f$.

\begin{lemma} \label{lem:rtr's-t}
Let $\tau \ge 100$.
\begin{enumerate}
    \item We have that $r + r' - a \le 2 r$, and similarly that $r + r' - a \le 2 r'$.
    \item We have that $r \ge {\tau - u - u' \over 2} \ge 0$, and similarly that $r' \ge {\tau - u - u' \over 2} \ge 0$.
    \item In the region where $u \le \delta_0 \tau$ and $u' \le \delta_0 \tau$, we have that $r' \ge {99 \over 100} (s - t) \ge {\tau \over 10}$ and that $r \ge {99 \over 100} t \ge {\tau \over 10}$.
    \item In the region where $u' \le \delta_0 \tau$ and $r \le t + 1$, we have that $t \ge {\tau \over 10}$. Similarly, in the region where $u \le \delta_0 \tau$ and $r' \le s - t + 1$, we have that $s - t \ge {\tau \over 10}$.
\end{enumerate}
Moreover, in the region where $-1 \le u \le \delta_0 \tau$ and $-1 \le u' \le \delta_0 \tau$, we have that $r' \ge {99 \over 100} (s - t) \ge {\tau \over 10}$ and that $r \ge {99 \over 100} t \ge {\tau \over 10}$.
\end{lemma}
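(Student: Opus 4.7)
The plan is to reduce everything to the planar triangle with vertices at the origin, at the point $(a,0) = (s - \tau, 0)$, and at the point $P = (x, y) \in \Sigma_t$, whose side lengths are $r$, $r'$, and $a$. The single algebraic identity underlying every estimate is
\[
r + r' - a \;=\; (s - u - u') - (s - \tau) \;=\; \tau - u - u',
\]
which follows from $u = t - r$, $u' = s - t - r'$, and $a = s - \tau$. Paired with this identity, the only geometric input used throughout is the ordinary planar triangle inequality.

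The first claim is then a rearrangement of the triangle inequalities $r' \le r + a$ and $r \le r' + a$. For the second claim, I substitute the identity into the first to get $\tau - u - u' \le 2r$ and $\tau - u - u' \le 2r'$, and the opposite triangle inequality $a \le r + r'$ supplies $\tau - u - u' \ge 0$.

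For the third claim, I would use the second claim with $u, u' \le \delta_0 \tau$ to obtain $r, r' \ge (1 - 2\delta_0)\tau/2$, which for $\delta_0 \le 2/5$ already gives $r, r' \ge \tau/10$. To upgrade to $r \ge (99/100) t$, I write $t = r + u \le r + \delta_0 \tau$ and bound $\delta_0 \tau \le 2\delta_0 r/(1 - 2\delta_0)$ via the previous step; for $\delta_0$ sufficiently small (e.g.\ $\delta_0 \le 1/200$), this forces $t/r \le 100/99$. The final inequality $(99/100) t \ge \tau/10$ needs $t \ge 10\tau/99$, which is where the support bound $u \ge -1$ (equivalently $r \le t + 1$) enters: then $t \ge r - 1 \ge (1 - 2\delta_0)\tau/2 - 1$, which exceeds $\tau/3$ for $\tau \ge 100$ and $\delta_0$ small. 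This is the reason the \emph{Moreover} clause restates the bound with $u \ge -1$ and $u' \ge -1$ explicit; the statement for $r'$ is entirely symmetric under the involution $t \leftrightarrow s - t$, $u \leftrightarrow u'$, $r \leftrightarrow r'$.

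For the fourth claim, the triangle inequality $r' \le a + r$ combined with the hypothesis $r \le t + 1$ gives $r' \le s - \tau + t + 1$, while $u' \le \delta_0 \tau$ gives $r' \ge s - t - \delta_0 \tau$. Combining these yields
\[
s - t - \delta_0 \tau \;\le\; s - \tau + t + 1,
\]
which rearranges to $t \ge (\tau - \delta_0 \tau - 1)/2$; this exceeds $\tau/10$ for $\tau \ge 100$ and $\delta_0 \le 1/2$. The companion statement follows by the same symmetry. The main challenge is purely bookkeeping, namely keeping straight which of the three triangle inequalities supplies each bound and verifying the $\delta_0$-smallness thresholds; no analytic difficulty appears.
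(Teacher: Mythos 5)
Your proof is correct and follows the same route as the paper: both reduce everything to the identity $r + r' - a = \tau - u - u'$ together with the planar triangle inequality, then get (3) from $r \ge \frac{\tau - u - u'}{2}$ and the bound $\frac{r}{t} = \frac{r}{r+u} \ge \frac{1}{1 + \delta_0\tau/r}$, and get (4) by combining the second assertion with $u \ge -1$ and $u' \le \delta_0\tau$ (your direct triangle-inequality route for (4) is algebraically equivalent to the paper's "follows from the second assertion"). You are somewhat more explicit than the paper's terse proof in tracking exactly where the lower bounds $u, u' \ge -1$ are required for the final inequalities $\frac{99}{100}t \ge \frac{\tau}{10}$ and $\frac{99}{100}(s-t) \ge \frac{\tau}{10}$, which is the content the "Moreover" clause makes precise; this is a useful clarification, not a divergence from the paper.
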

\begin{proof}
We begin by using the triangle inequality to note that
\[
a - r' \le r \le a + r'.
\]
Thus, we have that
\[
0 \le r + r' - a \le 2 r'.
\]
Because the argument is symmetric in $r$ and $r'$, this proves the first assertions.

Now, we note that
\begin{equation} \label{eq:tauuu'rr'ar}
    \begin{aligned}
    \tau - u - u' = \tau - (t - r) - (s - t - r') = \tau - s + r + r' = r + r' - a \le 2 r.
    \end{aligned}
\end{equation}
Similarly, we have that
\begin{equation} \label{eq:tauuu'rr'ar'}
    \begin{aligned}
    \tau - u - u' = \tau - (t - r) - (s - t - r') = \tau - s + r + r' = r + r' - a \le 2 r'.
    \end{aligned}
\end{equation}
This implies the lower bounds on $r$ and $r'$ in terms of $\tau - u - u'$.

We now assume the further restrictions on $u$ and $u'$, which imply that
\[
{\tau \over 2} \le \tau - u - u' = r + r' - a \le 2 r.
\]
From this, it follows that
\[
r \ge {\tau \over 4}.
\]
Now, because $u \le \delta_0 \tau$, we have that
\[
{r \over t} = {r \over r + u} \ge {r \over r + \delta_0 \tau} = {1 \over 1 + {\delta_0 \tau \over r}} \ge {99 \over 100}
\]
for $\delta_0$ sufficiently small. This gives us the desired result for $r$ and $t$, and by symmetry, the same argument gives us the desired result for $r'$ and $s - t$.

The fourth and final assertion follows from the second assertion, noting that $t = r + u$, and using that $u' \le \delta_0 \tau$. This implies the lower bound on $t$, and the lower bound on $s - t$ follows in an analogous way way.


\end{proof}

The next lemma allows us to effectively use the fact that the light cones intersect transversally. It is here that the assumption that $\lambda_1 \ne 1$ and $\lambda_2 \ne 1$ is fundamental. In the region along both light cones, we expect that the Jacobian of the $(r,\overline{r})$ coordinate system in $\Sigma_t$ is comparable to $1$ because the light cones intersect transversally (see Figure~\ref{fig:crosssections}). This will allow us to effectively take advantage of decay in $u$ and $\overline{u}$. The necessary properties of the $(r,\overline{r})$ coordinate system within each $\Sigma_t$ along both light cones is established in the following lemma.

\begin{lemma} \label{lem:rrbar}
In the region where $t \ge 100$, $r \le t + 1$, $\overline{r} \le t + 1$, $|u| \le {1 \over 100} t$, and $|u'| \le {1 \over 100} t$, we have that $(r,\overline{r})$ is a well defined coordinate system on each $\Sigma_t$ with uniformly bounded Jacobian. More precisely, we have that
\begin{equation} \label{eq:rrbar}
    \begin{aligned}
    d x \wedge d y = {r \overline{r} \over (\lambda_2^2 - \lambda_1^2) x y} d r \wedge d \overline{r},
    \end{aligned}
\end{equation}
and we have that
\[
\left |{r \overline{r} \over (\lambda_2^2 - \lambda_1^2) x y} \right | \le C,
\]
where $C$ depends only on $\lambda_1$ and $\lambda_2$, and is uniform given our restrictions on $\lambda_1$ and $\lambda_2$.
\end{lemma}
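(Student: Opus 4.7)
The plan is to compute $dr \wedge d\overline{r}$ directly in Cartesian coordinates, and then bound $|xy|$ from below by a constant multiple of $t^{2}$ using the restrictions. First, differentiating $r^{2} = x^{2} + y^{2}$ and $\overline{r}^{2} = \lambda_{1}^{2} x^{2} + \lambda_{2}^{2} y^{2}$ gives
\[
r\, dr = x\, dx + y\, dy, \qquad \overline{r}\, d\overline{r} = \lambda_{1}^{2}\, x\, dx + \lambda_{2}^{2}\, y\, dy,
\]
so wedging these yields $r \overline{r}\, dr \wedge d\overline{r} = (\lambda_{2}^{2} - \lambda_{1}^{2})\, xy\, dx \wedge dy$. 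Whenever $xy \neq 0$ this gives both the identity \eqref{eq:rrbar} and the fact that $(r, \overline{r})$ is a local diffeomorphism on each open quadrant, which is as much as one can hope for since these two functions are unchanged under reflection across the axes.

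The substantive task is the pointwise bound on $r \overline{r}/((\lambda_{2}^{2} - \lambda_{1}^{2}) xy)$. I read the hypothesis ``$|u'| \leq t/100$'' as ``$|\overline{u}| \leq t/100$'' (the lemma is purely about the intrinsic geometry of $\Sigma_{t}$ and makes no reference to $f$). Together with $r, \overline{r} \leq t + 1$, the restrictions $|u|, |\overline{u}| \leq t/100$ force $r$ and $\overline{r}$ to both lie in $[(99/100) t, t + 1]$, so both are comparable to $t$. Inverting the linear system $r^{2} = x^{2} + y^{2}$, $\overline{r}^{2} = \lambda_{1}^{2} x^{2} + \lambda_{2}^{2} y^{2}$ gives
\[
x^{2} = \frac{\lambda_{2}^{2} r^{2} - \overline{r}^{2}}{\lambda_{2}^{2} - \lambda_{1}^{2}}, \qquad y^{2} = \frac{\overline{r}^{2} - \lambda_{1}^{2} r^{2}}{\lambda_{2}^{2} - \lambda_{1}^{2}}.
\]
Plugging in the ranges for $r$ and $\overline{r}$ together with $\tfrac{1}{4} \leq \lambda_{1}^{2} \leq \tfrac{1}{2}$ and $2 \leq \lambda_{2}^{2} \leq 4$, a direct computation shows that both numerators are bounded below by $c\, t^{2}$ for some $c > 0$ depending only on $\lambda_{1}, \lambda_{2}$; I will use $t \geq 100$ here to absorb the lower-order terms coming from $(t+1)^{2}$ versus $t^{2}$. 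Hence $|x|, |y| \geq c\, t$, so $|xy| \geq c\, t^{2}$, and combining with $r \overline{r} \leq (t+1)^{2}$ and $\lambda_{2}^{2} - \lambda_{1}^{2} \geq 3/2$ gives the stated bound.

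The one place where the assumption $\lambda_{1} \neq 1$, $\lambda_{2} \neq 1$ is used in an essential way is precisely the lower bounds $\lambda_{2}^{2} r^{2} - \overline{r}^{2} \geq c\, t^{2}$ and $\overline{r}^{2} - \lambda_{1}^{2} r^{2} \geq c\, t^{2}$. If $\lambda_{1} = 1$, the second numerator would be $\overline{r}^{2} - r^{2}$, which vanishes along the angular directions where the two light cones are tangent, allowing $y$ to be arbitrarily small and destroying the uniform bound on the Jacobian. This is the sole nontrivial step; everything else is bookkeeping. No other obstacle arises.
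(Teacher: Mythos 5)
Your proof is correct, and its first half (the wedge computation giving $r\overline{r}\,dr\wedge d\overline{r}=(\lambda_2^2-\lambda_1^2)xy\,dx\wedge dy$) is exactly the paper's. Where you differ is in how you obtain the lower bound on $|xy|$: you invert the system $r^2=x^2+y^2$, $\overline{r}^2=\lambda_1^2x^2+\lambda_2^2y^2$ to get the explicit formulas $x^2=\frac{\lambda_2^2r^2-\overline{r}^2}{\lambda_2^2-\lambda_1^2}$, $y^2=\frac{\overline{r}^2-\lambda_1^2r^2}{\lambda_2^2-\lambda_1^2}$ and then bound the numerators below by $c\,t^2$ using $r,\overline{r}\in[\tfrac{99}{100}t,\,t+1]$ and the numerical ranges of $\lambda_1,\lambda_2$; the paper instead works in polar coordinates, writes $\overline{r}^2=\lambda_1^2r^2+(\lambda_2^2-\lambda_1^2)r^2\sin^2\theta$, and argues by contradiction that $\theta$ stays uniformly away from $0,\tfrac{\pi}{2},\pi,\tfrac{3\pi}{2}$ (since otherwise $\overline{r}/r$ would leave the interval forced by $r,\overline{r}\sim t$), which again gives $|x|,|y|\gtrsim t$. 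The two routes use the same inputs ($r\sim\overline{r}\sim t$ plus $\lambda_1\neq1\neq\lambda_2$ quantified through the stated ranges); yours is more explicit and makes the dependence on the $\lambda_i$ ranges immediately checkable, while the paper's angular formulation ties more directly to the geometric picture of transversally intersecting cones used elsewhere. Two further points in your favor: your reading of the hypothesis ``$|u'|\le\tfrac{1}{100}t$'' as a condition on $\overline{u}=t-\overline{r}$ is the right one --- the paper's own proof uses precisely the bound $\overline{r}/t$ close to $1$, and the lemma is applied later with cutoffs in $u$ and $\overline{u}$, so the $u'$ in the statement is a slip of notation --- and your caveat that $(r,\overline{r})$ is only a coordinate system quadrant-by-quadrant (because both functions are invariant under reflections across the axes) is a legitimate precision that the paper glosses over and that does not affect how the lemma is used, since it is only invoked to bound integrals via the Jacobian identity.
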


\begin{proof}
We begin by noting that $r$ and $\overline{r}$ are comparable to $t$ in this region. Thus, we have that $r$ and $\overline{r}$ are comparable to each other. More precisely, we have that
\begin{equation} \label{eq:r/rbar}
    \begin{aligned}
    {9 \over 10} \le {r \over \overline{r}} \le {10 \over 9}
    \end{aligned}
\end{equation}
because the conditions on $u$ and $u'$ imply that
\[
{99 \over 100} \le {r \over t} \le {101 \over 100},
\]
and similarly for ${\overline{r} \over t}$.

We shall now establish the formula \eqref{eq:rrbar}. We shall then show that $x$ and $y$ are also comparable to $r$ and $\overline{r}$ in this region from which the desired result will follow.

We have that
\[
r d r = x d x + y d y,
\]
and we have that
\[
\overline{r} d \overline{r} = \lambda_1^2 x d x + \lambda_2^2 y d y.
\]
Thus, we have that
\[
r \overline{r} d r \wedge d \overline{r} = (\lambda_2^2 - \lambda_1^2) x y d x \wedge d y.
\]
This implies \eqref{eq:rrbar}, as desired.

Now, in order to show that $x$ and $y$ are comparable to $r$ and $\overline{r}$ in this region, we go into polar coordinates $(r,\theta)$. We shall show that $\theta$ is bounded uniformly away from $0$, ${\pi \over 2}$, $\pi$, and ${3 \pi \over 2}$. This will imply the desired result.

Writing $\overline{r}$ in polar coordinates, we have that
\[
\overline{r}^2 = \lambda_1^2 x^2 + \lambda_2^2 y^2 = \lambda_1^2 r^2 + (\lambda_2^2 - \lambda_1^2) y^2 = \lambda_1^2 r^2 + (\lambda_2^2 - \lambda_1^2) r^2 \sin^2 (\theta).
\]
Now, for $\theta$ sufficiently close to $0$ or to $\pi$, we have that
\[
\overline{r}^2 \le {11 \over 10} \lambda_1^2 r^2.
\]
Thus, using the bounds on $\lambda_1^2$, we have that
\[
{\overline{r} \over r} \le {3 \over 4}.
\]
This contradicts the bounds \eqref{eq:r/rbar}. Similarly, for $\theta$ sufficiently close to ${\pi \over 2}$ or ${3 \pi \over 2}$, we have that
\[
{\overline{r} \over r} \ge {3 \over 2},
\]
once again contradicting \eqref{eq:r/rbar}.

\end{proof}

The next lemma writes $x$, $x - a$, and $y$ in terms of $r$ and $r'$ when $\tau$ is small in the region along the light cones for $\psi$ and $f$. The fact that $\tau$ is small and that we are along the light cones for $\psi$ and $f$ gives us smallness in ${1 \over a}$ and ${\tau \over a}$, and it lets us treat $u$ and $u'$ as errors.

\begin{lemma} \label{lem:xyrr'}
Let $\tau \le \delta_0 s$.
\begin{enumerate}
\item We have that
\begin{equation}
    \begin{aligned}
    x = r \left (1 - {\tau - u - u' \over r} + {\tau - u - u' \over a} - {(\tau - u - u')^2 \over 2 r a} \right ).
    \end{aligned}
\end{equation}
\item We have that
\begin{equation}
    \begin{aligned}
    x - a = -r' \left (1 - {\tau - u - u' \over r'} + {\tau - u - u' \over a} - {(\tau - u - u')^2 \over 2 r' a} \right ).
    \end{aligned}
\end{equation}
\item We have that
\begin{equation}
    \begin{aligned}
    y^2 = r^2 - r^2 \left (1 + {\tau - u - u' \over a} - {\tau - u - u' \over r} - {(\tau - u - u')^2 \over 2 r a} \right )^2 \le C r (\tau - u - u').
    \end{aligned}
\end{equation}
\item We have that
\begin{equation}
    \begin{aligned}
    y^2 = (r')^2 - (r')^2 \left (1 + {\tau - u - u' \over a} - {\tau - u - u' \over r'} - {(\tau - u - u')^2 \over 2 r' a} \right )^2 \le C r' (\tau - u - u').
    \end{aligned}
\end{equation}
\end{enumerate}
Moreover, we have that $a \ge (1 - \delta_0) s$.
\end{lemma}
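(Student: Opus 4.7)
The plan is to deduce all four identities (and the final inequality) from two simple inputs. The first is the algebraic identity
\[
r^{2}-(r')^{2}=2ax-a^{2},
\]
obtained by subtracting $(r')^{2}=(x-a)^{2}+y^{2}$ from $r^{2}=x^{2}+y^{2}$. The second is the relation $r+r'=a+\sigma$ with $\sigma:=\tau-u-u'$, which is precisely equation~\eqref{eq:tauuu'rr'ar} in Lemma~\ref{lem:rtr's-t} once one recalls $a=s-\tau$. The concluding statement $a\ge(1-\delta_{0})s$ is immediate from $\tau\le\delta_{0}s$, so I focus on the four displayed formulas.

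For the formula for $x$, I would substitute $r'=a+\sigma-r$ into $2ax=r^{2}-(r')^{2}+a^{2}$. After expanding $(r')^{2}=(a+\sigma)^{2}-2(a+\sigma)r+r^{2}$ and cancelling, the right-hand side becomes $2(a+\sigma)r-2a\sigma-\sigma^{2}$, and dividing by $2a$ and pulling out a factor of $r$ produces exactly the stated expression. The formula for $x-a$ then comes from the symmetric manipulation: instead substitute $r=a+\sigma-r'$, yielding $2a(x-a)=2a\sigma+\sigma^{2}-2(a+\sigma)r'$, which factors in the claimed way.

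For the $y^{2}$ identity, I would write $x=r(1-A)$ with $A:=\sigma/r-\sigma/a+\sigma^{2}/(2ra)$; then $y^{2}=r^{2}-x^{2}=r^{2}A(2-A)$, which matches the stated formula after collecting signs. The key simplification is to use $a-r=r'-\sigma$ (a direct consequence of $r+r'=a+\sigma$) to rewrite
\[
rA=\frac{\sigma(2r'-\sigma)}{2a}=\frac{\sigma(r'-r+a)}{2a}.
\]
The triangle inequality applied to the triangle with vertices $0$, $(a,0)$, $(x,y)$ gives $r'+a\ge r$, so $A\ge 0$; and since $y^{2}\ge 0$ forces $A\le 2$, one has $y^{2}\le 2r(rA)$. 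To convert this into $y^{2}\le Cr\sigma$ it suffices to bound $r'-r+a$ by $Ca$. In the range where this bound is applied, one may assume $u'\ge -1$, so $r'\le s-t+1\le s+1$, and since $a\ge(1-\delta_{0})s$ by the last assertion, $r'/a$ is uniformly bounded. The analogous inequality $y^{2}\le Cr'\sigma$ is obtained by running the entire computation with $r$ and $r'$ interchanged, starting from $y^{2}=(r')^{2}-(x-a)^{2}$.

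The whole argument is algebraic manipulation plus one invocation of the triangle inequality; the only mildly subtle point is recognizing $a-r=r'-\sigma$ as the substitution that converts the formally quadratic $A(2-A)$ into the linear-in-$\sigma$ bound that is actually needed when controlling the error integrals in Section~\ref{sec:closingpointwise}.
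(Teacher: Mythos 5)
Your proposal is correct and follows essentially the same route as the paper: subtract the two circle equations $x^2+y^2=r^2$ and $(x-a)^2+y^2=(r')^2$, use $r+r'=a+(\tau-u-u')$ (i.e.\ \eqref{eq:tauuu'rr'ar}) to eliminate one radius, and read off the formulas for $x$, $x-a$, and $y^2$, with $a\ge(1-\delta_0)s$ immediate from $a=s-\tau$. The only difference is cosmetic: where the paper bounds $y^2$ term by term using ${\tau-u-u'\over r},{\tau-u-u'\over r'}\le C$ and ${r\over a},{r'\over a}\le C$, you factor $y^2=r^2A(2-A)$ and use $rA={(\tau-u-u')(r'-r+a)\over 2a}$ with $0\le A\le 2$, which yields the same inequality under the same implicit support restrictions ($u\ge-1$, $u'\ge-1$) that the paper also uses tacitly.
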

\begin{proof}
We have that $x^2 + y^2 = r^2$, and that $(x - a)^2 + y^2 = (r')^2$. This second expression expands out to $x^2 - 2 x a + a^2 + y^2 = (r')^2$. Now, subtracting this from the first expression gives
\begin{equation} \label{eq:xrr'}
\begin{aligned}
2 x a = r^2 - (r')^2 + a^2.
\end{aligned}
\end{equation}
Now, we write $r = t - u$ and $r' = s - t - u'$. From this, it follows that
\begin{equation} \label{eq:rr'atau}
    \begin{aligned}
    r + r' = s - u - u' = a + \tau - u - u'.
    \end{aligned}
\end{equation}
Solving for $r'$ and plugging this in to \eqref{eq:xrr'} gives us
\begin{equation}
\begin{aligned}
2 x a = r^2 - (a + \tau - r - u - u')^2 + a^2
\\ = r^2 - r^2 + 2 a r + 2 r (\tau - u - u') - a^2 - 2 a (\tau - u - u') - (\tau - u - u')^2 + a^2
\\ = 2 a r + 2 r (\tau - u - u') - 2 a (\tau - u - u') - (\tau - u - u')^2.
\end{aligned}
\end{equation}
Thus, we have that
\begin{equation}
    \begin{aligned}
    x = r + {r (\tau - u - u') \over a} - (\tau - u - u') - {(\tau - u - u')^2 \over 2 a}
    \\ = r \left (1 + {\tau - u - u' \over a} - {\tau - u - u' \over r} - {(\tau - u - u')^2 \over 2 r a} \right ),
    \end{aligned}
\end{equation}
giving us the first equality.

For $x - a$, we note that \eqref{eq:xrr'} implies that
\[
x - a = {r^2 - (r')^2 - a^2 \over 2 a}.
\]
Thus, using \eqref{eq:rr'atau} to solve for $r$ in terms of $r'$, $a$, $\tau$, $u$, and $u'$ gives us that
\[
x - a = {-2 a r' + 2 a (\tau - u - u') - 2 r' (\tau - u - u') + (\tau - u - u')^2 \over 2 a}.
\]
Factoring out $-r'$ gives us that
\[
x - a = -r' \left (1 - {\tau - u - u' \over r'} + {\tau - u - u' \over a} - {(\tau - u - u')^2 \over 2 a r'} \right ),
\]
as desired.

The expressions for $y^2$ now follow simply from using the equation relating $r$, $x$, and $y$, and the equation relating $r'$, $x - a$, and $y$. We have that
\[
y^2 = r^2 - x^2 = r^2 - r^2 \left (1 + {\tau - u - u' \over a} - {\tau - u - u' \over r} - {(\tau - u - u')^2 \over 2 r a} \right )^2,
\]
and we have that
\[
y^2 = (r')^2 - (x - a)^2 = (r')^2 - (r')^2 \left (1 + {\tau - u - u' \over a} - {\tau - u - u' \over r'} - {(\tau - u - u')^2 \over 2 r' a} \right )^2.
\]
Now, we note that
\[
\tau - u - u' = r + r' - a \ge 0
\]
by \eqref{eq:tauuu'rr'ar}. Moreover, by Lemma~\ref{lem:rtr's-t}, we have that
\[
{\tau - u - u' \over r} \le C,
\]
and we have that
\[
{\tau - u - u' \over r'} \le C.
\]
The desired bounds on the expression for $y$ then follow after noting that ${r \over a} \le C$ and ${r' \over a} \le C$ because $\tau \le \delta_0 s$.

The final point that $a \ge (1 - \delta_0) s$ follows immediately because $a = s - \tau$.

\end{proof}

In order to control higher order nonlinearities such as $(\partial_t \psi)^4$, we must analyze the $(r,r')$ coordinate system more carefully. We have the following lemma.

\begin{lemma} \label{lem:annuliu'dec}
We have that
\begin{equation} \label{eq:rr'drdr'}
    \begin{aligned}
    r r' d r \wedge d r' = a y d x \wedge d y.
    \end{aligned}
\end{equation}
Moreover, let $\tau \ge 100$ and let $\tau \le \delta_0 s$. We then fix any $\Sigma_t$ with $0 \le t \le s$, and we take the annular region of thickness $1$ within $\Sigma_t$ given by $b \le u \le b + 1$ subject to the constraint that $-1 \le b \le \delta_0 \tau - 1$. Similarly, we take the annular region of thickness approximately $\delta_0 \tau$ within $\Sigma_t$ given by $-1 \le u' \le \delta_0 \tau$. Then, we have that
\begin{equation} \label{eq:u'decest1}
    \begin{aligned}
    \int_{\Sigma_t} \chi_{b \le u \le b + 1} \chi_{-1 \le u' \le \delta_0 \tau} {1 \over 1 + |u'|^{{1 \over 2}}} d x \le C \min(\sqrt{1 + t} \log(1 + t),\sqrt{1 + s - t}).
    \end{aligned}
\end{equation}

\end{lemma}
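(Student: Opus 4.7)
The first identity follows by a direct calculation. From $r^2 = x^2 + y^2$ and $(r')^2 = (x-a)^2 + y^2$ we get $r\,dr = x\,dx + y\,dy$ and $r'\,dr' = (x-a)\,dx + y\,dy$. Wedging these two one-forms and using $dx \wedge dx = dy \wedge dy = 0$ yields $r r'\,dr \wedge dr' = \bigl(xy - y(x-a)\bigr)\,dx \wedge dy = ay\,dx \wedge dy$.

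My plan for the integral bound is to interpret the identity from the first part as a change of variables, noting that $(x, y) \mapsto (r, r')$ is two-to-one (one preimage for each sign of $y$). Equivalently, passing to $(u, u') = (t - r,\, s - t - r')$, we have $dx\,dy = \tfrac{r r'}{a|y|}\,du\,du'$ up to this two-fold cover. Next, I would solve the two circle equations simultaneously to obtain $x = \tfrac{r^2 - (r')^2 + a^2}{2a}$, and then factor $y^2 = r^2 - x^2$ as
\[
y^2 = \frac{(r+a-r')(r'+a-r)(r+r'-a)(r+r'+a)}{4a^2} =: \frac{ABCD}{4a^2}.
\]
The integration region corresponds precisely to $\{A, B, C > 0\}$, where the two circles actually intersect. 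Using Lemma~\ref{lem:rtr's-t}, in the portion where $u, u' \le \delta_0 \tau$ we have $r \approx t$, $r' \approx s - t$, $a \approx s$, $D \approx 2s$, and $C = \tau - u - u' \ge (1 - 2\delta_0)\tau$. The remaining factors $A \approx 2t - \tau$ and $B \approx 2(s-t) - \tau$ (up to $O(\delta_0 \tau)$ corrections) are the only ones that can degenerate, and only in the tangency regimes $t \approx \tau/2$ or $t \approx s - \tau/2$ respectively.

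In the generic case where $A \gtrsim t$ and $B \gtrsim s - t$, I would use $|y| \gtrsim \sqrt{t(s-t)\tau/s}$ to bound the Jacobian by $\tfrac{r r'}{a|y|} \lesssim \sqrt{\tfrac{t(s-t)}{s\tau}}$; since $\int_{-1}^{\delta_0 \tau} \tfrac{du'}{1 + |u'|^{1/2}} \lesssim \sqrt{\tau}$, this yields $I \lesssim \sqrt{t(s-t)/s} \le \min(\sqrt{1+t}, \sqrt{1+s-t})$, dominated by both sides of the claimed minimum. For the symmetric critical regime $t \approx s - \tau/2$ (where $B$ is small), the constraint $B > 0$ forces $u > u'$, and substituting $v = u - u'$ shows that $|u'| \approx |u - v|$ stays comparable to $|u|$, so the weight $(1 + |u'|^{1/2})^{-1}$ effectively compensates the $B^{-1/2}$ singularity; the inner integral is bounded, giving $I \lesssim \sqrt{\tau} \lesssim \sqrt{1+s-t}$.

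The main obstacle is the asymmetric critical regime $t \approx \tau/2$, which is responsible for the logarithmic factor. Here $A = 2t - \tau + u' - u$ can be small and the Jacobian behaves like $\tfrac{r r'}{a|y|} \approx \sqrt{\tau/A}$. Substituting $v = u' - u + (2t - \tau)$ reduces the inner integral to $\int_0^{\delta_0 \tau} \tfrac{dv}{\sqrt{v}\,(1 + \sqrt{v+u})}$; unlike the symmetric case, the weight fails to dampen the $v^{-1/2}$ singularity because $u' \approx u$ can both be small. Splitting at $v \approx u$, the piece $v \gtrsim u$ has integrand $\approx 1/v$, contributing $\log(\delta_0 \tau / \max(u, 1))$; integrating in $u$ over $[b, b+1]$ produces at most a factor of $\log \tau$. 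Combining with the prefactor $\sqrt{\tau}$ and using $\tau \approx 2t$ in this regime yields $I \lesssim \sqrt{\tau}\log \tau \lesssim \sqrt{1+t}\log(1+t)$. Taking the minimum over the bounds from the generic case and the two critical regimes completes the proof.
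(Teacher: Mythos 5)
Your derivation of \eqref{eq:rr'drdr'} is the same as the paper's; for the integral bound \eqref{eq:u'decest1} you take a genuinely different route. The paper fixes $\Sigma_t$, works in polar coordinates centered either at the origin (for $t \le 3s/4$) or at the center of the $f$-annulus (for $t \ge s/4$), splits into angular sectors, controls the $(r,r')$ Jacobian in the transversal sectors via lower bounds on $y$ (Lemmas~\ref{lem:tvartheta}, \ref{lem:yvartheta}), and in the near-tangency sectors integrates the weight using monotonicity bounds such as $|\partial r'/\partial\vartheta| \gtrsim r|\vartheta|$, excising an angular window of width $t^{-1/2}$; the logarithm appears only at the tangency where $|u'|$ is smallest. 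You instead make one global change of variables to $(u,u')$ with Jacobian $r r'/(a|y|)$ and control $|y|$ by the factorization $4a^2 y^2 = ABCD$ with $A = r+a-r'$, $B = r'+a-r$, $C = r+r'-a$, $D = r+r'+a$: since $A+B = 2a$ at most one of $A,B$ degenerates, $C \approx \tau$, $D \approx 2s$, and the two tangency regimes correspond to smallness of $A$ (at $t \approx \tau/2$, producing the $\sqrt{1+t}\log(1+t)$ branch) or of $B$ (at $t \approx s - \tau/2$, producing $\sqrt{\tau} \approx \sqrt{1+s-t}$). I checked your three one-dimensional estimates and they are correct; in the $B$-regime your stated justification (``$|u'|$ stays comparable to $|u|$'') is garbled, but substituting $w = B$ does show the inner integral is $O(\sqrt{\tau})$ because the weight $(1+|u'|^{1/2})^{-1} = (1+|\beta+u-w|^{1/2})^{-1}$ absorbs the $w^{-1/2}$ singularity. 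Your route is arguably cleaner and more symmetric: the factorization makes the tangency structure and the log/no-log asymmetry transparent, replacing the paper's sector-by-sector angular monotonicity arguments. One caveat on your closing sentence: in the $A$-degenerate window ($t \approx \tau/2$) you obtain only the $\sqrt{1+t}\log(1+t)$ branch and not $\sqrt{1+s-t}$; this is exactly parallel to the paper's proof, which establishes the $\sqrt{1+s-t}$ branch only for $t \ge s/4$ (a range excluding this window), and the logarithmic branch is the only one used downstream, so your argument covers the same ranges with the same bounds as the paper's.
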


Before turning to the proof, we mention that this integral estimate arises from trying to control interactions where $|u|$ and $|u'|$ are small. We are able to assume that we have compact support in $|u|$ because of the strong decay in $|u|$. However, an integration by parts in the scaling vector field will result in only $|u'|^{{1 \over 2}}$ decay in $|u'|$ on the auxiliary multiplier $f$, so we must understand these integrals as opposed to simply understanding the intersections of two annular regions of thickness $1$. Moreover, the conditions we have imposed on the regions may seem very arbitrary. The estimates are used to control quartic interactions in the nonlinearity. The motivation can be summarized by saying that we are most concerned with the region along the light cones for $\psi$ and $f$, and this determines the restrictions above.

\begin{proof}
We have that
\[
r d r = x d x + y d y,
\]
and that
\[
r' d r' = (x - a) d x + y d y.
\]
From this, it immediately follows that
\[
r r' d r \wedge d r' = a y d x \wedge d y,
\]
as desired.

There are now $6$ separate regions to consider. We first consider two cases depending on the size of $t$. When $t \le {3 s \over 4}$, we consider $3$ regions adapted to $r$, and when $t \ge {s \over 4}$, we consider $3$ regions adapted to $r'$. We first consider the regions adapted to $r$ when $t \le {3 s \over 4}$. We also note that the integrand is $0$ unless if ${\tau \over 10} \le t \le s - {\tau \over 10}$ by Lemma~\ref{lem:rtr's-t}, and we note that $a \ge {s \over 2}$ because $\tau \le \delta_0 s$.

The first region is where $|\theta| > \delta_0$ and $|\pi - \theta| > \delta_0$. Figure~\ref{fig:thetavartheta'small} shows schematically what this region looks like.

\begin{figure}
    \centering
    \begin{tikzpicture}
    \draw[very thick] (0,0) circle (4);
    \draw[very thick] (0,0) circle (3.8);
    \draw[dashed] (0,0) -- (4.5,0);
    
    \draw[very thick] (6,0) circle (5);
    \draw[very thick] (6,0) circle (3);
    \draw[dashed] (6,0) -- (1.5,0);
    \draw (0,0) -- (2.85,2.65);
    \draw (6,0) -- (2.85,2.65);
    \node (theta) at (0.5,0.2) {$\theta$};
    \node (vartheta') at (5.3,0.2) {$\vartheta'$};
    \end{tikzpicture}
    \caption{One possible configuration that schematically describes the regions where $t \le {3 s \over 4}$ and $|\vartheta| \ge \delta_0$, and also where $t \ge {s \over 4}$ and $|\theta'| \ge \delta_0$. The thin annulus should be thought of as being the annulus where $b \le u \le b + 1$. The thick annulus should be thought of as being the annulus where $-1 \le u' \le \delta_0 \tau$. The points in question are those which are in both annuli. The integral we are computing involves the function ${1 \over 1 + |u'|^{{1 \over 2}}}$, which decays as we move away from the outer circle of the thick annulus towards the inner circle. In these regions, the important fact we are using is that $y \theta$ and $y \vartheta'$ satisfy good lower bounds (see Lemma~\ref{lem:yvartheta}), meaning that we have good bounds on the Jacobian \eqref{eq:rr'drdr'}. We are thus using that $u \le \delta_0 \tau$ and $u' \le \delta_0 \tau$.}
    \label{fig:thetavartheta'small}
\end{figure}
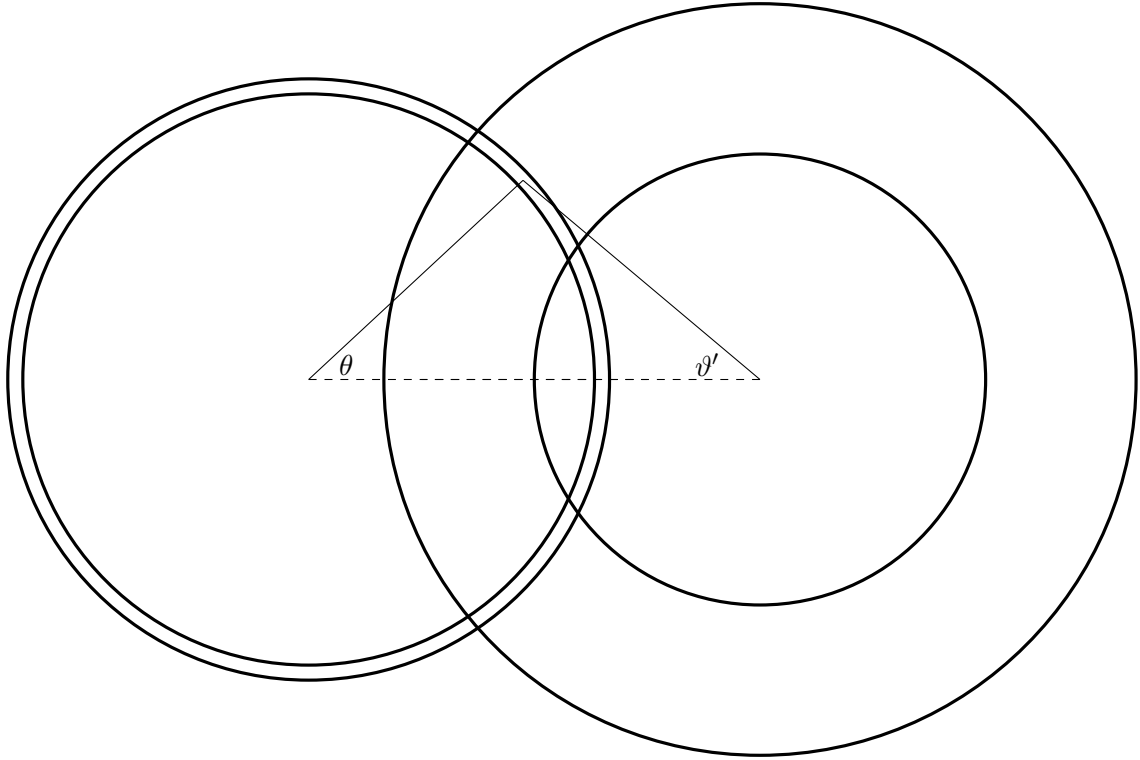

In this region, we have that $y = r \sin(\theta)$ is comparable to $r$, meaning that
\[
{r r' \over a y} \le C.
\]
Thus, the integral is controlled by
\[
\int_{t - b - 1}^{t - b} \int_{s - t - \delta_0 \tau}^{s - t + 1} {1 \over 1 + |u'|^{{1 \over 2}}} d r' d r \le C (1 + \sqrt{\tau}) \le C (1 + \sqrt{t}).
\]
The second region is where $|\theta| \le \delta_0$. This region also looks roughly like the one shown in Figure~\ref{fig:thetavartheta'small}.

We begin by noting that we must have that $|\theta| \ge c {\sqrt{\tau} \over \sqrt{s}}$ because $t \le {3 \over 4} s$ by Lemma~\ref{lem:tvartheta}. We thus also have that $y^2 \ge c r \tau$, meaning that $y \ge c \sqrt{r} \sqrt{\tau}$. Indeed, by Lemma~\ref{lem:yvartheta}, we have that $y \theta \ge {\tau \over 10}$ in this region, meaning that $y^2 \ge c r y \theta \ge c r \tau$, where we are using that $r \theta$ is comparable to $y$ because $|\theta| \le \delta_0$.

Thus, we have that
\[
{r r' \over a y} \le C {\sqrt{r} \over \sqrt{\tau}}.
\]
Thus, the integral is controlled by
\[
\int_{t - b - 1}^{t - b} \int_{s - t - \delta_0 \tau}^{s - t + 1} {\sqrt{r} \over \sqrt{\tau} \left (1 + |u'|^{{1 \over 2}} \right )} d r' d r \le C \sqrt{1 + t}.
\]

We finally consider the region where $|\vartheta| = |\pi - \theta| \le \delta_0$. Figure~\ref{fig:varthetasmall} shows schematically what this region looks like.

\begin{figure}
    \centering
    \begin{tikzpicture}
    \draw[very thick] (-4,0) circle (3);
    \draw[very thick] (-4,0) circle (2.8);
    \draw (-4,0) -- (-6.5,1.5);
    \node (vartheta1) at (-4.7,0.2) {$\vartheta$};
    \draw[dashed] (-4,0) -- (-7,0);
    \draw[very thick,domain=-10:10] plot ({18-25*cos(\x)},{25*sin(\x)});
    \draw[very thick,domain=-10:10] plot ({18-24*cos(\x)},{24*sin(\x)});
    \draw[very thick] (4,0) circle (3);
    \draw[very thick] (4,0) circle (2.8);
    \draw (4,0) -- (2,2.15);
    \node (vartheta2) at (3.55,0.2) {$\vartheta$};
    \draw[dashed] (4,0) -- (1,0);
    \draw[very thick,domain=-10:10] plot ({26.3-25*cos(\x)},{25*sin(\x)});
    \draw[very thick,domain=-10:10] plot ({26.3-24*cos(\x)},{24*sin(\x)});
    \end{tikzpicture}
    \caption{Two configurations giving a rough idea of the regime where $|\vartheta| = |\pi - \theta| \le \delta_0$. The thin annulus should be thought of as being the annulus where $b \le u \le b + 1$. The long outer arc should be thought of as being the outer edge given by $r' = s - t + 1$. The distance between the long outer arc and the long inner arc is comparable to $\delta_0 \tau + 1$ and can be much larger than $1$ in practice, which is why this thickness is much larger than the thickness of the annulus. The points in question are the points in the annulus, to the right of the outer long arc, and to the left of the inner long arc. Of course, the picture is a bit misleading because these points should also have very small $|\vartheta|$ value. The integral we are computing involves the function ${1 \over 1 + |u'|^{{1 \over 2}}}$, which decays as we move away from the outer long arc towards the inner long arc.}
    \label{fig:varthetasmall}
\end{figure}
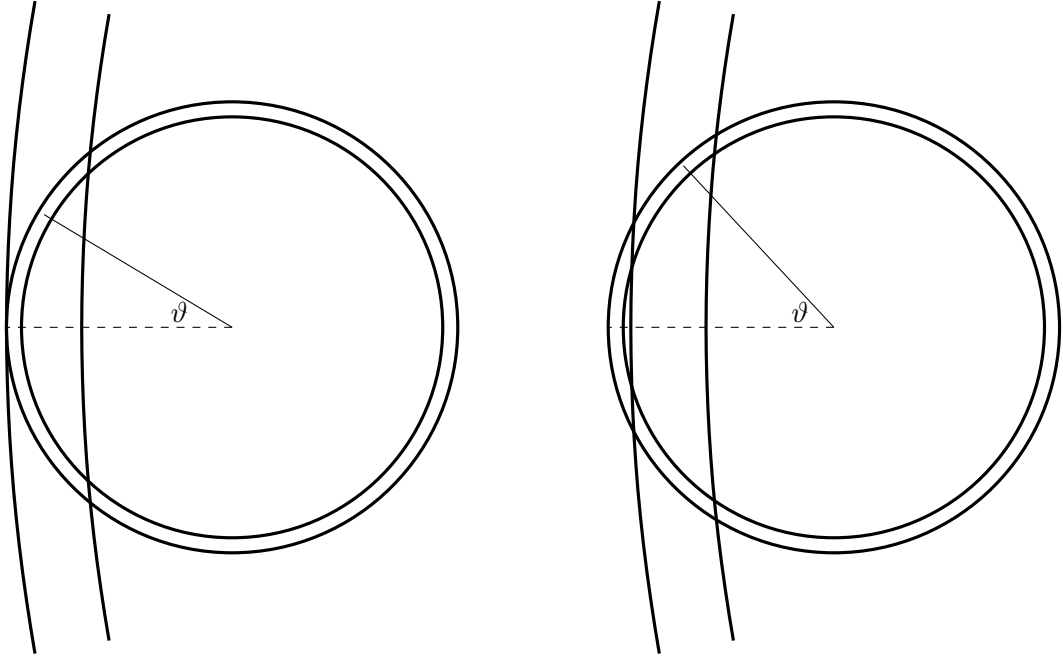

We now go into polar coordinates adapted to $r$. We now note the relation
\[
r' \cos(\vartheta') = a - x = r \cos(\vartheta) + a.
\]
We want to express $r'$ and $\vartheta'$ in terms of $r$ and $\vartheta$. We take a derivative of this expression in $\vartheta$, and we get
\begin{equation} \label{eq:dr'dvartheta1}
    \begin{aligned}
    {\partial r' \over \partial \vartheta} \cos(\vartheta') - r' \sin(\vartheta') {\partial \vartheta' \over \partial \vartheta} = -r \sin(\vartheta).
    \end{aligned}
\end{equation}
Thus, we have that
\[
{\partial r' \over \partial \vartheta} \cos(\vartheta') = -y \left (1 - {\partial \vartheta' \over \partial \vartheta} \right ),
\]
meaning that
\begin{equation} \label{eq:dr'dvartheta2}
    \begin{aligned}
    \left |{\partial r' \over \partial \vartheta} \right | \ge c |y| \left (1 - {\partial \vartheta' \over\partial \vartheta} \right ),
    \end{aligned}
\end{equation}
where we have used the fact that $|\vartheta| \le \delta_0$ implies that $|\vartheta'| \le 10 \delta_0$ by Lemma~\ref{lem:yvartheta} (in fact, we have that $|\vartheta'| \le |\vartheta|$ in this region) along with the fact that $y = r \sin(\vartheta) = r' \sin(\vartheta')$. We shall show that the derivative is approximately equal to $c -y$. We shall then be able to integrate ${\partial u' \over \partial \vartheta}$ in order to control the size of ${1 \over 1 + |u'|^{{1 \over 2}}}$ in the integrand.

We shall now bound ${\partial \vartheta' \over \partial \vartheta}$ in this region. We recall that
\[
\vartheta' = \arctan \left ({y \over a - x} \right ) = \arctan \left ({r \sin(\vartheta) \over r \cos(\vartheta) + a} \right ).
\]
Thus, we have that
\[
{\partial \vartheta' \over \partial \vartheta} = {1 \over 1 + {r^2 \sin^2 (\vartheta) \over (r \cos(\vartheta) + a)^2}} \left ({r \cos(\vartheta) \over r \cos(\vartheta) + a} + {r^2 \sin^2(\vartheta) \over (r \cos(\vartheta) + a)^2} \right )
\]
Now, we have that
\[
r \cos(\vartheta) + a = a \left ({r \over a} \cos(\vartheta) + 1 \right ) \ge {a \over 2},
\]
where we have used the fact that
\begin{equation} \label{eq:rtaucomp}
    \begin{aligned}
    {r \over a} \le C {\tau \over a}
    \end{aligned}
\end{equation}
for some $C$ independent of $\delta_0$ in the region in question, and also that $\tau \le \delta_0 s \le 2 \delta_0 a$.

We shall now pause briefly to show that ${r \over a} \le C {\tau \over a}$ for some $C$ independent of $\delta_0$ indeed holds. Because $|\vartheta| \le \delta_0$, we have that $x = -r \cos(\vartheta) \le -{r \over 2}$. Thus, using Lemma~\ref{lem:xyrr'}, we have that
\[
r \left (1 - {\tau - u - u' \over r} + {\tau - u - u' \over a} - {(\tau - u - u')^2 \over 2 r a} \right ) \le -{r \over 2}.
\]
Moreover, using that
\[
{\tau - u - u' \over a} \le 100 \delta_0,
\]
we have that
\[
-(\tau - u - u') \left (1 - {r \over a} + O(\delta_0) \right ) \le -{3 \over 2} r.
\]
Thus, because ${r \over a} \le 10$, we have that
\[
r \le 100 \tau,
\]
for $\delta_0$ sufficiently small. This allows us to say that, in fact, we have that
\[
{r \over a} \le C {\tau \over a}
\]
for some $C$ independent of $\delta_0$ (for example, we can take $C = 100$), as desired.

Using this, we have that
\begin{equation} \label{eq:dtheta'dthetaRpsi}
    \begin{aligned}
    \left |{\partial \vartheta' \over \partial \vartheta} \right | \le C {r \over a} \le C {\tau \over a}.
    \end{aligned}
\end{equation}
Because the constant $C$ in this expression does not depend on $\delta_0$ and because ${\tau \over a} \le 2 \delta_0$, this expression can be made arbitrarily small by picking $\delta_0$ sufficiently small. From this and from \eqref{eq:dr'dvartheta2}, it follows that


\begin{equation} \label{eq:dr'dvarthetaRpsi}
    \begin{aligned}
    \left |{\partial r' \over \partial \vartheta} \right | \ge c |y| = c r |\sin(\vartheta)| \ge c r |\vartheta|.
    \end{aligned}
\end{equation}
This allows us to argue as follows. We have that the integral in \eqref{eq:u'decest1} is controlled by
\[
\int_{t - b - 1}^{t - b} \int_{-\delta_0}^{\delta_0} \chi_{-1 \le u' \le \delta_0 \tau} {1 \over 1 + |u'|^{{1 \over 2}}} r d \vartheta d r.
\]
Because the integrand is symmetric with respect to reflections over the $x$ axis and because the region of integration is as well, it suffices to control the integral for $\vartheta$ from $0$ to $\delta_0$ instead. Now, for fixed $r$, we pick $\vartheta_0 (r)$ such that $u'$ is minimized under the constraint that $u' \ge -1$ (otherwise, the integrand is $0$). We note then that the integrand is supported where $\vartheta \ge \vartheta_0 (r)$ because we in fact have that ${\partial r' \over \partial \vartheta} < 0$ for $\vartheta$ small and positive by \eqref{eq:dr'dvartheta1}. Thus, the integral we must control is given by
\[
\int_{t - b - 1}^{t - b} \int_{\vartheta_0 (r)}^{\delta_0} \chi_{-1 \le u' \le \delta_0 \tau} {1 \over 1 + |u'|^{{1 \over 2}}} r d \vartheta d r.
\]



Using \eqref{eq:dr'dvarthetaRpsi}, we then have that
\[
u'(r,\vartheta) = u'(r,\vartheta_0) + \int_{\vartheta_0}^{\vartheta} {\partial u' \over \partial \vartheta} d \vartheta_1 \ge -1 + c \int_{\vartheta_0}^{\vartheta} r \vartheta_1 d \vartheta_1 \ge -1 + c r (\vartheta_0 + \vartheta) (\vartheta - \vartheta_0),
\]
where we have used that $u'(r,\vartheta_0) \ge -1$ by the presence of $\chi_{u' \ge -1}$. We now excise a neighborhood of thickness ${10 \over \sqrt{t}}$ around $\vartheta_0$. When $\vartheta - \vartheta_0 \ge {10 \over \sqrt{t}}$, we note that
\begin{equation} \label{eq:u'estRpsi}
    \begin{aligned}
    1 + |u'(r,\vartheta)| \ge c r (\vartheta - \vartheta_0)^2 \ge c.
    \end{aligned}
\end{equation}
Now, we have that
\[
\int_{t - b - 1}^{t - b} \int_{\vartheta_0 (r)}^{\delta_0} {1 \over 1 + |u'|^{{1 \over 2}}} r d \vartheta d r \le \int_{t - b - 1}^{t - b} \int_{\vartheta_0 (r)}^{\vartheta_0 (r) + {10 \over \sqrt{t}}} {1 \over 1 + |u'|^{{1 \over 2}}} r d \vartheta d r + \int_{t - b - 1}^{t - b} \int_{\vartheta_0 (r) + {10 \over \sqrt{t}}}^{\delta_0} {1 \over 1 + |u'|^{{1 \over 2}}} r d \vartheta d r,
\]
where the second integral is taken to be $0$ when $\vartheta_0 (r) + {10 \over \sqrt{t}} \ge \delta_0$.

We now examine each of these integrals. For the first integral, we note that the inner integral is controlled by
\[
C \int_{\vartheta_0 (r)}^{\vartheta_0 (r) + {10 \over \sqrt{t}}} d \vartheta \le {C \over \sqrt{t}},
\]
meaning that the first integral is controlled by $C(1 + \sqrt{t})$, as desired. For the second integral, we use \eqref{eq:u'estRpsi} to show that $|u'|$ is relatively large, giving us that
\[
\int_{\vartheta_0 (r) + {10 \over \sqrt{t}}}^{\delta_0} {1 \over 1 + |u'|^{{1 \over 2}}} d \vartheta \le C \int_{\vartheta_0 (r) + {10 \over \sqrt{t}}}^{\delta_0} {1 \over \sqrt{r} (\vartheta - \vartheta_0 (r))} d \vartheta \le {C (1 + \log(1 + t)) \over \sqrt{r}}.
\]
From this, it follows that the second integral is controlled by $C(1 + \log(1 + t) \sqrt{t})$. This proves the correct upper bound in the region where $t \le {3 \over 4} s$.


We now turn to the regions where $t \ge {s \over 4}$, which are adapted to $r'$. The first region is where $|\theta'| > \delta_0$ and $|\pi - \theta'| > \delta_0$, the second region is where $|\pi - \theta'| \le \delta_0$, and the third region is where $|\theta'| \le \delta_0$. The bounds in the first two regions follow from the same argument to control the corresponding regions adapted to $r$. They both also roughly look like the configuration found in Figure~\ref{fig:thetavartheta'small}.

We thus turn to control the final region where $|\theta'| \le \delta_0$. Figure~\ref{fig:theta'small} gives an idea of what this region looks like.

\begin{figure}
    \centering
    \begin{tikzpicture}
    \draw[very thick] (-4,0) circle (3);
    \draw[very thick] (-4,0) circle (1.5);
    \draw (-4,0) -- (-1.2,0.9);
    \node (theta'1) at (-2.8,0.2) {$\theta'$};
    \draw[dashed] (-4,0) -- (-1,0);
    \draw[very thick,domain=-10:10] plot ({25*cos(\x)-26},{25*sin(\x)});
    \draw[very thick,domain=-10:10] plot ({24.7*cos(\x)-26},{24.7*sin(\x)});
    \draw[very thick] (4,0) circle (3);
    \draw[very thick] (4,0) circle (1.5);
    \draw (4,0) -- (5.7,2.3);
    \node (theta'2) at (4.35,0.2) {$\theta'$};
    \draw[dashed] (4,0) -- (7,0);
    \draw[very thick,domain=-10:10] plot ({25*cos(\x)-19},{25*sin(\x)});
    \draw[very thick,domain=-10:10] plot ({24.7*cos(\x)-19},{24.7*sin(\x)});
    \end{tikzpicture}
    \caption{Two configurations giving a rough idea of the regime where $|\theta'| \le \delta_0$. The two large arcs correspond to the region where $b \le u \le b + 1$. The thick annulus corresponds to $-1 \le u' \le \delta_0 \tau$, and it can in general be rather large. The points in question are the points in the annulus, to the left of the outer long arc, and to the right of the inner long arc. Just like in Figure~\ref{fig:varthetasmall}, the picture is a bit misleading because these points should also have very small $|\theta'|$ value. The integral we are computing involves the function ${1 \over 1 + |u'|^{{1 \over 2}}}$, which decays as we move away from the outer circle in the annulus towards the inner circle.}
    \label{fig:theta'small}
\end{figure}
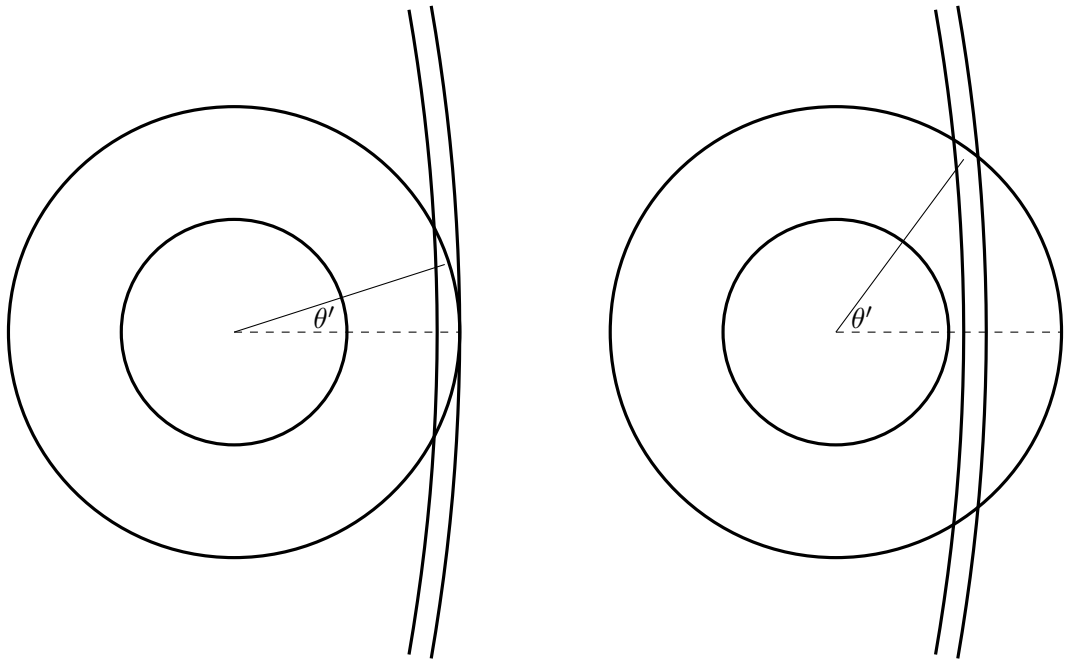


We use the relation
\[
r \cos(\theta) = r' \cos(\theta') + a.
\]
We shall analyze this in the $(r,\theta)$ coordinate system. Taking a derivative with respect to $\theta$ gives us that
\[
-r \sin(\theta) = {\partial r' \over \partial \theta} \cos(\theta') - r' \sin(\theta') {\partial \theta' \over \partial \theta}.
\]
Thus, we have that
\begin{equation} \label{eq:dr'dthetaRf}
    \begin{aligned}
    {\partial r' \over \partial \theta} \cos(\theta) = y \left ({\partial \theta' \over \partial \theta} - 1 \right ).
    \end{aligned}
\end{equation}
Now, we have that
\[
\theta' = \arctan \left ({y \over x - a} \right ) = \arctan \left ({r \sin(\theta) \over r \cos(\theta) - a} \right )
\]
From this, it follows that
\[
{\partial \theta' \over \partial \theta} = {1 \over 1 + {r^2 \sin^2 (\theta) \over (r \cos(\theta) - a)^2}} \left ({r \cos(\theta) \over r \cos(\theta) - a} + {r^2 \sin^2(\theta) \over (r \cos(\theta) - a)^2} \right )
\]

We now use the relations
\[
r \cos(\theta) - a = r' \cos(\theta')
\]
and
\[
r \sin(\theta) = r' \sin(\theta')
\]
in order to conclude that
\[
{\partial \theta' \over \partial \theta} = {1 \over 1 + {\sin^2 (\theta') \over \cos^2 (\theta')}} \left ({r \cos(\theta) \over r' \cos(\theta')} + {\sin^2(\theta') \over \cos^2(\theta')} \right ).
\]
Thus, we have that
\begin{equation} \label{eq:dtheta'dthetaRf}
    \begin{aligned}
    {1 \over 1 + \delta_0} {r \cos(\theta) \over r' \cos(\theta')} \le {\partial \theta' \over \partial \theta} \le (1 + \delta_0) {r \cos(\theta) \over r' \cos(\theta')}.
    \end{aligned}
\end{equation}

Now, we have that the integral in \eqref{eq:u'decest1} is controlled by
\begin{equation} \label{eq:u'decest2}
    \begin{aligned}
    \int_{t - b - 1}^{t - b} \int_{-\theta_1 (r)}^{\theta_1 (r)} {1 \over 1 + |u'|^{{1 \over 2}}} r d \theta d r
    \end{aligned}
\end{equation}
for some $\theta_1 (r)$ (note that we have used that this region is symmetric with respect to reflections over the $x$ axis). These points are determined by the restrictions that $r' \le s - t + 1$ and $-\delta_0 \le \theta' \le \delta_0$. We shall now bound the inner integral
\[
\int_{-\theta_1 (r)}^{\theta_1 (r)} {1 \over 1 + |u'|^{{1 \over 2}}} d \theta
\]
by $C {\sqrt{s - t} \over t}$.


By symmetry, it suffices to control the integral from $0$ to $\theta_1 (r)$ instead. Now, we note that ${\partial \theta' \over \partial \theta}$ is bounded from below by ${1 \over 2} {r \over r'}$ by \eqref{eq:dtheta'dthetaRf} (note that ${r \over r'} \ge {1 \over 10 \delta_0}$ in this region by an argument similar to the one used to show \eqref{eq:rtaucomp}). Thus, using this, \eqref{eq:dr'dthetaRf}, and Lemma~\ref{lem:rtr's-t}, we have that
\[
{\partial r' \over \partial \theta} \ge c {r^2 \over r'} \theta \ge c {t^2 \over s - t} \theta
\]
for some $c$ in this region. Now, at $(r,\theta_1 (r))$, we note that we have that $u' = -1$. Thus, for $0 \le \theta \le \theta_1 (r)$, we have that
\[
|u'(r,\theta)| = \left |u'(r,\theta_1 (r)) - \int_\theta^{\theta_1 (r)} {\partial u' \over \partial \theta} d \theta_0 \right | \ge c {t^2 \over s - t} \theta_1 (r) (\theta_1 (r) - \theta) - 1.
\]
Thus, in this region, we have that
\[
{1 \over 1 + |u'|^{{1 \over 2}}} \le {C \over 1 + {t \sqrt{\theta_1 (r)} \over \sqrt{s - t}} \sqrt{\theta_1 (r) - \theta}}.
\]

Now, we have that
\begin{equation}
    \begin{aligned}
    \int_0^{\theta_1 (r)} {1 \over 1 + {t \sqrt{\theta_1 (r)} \over \sqrt{s - t}} \sqrt{\theta_1 (r) - \theta}} d \theta \le {\sqrt{s - t} \over t \sqrt{\theta_1 (r)}} \int_0^{\theta_1 (r)} {1 \over \sqrt{\theta_1 (r) - \theta}} d \theta = {\sqrt{s - t} \over t \sqrt{\theta_1 (r)}} \int_0^{\theta_1 (r)} {1 \over \sqrt{x}} d x
    \\ \le C {\sqrt{s - t} \over t}.
    \end{aligned}
\end{equation}
Plugging this into \eqref{eq:u'decest2} and integrating in $r$ then gives us the desired estimate in this region.




\end{proof}

Before proceeding, we make a brief remark about the proof of Lemma~\ref{lem:annuliu'dec}. It is interesting to note the asymmetry in the logarithmic factor that occurs in $t$ but not in $s - t$. This can be explained by comparing Figure~\ref{fig:varthetasmall} and Figure~\ref{fig:theta'small}. As can be seen by examining \eqref{eq:rr'drdr'}, the worst contributions from the volume form come when the level sets of $r$ and $r'$ become tangent. Moreover, we note that we have a decaying factor in $u'$. In Figure~\ref{fig:varthetasmall}, $|u'|$ is essentially minimized when these level sets are tangent, while in Figure~\ref{fig:theta'small}, $|u'|$ is maximized when these level sets are tangent. Thus, the region of tangency and worst decay coincide in the region corresponding to Figure~\ref{fig:varthetasmall} (which occurs when $t$ is relatively small), but not in the region corresponding to Figure~\ref{fig:theta'small}.

We shall also need a simpler estimate involving the intersection of an annulus of thickenss one and an annulus of thickness at most $\tau$.

\begin{lemma} \label{lem:annuliarea}
Let $100 \le \tau \le \delta_0 s$. Moreover, in $\Sigma_t$, let $-1 \le b \le 10 \tau$ and let $-1 \le b' \le \tau$. Then, the area of intersection of an annulus of thickness $1$ and radius $r = t - b \le t + 1$ adapted to $r$ and another annulus of thickness $10 \tau$ and radius $r' = s - t - b' \le s - t + 1$ adapted to $r'$ is at most $C (1 + \sqrt{t}) \sqrt{\tau}$. In other words, we have that
\begin{equation}
    \begin{aligned}
    \int_{\Sigma_t} \chi_{b \le u \le b + 1} \chi_{b' \le u' \le b' + 100 \tau} d x \le C (1 + \sqrt{t}) \sqrt{\tau}
    \end{aligned}
\end{equation}
Similarly, the intersection an annulus of thickness $1$ and radius $r = t - b \le t + 1$ and another annulus of thickness $1$ and radius $r' = s - t - b' \le s - t - 1$ has area at most $C \min(1 + \sqrt{t},1 + \sqrt{s - t})$. In other words, we have that
\[
\int_{\Sigma_t} \chi_{b \le u \le b + 1} \chi_{b' \le u' \le b' + 1} d x \le C \min(1 + \sqrt{t},1 + \sqrt{s - t}).
\]

\end{lemma}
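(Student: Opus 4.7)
The plan is to work in polar coordinates $(r,\theta)$ adapted to the first annulus (centered at the origin) and estimate the arc length on each circle $\{r=\text{const}\}$ that lies within the second annulus. Using the identity $r'^2 = r^2 - 2ax + a^2$ with $x = r\cos\theta$, for fixed $r$ the constraint $r' \in I_{r'}$ translates to $x \in I_x$ where $|I_x| \le \frac{r'|I_{r'}|}{a}$, as follows from differentiating the identity. Here $I_{r'}$ denotes the $r'$-interval defining the second annulus and $I_r$ the $r$-interval defining the first.

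The arc length on the circle of radius $r$ subject to $x \in I_x$ is
\[
\mathrm{arc}(r) = 2\int_{I_x \cap [-r,r]} \frac{r}{\sqrt{r^2-x^2}}\,dx.
\]
The integrand is bounded away from $x=\pm r$ and has an integrable square-root singularity near those endpoints. An elementary estimate gives the universal bound $\mathrm{arc}(r) \le C\sqrt{r|I_x|}$, with the worst case realized when $I_x$ abuts one of $\pm r$; the bound also dominates the trivial estimate $\mathrm{arc}(r) \le 2\pi r$ whenever $|I_x| \ge r$. Integrating over $r \in I_r$ yields
\[
\text{Area} \le |I_r|\cdot \sup_{r \in I_r}\mathrm{arc}(r) \le C|I_r|\sqrt{\frac{rr'|I_{r'}|}{a}}.
\]

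To finish, in the support of the characteristic functions one has $r \le t+1$ and $r' \le s-t+1$, while Lemma~\ref{lem:xyrr'} provides $a \ge (1-\delta_0)s$ since $\tau \le \delta_0 s$. For Part 1, $|I_r|=1$ and $|I_{r'}|=100\tau$, giving $\text{Area} \le C\sqrt{t(s-t)\tau/s} \le C\sqrt{t\tau}$ using $(s-t)/s \le 1$, which is absorbed by $C(1+\sqrt{t})\sqrt{\tau}$. For Part 2, $|I_r|=|I_{r'}|=1$, yielding $\text{Area} \le C\sqrt{t(s-t)/s} \le C\min(\sqrt{t},\sqrt{s-t})$ because $t(s-t)/s \le \min(t,s-t)$.

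The main obstacle is the arc-length estimate $\mathrm{arc}(r) \le C\sqrt{r|I_x|}$, which requires careful treatment of the tangency regime when $I_x$ approaches $\pm r$; this is where the square-root appears and is the source of the non-trivial behavior in the bounds. A secondary point is the degenerate regime where $t \le 1$ or $s-t \le 1$, in which case either annulus is contained in a disk of bounded radius and the claimed bounds hold trivially; the main argument above applies without modification once $t,s-t \gtrsim 1$.
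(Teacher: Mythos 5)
Your proof is correct, but it takes a genuinely different route from the paper. The paper proves this lemma by recycling the case analysis of Lemma~\ref{lem:annuliu'dec}: it splits into regions according to the sizes of $t$, $s-t$ and the angles $\theta,\vartheta,\theta',\vartheta'$, uses the boundedness of the $(r,r')$ Jacobian away from tangency, invokes Lemma~\ref{lem:largecirclearc} when the $r'$-annulus has small diameter, and in the tangency regimes estimates how fast $x$ (resp.\ $r'$) varies in $\theta$ (resp.\ $\vartheta$) to bound the arc length trapped in the other annulus. You instead slice by the circles $\{r=\mathrm{const}\}$, use the law-of-cosines identity $r'^2=r^2-2ax+a^2$ to convert the $r'$-constraint into an $x$-interval of length $|I_x|\le r'_{\max}|I_{r'}|/a$ (independent of $r$), and bound the arc length over an $x$-interval by $C\sqrt{r\,|I_x|}$, the square-root singularity of $r\,dx/\sqrt{r^2-x^2}$ at $x=\pm r$ absorbing the tangency case uniformly. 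Together with $a=s-\tau\ge(1-\delta_0)s$ this gives the clean unified bound $C\,|I_r|\sqrt{r\,r'\,|I_{r'}|/a}$, from which both assertions follow at once — including the $\min(1+\sqrt{t},\,1+\sqrt{s-t})$, since $(t+1)(s-t+1)/s\le 2\min(t+1,s-t+1)$ — with no angular case distinctions. What each approach buys: yours is shorter, avoids the $\delta_0$-dependent decomposition, and actually yields a slightly stronger quantitative statement; the paper's route reuses machinery that is needed anyway for the weighted integral in Lemma~\ref{lem:annuliu'dec}, where the factor $1/(1+|u'|^{1/2})$ makes a pure area bound insufficient and your slicing would have to be supplemented by tracking $u'$ along each circle. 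Note that your argument does use the hypothesis $\tau\le\delta_0 s$ through $a\gtrsim s$, and (as you observe) the degenerate cases $t\lesssim 1$ or an empty $r$-range are trivial; with those remarks your proof is complete.
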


\begin{proof}
The proof follows from similar considerations to those in the proof of Lemma~\ref{lem:annuliu'dec}. We shall now describe the geometric considerations.

We consider several different regimes depending on the relative size of $t$, $s - t$, and $\tau$. We begin by noting that the result is clearly true when $t \le {1 \over 100 \delta_0} \tau$ because $C (1 + \sqrt{t}) \sqrt{\tau}$ controls the area of the annulus of radius $t$ and thickness $1$. We thus assume that $t \ge {1 \over 100 \delta_0} \tau$. This implies that $r = t - u \ge {1 \over 100 \delta_0} \tau - 10 \tau - 1 \ge 1000 \tau$, which implies that $|\vartheta| \ge \delta_0$ (see \eqref{eq:rtaucomp}). We then consider two additional cases, one where $|\theta| \ge \delta_0$ and one where $|\theta| \le \delta_0$. When $|\theta| \ge \delta_0$, the Jacobian in the $(r,r')$ coordinate system is well behaved (note that $\tau \le \delta_0 s$), meaning that we may argue as in the proof of Lemma~\ref{lem:annuliu'dec} (see Figure~\ref{fig:thetavartheta'small}). We may thus restrict ourselves to the case where $|\theta| \le \delta_0$. Now, we first assume that $s - t \le {1 \over 100 \delta_0} \tau$. We then have that the diameter of the annulus adapted to $r'$ is controlled by $C \tau$, and we have that $t \ge {9 \over 10} s$ in this region. Thus, using Lemma~\ref{lem:largecirclearc}, we get that the area of intersection is at most $C \tau \le C (1 + \sqrt{t}) \sqrt{\tau}$. Thus, we may assume that $s - t \ge {1 \over 100 \delta_0} \tau$. This forces us to have $|\theta'| \ge \delta_0$ (this is analogous to how having $t \ge {1 \over 100 \delta_0} \tau$ forces $|\vartheta| \ge \delta_0$). We now further consider two cases, one where $|\vartheta'| \ge \delta_0$ and another where $|\vartheta'| \le \delta_0$. In the first case, the Jacobian in $(r,r')$ coordinates is well behaved, and we can argue as in the proof of Lemma~\ref{lem:annuliu'dec} (see Figure~\ref{fig:thetavartheta'small}). We are thus left with the case where $|\theta| \le \delta_0$ and $|\vartheta'| \le \delta_0$. In this region, we consider the natural foliation of the $r$ adapted annulus by circles. If we look at the intersection of any circle in this foliation with the $r'$ annulus, we pick the point with largest $x$ coordinate in this intersection, and we call this value $x_0$. Because the region is symmetric in reflection across the $x$ axis, we can assume that $y \ge 0$. We then note that the smallest possible $x$ coordinate that is still in the intersection is greater than or equal to $x_0 - 100 \tau$. We can simply compute that increasing $\theta$ by $1000 {\sqrt{\tau} \over \sqrt{r}}$ will always decrease the $x$ coordinate by at least $100 \tau$ (note that $r \ge 1000 \tau$ for $\delta_0$ sufficiently small because $t \ge \delta_0^{-1} \tau$). This means that the arc length of the circle contained in this annulus is at most $1000 r {\sqrt{\tau} \over \sqrt{r}} \le C (1 + \sqrt{t}) \sqrt{\tau}$. This gives us the first desired result.

The second result follows from similar considerations. The most substantially different cases are when $|\vartheta| \le \delta_0$ or when $|\theta'| \le \delta_0$, so we shall now describe how to treat these regions. Because the integrals are now symmetric in $r$ and $r'$ after sending $t$ to $s - t$ (this is because both annuli have thickness $1$ now), we can assume that we are considering the case where $|\vartheta| \le \delta_0$. Because the region is once again symmetric with respect to reflections over the $x$ axis, we can restrict ourselves to $\vartheta \ge 0$. We look at the natural foliation of the $r$ annulus by circles. Taking any of these circles, we look at the minimum value of $\vartheta \ge 0$ for which the circle intersects the $r'$ annulus. We can then simply compute that increasing the value of $\vartheta$ by ${C \over \sqrt{r}}$ will case the value of $r'$ to decrease by at least $1$ (see \eqref{eq:dr'dvarthetaRpsi}), meaning that after this change in $\vartheta$, the circle must exit the support of the $r'$ annulus, which has thickness $1$. Thus, the arc length of the circle contained in the $r'$ annulus is controlled by $C (1 + \sqrt{t})$, giving us the desired result.
\end{proof}



We must now get estimates for the $(\theta,u,u')$ coordinate system when $\tau$ is large (meaning that $\tau \ge \delta_0 s$). This will allow us to effectively integrate in this region. The following result is effectively a consequence of the fact that an upward opening cone and a downward opening cone intersect uniformly transversally when $\tau$ is comparable to $s$ (see Figure~\ref{fig:lightconesaux}).

\begin{lemma} \label{lem:thetauu'}
We have that
\[
-r d \theta \wedge d u \wedge d u' = (1 + \sin(\theta) \sin(\theta') + \cos(\theta) \cos(\theta')) d t \wedge d x \wedge d y.
\]
Moreover, let $\tau \ge \delta_0 s$. Then, in the region where $-1 \le u \le \delta_0 \tau$ and $-1 \le u' \le \delta_0 \tau$, we have that
\[
|1 + \sin(\theta) \sin(\theta') + \cos(\theta) \cos(\theta')| \ge c > 0,
\]
meaning that
\[
d t \wedge d x \wedge d y = J_{\theta,u,u'} r d \theta \wedge d u \wedge d u'
\]
with $|J_{\theta,u,u'}| \le C$.
\end{lemma}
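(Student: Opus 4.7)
The plan is to prove the first identity by a direct Jacobian computation, and then to obtain the lower bound in the second part by recognizing $1 + \sin\theta\sin\theta' + \cos\theta\cos\theta' = 1 + \cos(\theta - \theta')$ and applying the law of cosines.

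For the first identity, I would compute the differentials
\begin{align*}
du &= dt - \cos(\theta)\, dx - \sin(\theta)\, dy, \\
du' &= -dt - \cos(\theta')\, dx - \sin(\theta')\, dy, \\
d\theta &= -\tfrac{\sin\theta}{r}\, dx + \tfrac{\cos\theta}{r}\, dy,
\end{align*}
and wedge them out. The two-form $du \wedge du'$ decomposes into $dt\wedge dx$, $dt\wedge dy$, and $dx\wedge dy$ components; since $d\theta$ contains no $dt$, only the first two contribute after wedging with $d\theta$. Collecting the resulting coefficient of $dt \wedge dx \wedge dy$ and simplifying via $\cos^2\theta + \sin^2\theta = 1$ produces $-\frac{1}{r}(1 + \cos\theta\cos\theta' + \sin\theta\sin\theta')$, which upon multiplication by $-r$ gives the stated identity.

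For the lower bound, I will assume $b = 0$ (so that $a = s - \tau$ and $\Theta = 0$), as permitted by the conventions in Section~\ref{sec:coordinates}. The key observation is that the angle at the vertex $(x, y)$ of the triangle with other vertices $(0,0)$ and $(a,0)$ equals $\theta - \theta'$ modulo $2\pi$, so the law of cosines yields $a^2 = r^2 + (r')^2 - 2rr'\cos(\theta - \theta')$ and hence
\[
1 + \cos(\theta - \theta') = \frac{(r + r')^2 - a^2}{2rr'} = \frac{(r + r' - a)(r + r' + a)}{2rr'}.
\]
I would then control each factor using the hypotheses. Lemma~\ref{lem:rtr's-t} identifies $r + r' - a = \tau - u - u'$, so $u, u' \le \delta_0 \tau$ gives $r + r' - a \ge (1 - 2\delta_0)\tau$; the same identity combined with $u, u' \ge -1$ yields $r + r' \le s + 2$, hence $rr' \le C s^2$. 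A direct computation gives $r + r' + a = 2s - \tau - u - u' \ge (1 - 2\delta_0)s$, using $\tau \le s$. Combining these with $\tau \ge \delta_0 s$ produces
\[
1 + \cos(\theta - \theta') \ge \frac{(1 - 2\delta_0)^2\, \tau\, s}{C s^2} \ge c\, \delta_0 > 0,
\]
yielding the lower bound and thereby $|J_{\theta, u, u'}| \le C$. The main geometric content --- and the only real obstacle --- is that $1 + \cos(\theta - \theta')$ degenerates precisely when $(x,y)$ lies on the segment joining the two cone apex projections, where $\theta - \theta' \to \pi$; the constraints $\tau \ge \delta_0 s$ and $u, u' \le \delta_0 \tau$ are exactly what push $(x,y)$ uniformly away from this bad set.
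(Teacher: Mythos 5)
Your proposal is correct. The first identity is proved by essentially the same direct Jacobian computation as in the paper (the paper organizes it by noting $d\theta \wedge dr \wedge dr' = 0$ and reducing to a two-dimensional wedge on $\Sigma_t$, while you expand $du$, $du'$, $d\theta$ in $dt,dx,dy$ directly, but the content is identical). For the lower bound, however, you take a genuinely different route. The paper argues by signs and by the geometry of the intersection ellipse: $\sin(\theta)\sin(\theta') \ge 0$ always, $\cos(\theta)\cos(\theta') \ge 0$ when $x \le 0$ or $x \ge a$, and in the remaining strip $0 \le x \le a$ it invokes Lemma~\ref{lem:xyrr'} to show $y^2$ is comparable to $s^2$, hence $|\sin(\theta)|, |\sin(\theta')| \ge c$. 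You instead write $1 + \sin(\theta)\sin(\theta') + \cos(\theta)\cos(\theta') = 1 + \cos(\theta - \theta') = \frac{(r + r' - a)(r + r' + a)}{2 r r'}$ via the law of cosines (with $b = 0$, which is legitimate here since only the cones for $\psi$ and $f$ enter), and then bound each factor using only the identity $r + r' - a = \tau - u - u'$ from Lemma~\ref{lem:rtr's-t}, the constraints $-1 \le u, u' \le \delta_0 \tau$, and $\tau \ge \delta_0 s$. I checked the factorization: $2 r r' \cos(\theta - \theta') = 2\left(x(x - a) + y^2\right) = r^2 + (r')^2 - a^2$, so your formula is exact, and your bounds $r + r' - a \ge (1 - 2\delta_0)\tau$, $r + r' + a \ge (1 - 2\delta_0)s$, $r r' \le C s^2$ all hold in the stated region (note also that item 2 of Lemma~\ref{lem:rtr's-t} keeps $r, r' > 0$ there, so the division is harmless). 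What your argument buys is an explicit quantitative transversality constant $c \sim \delta_0$, with the degeneracy manifestly located at $r + r' = a$ (the segment between the two apex projections), and it avoids Lemma~\ref{lem:xyrr'} entirely; it also makes transparent that the bound is really of size $\tau / s$, which is consistent with the paper needing a finer analysis (e.g.\ Lemma~\ref{lem:annuliu'dec}) in the regime $\tau \le \delta_0 s$. The paper's argument, by contrast, leans on machinery it has already set up and ties the bound to the geometric picture of the intersection ellipse. Both are valid proofs of the lemma.
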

\begin{proof}
We have that
\[
-r d \theta = {y \over r} d x - {x \over r} d y.
\]
Moreover, we have that $d \theta \wedge d r \wedge d r'$ vanishes because the one forms involved are all intrinsic to the $\Sigma_t$ hypersurfaces, which are two dimensional. Thus, we have that
\[
-r d \theta \wedge d u \wedge d u' = r d \theta \wedge d t \wedge (d r + d r') = d t \wedge (-r d \theta) \wedge (d r + d r').
\]
We shall now estimate $-r d \theta \wedge (d r + d r')$.

We have that
\[
d r + d r' = \left ({x \over r} d x + {x - a \over r'} \right ) d x + \left ({y \over r} + {y \over r'} \right ) d y.
\]
Thus, we have that
\begin{equation}
    \begin{aligned}
    -r d \theta \wedge (d r + d r') = \left [{y \over r} \left ({y \over r} + {y \over r'} \right ) + {x \over r} \left ({x \over r} + {x - a \over r'} \right ) \right ] d x \wedge d y
    \\ = \left (1 + \sin(\theta) \sin(\theta') + \cos(\theta) \cos(\theta') \right ) d x \wedge d y,
    \end{aligned}
\end{equation}
as desired.

We shall now prove a lower bound on
\[
\left |1 + \sin(\theta) \sin(\theta') + \cos(\theta) \cos(\theta') \right |,
\]
from which the desired result will follow.

We begin by noting that $\sin(\theta)$ and $\sin(\theta')$ always have the same sign, meaning that $\sin(\theta) \sin(\theta')$ is always positive. Thus, when $x \le 0$ or $x \ge a$, we have a lower bound of $1$ because $\cos(\theta)$ and $\cos(\theta')$ also have the same sign, meaning that $\cos(\theta) \cos(\theta')$ is also positive. We may thus restrict ourselves to the region where $0 \le x \le a$.

Now, because $\tau \ge \delta_0 s$ and because of the bounds on $|u|$ and $|u'|$, we note that $|\theta| \ge c$ and $|\theta'| \ge c$ when $0 \le x \le x - a$ for some constant $c$ depending only on $\delta_0$ by Lemma~\ref{lem:xyrr'}. Indeed, when $0 \le x \le a$, $y^2$ is minimized when $x = 0$ and $x = a$. The only other critical point in this region corresponds to a maximum for $y^2$ (this behavior can be seen geometrically by examining the red ellipse in Figure~\ref{fig:lightconesaux}, and it can be explicitly computed by calculating the intersection of the cones of constant $u$ and constant $u'$ subject to the constraint that $u \le \delta_0 \tau$ and $u' \le \delta_0 \tau$). Now, at $x = 0$, we still have that $r$ is comparable to $s$ by Lemma~\ref{lem:rtr's-t}. This means that $y^2$ must still be comparable to $s^2$ at this point (we can in fact explicitly compute that it is equal to $(\tau - u - u')^2 \left (1 - {\tau - u - u' \over s} + {(\tau - u - u')^2 \over s^2} \right ) \ge c s^2$ for $\tau \ge \delta_0 s$ when $x = 0$ and $x = a$, see Lemma~\ref{lem:xyrr'}). By symmetry, $y^2$ must also be comparable to $s^2$ at $x = a$, meaning that $y^2$ is uniformly comparable to $s^2$ whenever $0 \le x \le a$, proving the lower bound on $|\theta|$ and $|\theta'|$ because $y^2 = r^2 \sin^2 (\theta) = (r')^2 \sin^2 (\theta')$. Thus, we have that $1 + \sin(\theta) \sin(\theta') \ge 1 + c^2$, and this implies the desired result.
\end{proof}


Finally, we must get estimates for the $(\overline{r},r')$ coordinate system. We shall only need these estimates when $\tau \le \delta_0 s$, $r' \ge 10$, $t \ge (1 - 20 \delta_0) s$, and $|\vartheta'| \le \delta_0$. As a consequence of these assumptions, we note that the level sets of $r'$ and $r$ approximately coincide. Thus, the fact that this is true morally follows from Lemma~\ref{lem:rrbar}.


\begin{lemma} \label{lem:dr'drbar}
Let $\tau \le \delta_0 s$. Then, in the region where $t \ge (1 - 20 \delta_0) s$, $|\vartheta'| \le \delta_0$, $-1 \le u$, $-1 \le \overline{u} \le \delta_0 \tau$, and $-1 \le u' \le \delta_0 \tau$, we have that
\[
d x \wedge d y = J_{\overline{r},r'} d \overline{r} \wedge d r',
\]
where
\[
|J_{\overline{r},r'}| \le C.
\]
\end{lemma}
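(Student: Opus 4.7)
The plan is to compute the wedge product $d\overline{r} \wedge dr'$ directly and then bound the resulting coefficient from below, in the same spirit as Lemma~\ref{lem:rrbar}. From $\overline{r}\,d\overline{r} = \lambda_1^2 x\,dx + \lambda_2^2 y\,dy$ and $r'\,dr' = (x-a)\,dx + (y-b)\,dy$, the wedge product will yield
\[
\overline{r}\,r'\,d\overline{r} \wedge dr' = \bigl[(\lambda_1^2 - \lambda_2^2)\,xy + \lambda_2^2 a y - \lambda_1^2 b x\bigr]\,dx \wedge dy,
\]
so that $J_{\overline{r},r'} = \overline{r}\,r'/F$ with $F$ the bracketed quantity. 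It thus suffices to prove the quantitative transversality estimate $|F| \ge c\,\overline{r}\,r'$ for some universal $c > 0$ depending only on $\lambda_1, \lambda_2$.

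My first task would be to extract the relevant geometric information from the assumptions. Setting $R = s - \tau$ and $(a,b) = R(\hat{a}, \hat{b})$, the constraints force $r$, $\overline{r}$, and $R$ to all be comparable to $s$: the bound $\tau \le \delta_0 s$ gives $R \ge (1-\delta_0)s$; $\overline{u} \le \delta_0 \tau$ together with $t \ge (1 - 20\delta_0)s$ gives $\overline{r} \ge (1 - 21\delta_0)s$; and $u \ge -1$ combined with the triangle inequality $r + r' \ge R$ and $r' \le s - t + 1$ gives $r \ge (1 - 22\delta_0)s - 1$. Writing $x = r\cos\theta$, $y = r\sin\theta$ and using $\overline{r}^2 = r^2[\lambda_1^2 + (\lambda_2^2 - \lambda_1^2)\sin^2\theta]$ together with $\overline{r}/r = 1 + O(\delta_0)$ will pin $\sin^2\theta$ to an $O(\delta_0)$-neighborhood of $(1 - \lambda_1^2)/(\lambda_2^2 - \lambda_1^2)$, which lies in $[1/8, 1/2]$ under the standing assumptions $\lambda_1^2 \in [1/4, 1/2]$ and $\lambda_2^2 \in [2,4]$. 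This is exactly the angular argument used in the proof of Lemma~\ref{lem:rrbar}, and it yields $|\cos\theta|, |\sin\theta| \ge c$ for a universal $c$. The assumption $|\vartheta'| \le \delta_0$, combined with $r$ and $R$ both being comparable to $s$, will then force $|\theta - \Theta| = O(\delta_0)$, so $|\hat{a}|, |\hat{b}| \ge c/2$ once $\delta_0$ is sufficiently small.

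Finally, plugging the approximations $x = r\hat{a} + O(\delta_0 s)$ and $y = r\hat{b} + O(\delta_0 s)$ into $F$, the leading terms should collapse to
\[
F = (\lambda_2^2 - \lambda_1^2)\,\hat{a}\hat{b}\,r(R - r) + O(\delta_0\,s\,r') = (\lambda_2^2 - \lambda_1^2)\,\hat{a}\hat{b}\,r\,r' + O(\delta_0\,s\,r'),
\]
where I used $R - r = r' + O(\tau) = r'(1 + O(\delta_0))$. Since $|\hat{a}\hat{b}| \ge c^2/4$ and $\lambda_2^2 - \lambda_1^2 \ge 3/2$, the main term will dominate the error for $\delta_0$ sufficiently small, giving $|F| \ge c'\,s\,r' \ge c''\,\overline{r}\,r'$ and hence $|J_{\overline{r},r'}| \le C$. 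The hard part will be the careful bookkeeping of the $O(\delta_0)$ corrections in this last step (in particular, handling the combined errors coming from the deviation of $(x,y)$ from the segment joining the origin to $(a,b)$, the gap in $r + r' \approx R$, and the anisotropic weights). The conceptual content of the argument is that the assumed lower bounds on $r$ and $\overline{r}$, which were essentially free in Lemma~\ref{lem:rrbar}, force $(\hat{a}, \hat{b})$ away from the coordinate axes here, and this in turn provides the quantitative transversality between the ellipse $\{\overline{r} = \mathrm{const}\}$ and the circle $\{r' = \mathrm{const}\}$ required for the Jacobian bound.
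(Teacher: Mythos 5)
Your overall route is essentially the paper's: you pin $\theta$ (and hence $\Theta$) near one of the intersection angles $\Theta_i$ using $\overline{r}/r = 1+O(\delta_0)$, use the smallness of $r'$ relative to $s$ to identify $\Theta$ with $\theta$ up to $O(\delta_0)$, and reduce the Jacobian bound to a quantitative transversality constant which (one can check) agrees with the paper's $\lambda_2\sin(\overline{\Theta}_1)\cos(\Theta_1)-\lambda_1\cos(\overline{\Theta}_1)\sin(\Theta_1)$. However, the final quantitative step, which is the entire content of the lemma, has a genuine gap. You need $|F|\ge c\,\overline{r}\,r'\sim c\,s\,r'$, but the expansion you propose to plug in, $x=r\hat a+O(\delta_0 s)$, $y=r\hat b+O(\delta_0 s)$, only yields $F=(\lambda_2^2-\lambda_1^2)\hat a\hat b\,r(R-r)+O(\delta_0 s^2)$, not $+O(\delta_0 s r')$ as you assert. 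Since the hypotheses only give $t\ge(1-20\delta_0)s$ with no upper bound keeping $t$ away from $s$, $r'$ can be $O(1)$ (or smaller) while $s-t$ and $\tau$ are not, so an $O(\delta_0 s^2)$ error swamps the main term $\sim s r'$ and the argument does not close. Relatedly, your step $R-r=r'+O(\tau)=r'(1+O(\delta_0))$ is unjustified as written: $\tau$ need not be $\le C\delta_0 r'$, so the second equality does not follow from the first. The conclusion $R-r=r'(1+O(\delta_0))$ is in fact true, but only via the constraint $|\vartheta'|\le\delta_0$ (law of cosines: $r^2=R^2-2Rr'\cos\vartheta'+r'^2$), which you never invoke for this purpose; this is a symptom of the same loss of information, namely that your approximations discard the direction of the displacement $(x-a,y-b)$, which is exactly what $|\vartheta'|\le\delta_0$ controls and what the cancellation needs.

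The repair is to keep the exact parametrization rather than an $O(\delta_0 s)$ approximation: with $\theta'=\pi-\vartheta'$ one has $(x,y)=(a,b)-r'\bigl(\cos(\Theta-\vartheta'),\sin(\Theta-\vartheta')\bigr)$, and substituting into $F=(\lambda_1^2-\lambda_2^2)xy+\lambda_2^2 a y-\lambda_1^2 b x$ the $ab$-terms cancel identically, leaving
\begin{equation}
F=r'R\left[\lambda_2^2\sin\Theta\cos(\Theta-\vartheta')-\lambda_1^2\cos\Theta\sin(\Theta-\vartheta')\right]+(\lambda_1^2-\lambda_2^2)\,r'^2\sin(\Theta-\vartheta')\cos(\Theta-\vartheta'),
\end{equation}
so every surviving term carries a factor $r'$. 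Then $|\vartheta'|\le\delta_0$ and $|\Theta-\Theta_1|\le C\delta_0$ make the bracket equal to $(\lambda_2^2-\lambda_1^2)\sin\Theta_1\cos\Theta_1+O(\delta_0)\ge c$, while the quadratic term is $O(\delta_0 s r')$ since $r'\le 20\delta_0 s+1$; this gives $|F|\ge c\,s\,r'$ and hence the Jacobian bound. This is, in coordinates, the same cancellation the paper obtains by writing $d\overline{r}$ and $dr'$ directly in terms of the angles $\overline{\theta}$ and $\theta'+\Theta$, which avoids the issue altogether because no factor of $\overline{r}$ or $r'$ ever appears in the one-forms.
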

\begin{proof}
For this proof only, we shall denote by $C$ a constant which is independent of $\delta_0$. Moreover, because we need to analyze the geometry of all three light cones in question at the same time, we shall not assume $b = 0$. Thus, the downward opening light cone has its tip at the point $(s,a,b)$ (see Section~\ref{sec:coordinates}).

We recall that $\Theta$ denotes the $\theta$ coordinate of the points where $r' = 0$. We also recall that $\Theta_i$ for $i = 1, 2, 3, 4$ denote the four lines of intersection of the cones $u = 0$ and $\overline{u} = 0$, and we recall the analogous five quantities in terms of $\overline{\theta}$. We shall first show that $\Theta$ must be very close to one of the $\Theta_i$.

Without loss of generality, we can assume that $0 \le \Theta \le {\pi \over 2}$. We shall show that $\Theta$ must be very close to $\Theta_1$ in this case, and this will give us the desired estimates. The arguments for the other cases follow in an analogous way by symmetry.

The first observation is that, in the region in question, we have that $\Theta_1 - C \delta_0 \le \theta \le \Theta_1 + C \delta_0$ where $C$ depends only on $\lambda_1$ and $\lambda_2$. Indeed, outside of this region, we have that $\overline{u}$ will no longer be between $-1$ and $\delta_0 \tau$. This is because ${\partial \overline{u} \over \partial \theta} (r,\theta) \ge c r$ for some $c$ depending only on $\lambda_1$ and $\lambda_2$. To see this, we recall that $\partial_\theta = x \partial_y - y \partial_x$, and we note that
\[
2 \overline{r} {\partial \overline{r} \over \partial \theta} (r,\theta) = (x \partial_y - y \partial_x) (\lambda_1^2 x^2 + \lambda_2^2 y^2) = 2 (\lambda_2^2 - \lambda_1^2) x y.
\]
The desired result then follows because $x$ and $y$ are both comparable to $r$ and $\overline{r}$ in this region. A similar argument works to show that $\overline{\Theta}_1 - C \delta_0 \le \overline{\theta} \le \overline{\Theta_1} + C \delta_0$ because we have that $u \le 100 \delta_0 s$ in the region in question.


The second observation is that we must have that $|\Theta - \Theta_1| \le C \delta_0$ where $C$ depends only on $\lambda_1$ and $\lambda_2$ in order for the region to be nonempty. Indeed, given that we are restricting to the region where $t \ge (1 - 20 \delta_0) s$, we note that the region in question requires that $r' \le 20 \delta_0 s + 1$. This restricts us to the region where $|\theta - \Theta| \le 100 \delta_0$ (see also Lemma~\ref{lem:largecirclearc}). Thus, because $\theta$ is already restricted to being close to $\Theta_1$, we thus have that $|\Theta - \Theta_1| \le C \delta_0$, as desired. See Figure~\ref{fig:vartheta'smalltlarge} to see a geometric description of what is going on. These considerations will allow us to compare $\overline{\theta}$ and $\theta'$ with $\overline{\Theta}_1$ and $\Theta_1$, respectively.

\begin{figure}
    \centering
    \begin{tikzpicture}
    \draw[very thick] (0,0) circle (4.5);
    \draw[very thick] (0,0) circle (3.8);
    \draw[very thick] (0,0) ellipse (6 and 3);
    \draw[very thick] (0,0) ellipse (5.7 and 2.7);
    \draw[red,fill=red] (3.1,2.6) circle (0.15);
    \end{tikzpicture}
    \caption{A figure showing why $\theta$ must be close to $\Theta$ and $\Theta_1$, and why $\overline{\theta}$ must be close to $\overline{\Theta}_1$, where close means within some constant multiplied by $\delta_0$. The red disk represents $r' \le 20 \delta_0 s + 1$, which is the largest $r'$ can be when $t \ge (1 - 20 \delta_0) s$. The line $\theta = \Theta$ is the line going from the center of the circular annulus to the center of the red disk. The circular annulus is $(1 - 50 \delta_0) s \le r \le s + 1$ and the elliptical annulus is $(1 - 30 \delta_0) s \le \overline{r} \le s + 1$. We are interested in points contained in the intersection of both annuli and the red disk. Because $\tau \le \delta_0 s$, we can see that we are restricted to a small range of $\theta$ and $\overline{\theta}$ values. Of course, we additionally have the restriction that $|\vartheta'| \le \delta_0$, but we have not drawn this restriction.}
    \label{fig:vartheta'smalltlarge}
\end{figure}
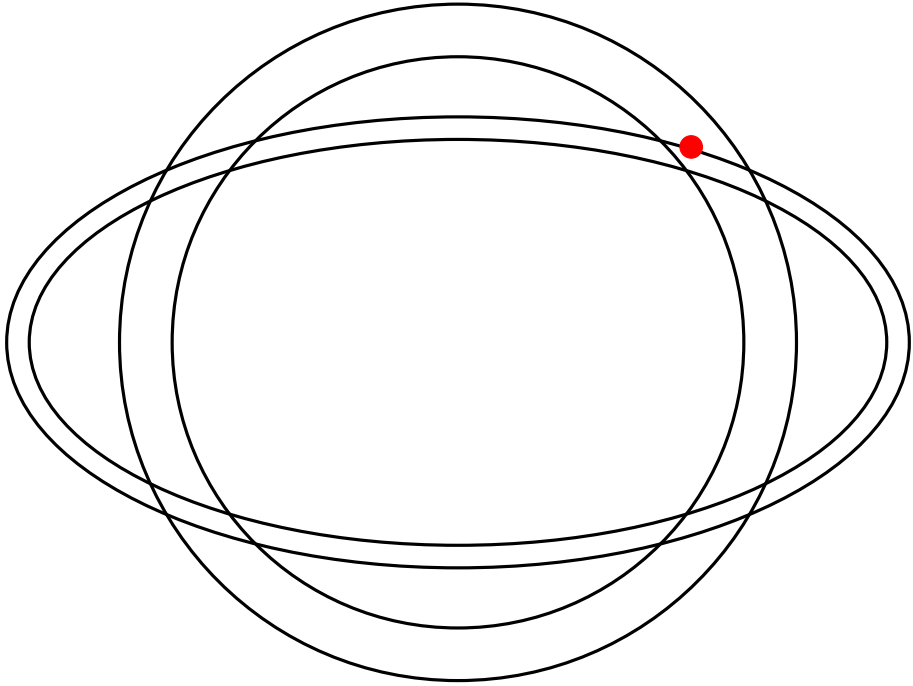

Because we have that $|\vartheta'|
\le \delta_0$ and that $|\overline{\theta} - \overline{\Theta}_1| \le C \delta_0$, it is natural to get expressions for $d \overline{r}$ and $d r'$ in terms of $\overline{\theta}$ and $\vartheta'$. We have that
\[
\overline{r} d \overline{r} = \lambda_1^2 x d x + \lambda_2^2 y d y.
\]
Using that $x = \lambda_1^{-1} \overline{x} = \lambda_1^{-1} \overline{r} \cos(\overline{\theta})$ and that $y = \lambda_2^{-1} \overline{y} = \lambda_2^{-1} \overline{r} \sin(\overline{\theta})$, this gives us that
\[
d \overline{r} = \lambda_1 \cos(\overline{\theta}) d x + \lambda_2 \sin(\overline{\theta}) d y.
\]
Now, for $d r'$, we first note that
\[
\theta' = \arctan \left ({y - b \over x - a} \right ) - \Theta.
\]
Moreover, we have that
\[
d r' = {x - a \over r'} d x + {y - b \over r'} d y.
\]
Using that ${x - a \over r'} = \cos \left (\arctan \left ({y - b \over x - a} \right ) \right )$ and that ${y - b \over r'} = \sin \left (\arctan \left ({y - b \over x - a} \right ) \right )$ then gives us that
\[
d r' = \cos(\theta' + \Theta) d x + \sin(\theta' + \Theta) d y.
\]
Thus, we have that
\[
d \overline{r} \wedge d r' = (\lambda_1 \cos(\overline{\theta}) \sin(\theta' + \Theta) - \lambda_2 \sin(\overline{\theta}) \cos(\theta' + \Theta)) d x \wedge d y.
\]

Now, we have that
\[
\vartheta' = \pi - \theta' = \pi - \arctan \left ({y - b \over x - a} \right ) - \Theta.
\]
From the bounds on $\vartheta'$, it follows that
\[
\left |\arctan \left ({y - b \over x - a} \right ) - \pi - \Theta \right | \le \delta_0.
\]
Thus, we have that $\theta' + \Theta = \pi + \Theta +  O(\delta_0) = \pi + \Theta_1 + O(\delta_0)$.

From this, it follows that
\[
d \overline{r} \wedge d r' = (\lambda_2 \sin(\overline{\Theta}_1) \cos(\Theta_1) - \lambda_1 \cos(\overline{\Theta_1}) \sin(\Theta_1) + O(\delta_0)) d x \wedge d y.
\]

This implies the desired result for $\delta_0$ sufficiently small because the quantity is nonzero when $\delta_0 = 0$. Indeed, dividing by $\cos(\Theta_1)$ and $\cos(\overline{\Theta}_1)$ (which we can do because $\Theta_1$ and $\overline{\Theta}_1$ are uniformly bounded away from half integer multiples of $\pi$ given our restrictions on $\lambda_1$ and $\lambda_2$), we get that
\begin{equation}
    \begin{aligned}
    {1 \over \cos(\Theta_1) \cos(\overline{\Theta}_1)} d \overline{r} \wedge d r' = (\lambda_2 \tan(\overline{\Theta}_1) - \lambda_1 \tan(\Theta_1) + O(\delta_0)) d x \wedge d y
    \\ = \left ({\lambda_2 \overline{y} \over \overline{x}} - {\lambda_1 y \over x} + O(\delta_0) \right ) d x \wedge d y = \left ({(\lambda_2^2 - \lambda_1^2) \sqrt{1 - \lambda_1^2} \over \lambda_1 \sqrt{\lambda_2^2 - 1}} + O(\delta_0) \right ) d x \wedge d y,
    \end{aligned}
\end{equation}
which is bounded away from $0$ uniformly by the restrictions we have placed on $\lambda_1$ and $\lambda_2$ for $\delta_0$ sufficiently small. We note that we have simply solved for ${y \over x}$ in the equations
\[
t^2 = x^2 + y^2 = \lambda_1^2 x^2 + \lambda_2^2 y^2,
\]
which gives the different values of $\tan(\theta)$ and $\tan(\overline{\theta})$ corresponding to the four lines of intersection of the two forward opening light cones. This gives us the desired result.
\end{proof}

The final geometric result we shall need concerns the intersections of circles with sets having small diameter relative to the radius of the circle.

\begin{lemma} \label{lem:largecirclearc}
Let $S$ be a circle of radius $r$, and let $D$ be a region with diameter $d \le {1 \over 2} r$. Then, we have that the length of the portion of $S$ contained in $D$ is controlled by $C d$.
\end{lemma}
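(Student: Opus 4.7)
The plan is to reduce the statement to bounding the length of a short arc cut out of $S$ by a ball of radius $d$ centered at a point of $S$, and then apply the chord-arc relation. If $S \cap D$ is empty there is nothing to prove, so pick any $p \in S \cap D$. Since $D$ has diameter $d$, every point of $D$ lies within distance $d$ of $p$, and hence
\[
S \cap D \subseteq S \cap \overline{B}(p,d),
\]
where $\overline{B}(p,d)$ denotes the closed Euclidean ball of radius $d$ about $p$. It therefore suffices to bound the length of $S \cap \overline{B}(p,d)$.

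Next, I would observe that because $d \le r/2 < 2r$, the ball $\overline{B}(p,d)$ cannot contain $S$; a direct inspection shows that $S \cap \overline{B}(p,d)$ is a single connected arc of $S$ passing through $p$, whose two endpoints $q_1$ and $q_2$ satisfy $|p - q_i| = d$. Let $\theta_i$ denote the central angle, at the center of $S$, subtended by the chord from $p$ to $q_i$. Then the chord-arc identity gives $d = 2 r \sin(\theta_i / 2)$, so $\theta_i = 2 \arcsin(d / (2 r))$.

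Finally, using the hypothesis $d/(2r) \le 1/4$ together with the elementary inequality $\arcsin(x) \le 2x$ valid for $x \in [0, 1/2]$, each sub-arc of $S \cap \overline{B}(p,d)$ from $p$ to $q_i$ has length
\[
r \theta_i \;=\; 2 r \arcsin\!\left( \frac{d}{2 r} \right) \;\le\; 2 r \cdot \frac{d}{r} \;=\; 2 d,
\]
so the total arc length is at most $4 d$, giving the claim with $C = 4$.

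The only potential subtlety is verifying that $S \cap \overline{B}(p,d)$ really is a single arc containing $p$ rather than a more complicated union of arcs, but this is forced by the hypothesis $d \le r/2$, so no genuine obstacle is expected; the argument is elementary and short.
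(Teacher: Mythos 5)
Your proof is correct and rests on the same idea as the paper's: the intersection is confined to an angular sector of size $O(d/r)$ because arc length along $S$ is comparable to chord length (Euclidean distance) at scales small compared to $r$. You simply carry this out explicitly, by enclosing $S \cap D$ in the ball $\overline{B}(p,d)$ about a point $p \in S \cap D$ and using $d = 2r\sin(\theta_i/2)$, which has the minor added benefit of producing the explicit constant $C = 4$.
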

\begin{proof}
This follows immediately by going into polar coordinates adapted to $S$ and noting that the set of all $\theta$ for which $S \cap D$ is nonempty must be contained in an interval of length at most $C {d \over r}$. Indeed, if this were not true, it would contradict that $D$ has diameter $d$ because the arc length along $S$ is comparable to the Euclidean distance at scales which are small compared to the radius of the circle.
\end{proof}

We note that an analogous statement holds for ellipses.



\subsection{Scaling vector field geometry} \label{sec:SGeometry}
Throughout this Section, we recall that we can take $s \ge {1 \over \delta_0^{10}}$. Moreover, we shall once again be assuming without loss of generality that $b = 0$, meaning that $a = s - \tau$ (see Section~\ref{sec:coordinates}).

We shall use the above ideas in addition with the scaling vector field $S = t \partial_t + r \partial_r = t \partial_t + x \partial_x + y \partial_y$. We first note that $S$ satisfies good commutation properties with both $\Box$ and $\Box'$.

\begin{lemma}
The vector field $S$ has that $[\Box,S] = 2 \Box$. Similarly, we have that $[\Box',S] = 2 \Box'$.
\end{lemma}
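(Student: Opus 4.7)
The plan is to reduce both commutator identities to the single observation that the scaling vector field $S$ scales each Cartesian coordinate linearly, so that $[\partial_\mu,S] = \partial_\mu$ for $\mu \in \{t,x,y\}$. Since $\Box$ and $\Box'$ are both constant-coefficient linear combinations of the operators $\partial_t^2$, $\partial_x^2$, $\partial_y^2$, the result will follow by computing $[\partial_\mu^2,S]$ and then taking linear combinations.

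First I would verify the base commutators $[\partial_t,S] = \partial_t$, $[\partial_x,S] = \partial_x$, and $[\partial_y,S] = \partial_y$ directly from $S = t\partial_t + x\partial_x + y\partial_y$, using that $[\partial_\mu,x^\nu \partial_\nu] = \delta_\mu^\nu \partial_\nu$ and that the remaining terms involve $\partial_\mu$ applied to coefficients that do not depend on the variable being differentiated. Next I would promote this to a formula for second derivatives via the Leibniz rule for commutators:
\begin{equation*}
[\partial_\mu^2, S] = \partial_\mu [\partial_\mu, S] + [\partial_\mu, S] \partial_\mu = 2 \partial_\mu^2.
\end{equation*}

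Finally, I would simply take the appropriate constant-coefficient linear combinations. Since $\Box = -\partial_t^2 + \partial_x^2 + \partial_y^2$, linearity of the commutator bracket in its first argument gives
\begin{equation*}
[\Box,S] = -[\partial_t^2,S] + [\partial_x^2,S] + [\partial_y^2,S] = -2\partial_t^2 + 2\partial_x^2 + 2\partial_y^2 = 2\Box,
\end{equation*}
and the identical computation with coefficients $-1$, $\lambda_1^{-2}$, $\lambda_2^{-2}$ yields $[\Box',S] = 2\Box'$.

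There is no real obstacle here; the lemma is essentially a restatement of the fact that $S$ is a dilation field that rescales both $\Box$ and $\Box'$ homogeneously with weight $2$. The only mild subtlety is being careful that the computation of $[\partial_\mu^2, S]$ uses the Leibniz-type identity for commutators with a second-order operator, which I would state explicitly to avoid sign errors. The statement for $\Box'$ requires nothing beyond what is used for $\Box$ because $S$ treats the three Cartesian coordinate directions symmetrically as far as the commutator is concerned, and the anisotropic coefficients $\lambda_1^{-2}, \lambda_2^{-2}$ are simply constants that pass through the bracket.
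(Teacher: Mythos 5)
Your proof is correct and is exactly the computation the paper leaves implicit (its proof reads only ``this follows easily from a computation''): the base commutators $[\partial_\mu,S]=\partial_\mu$, the Leibniz identity for $[\partial_\mu^2,S]$, and constant-coefficient linearity give the claim for both $\Box$ and $\Box'$.
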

\begin{proof}
This follows easily from a computation.
\end{proof}
Because of this, $S$ can effectively be used as a commutator for this system of equations.

In order to use $S$ in the way described in Section~\ref{sec:anisotropicdescription}, we must find a way to write general vectors in the frame consisting of $S$ and $\overline{\partial}_f$, the good derivatives of $f$. This is the content of the following lemma, which writes $\underline{L}'$ in terms of $S$ and $\overline{\partial}_f$. We introduce the expression
\begin{equation}
    \begin{aligned}
    \gamma = -{1 \over t + r' + {a (x - a) \over r'}}.
    \end{aligned}
\end{equation}
This expression arises as a coefficient in frame decompositions, and it shall be very important in the following.

\begin{lemma} \label{lem:du'S}
Let $\tau \ge 100$. In the region where $u' \le \delta_0 \tau$, we have that
\begin{equation} \label{eq:du'S}
    \begin{aligned}
    \underline{L}' h = 2 \gamma S(h) + \gamma \left (t - r' - {a (x - a) \over r'} \right ) L' (h) + \gamma {2 a y \over (r')^2} \partial_{\theta'} (h)
    \end{aligned}
\end{equation}
where $h$ is any smooth function.
\end{lemma}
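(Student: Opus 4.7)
The cleanest strategy is to expand $S$ in the frame $\{L',\underline{L}',\partial_{\theta'}\}$ adapted to the auxiliary multiplier and then solve algebraically for $\underline{L}'$. Throughout I would assume the standing convention $b=0$ (so $a = s-\tau$); the claimed identity is rotation-equivariant in the $(x,y)$ plane, so this costs nothing.

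First, I would rewrite $S = t\partial_t + x\partial_x + y\partial_y$ in the translated polar coordinates $(r',\theta')$ centered at $(a,0)$, using $x = a + r'\cos\theta'$, $y = r'\sin\theta'$ and substituting the standard expressions for $\partial_x,\partial_y$. Collecting terms produces
\[
S \;=\; t\partial_t + \Bigl(r' + \tfrac{a(x-a)}{r'}\Bigr)\partial_{r'} - \tfrac{ay}{(r')^2}\,\partial_{\theta'}.
\]
Following the conventions of Section~\ref{sec:coordinates} (so that $L' = -\partial_t + \partial_{r'}$ and $\underline{L}' = -\partial_t - \partial_{r'}$, since $v' = s-t+r'$ and $u' = s-t-r'$), I would substitute $\partial_t = -\tfrac{1}{2}(L'+\underline{L}')$ and $\partial_{r'} = \tfrac{1}{2}(L'-\underline{L}')$. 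A short calculation then gives
\[
S \;=\; -\tfrac{1}{2}\Bigl(t-r'-\tfrac{a(x-a)}{r'}\Bigr)L' \;-\; \tfrac{1}{2}\Bigl(t+r'+\tfrac{a(x-a)}{r'}\Bigr)\underline{L}' \;-\; \tfrac{ay}{(r')^2}\partial_{\theta'},
\]
and the coefficient of $\underline{L}'$ is exactly $1/(2\gamma)$ by definition of $\gamma$. Solving this relation for $\underline{L}'$, multiplying through by $2\gamma$, and applying both sides to $h$ produces the claimed formula.

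The only nontrivial point is verifying that the denominator appearing in $\gamma$ is nonzero (and in fact uniformly bounded away from zero) throughout the region $u' \le \delta_0 \tau$. Since $|x-a| \le r'$ and $a = s - \tau > 0$, we have $\tfrac{a(x-a)}{r'} \ge -a$, hence
\[
t + r' + \tfrac{a(x-a)}{r'} \;\ge\; t + r' - a \;=\; (s - u') - (s - \tau) \;=\; \tau - u' \;\ge\; (1-\delta_0)\tau \;>\; 0,
\]
using $u' \le \delta_0\tau$ and $\tau \ge 100$. There is no real obstacle beyond careful bookkeeping; the substantive content of the lemma is simply that the frame $\{S,L',\partial_{\theta'}\}$ is uniformly transverse to the downward null cone of $f$ whenever $u' \le \delta_0\tau$, with the quantitative transversality measured by $\gamma^{-1}$. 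This is exactly the geometric fact already anticipated in Section~\ref{sec:anisotropicdescription} (that $S$ everywhere pierces the downward cone of $f$), and the rest is the linear algebra of inverting a $3\times 3$ change-of-basis matrix.
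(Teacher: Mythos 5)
Your proposal is correct and follows essentially the same route as the paper: expand $S$ in the frame $\{L',\underline{L}',\partial_{\theta'}\}$ via $r\partial_r = r'\partial_{r'} + a\partial_x$ (equivalently, translated polar coordinates with $b=0$) and solve the resulting identity for $\underline{L}'$, recognizing the coefficient of $\underline{L}'$ as $1/(2\gamma)$. Your added check that $t + r' + \tfrac{a(x-a)}{r'} \ge \tau - u' \ge (1-\delta_0)\tau > 0$ on the region $u' \le \delta_0\tau$ is consistent with what the paper establishes separately in Lemma~\ref{lem:gammaSgammabound}.
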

\begin{proof}
We have that
\[
S = t \partial_t + r \partial_r = -{1 \over 2} t (L' + \underline{L}') + r' \partial_{r'} + a \partial_x = -{1 \over 2} t (L' + \underline{L}') + {1 \over 2} r' (L' - \underline{L}') + a \partial_x.
\]
Moreover, we have that
\[
\partial_x = {\partial r' \over \partial x} \partial_{r'} + {\partial \theta' \over \partial x} \partial_{\theta'} = {x - a \over r'} \partial_{r'} - {y \over (r')^2} \partial_{\theta'} = {x - a \over 2 r'} (L' - \underline{L}') - {y \over (r')^2} \partial_{\theta'}.
\]
Thus, we have that
\[
2 S = -t (L' + \underline{L}') + r' (L' - \underline{L}') + {a (x - a) \over r'} (L' - \underline{L}') - {2 a y \over (r')^2} \partial_{\theta'}.
\]
Thus, we have that
\[
-\left (t + r' + {a (x - a) \over r'} \right ) \underline{L}' = 2 S + \left (t - r' - {a (x - a) \over r'} \right ) L' + {2 a y \over (r')^2} \partial_{\theta'}.
\]
If we recall that $\gamma^{-1} = -\left (t + r' + {a (x - a) \over r'} \right ) = -{t r' + (r')^2 + a (x - a) \over r'}$, we thus have that
\[
\underline{L}' = 2 \gamma S + \gamma \left (t - r' - {a (x - a) \over r'} \right ) L' + \gamma {2 a y \over (r')^2} \partial_{\theta'},
\]
as desired.
\end{proof}

Terms not involving $S$ and having the factor of $\gamma$ will be improved because $\gamma$ will be shown to be small. Moreover, they contain good derivatives of $f$, meaning that they gain a weight of ${1 \over r'}$, which is comparable to ${1 \over s - t}$ in the region in question. The term with $S$ is dangerous because $S f$ can be large for the auxiliary multiplier. However, we can integrate it by parts and use the fact that we can commute the equation with $S$. The gain is then that the factor of $\gamma$ remains. However, when integrating by parts, it is possible for $S$ to hit $\gamma$, meaning that we must control $S(\gamma)$ as well.

Geometrically, the fact that the function $\gamma$ is well behaved follows from the fact that $S$ everywhere pierces the downward opening light cone of $f$, and from the fact that it has length comparable to $t + r$. The angle between $S$ and the normal to the tangent space of the downward pointing light cone for $f$ may, however, be close to ${\pi \over 2}$, meaning that $S$ may be almost tangent to the light cone for $f$. The reader may wish to keep Figure~\ref{fig:lightconesaux} in order to get an idea of the interplay of geometry between the scaling vector field and the light cones in question.

The smallness of $\gamma$ and the control of $S(\gamma)$ are the content of the following lemma.

\begin{lemma} \label{lem:gammaSgammabound}
Let $\tau \ge 100$, let $-1 \le u' \le \delta_0 \tau$, and let $|u| \le 100 \tau$.
\begin{enumerate}
    \item We have that
    \begin{equation} \label{eq:gammaest}
    \begin{aligned}
    |\gamma| \le {C \over \tau}.
    \end{aligned}
    \end{equation}
    \item In the region where $|\vartheta'| = |\pi - \theta'| \ge \delta_0 / 2$ and $r' \ge 10$, we have that
    \begin{equation} \label{eq:Sgammaesttheta'} 
    \begin{aligned}
    |S(\gamma)| \le {C \over r'}.
    \end{aligned}
    \end{equation}
    \item In the region where in fact $-1 \le u \le \delta_0 \tau$, we have that
    \begin{equation} \label{eq:Sgammaestusmall}
    \begin{aligned}
    |S(\gamma)| \le {C \over \tau}.
    \end{aligned}
    \end{equation}
    \item In the region where $t \le (1 - 10 \delta_0) s$, we have that
    \begin{equation} \label{eq:Sgammaesttlarge}
        \begin{aligned}
        |S(\gamma)| \le {C \over \tau}.
        \end{aligned}
    \end{equation}
    \item In the region where $\tau \ge \delta_0 s$ and $r' \ge 10$, we have that
    \begin{equation} \label{eq:Sgammaesttaularge}
        \begin{aligned}
        |S(\gamma)| \le {C \over r'}.
        \end{aligned}
    \end{equation}
\end{enumerate}
\end{lemma}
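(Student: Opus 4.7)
The plan is to work in coordinates $(t, r', \theta')$ centered at the $f$-data ball. In these coordinates a direct computation gives
\[
S = t\partial_t + (r' + a\cos\theta')\partial_{r'} - \frac{a\sin\theta'}{r'}\partial_{\theta'},
\]
and, writing $F := -\gamma^{-1}$, the denominator of $\gamma$ simplifies (via $t + r' = s - u'$ and $\cos\theta' = -\cos\vartheta'$) to
\[
F = t + r' + a\cos\theta' = (\tau - u') + a(1 - \cos\vartheta').
\]
This identity yields part (1) at once: since $u' \le \delta_0\tau$ and $a \ge 0$, $F \ge \tau - u' \ge \tau/2$. Computing further, $S(F) = F + a^2\sin^2\theta'/r'$, so $S(\gamma) = -\gamma + a^2\sin^2\theta'/(F^2 r')$. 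Using $\sin^2\vartheta' \le 2(1-\cos\vartheta')$ together with $a(1-\cos\vartheta') \le F$ gives $a^2\sin^2\theta' \le 2aF$, and hence the master estimate
\[
|S(\gamma)| \le |\gamma| + \frac{2a}{F\,r'}.
\]
Each of parts (2)--(5) then reduces to producing the appropriate lower bound on $Fr'$ (and on $F$ itself for the $|\gamma|$ term).

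Parts (4) and (5) will be essentially immediate from the hypotheses. In part (5), $\tau \ge \delta_0 s$ with $r' \le s+1$ makes $\tau$, $a$, and $r'$ all comparable up to factors of $\delta_0$, directly giving both $F \ge cr'$ and $a/F \le C$. In part (4), $t \le (1-10\delta_0)s$ forces $r' \ge 9\delta_0 s - 1$, so $Fr' \ge c\tau s \ge ca\tau$ using $a \le s$. For part (2), $|\vartheta'| \ge \delta_0/2$ yields $a(1-\cos\vartheta') \ge ca\delta_0^2$, so $F \ge c\max(\tau, a\delta_0^2)$; a short case analysis (splitting according to whether $r' \le \tau$, and whether $\tau$ is comparable to $s$) then shows $F \ge cr'$, giving $|\gamma| \le C/r'$, together with $a/F \le C$, so that both terms are bounded by $C/r'$.

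The main obstacle will be part (3), where neither an angular separation nor a size condition on $\tau$ relative to $s$ is available. The strategy is to show $Fr' \ge ca\tau$ whenever $r' < a/2$, so that $2a/(Fr') \le C/\tau$. The hypothesis $u \le \delta_0\tau$ gives $r \ge t - \delta_0\tau$; when $t \ge \delta_0\tau$, squaring this and combining with the law of cosines $r^2 = a^2 + (r')^2 - 2ar'\cos\vartheta'$, using the identity $t - \delta_0\tau = A - r'$ with $A := s - u' - \delta_0\tau$ and the bound $A - a = \tau(1-\delta_0) - u' \in [\tau/2, 2\tau]$ (so $A^2 - a^2 \ge a\tau$), produces
\[
2ar'(1-\cos\vartheta') \ge (A^2 - a^2) - 2r'(A-a) \ge a\tau - 4r'\tau.
\]
For $r' \le a/8$ this gives $a(1-\cos\vartheta') \ge a\tau/(4r')$, hence $Fr' \ge a\tau/4$; for $a/8 < r' < a/2$, the trivial bound $F \ge \tau/2$ with $r' > a/8$ already suffices. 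The residual case $t < \delta_0\tau$ is handled directly: then $r' \ge s - 2\delta_0\tau > a$, placing us in the already-completed regime $r' \ge a/2$ where $Fr' \ge (\tau/2)(a/2)$ is enough. The hard part is the geometric lower bound in the case $r' \le a/8$: it forces $1 - \cos\vartheta'$ to grow like $\tau/r'$ as $r'$ shrinks, and is what allows the estimate to close without any angular separation hypothesis.
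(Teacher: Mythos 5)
Your proposal is correct, and it takes a genuinely different route from the paper's proof. You exploit the exact structure of $\gamma$ in polar coordinates centered at $(s,a,0)$: writing $F=-\gamma^{-1}=t+r'+a\cos(\theta')=(\tau-u')+a(1-\cos(\vartheta'))$ and expressing $S=t\partial_t+(r'+a\cos(\theta'))\partial_{r'}-\tfrac{a\sin(\theta')}{r'}\partial_{\theta'}$ gives the identity $S(F)=F+a^2\sin^2(\theta')/r'$, hence the master bound $|S(\gamma)|\le|\gamma|+2a/(Fr')$ after the elementary inequality $a^2\sin^2(\vartheta')\le 2a\cdot a(1-\cos(\vartheta'))\le 2aF$; each case then reduces to lower bounds on $F$ and $Fr'$. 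I checked the details: parts (1), (2), (4), (5) follow from $F\ge(\tau-u')\ge\tau/2$, $F\ge c\delta_0^2 s$ when $|\vartheta'|\ge\delta_0/2$ (together with $r'\le s+1$), $r'\ge 9\delta_0 s$ when $t\le(1-10\delta_0)s$, and $F\ge\tau/2\ge\delta_0 s/2$ when $\tau\ge\delta_0 s$; and in the hard part (3) your law-of-cosines computation ($r^2=a^2+(r')^2-2ar'\cos(\vartheta')$ combined with $r\ge t-\delta_0\tau$, $A-a\in[\tau/2,2\tau]$, $A^2-a^2\ge a\tau$) correctly yields $a(1-\cos(\vartheta'))\ge a\tau/(4r')$ for $r'\le a/8$, with the remaining ranges of $r'$ and the case $t<\delta_0\tau$ handled as you say, so $Fr'\ge ca\tau$ throughout and $|S(\gamma)|\le C/\tau$. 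The paper instead expands $S(\gamma)=\gamma^2(t+r\partial_r r'+ar\partial_r\cos(\theta'))$ in Cartesian components, groups the resulting terms, and controls them region by region using the expansions of Lemma~\ref{lem:xyrr'} together with the angular bounds $|\vartheta'|\ge c\sqrt{\tau}/\sqrt{s}$ and $y\vartheta'\ge\tau/10$ of Lemmas~\ref{lem:tvartheta} and~\ref{lem:yvartheta}. Your single identity replaces that term-by-term bookkeeping, and your law-of-cosines step is a self-contained substitute for the role those two lemmas play here (they are still needed elsewhere, e.g.\ in Lemmas~\ref{lem:annuliu'dec} and~\ref{lem:du'dv'dtheta'}, so the shortcut streamlines this proof but does not remove them from the paper). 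What the paper's route buys is methodological uniformity with the rest of Section~\ref{sec:SGeometry} and reusable intermediate estimates; what yours buys is brevity, the sign information $S(\gamma)\ge 0$ in this region, and the observation that only $u\le\delta_0\tau$ (rather than $|u|\le 100\tau$) is actually used.
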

Before proceeding to the proof, let us try to briefly motivate these different regions. The choice of the regions is motivated by the fact that we have to take advantage of different things in different areas. We can then decompose the error integrals in Section~\ref{sec:closingpointwise} into sums of integrals over these localized regions, control each one individually, and then add together the (finitely many) resulting bounds.

When $\tau \ge \delta_0 s$, the scaling vector field uniformly pierces the light cone for $f$, so we expect to have good estimates in this region. When $s - t$ is comparable to $s$, we can use the fact that all of the expressions will end up having good powers of $r'$. Because $r'$ is then comparable to $s - t$ along the light cone for $f$, we can hope to take advantage of this. When $u \le \delta_0 \tau$ and $u' \le \delta_0 \tau$, we can control how tangent the two circles from Figure~\ref{fig:psifSigma_t} are. This is important for controlling $\gamma$ and $S(\gamma)$ because the coordinate $y$ naturally appears, and preventing the circles from becoming too tangent is the same thing as getting a lower bound on the $y$ coordinate, which is the vertical distance in Figure~\ref{fig:psifSigma_t}. When $|\vartheta'| \ge \delta_0$, we can get strong estimates for $\gamma$, allowing us to control everything.
\begin{proof}
We shall do several computations and specialize as necessary. The case where $\tau \le \delta_0 s$ will require the most work.


We have that
\begin{equation} \label{eq:gammaest1}
\begin{aligned}
\gamma = -{r' \over t r' + (r')^2 + a (x - a)} = -{1 \over t + r' + a \cos(\theta')} = -{1 \over t + s - t - u' + (s - \tau) \cos(\theta')}
\\ = -{1 \over s - u' + (s - \tau) \cos(\theta')}.
\end{aligned}
\end{equation}
Now, we note that
\[
s - u' + (s - \tau) \cos(\theta') \ge {\tau \over 2}
\]
because $u' \le \delta_0 \tau$. This proves the first assertion \eqref{eq:gammaest} about $\gamma$.

Before proceeding, we shall make a few remarks. When $\tau \ge \delta_0 s$, the denominator in \eqref{eq:gammaest1} is very large, and this makes the proof easier. Moreover, we note that it will be natural to consider $\vartheta' = \pi - \theta'$. This is because the most delicate region is where $\theta'$ is close to $\pi$. As we shall see below, we shall break up the expressions into regions where $\vartheta'$ is of different sizes depending on $\tau$ and $s$.

We note that
\begin{equation} \label{eq:Sgammaest1}
\begin{aligned}
S (\gamma) = -S \left ({1 \over t + r' + a \cos(\theta')} \right ) = {t \over (t + r' + a \cos(\theta'))^2} + {r \partial_r r' + a r \partial_r \cos(\theta') \over (t + r' + a \cos(\theta'))^2}
\\ = (t + r \partial_r r' + a r \partial_r \cos(\theta')) \gamma^2.
\end{aligned}
\end{equation}

The way in which these terms can be controlled is geometrically motivated. We begin by noting that $\partial_r r' \approx -1$ on a large set when $\tau$ is small relative to $s$ (when $\tau \ge \delta_0 s$, the proof is more immediate, as we shall see). Thus, we think that $\gamma^2 (t + r \partial_r r') \approx \gamma^2 u$. Moreover $y$ should be small relative to other quantities on a large set by Lemma~\ref{lem:xyrr'}. Thus, we expect to be able to control expressions with $y$ in the numerator and other quantities in the denominator. More precisely, we note that $t + r \partial_r r' + a r \partial_r \cos(\theta')$ vanishes in the limit as $\tau \rightarrow 0$. Of course, when $\tau \rightarrow 0$, we have that $\gamma$ blows up because the downward opening and upward opening light cones become tangent (see Figure~\ref{fig:lightconesaux}). However, this gives us hope that this term can be controlled when $\tau$ is small relative to $s$, and that \eqref{eq:Sgammaest1} is small. We now make these ideas precise.

We shall first calculate the quantity $t + r \partial_r r' + a r \partial_r \cos(\theta')$. We have that
\[
r \partial_r r' = x \partial_x (r') + y \partial_y (r') = {x (x - a) \over r'} + {y^2 \over r'},
\]
and that
\[
r \partial_r \cos{\theta'} = x \partial_x \left ({x - a \over r'} \right ) + y \partial_y \left ({x - a \over r'} \right ) = {x \over r'} - {x (x - a)^2 \over (r')^3} - {y^2 (x - a) \over (r')^3}.
\]
Thus, we have that
\begin{equation} \label{eq:Sr'theta'}
    \begin{aligned}
    S(\gamma) = \gamma^2 \left (t + r \partial_r r' + a r \partial_r \cos(\theta') \right ) = \gamma^2 \left [t + {x (x - a) \over r'} + {y^2 \over r'} + {a x \over r'} - {a x (x - a)^2 \over (r')^3} - {a y^2 (x - a) \over (r')^3} \right ].
    \end{aligned}
\end{equation}
This already suffices to control $S(\gamma)$ when $\tau \ge \delta_0 s$ (we note that $\tau \le s$ always holds). Indeed, in this region, we have that $\gamma^2 \le {C \over s^2}$. Thus, every term in \eqref{eq:Sr'theta'} multiplied by $\gamma^2$ is bounded by ${C \over r'}$. This implies \eqref{eq:Sgammaesttaularge}, and it also implies \eqref{eq:Sgammaesttheta'}, \eqref{eq:Sgammaestusmall}, and \eqref{eq:Sgammaesttlarge} in the region where $\tau \ge \delta_0 s$ (this requires comparing $r'$ with $\tau$ using Lemma~\ref{lem:rtr's-t} for \eqref{eq:Sgammaestusmall} and Lemma~\ref{lem:tvartheta} for \eqref{eq:Sgammaesttlarge}). Thus, for the remainder of the proof, we shall assume that $\tau \le \delta_0 s$.

Now, before proceeding, we shall explicitly group the terms in \eqref{eq:Sr'theta'} into three different categories because they will be dealt with differently. Following the discussion at the beginning of the proof, we shall bound the terms in \eqref{eq:Sr'theta'} with powers of $y$ in the numerator directly. We shall treat $t + {x (x - a) \over r'}$ as a single term and extract a power of $u$ plus other terms which are well behaved. Then, we shall treat ${a x \over r'} - {a x (x - a)^2 \over (r')^3}$ as a single term, noting that it is equal to ${a x y^2 \over (r')^3}$. Thus, the first group consists of
\[
\gamma^2 \left [t + {x (x - a) \over r'} \right ].
\]
The second group consists of
\[
\gamma^2 \left [{a x \over r'} - {a x (x - a)^2 \over (r')^3} \right ].
\]
The third group consists of
\[
\gamma^2 \left [{y^2 \over r'} - {a y^2 (x - a) \over (r')^3} \right ].
\]
The second group of terms is the hardest to control.

Following the discussion at the beginning of the proof, we shall bound the terms in \eqref{eq:Sr'theta'} with powers of $y$ in the numerator directly. We shall treat $t + {x (x - a) \over r'}$ as a single term and extract a power of $u$ plus other terms which are well behaved. Then, we shall treat ${a x \over r'} - {a x (x - a)^2 \over (r')^3}$ as a single term, noting that it is equal to ${a x y^2 \over (r')^3}$.

Using Lemma~\ref{lem:xyrr'}, we have that
\begin{equation}
    \begin{aligned}
    {x (x - a) \over r'} = -r \left (1 - {\tau - u - u' \over r} + {\tau - u - u' \over a} - {(\tau - u - u')^2 \over 2 r a} \right )
    \\ \times \left (1 - {\tau - u - u' \over r'} + {\tau - u - u' \over a} - {(\tau - u - u')^2 \over 2 r' a} \right ).
    \end{aligned}
\end{equation}
Expanding and grouping this with $t$ in \eqref{eq:Sr'theta'} as described above, we are able to extract a factor of $u = t - r$. Indeed, using \eqref{eq:tauuu'rr'ar}, we have that 
\[
t + {x (x - a) \over r'} = t - r + {r \over r'} (\tau - u - u') + O(\tau - u - u'),
\]
where we have used that $0 \le \tau - u - u' \le C r'$ (see Lemma~\ref{lem:rtr's-t}). We have that $\gamma^2 (t - r) \le C \gamma$ because $|u| \le C \tau$. Moreover, we have that $\gamma^2 O(\tau - u - u') \le C \gamma$ because $\gamma \le {C \over \tau}$. The only remaining term is ${r \over r'} (\tau - u - u')$. Because $a \ge {s \over 2}$ when $\tau \le \delta_0 s$, we have that


\[
{r \over r'} (\tau - u - u') \le C {a r (\tau - u - u') \over (r')^2},
\]
and this term (see \eqref{eq:Sgammaworstterm}) is controlled below. Altogether, we have so far shown that
\begin{equation} \label{eq:Sgammat-r}
    \begin{aligned}
    \gamma^2 \left [t + {x (x - a) \over r'} \right ] \le C \gamma + C \gamma^2 {a r (\tau - u - u') \over (r')^2}.
    \end{aligned}
\end{equation}
This reduces controlling the first group of terms to controlling \eqref{eq:Sgammaworstterm}, which comes up in the second group of terms.


Now, by Lemma~\ref{lem:xyrr'}, we have that
\[
y^2 = (r')^2 - (r')^2 \left (1 - {\tau - u - u' \over r'} + {\tau - u - u' \over a} - {(\tau - u - u')^2 \over 2 r' a} \right )^2 \le C r'(\tau - u - u')
\]
because we are now assuming that $\tau \le \delta_0 s$. Thus, we have that
\begin{equation} \label{eq:Sgammaworstterm}
    \begin{aligned}
    \left |{a x \over r'} - {a x (x - a)^2 \over (r')^3} \right | = \left |{a x ((r')^2 - (x - a)^2) \over (r')^3} \right | = \left |{a x y^2 \over (r')^3} \right | \le C {a r (\tau - u - u') \over (r')^2}.
    \end{aligned}
\end{equation}
Multiplying by $\gamma^2$, we have that
\[
\left |\gamma^2 \left ({a x \over r'} - {a x (x - a)^2 \over (r')^3} \right ) \right | \le C {a r (\tau - u - u') \over (r')^2 (t + r' + a \cos(\theta'))^2}.
\]


We now have everything in place to establish \eqref{eq:Sgammaesttheta'}. We first recall that
\[
t + r' + a \cos(\theta') = t + s - t - u' + (s - \tau) \cos(\theta') = s - u' + (s - \tau) \cos(\theta').
\]
Now, for $|\vartheta'| = |\theta' - \pi| \ge \delta_0 / 2$, we have that
\begin{equation} \label{eq:gammavarthetalarge}
    \begin{aligned}
    |s - u' + (s - \tau) \cos(\theta')| \ge c s^2
    \end{aligned}
\end{equation}
for some $c > 0$, resulting in the bound
\[
|\gamma| \le {C \over s},
\]
and also the bound
\[
\left |\gamma^2 \left ({a x \over r'} - {a x (x - a)^2 \over (r')^3} \right ) \right | \le C {a r (\tau - u - u') \over s^2 (r')^2} \le {C \over r'},
\]
where we have used Lemma~\ref{lem:rtr's-t} to say that ${|\tau - u - u'| \over r'} \le C$. This controls the second group of terms in this region. We also note that
\[
\gamma^2 {y^2 \over r'} \le {C \over r'},
\]
and that
\[
\gamma^2 {a y^2 (x - a) \over (r')^3} \le {C \over s},
\]
which controls the third group of terms in this region. Along with \eqref{eq:Sgammat-r}, these considerations prove \eqref{eq:Sgammaesttheta'}, and they also establish \eqref{eq:Sgammaestusmall} in the region where $|\vartheta'| \ge \delta_0 / 2$ after using Lemma~\ref{lem:rtr's-t} to say that $r' \ge c \tau$ when $u \le \delta_0 \tau$, which is an assumption for the estimate \eqref{eq:Sgammaestusmall}.

All that remains is to establish \eqref{eq:Sgammaesttlarge} and \eqref{eq:Sgammaestusmall}. More precisely, we must only consider the case of $\tau \le \delta_0 s$ in the region where $|\vartheta'| \le \delta_0$ by what we have done above. We shall focus first on the most difficult term given by
\[
\gamma^2 {a r (\tau - u - u') \over (r')^2}.
\]
In the grouping we defined earlier, this corresponds to controlling the second group of terms. The third group of terms will be controlled after.




Now, when $|\vartheta'| \le \delta_0$, we compute the Taylor expansion around $\vartheta' = \pi - \theta' = 0$ for $t + r' + a \cos(\theta') = s - u' + (s - \tau) \cos(\theta')$, giving us that
\begin{equation}
    \begin{aligned}
    s - u' + (s - \tau) \cos(\theta') = s - u' - (s - \tau) + {1 \over 2} (s - \tau) (\vartheta')^2 + (s - \tau) O((\vartheta')^4)
    \\ = (\tau - u') + {1 \over 2} (s - \tau) (\vartheta')^2 + (s - \tau) O((\vartheta')^4).
    \end{aligned}
\end{equation}
Thus, it suffices to control
\begin{equation} \label{eq:Sgammaworstterm1}
    \begin{aligned}
    {a r \tau \over (r')^2 \left (\tau - u' + {1 \over 2} (s - \tau) (\vartheta')^2 + (s - \tau) O((\vartheta')^4) \right )^2}.
    \end{aligned}
\end{equation}
Let us first assume that $t \le (1 - 10 \delta_0 s)$. Then, the quantity \eqref{eq:Sgammaworstterm1} is controlled by
\[
C {a r \tau \over (r')^2 \tau^2}.
\]
By Lemma~\ref{lem:tvartheta}, we have that $r' \ge c s$ in this region, meaning that this quantity is controlled by
\[
{C \over \tau}.
\]
This controls this term in the region where $t \le (1 - 10 \delta_0 s)$.

More generally, there is a transition of dominant terms in the denominator between $\tau - u'$ and ${1 \over 2} (s - \tau) (\vartheta')^2$ when they are equal. This occurs when $(\vartheta')^2 = 2{\tau - u' \over s - \tau}$. Now, we have that $\tau - u' \ge (1 - \delta_0) \tau$ because $u' \le \delta_0 \tau$. Moreover, we are assuming that $\tau \le \delta_0 s$. This means that $(\vartheta')^2$ must be comparable to ${\tau \over s}$. Thus, it is natural to consider two regions, one where $(\vartheta')^2 \le C {\tau \over s}$ and the other where $(\vartheta')^2 > C {\tau \over s}$ for some constant $C$. However, for technical reasons, it is easier to work instead with the region where $t \le c s$ and $t \ge c s$ for some $0 < c < 1$. We prove in Lemma~\ref{lem:tvartheta} that $t \ge (1 - 10 \delta_0) s$, $u' \le \delta_0 \tau$, and $u \le \delta_0 \tau$ implies that $|\vartheta'| > c {\sqrt{\tau} \over \sqrt{s}}$ when $\tau \le \delta_0 s$. We note that we can now freely assume all of these conditions because all that remains to be shown for the second group of terms is that they are controlled by ${C \over \tau}$ with these assumptions.


Now, in this region, we have that the quantity \eqref{eq:Sgammaworstterm1} is controlled by
\begin{equation} \label{eq:gammabound1}
    \begin{aligned}
    C {a r \tau \over (r')^2 s^2 (\vartheta')^4}.
    \end{aligned}
\end{equation}
Moreover, we note that $r' \vartheta'$ is comparable to $y$ (note that we are assuming that $|\vartheta'| \le \delta_0$). Then, because $y \vartheta'$ is comparable to $\tau$ by Lemma~\ref{lem:yvartheta} below, we have that
\[
{a r \tau \over (r')^2 s^2 (\vartheta')^4} \le C {\tau \over (r')^2 (\vartheta')^4} \le C {\tau \over y^2 (\vartheta')^2} \le {C \over \tau}.
\]
This completes the control of the second group of terms in every region.

We now proceed to the third and final group of terms. We have that
\[
\gamma^2 {a y^2 (x - a) \over (r')^3} \le C \gamma^2 {a \tau \over r'},
\]
where we have used Lemma~\ref{lem:xyrr'} to control $y^2$. Now, for $t \le (1 - 10 \delta_0) s$, we have that $r'$ is comparable to $s$ (see Lemma~\ref{lem:tvartheta}), meaning that this quantity is controlled by
\[
C \gamma^2 \tau \le {C \over \tau}.
\]
When $t \ge (1 - 10 \delta_0) s$, we have that ${r \over r'} \ge 1$ because $\tau \le \delta_0 s$, meaning that
\[
C \gamma^2 {a \tau \over r'} \le C \gamma^2 {a r \tau \over (r')^2},
\]
and the bound follows in the same way as for the term \eqref{eq:Sgammaworstterm} above.

Finally, for the last term in this third group of terms, we use Lemma~\ref{lem:xyrr'} to control $y^2$, giving us that
\[
\gamma^2 {y^2 \over r'} \le C \gamma^2 \tau \le {C \over \tau},
\]
as desired.
\end{proof}

We now control the other coefficients from the expansion in \eqref{eq:du'S}, which will allow us to finally control the error integrals. Now, because the $L'$ derivative gains a weight of ${1 \over r'}$ when applied to $f$ (see \eqref{eq:assumeddecay2}), it is natural to consider instead the expression
\[
2 \gamma S(h) + \gamma {1 \over r'} \left (t - r' - {a (x - a) \over r'} \right ) r' L' (h) + \gamma {2 a y \over (r')^2} \partial_{\theta'} (h).
\]
Note that $\partial_{\theta'}$ is already correctly normalized. We then have the following lemma which controls the coefficients of $L'$ and $\partial_{\theta'}$.

\begin{lemma} \label{lem:du'dv'dtheta'}
Let $u \le 100 \tau$, and let $-1 \le u' \le \delta_0 \tau$.

\begin{enumerate}
    \item In the region where $u \le \delta_0 \tau$, we have that
    \[
    \left |\gamma {1 \over r'} \left (t - r' - {a (x - a) \over r'} \right ) \right | \le {C \over \tau},
    \]
    and we have that
    \[
    \left |\gamma {2 a y \over (r')^2} \right | \le {C \over \tau}.
    \]
    \item In the region where $r' \ge 10$ and $|\vartheta'| = |\pi - \theta| \ge \delta_0 / 2$, we have that
    \[
    \left |\gamma {1 \over r'} \left (t - r' - {a (x - a) \over r'} \right ) \right | \le {C \over r'},
    \]
    and we have that
    \[
    \left |\gamma {2 a y \over (r')^2} \right | \le {C \over r'}.
    \]
    \item In the region where $t \le (1 - 10 \delta_0) s$ and $r' \ge 10$, we have that
    \[
    \left |\gamma {1 \over r'} \left (t - r' - {a (x - a) \over r'} \right ) \right | \le {C \over \tau} + {C \over r'},
    \]
    and we have that
    \[
    \left |\gamma {2 a y \over (r')^2} \right | \le {C \over \tau} + {C \over r'}.
    \]
    \item In the case where $\tau \ge \delta_0 s$, we have that
    \[
    \left |\gamma {1 \over r'} \left (t - r' - {a (x - a) \over r'} \right ) \right | \le {C \over r'},
    \]
    and we have that
    \[
    \left |\gamma {2 a y \over (r')^2} \right | \le {C \over r'}.
    \]
\end{enumerate}


\end{lemma}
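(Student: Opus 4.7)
My plan is to first recast both coefficients in forms where the dependence on $\gamma$ is isolated from the geometry, so that the previously established bounds on $\gamma$ and on the relevant trigonometric quantities can be invoked directly. Set $A = -\gamma^{-1} = t + r' + a\cos\theta'$ and $B = t - r' - a\cos\theta'$. Since $A + B = 2t$, one computes $\gamma B = -B/A = 1 + 2t\gamma$, so the first coefficient becomes
\[
\gamma \cdot \frac{1}{r'}\left(t - r' - \frac{a(x-a)}{r'}\right) = \frac{1 + 2t\gamma}{r'}.
\]
For the second coefficient, writing $y = r'\sin\theta' = r'\sin\vartheta'$ gives
\[
\gamma \cdot \frac{2ay}{(r')^2} = -\frac{2a\sin\theta'}{A r'} = \frac{2a\sin\vartheta'}{A r'}.
\]
With these two rewritings, both coefficients are controlled once one has good bounds on $|A|$, $|\gamma|$, $r'$, and $|y\vartheta'|$.

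The proof then proceeds by a case analysis exactly parallel to that in Lemma~\ref{lem:gammaSgammabound}. In Case (4), $\tau \ge \delta_0 s$, the estimate $|\gamma| \le C/\tau$ from \eqref{eq:gammaest} together with $s \le \tau/\delta_0$, $t \le 2s$, $a \le s$, and $|y| \le r'$ gives both coefficients $\le C/r'$ directly. In Case (2), $|\vartheta'| \ge \delta_0/2$ with $r' \ge 10$, the stronger bound $|A| \ge c s$ coming from the computation around \eqref{eq:gammavarthetalarge} yields $|\gamma| \le C/s$, and the $C/r'$ bound on both coefficients follows at once. In Case (3), $t \le (1 - 10\delta_0)s$ and $r' \ge 10$, Lemma~\ref{lem:tvartheta} forces $r' \ge c s$, so that $1/r' \le C/\tau$, and bounding via $|\gamma| \le C/\tau$ gives the claimed $C/\tau + C/r'$.

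In Case (1), $u \le \delta_0 \tau$, I would first reduce to $\tau \le \delta_0 s$: otherwise Case (4) already applies and $C/r' \le C/\tau$ via $r' \ge c\tau$ from Lemma~\ref{lem:rtr's-t}. Then I split by $|\vartheta'|$. For $|\vartheta'| \ge \delta_0/2$, Case (2) applies and $C/r' \le C/\tau$ by $r' \ge c\tau$. For $|\vartheta'| \le \delta_0/2$, Taylor expanding $A = s - u' - (s - \tau)\cos\vartheta'$ about $\vartheta' = 0$ gives $|A| \ge c(\tau + s(\vartheta')^2)$, and Lemma~\ref{lem:yvartheta} furnishes the transverse lower bound $y\vartheta' \ge c\tau$; combined with $y \le r'|\vartheta'|$ this forces the key geometric inequality $r'(\vartheta')^2 \ge c\tau$, hence $r'|\vartheta'| \ge c\tau/\delta_0$. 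From $(1 + 2t|\gamma|)/r' \le 1/r' + 2s|\gamma|/r'$ one bounds the first summand by $C/\tau$ via $r' \ge c\tau$, and the second either by $|\gamma| \le C/\tau$ when $s(\vartheta')^2 \lesssim \tau$ (in which case $r' \ge c s$ follows from $r'(\vartheta')^2 \ge c\tau$) or by $|\gamma| \le C/(s(\vartheta')^2)$ combined with $r'(\vartheta')^2 \ge c\tau$ in the opposite regime. The second coefficient is controlled similarly: $2a|\sin\vartheta'|/(|A|r') \le 2a|\vartheta'|/(c s (\vartheta')^2 r') \le C/(|\vartheta'|r') \le C/\tau$.

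The main obstacle is precisely this last subcase, $\tau \ll s$ with $|\vartheta'|$ very small: naive use of $|\gamma| \le C/\tau$ and $|y| \le r'$ produces an unwanted factor of $s/\tau$ that spoils the bound. Overcoming this requires simultaneously exploiting the quadratic enhancement of $|A|$ for small $\vartheta'$ and the transversality inequality $y\vartheta' \gtrsim \tau$ from Lemma~\ref{lem:yvartheta}; it is their combination that delivers $r'(\vartheta')^2 \gtrsim \tau$, which is the geometric heart of the estimate and resolves all the dangerous terms.
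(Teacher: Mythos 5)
Your algebraic reformulation ($\gamma B = 1 + 2t\gamma$ with $A=-\gamma^{-1}$, $B = t-r'-a\cos\theta'$, and $\gamma\,2ay/(r')^2 = -2a\sin\vartheta'/(Ar')$) is correct and tidy, and your treatment of Cases (2), (3), (4) matches the paper's in substance: $|A|\ge cs$ when $|\vartheta'|\ge\delta_0/2$, $r'\ge cs$ when $t\le(1-10\delta_0)s$, and $|\gamma|\le C/\tau\le C/(\delta_0 s)$ when $\tau\ge\delta_0 s$. However, there is a genuine gap in Case (1), subcase $|\vartheta'|\le\delta_0/2$: you invoke Lemma~\ref{lem:yvartheta} to get $y\vartheta'\ge c\tau$, but that lemma carries the hypothesis $t\ge\tfrac14 s$, which you never arrange, since you do not split Case (1) on $t$. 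The inequality is in fact false when $t$ is far from $s$: for $t$ comparable to $\tau$ (with $\tau\le\delta_0 s$, $-1\le u,u'\le\delta_0\tau$), Lemma~\ref{lem:xyrr'} gives $y^2\approx 2r(\tau-u-u')\sim\tau^2$, hence $y\sim\tau$ and $\vartheta'\sim y/r'\sim\tau/s$, so $y\vartheta'\sim\tau^2/s\ll\tau$ and your ``key geometric inequality'' $r'(\vartheta')^2\ge c\tau$ (and a fortiori $r'|\vartheta'|\ge c\tau/\delta_0$) fails. Consequently the steps where you deduce $r'\ge cs$ from $r'(\vartheta')^2\ge c\tau$, and where you bound the second coefficient by $C/(|\vartheta'|r')\le C/\tau$, are not justified as written.

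The conclusion of Case (1) is still true in that regime, but by a different mechanism: when $t\le(1-10\delta_0)s$ (and $u'\le\delta_0\tau$) one has $r'=s-t-u'\ge cs$ directly, so your own Case (3) argument gives both coefficients $\le C/\tau + C/r'\le C/\tau$ (using $r'\ge c\tau$ from Lemma~\ref{lem:rtr's-t}); the transversality bound $y\vartheta'\gtrsim\tau$ is only needed, and only valid, when $t$ is close to $s$. This is exactly how the paper organizes the proof: it first disposes of $\tau\ge\delta_0 s$, then of $t\le(1-10\delta_0)s$ via $r'\sim s$, and only in the remaining region $t\ge(1-20\delta_0)s$, $|\vartheta'|\le\delta_0$ does it use Lemma~\ref{lem:yvartheta} (whose hypothesis $t\ge\tfrac14 s$ is then satisfied), bounding $\gamma a/r'\le Ca/(r's(\vartheta')^2)\le C/(y\vartheta')\le C/\tau$. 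So your proposal needs one additional split on $t$ inside Case (1); with that insertion it closes and is essentially the paper's argument, modestly streamlined by your identity $\gamma B=1+2t\gamma$.
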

\begin{proof}
We first consider the relative size of $\tau$ and $s$. When $\tau \ge \delta_0 s$, the result is immediate because $\gamma$ is controlled by ${C \over s}$ by Lemma~\ref{lem:gammaSgammabound}. We may thus assume that $\tau \le \delta_0 s$. Moreover, when $t \le (1 - 10 \delta_0) s$, we note that $r'$ is comparable to $s$ (see Lemma~\ref{lem:tvartheta}). Thus, in this region, we can bound the coefficients by $C \gamma$, as desired.

We now consider the region where $t \ge (1 - 20 \delta_0) s$ and $|\vartheta'| \le \delta_0$. With these conditions, we are only interested in the case where we additionally have that $u \le \delta_0 \tau$. We note that the coefficient of $r' L'$ is bounded by
\[
C \gamma + {C \gamma a \over r'}.
\]
The first of these terms is bounded by ${C \over \tau}$ by Lemma~\ref{lem:gammaSgammabound}. For the second term, we note that
\[
\gamma {a \over r'} \le C {a \over r' s (\vartheta')^2} \le C {1 \over y \vartheta'},
\]
where we have bounded $\gamma$ in the same way as in \eqref{eq:gammabound1}. This quantity is then controlled by ${C \over \tau}$ by Lemma~\ref{lem:yvartheta}. Then, we have that
\[
\left |\gamma {2 a y \over (r')^2} \right | \le C \gamma {a \over r'},
\]
meaning that the same argument works here.

Finally, in the region where $|\vartheta'| \ge \delta_0 / 2$ and $r' \ge 10$, we note that $\gamma$ is comparable to ${1 \over s}$. Thus, we have that
\[
\gamma {a \over r'} \le {C \over r'},
\]
giving us the desired result. We note that we are using Lemma~\ref{lem:rtr's-t} to say that $r' \ge c \tau$ in the region where $u \le \delta_0 \tau$.


\end{proof}

\begin{lemma} \label{lem:tvartheta}
Let $\tau \le \delta_0 s$.
\begin{enumerate}
    \item In the region where $t \ge {1 \over 4} s$, $u' \le \delta_0 \tau$, and $u \le \delta_0 \tau$, we have that $|\vartheta'| \ge c {\sqrt{\tau} \over \sqrt{s}}$, and we have that $|\vartheta'| \ge {1 \over 10} |\theta|$. An analogous statement holds when $t \le {3 \over 4} s$ with the roles of $\vartheta'$ and $\theta$ interchanged.
    \item In the region where $t \le (1 - 10 \delta_0) s$ and $u' \le \delta_0 \tau$, we have that $r' \ge c s$. Similarly, in the region where $t \ge 10 \delta_0 s$ and $u \le \delta_0 \tau$, we have that $r \ge c s$.
\end{enumerate}
\end{lemma}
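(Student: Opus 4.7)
The plan is to establish the first assertion by using Lemma~\ref{lem:xyrr'} to compute $\sin^2 \vartheta'$ explicitly, extract a lower bound proportional to $\tau/s$, and then convert this to a lower bound on $|\vartheta'|$ itself. The second assertion is immediate: if $t \le (1 - 10\delta_0)s$ and $u' \le \delta_0\tau \le \delta_0^2 s$, then $r' = s - t - u' \ge 10\delta_0 s - \delta_0^2 s \ge 9\delta_0 s$, so one may take $c = 9\delta_0$ (the constant is permitted to depend on $\delta_0$); the analogous bound on $r$ is identical.

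For the first assertion, I would set $\sigma = \tau - u - u'$, so that $(1-2\delta_0)\tau \le \sigma$ and $\sigma \le 3\delta_0 s$ in the region of interest. Under the assumption $t \ge s/4$, Lemma~\ref{lem:rtr's-t} gives $r' \le 3s/4 + 1$ and $r = t - u \ge s/4 - \delta_0 \tau \ge s/5$ for $s$ sufficiently large. The key observation is that the quantity $B = 1 + \sigma/a - \sigma/r' - \sigma^2/(2r'a)$ appearing in Lemma~\ref{lem:xyrr'} is exactly $(a-x)/r' = \cos\vartheta'$, so that lemma directly yields
\[
\sin^2\vartheta' = y^2/(r')^2 = \tilde{A}(2 - \tilde{A}), \qquad \tilde{A} := 1 - \cos\vartheta' = \frac{\sigma(2r - \sigma)}{2r'a},
\]
after using the triangle-inequality relation $a - r' = r - \sigma$ to simplify the coefficient. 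Plugging in the bounds $\sigma \ge c\tau$, $2r - \sigma = r + a - r' \ge cs$, and $r'a \le s^2$ yields $\tilde{A} \ge c\tau/s$. The elementary inequality $1 - \cos\alpha \le \alpha^2/2$ then gives $|\vartheta'| \ge \sqrt{2\tilde{A}} \ge c\sqrt{\tau/s}$.

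For the bound $|\vartheta'| \ge |\theta|/10$, I would use the identity $|y| = r|\sin\theta| = r'|\sin\vartheta'|$. First one checks that $|\theta| \le \pi/2$ in this region: otherwise $x \le 0$ forces $(a-x)^2 \ge a^2 \ge (1-\delta_0)^2 s^2$, which combined with $(a-x)^2 \le (r')^2$ gives $r' \ge (1-\delta_0)s$, contradicting $r' \le 3s/4 + 1$ for $\delta_0$ small. Then for $|\vartheta'| \le \pi/2$, both sines are comparable to their angles, and combining $r/r' \ge 1/5$ with $|\sin\theta| \ge (2/\pi)|\theta|$ gives $|\vartheta'| \ge |\sin\vartheta'| = (r/r')|\sin\theta| \ge (2/5\pi)|\theta| > |\theta|/10$; the case $|\vartheta'| > \pi/2$ is trivial since $|\theta| \le \pi/2$. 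The analogous statement for $t \le 3s/4$ follows by the symmetric argument using $\sin^2\theta = A'(2-A')$ with $A' = \sigma(2r'-\sigma)/(2ra) \ge c\tau/s$.

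The hardest step is recognizing the identity $B = \cos\vartheta'$ which makes $\tilde{A}$ coincide with $1-\cos\vartheta'$; once that is in hand everything reduces to routine algebra using the triangle relation $r + r' = a + \sigma$ and the lower bound $r \ge s/5$ coming from the combination of $t \ge s/4$ and $r' \le 3s/4 + 1$. All constants appearing above are allowed to depend on $\delta_0$, which is consistent with the conventions of Section~\ref{sec:coordinates}.
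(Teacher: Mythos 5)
Your proof is correct, and on the hardest point it is actually cleaner than the paper's. The paper also reduces everything to Lemma~\ref{lem:xyrr'} and the relation $r + r' = a + (\tau - u - u')$, but it handles the bound $|\vartheta'| \ge c\sqrt{\tau}/\sqrt{s}$ perturbatively: it first reduces (WLOG) to $\vartheta' \le \delta_0$, derives the auxiliary bound $(\tau - u - u')/r' \le 10\delta_0$ from $y \le r'\delta_0$, and then Taylor-expands both $\sin^2\vartheta'$ and $y^2 = (r')^2 - (r')^2 B^2$, tracking $O(\delta_0)$ and $O((\vartheta')^4)$ errors before invoking the lower bound $1 - \tfrac{r'}{a} + \tfrac{\tau-u-u'}{2a} \ge \tfrac1{10}$. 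Your observation that $B = (a-x)/r' = \cos\vartheta'$, so that exactly $1 - \cos\vartheta' = \frac{\sigma(2r-\sigma)}{2r'a}$ with $\sigma = \tau - u - u'$, followed by $1-\cos\alpha \le \alpha^2/2$, bypasses the smallness reduction, the intermediate bound on $\sigma/r'$, and all the error bookkeeping, while using only $\sigma \ge (1-2\delta_0)\tau$, $2r - \sigma = r + a - r' \ge cs$, and $r'a \le s^2$; the identity itself is exact, so nothing is lost. The remaining parts (the bound $|\vartheta'| \ge \tfrac1{10}|\theta|$ via $r\sin\theta = r'\sin\vartheta'$, $r/r' \ge 1/5$, and ruling out $|\theta| > \pi/2$; the second assertion; the symmetric case $t \le \tfrac34 s$) coincide with the paper's argument. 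Two harmless remarks: your claim $\sigma \le 3\delta_0 s$ is not implied by the stated hypotheses alone (it needs the implicit lower bounds $u, u' \ge -1$ coming from the supports of $\psi$ and $f$), but you never use it; and the bounds $r' \le \tfrac34 s + 1$, $r \le t+1$ likewise rest on those same implicit lower bounds rather than on Lemma~\ref{lem:rtr's-t} — an assumption the paper makes tacitly as well in \eqref{eq:r'boundtlarge}.
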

\begin{proof}
We first assume that $t \le (1 - 10 \delta_0) s$ and that $u' \le \delta_0 \tau$. We then have that $s - t \ge 10 \delta_0 s$. Moreover, because $u' \le \delta_0 \tau$ and $\tau \le s$, we have that $u' \le \delta_0 s$. Thus, we have that
\[
r' = s - t - u' \ge 9 \delta_0 s.
\]
An analogous argument shows a similar lower bound for $r$ instead of $r'$. This proves the second assertion.

We shall now prove the first assertion. We shall perform the argument to find a lower bound for $|\vartheta'|$. Finding a lower bound for $|\theta'|$ follows in an analogous way.

We assume that $t \ge {1 \over 4} s$, $u' \le \delta_0 \tau$, and that $u \le \delta_0 \tau$. We then have that $r = t - u \ge {1 \over 5} s$, and we have that
\begin{equation} \label{eq:r'boundtlarge}
    \begin{aligned}
    r' \le s - t + 1 \le {3 \over 4} s + 1.
    \end{aligned}
\end{equation}
Then, because $y = r \sin(\theta) = r' \sin(\theta') = r' \sin(\vartheta')$, we must have that $|\vartheta'| \ge {1 \over 10} |\theta|$. This can be seen by consulting Figure~\ref{fig:psifSigma_t}. More precisely, we have that
\[
{\sin(\vartheta') \over \sin(\theta)} = {r \over r'} \ge {1 \over 5}.
\]
Moreover, without loss of generality, we have that $0 \le \theta \le {\pi \over 2}$ (we cannot have ${\pi \over 2} \le \theta \le \pi$ because we have that $|x - a| \le r' \le {3 \over 4} s + 1$). We thus have that ${2 \theta \over \pi} \le \sin(\theta) \le \theta$. Without loss of generality, we may also assume that $0 \le \vartheta' \le \pi$. In the region where $\vartheta' \ge {\pi \over 2}$, the inequality is then obvious, so we may also assume that $0 \le \vartheta' \le {\pi \over 2}$. In this region, we once again have that ${2 \vartheta' \over \pi} \le \sin(\vartheta') \le \vartheta'$. Thus, we have that
\[
\vartheta' \ge \sin(\vartheta') \ge {1 \over 5} \sin(\theta) \ge {1 \over 5} {2 \over \pi} \theta \ge {1 \over 8} \theta.
\]
This implies that $|\vartheta'| \ge {1 \over 8} |\theta|$ in this region, giving us the desired result.

We finally turn to showing that $|\vartheta'| \ge c {\sqrt{\tau} \over \sqrt{s}}$. By symmetry, we can take $\vartheta' > 0$, and without loss of generality, we may assume that $\vartheta' \le \delta_0$ (otherwise, there is nothing left to show). By Lemma~\ref{lem:xyrr'}, we have that
\begin{equation}
    \begin{aligned}
    a - x = r' \left (1 - {\tau - u - u' \over r'} + {\tau - u - u' \over a} - {(\tau - u - u')^2 \over 2 r' a} \right )
    \\ = r' - r' {\tau - u - u' \over r'} \left (1 - {r' \over a} + {\tau - u - u' \over 2 a} \right ).
    \end{aligned}
\end{equation}
Because $r' \le {3 \over 4} s + 1$ and because $a \ge (1 - \delta_0 s)$ (see Lemma~\ref{lem:xyrr'}), we have that
\begin{equation} \label{eq:factolowerbound}
    \begin{aligned}
    1 - {r' \over a} + {\tau - u - u' \over 2 a} \ge {1 \over 10}.
    \end{aligned}
\end{equation}
Then, because $\vartheta' \le \delta_0$, we have that $y \le r' \delta_0$, meaning that the triangle inequality gives us that
\[
r' - r' \delta_0 \le a - x = r' - r' {\tau - u - u' \over r'} \left (1 - {r' \over a} + {\tau - u - u' \over 2 a} \right ) \le r' - r' {\tau - u - u' \over 10 r'}.
\]
From this, it follows that
\[
{\tau - u - u' \over r'} \le 10 \delta_0.
\]

Now, we have that
\[
y^2 = (r')^2 - (r')^2 \left (1 + {\tau - u - u' \over a} - {\tau - u - u' \over r'} - {(\tau - u - u')^2 \over 2 r' a} \right )^2.
\]
Because $\vartheta' \le \delta_0$ and $\sin(\vartheta')$ is comparable to $\vartheta'$ for $\vartheta'$ small, we can write that
\[
(r')^2 (\vartheta')^2 + (r')^2 O((\vartheta')^4) = (r')^2 - (r')^2 \left (1 + {\tau - u - u' \over a} - {\tau - u - u' \over r'} - {(\tau - u - u')^2 \over 2 r' a} \right )^2.
\]
Thus, we have that
\begin{equation}
    \begin{aligned}
    (r')^2 (\vartheta')^2 + (r')^2 O((\vartheta')^4) = (r')^2 - (r')^2 \left [1 + {\tau - u - u' \over r'} \left ({r' \over a} - 1 - {\tau - u - u' \over 2 a} \right ) \right ]^2
    \\ = (r')^2 - (r')^2
    \\ \times \left [1 + {2 (\tau - u - u') \over r'} \left ({r' \over a} - 1 - {\tau - u - u' \over 2 a} \right ) + {(\tau - u - u')^2 \over (r')^2} \left ({r' \over a} - 1 - {(\tau - u - u')^2 \over 2 r' a} \right )^2 \right ]
    \\ = 2 r' (\tau - u - u') \left (1 - {r' \over a} + {\tau - u - u' \over 2 a} + O(\delta_0) \right ).
    \end{aligned}
\end{equation}
Thus, using \eqref{eq:factolowerbound}, we have that
\[
(r')^2 (\vartheta')^2 + (r')^2 O((\vartheta')^4) \ge {1 \over 20} r' (\tau - u - u').
\]
From this, it follows that
\[
(\vartheta')^2 \ge c {\tau - u - u' \over r'},
\]
giving us the desired result.

\end{proof}

\begin{lemma} \label{lem:yvartheta}
Let $\tau \le \delta_0 s$. In the region where $t \ge {1 \over 4} s$, $|\vartheta'| \le \delta_0$, $u \le \delta_0 \tau$ and $u' \le \delta_0 \tau$,  we have that $y \vartheta' \ge {1 \over 10} \tau$. Similarly, in the region where $t \le {3 \over 4} s$, $|\theta| \le \delta_0$, $u \le \delta_0 \tau$, and $u' \le \delta_0 \tau$, we have that $y \theta \ge {1 \over 10} \tau$.
\end{lemma}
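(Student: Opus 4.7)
The plan is to reduce the claim to an intermediate estimate that is essentially available inside the proof of Lemma~\ref{lem:tvartheta}, and then to combine it with the fact that, for small angles, $y$ is comparable to $r' \vartheta'$ (respectively $r \theta$).

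First I would treat the case $t \ge s/4$ and $|\vartheta'| \le \delta_0$. Since $b = 0$ we have $y = r' \sin(\theta') = r' \sin(\vartheta')$, and because $|\vartheta'| \le \delta_0$ with $\delta_0$ small, a standard Taylor/Jordan-inequality estimate gives $\sin(\vartheta') \ge \tfrac{9}{10} \vartheta'$ in absolute value, so
\[
y \vartheta' \ge \tfrac{9}{10}\, r' (\vartheta')^2.
\]
Thus it suffices to show $r'(\vartheta')^2 \ge c\tau$.

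Next I would recall that in the proof of Lemma~\ref{lem:tvartheta}, immediately before concluding the lower bound $|\vartheta'| \ge c \sqrt{\tau/s}$, the intermediate estimate
\[
(\vartheta')^2 \ge c \,\frac{\tau - u - u'}{r'}
\]
is derived under precisely the hypotheses we have here (namely $\tau \le \delta_0 s$, $t \ge s/4$, $u \le \delta_0\tau$, $u' \le \delta_0 \tau$, and $|\vartheta'| \le \delta_0$); this came from expanding $y^2 = (r')^2 - (x-a)^2$ with the formula in Lemma~\ref{lem:xyrr'} and using the uniform lower bound \eqref{eq:factolowerbound} on the factor $1 - r'/a + (\tau - u - u')/(2a)$. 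I would quote or briefly re-derive this intermediate inequality. Multiplying by $r'$ gives $r'(\vartheta')^2 \ge c(\tau - u - u')$, and since $u, u' \le \delta_0 \tau$ we have $\tau - u - u' \ge (1 - 2\delta_0)\tau \ge \tfrac{1}{2}\tau$ for $\delta_0$ small. Combining with the display above yields $y\vartheta' \ge \tfrac{1}{10}\tau$, as claimed.

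For the symmetric statement (the region $t \le 3s/4$, $|\theta| \le \delta_0$), the argument is identical with the roles of $(r', \vartheta')$ and $(r, \theta)$ interchanged: we use $y = r \sin(\theta)$ to reduce to showing $r\theta^2 \ge c\tau$, and then invoke the analogous intermediate estimate $\theta^2 \ge c(\tau - u - u')/r$ from the second half of Lemma~\ref{lem:tvartheta}'s proof. The only thing worth being careful about is bookkeeping the constants so that the final bound is at least $\tau/10$; this is routine once $\delta_0$ is chosen small enough (independently of the other parameters). There is no real obstacle — the lemma is essentially a repackaging of a computation already carried out — but the one point to verify cleanly is that the quoted intermediate inequality from Lemma~\ref{lem:tvartheta} holds under the hypotheses stated here, in particular that $\tau - u - u' \ge 0$ (which follows from Lemma~\ref{lem:rtr's-t}) and is comparable to $\tau$.
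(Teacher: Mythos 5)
Your argument is correct in substance, but it is not the route the paper takes. The paper proves the lemma directly from the relation $r\cos(\theta) + r'\cos(\vartheta') = a = s - \tau$ (the horizontal distance between the two circle centers in $\Sigma_t$): it Taylor-expands the cosines, uses $r + r' - (s-\tau) = \tau - u - u' \ge (1-2\delta_0)\tau$, and absorbs the $\sin(\theta)$ term via the comparison $|\vartheta'| \ge \tfrac{1}{10}|\theta|$ from Lemma~\ref{lem:tvartheta}. You instead reduce the claim to $r'(\vartheta')^2 \ge c\,(\tau - u - u')$ and reuse the final computation inside the proof of Lemma~\ref{lem:tvartheta} (itself based on Lemma~\ref{lem:xyrr'} and \eqref{eq:factolowerbound}), which is derived under exactly the hypotheses at hand, with no circularity since Lemma~\ref{lem:tvartheta} does not depend on this lemma. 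What your route buys: it avoids the angle comparison $|\vartheta'|\ge\tfrac{1}{10}|\theta|$ entirely and recycles an existing computation; what it costs: the estimate you quote is an unlabeled display inside another proof (so you should either extract it as a standalone statement or re-derive it, as you suggest), and since the paper only records it with an unspecified constant $c$, quoting it verbatim yields $y\vartheta' \ge c\tau$ rather than the advertised $\tfrac{1}{10}\tau$. Your claim that the constant bookkeeping is routine does check out: in this region $r' \le \tfrac{3}{4}s + 1$ and $a = s-\tau \ge (1-\delta_0)s$, so the bracket in \eqref{eq:factolowerbound} is in fact $\ge \tfrac14 - O(\delta_0)$, giving $r'(\vartheta')^2 \ge (\tfrac12 - O(\delta_0))(\tau - u - u')$ and hence $y\vartheta' \ge \tfrac{1}{10}\tau$ comfortably for $\delta_0$ small (and in any case only $y\vartheta' \ge c\tau$ is used downstream). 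For the symmetric case you need the analogue $\theta^2 \ge c(\tau-u-u')/r$, which the paper only asserts "follows analogously"; it does, using Lemma~\ref{lem:xyrr'} with the roles of $(r,\theta)$ and $(r',\vartheta')$ interchanged and $r \le \tfrac34 s + 1$ (from the support condition $u \ge -1$), so your plan goes through there as well.
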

\begin{proof}
We shall prove the result for $\vartheta'$. The result for $\theta$ follows in an analogous way.

We begin by noting that we must have that $0 \le x \le a = s - \tau$ in this region. Thus, we have that $r \cos(\theta) + r' \cos(\vartheta') = a = s - \tau$, where we recall that $\vartheta' = \pi - \theta'$ (see Figure~\ref{fig:psifSigma_t} to see that this is true because $r \cos(\theta) + r' \cos(\theta')$ gives the horizontal distance between the center adapted to $r$ and the center adapted to $r'$, and the line segment connecting these two points has length $s - \tau$). By symmetry, we can consider the region where $y \ge 0$. We have that $y = r \sin(\theta) = r' \sin(\theta') = r' \sin(\vartheta')$. Moreover, by Lemma~\ref{lem:tvartheta}, we have that $0 \le \theta \le 10 \vartheta'$. Thus, we have that
\[
r \sqrt{1 - \sin^2 (\theta)} + r' \sqrt{1 - \sin^2 (\vartheta')} = s - \tau.
\]
Taylor expanding, this gives us that
\[
r + r' - {1 \over 2} \sin^2 (\theta) r - {1 \over 2} \sin^2 (\vartheta') r' + r O(\sin^4 (\theta)) + r' O(\sin^4 (\vartheta')) = s - \tau.
\]
Thus, we have that
\[
-{1 \over 2} y \sin(\theta) - {1 \over 2} y \sin(\vartheta') + y O(\theta^3 + (\vartheta')^3) = s - \tau - r - r'.
\]
We note that $r = t - u \ge t - \delta_0 \tau$ and $r' = s - t - u' \ge s - t - \delta_0 \tau$. Thus, we have that
\[
s - \tau - r - r' \le s - \tau - (t - u) - (s - t - u') = -\tau + u + u' \le -{9 \tau \over 10},
\]
meaning that
\[
-y \sin(\vartheta') + y O((\vartheta')^3) \le -{\tau \over 7}.
\]
From this, it follows that $y \vartheta' \ge {\tau \over 10}$ when $\delta_0$ is sufficiently small, as desired.
 
\end{proof}


 


With these estimates, we can now control all of the nonlinear error integrals effectively.

\subsection{Bootstrap assumptions} \label{sec:bootstrap}
We now list the bootstrap assumptions. For $N$ a sufficiently large integer which will be chosen to make the proof work, we let $T$ be the largest real number such that 
\begin{equation}
\begin{aligned}
\sum_{|\alpha| \le N} \Vert (1 + t)^{-\epsilon} \partial \Gamma^\alpha \psi \Vert_{L_t^\infty ([0,T]) L_x^2} \le \epsilon^{{3 \over 4}},
\\ \sum_{|\alpha| \le N} \Vert (1 + t)^{-{1 \over 2} - {\delta \over 4}} \partial \Gamma^\alpha \psi \Vert_{L_t^2 ([0,T]) L_x^2} \le \epsilon^{{3 \over 4}},
\\ \sum_{|\alpha| \le {3 N \over 4}} \Vert (1 + t)^{{1 \over 2}} (1 + |u|)^{{3 \over 2} - \delta} \partial \Gamma^\alpha \psi \Vert_{L_t^\infty ([0,T]) L_x^\infty} \le \epsilon^{{3 \over 4}}.
\end{aligned}
\end{equation}
We shall show that assuming these estimates allows us to improve the constants on the right hand side of these norms to $C \epsilon$. This will prove global stability of the trivial solution. In the remainder of this section, we shall recover the energy bootstrap assumptions, which follow easily. We shall also interpolate between the pointwise estimates and the energy estimates. The next section will recover the pointwise bootstrap assumptions.

We shall now recover the bootstrap assumption for the energy. For every $\alpha$ with $|\alpha| \le N$, it suffices to show that $\Vert \partial \Gamma^\gamma \phi \Vert_{L^2 (\Sigma_t)} \le C \epsilon$. Thus, we commute the equation with $\Gamma^\alpha$. We have that
\[
\Box \Gamma^\alpha \psi + (\partial_t \phi)^2 (\partial_t \psi) (\partial_t^2 \Gamma^\alpha \psi) = F'
\]
for an appropriate function $F'$. We shall now explicitly do the energy estimate at this level. If we use $\partial_t \Gamma^\alpha \psi$ as a multiplier, we see that
\begin{equation} \label{eq:enest}
    \begin{aligned}
    \int_{\Sigma_s} (1 - (\partial_t \phi)^2 (\partial_t \psi)) (\partial_t \Gamma^\alpha \psi)^2 + (\partial_x \Gamma^\alpha \psi)^2 + (\partial_y \Gamma^\alpha \psi)^2 d x
    \\ \le \int_{\Sigma_s} (1 - (\partial_t \phi)^2 (\partial_t \psi)) (\partial_t \Gamma^\alpha \psi)^2 + (\partial_x \Gamma^\alpha \psi)^2 + (\partial_y \Gamma^\alpha \psi)^2 d x + \int_0^s \int_{\Sigma_t} |F| |\partial_t \Gamma^\alpha \psi| d x d t,
    \end{aligned}
\end{equation}
where $F$ is such that
\begin{equation} \label{eq:enesterror}
    \begin{aligned}
    |F| \le C \sum_{\beta \le \alpha} |\Gamma^\beta ((\partial_t \phi) (\partial_t \psi)^2)| + C \sum_{\beta \le \alpha} |\Gamma^\beta ((\partial_t \phi)^2 (\partial_t \psi))| + C \sum_{\beta \le \alpha} |\Gamma^\beta ((\partial_t \psi)^4)| = F_1 + F_2 + F_3.
    \end{aligned}
\end{equation}
Now, at least two of the factors in $F_1$ and $F_2$ must have fewer than ${3 N \over 4}$ derivatives on them after applying the chain rule with the $\Gamma^\beta$. Similarly, at least three of the factors in $F_3$ must have fewer than ${3 N \over 4}$ derivatives of them. Moreover, we note that
\[
|\Gamma^\beta (\partial_t \psi)| \le \sum_{\beta_1 \le \beta} |\partial_t \Gamma^{\beta_1} \psi|
\]
because $[\partial_t,\partial] = 0$ and $[\partial_t,S] = \partial_t$. Using the pointwise bootstrap assumptions on these factors with fewer derivatives thus tells us that
\begin{equation} \label{eq:topordererror}
    \begin{aligned}
    |F| \le {C \epsilon^{{6 \over 4}} \over (1 + t)} \sum_{\beta \le \alpha} (|\partial \Gamma^\beta \psi| + |\partial \Gamma^\beta \phi|).
    \end{aligned}
\end{equation}
Repeating the same argument for $\phi$ gives us an expression analogous to \eqref{eq:enest} that satisfies the same estimate.

We now set
\[
E(t) = \sum_{|\alpha| \le N} \int_{\Sigma_t} (\partial_t \Gamma^\alpha \psi)^2 + (\partial_x \Gamma^\alpha \psi)^2 + (\partial_y \Gamma^\alpha \psi)^2 d x + \sum_{|\alpha| \le N} \int_{\Sigma_t} (\partial_t \Gamma^\alpha \phi)^2 + (\partial_x \Gamma^\alpha \phi)^2 + (\partial_y \Gamma^\alpha \phi)^2 d x
\]
By the bootstrap assumptions and \eqref{eq:topordererror}, we have that \eqref{eq:enest} implies that
\[
E(s) \le 2 E(0) + C \epsilon^{{6 \over 4}} \int_0^s {1 \over (1 + t)} \sum_{|\beta| \le N} (|\partial \Gamma^\beta \psi| + |\partial \Gamma^\beta \phi|) |\partial_t \Gamma^\alpha \psi| d x d t \le 2 E(0) + C \epsilon^{{6 \over 4}} \int_0^s {1 \over (1 + t)} E(t) d t.
\]
Thus, by Gronwall's inequality, we have that
\[
E(t) \le 2 E(0) (1 + t)^{2 \epsilon} \le 2 \epsilon^2 (1 + t)^{2 \epsilon}.
\]
This recovers the bootstrap assumption for the energy.

Now, taking $(1 + t)^{-{\delta \over 4}} \partial_t \psi$ as a multiplier and using the bootstrap assumptions, we see that see that the same argument shows that
\[
\int_0^s \int_{\Sigma_t} (1 + t)^{-1 - {\delta \over 4}} \left [(\partial_t \Gamma^\gamma \psi)^2 + (\partial_x \Gamma^\gamma \psi)^2 + (\partial_y \Gamma^\gamma \psi)^2 \right ] d x d t \le C \epsilon^2 + C \epsilon^3 \int_0^s \int_{\Sigma_t} (1 + t)^{-1 + 2 \epsilon - {\delta \over 4}}.
\]
This last integral converges when $\delta > 8 \epsilon$, giving us the desired result.

All that remains is to recover the pointwise bootstrap assumptions. In order to recover these, we shall first interpolate between the pointwise bootstrap assumptions and the energy bootstrap assumptions. The next section uses these interpolated estimates in order to recover the pointwise bootstrap assumptions, which will complete the proof of the main Theorem. The interpolated estimates are the content of the following proposition.

\begin{lemma} \label{lem:interpolation}
Let $|\gamma| \le {3 N \over 4}$. For $N$ sufficiently large as a function of $\delta$ and for $|\beta| \le 10$, we have that
\[
|\Gamma^\beta \partial \Gamma^\gamma \psi| (t,r,\theta) \le {C \epsilon^{{3 \over 4}} \over (1 + t)^{{1 \over 2} - {\delta \over 2}} (1 + |u|^{{3 \over 2} - 2 \delta})}
\]
\end{lemma}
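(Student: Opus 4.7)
The plan is to reduce the claim to bounding individual terms $|\partial \Gamma^\mu \psi|(t,r,\theta)$ with $|\mu| \le \tfrac{3N}{4}+10$, and then either invoke the pointwise bootstrap directly or interpolate between the pointwise and energy bootstraps via a Gagliardo--Nirenberg estimate on a unit ball.

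First, using $[\partial_\nu,\partial_\lambda]=0$ and $[S,\partial_\nu]=-\partial_\nu$, I iteratively move the single $\partial$ past the $\Gamma^\beta$ on the left to obtain
\[
\Gamma^\beta \partial \Gamma^\gamma \psi = \sum_{|\mu| \le |\beta|+|\gamma|} c_\mu \, \partial \Gamma^\mu \psi
\]
with bounded integer coefficients, so that $|\mu| \le \tfrac{3N}{4}+10$. For $|\mu| \le \tfrac{3N}{4}$, the third bootstrap assumption immediately yields a bound strictly stronger than the claim. The substance is the range $\tfrac{3N}{4} < |\mu| \le \tfrac{3N}{4}+10$, which I handle by interpolation.

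Fix such a $\mu$ and the point $P=(t,r,\theta)\in\Sigma_t$, and let $B\subset\Sigma_t$ be the unit Euclidean ball centered at $P$. Since the weights $(1+t')^{1/2}(1+|u'|)^{3/2-\delta}$ vary by a bounded factor on $B$ (for $t\ge 1$), the third bootstrap gives
\[
\|\partial\Gamma^\alpha\psi\|_{L^\infty(B)} \le C\mathcal{A}, \quad \mathcal{A} := \frac{\epsilon^{3/4}}{(1+t)^{1/2}(1+|u|)^{3/2-\delta}}, \quad |\alpha|\le\tfrac{3N}{4},
\]
while the first bootstrap gives $\|\partial\Gamma^\alpha\psi\|_{L^2(B)}\le C\mathcal{B}$ for $|\alpha|\le N$, with $\mathcal{B}:=\epsilon^{3/4}(1+t)^\epsilon$. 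The 2D Gagliardo--Nirenberg inequality on the unit ball states that for $1\le j<m$,
\[
\|\partial^j w\|_{L^\infty(B)} \le C\|w\|_{L^\infty(B)}^{1-\theta}\|\partial^m w\|_{L^2(B)}^\theta + C\|w\|_{L^\infty(B)}, \qquad \theta=\tfrac{j}{m-1}.
\]
Using $[\partial_\nu,S]=\partial_\nu$ iteratively, I rewrite $\partial\Gamma^\mu\psi$ as a finite sum of terms of the form $\partial^j w_{\mu_0}$, where $w_{\mu_0}:=\partial\Gamma^{\mu_0}\psi$ for $|\mu_0|\le\tfrac{3N}{4}$ and $j\le 10$. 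Each $w_{\mu_0}$ obeys $\|w_{\mu_0}\|_{L^\infty(B)}\le C\mathcal{A}$, and $\partial^m w_{\mu_0}$, after one more commutator expansion, is a sum of $\partial\Gamma^{\mu'}\psi$ with $|\mu'|\le|\mu_0|+m$. Taking $m=\lfloor N/4\rfloor-1$ forces $|\mu'|\le N-1$, so $\|\partial^m w_{\mu_0}\|_{L^2(B)}\le C\mathcal{B}$. The interpolation therefore yields $\|\partial\Gamma^\mu\psi\|_{L^\infty(B)} \le C\mathcal{A}^{1-\theta}\mathcal{B}^\theta + C\mathcal{A}$ with $\theta\le 40/N$.

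Substituting, $\mathcal{A}^{1-\theta}\mathcal{B}^\theta = C\epsilon^{3/4}(1+t)^{\theta\epsilon-(1-\theta)/2}(1+|u|)^{-(1-\theta)(3/2-\delta)}$, which is bounded by the target $C\epsilon^{3/4}(1+t)^{-(1/2-\delta/2)}(1+|u|)^{-(3/2-2\delta)}$ provided $\theta(\tfrac{1}{2}-\epsilon)\le\tfrac{\delta}{2}$ and $\theta(\tfrac{3}{2}-\delta)\le\delta$. Both reduce to $\theta\le c\delta$ for an absolute constant $c>0$ (using $\epsilon<\tfrac{1}{4}$), which holds for $N$ sufficiently large depending on $\delta$. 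The main technical obstacle is the commutator bookkeeping needed to express $\partial\Gamma^\mu\psi$ in terms of successive $\partial$-derivatives of lower-order $\partial\Gamma^{\mu_0}\psi$, so that the scale-invariant 2D Gagliardo--Nirenberg inequality on the unit ball applies; fortunately, the relation $[S,\partial_\nu]=-\partial_\nu$ is a self-reproducing commutator identity that keeps all resulting terms within the admissible commutator class.
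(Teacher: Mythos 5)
Your reduction in the crucial intermediate range $\tfrac{3N}{4} < |\mu| \le \tfrac{3N}{4}+10$ has a genuine gap. You claim that $\partial\Gamma^\mu\psi$ can be rewritten as a finite sum of terms $\partial^j w_{\mu_0}$ with $w_{\mu_0}=\partial\Gamma^{\mu_0}\psi$, $|\mu_0|\le \tfrac{3N}{4}$ and $j\le 10$, i.e.\ that the excess ten vector fields can always be converted into plain coordinate derivatives sitting outside a string of length $\le \tfrac{3N}{4}$. But the commutator identity $[S,\partial_\nu]=-\partial_\nu$ only reorders a string (it converts $S\partial$ into $\partial S$ plus lower-order terms); it never turns a scaling field into a translation. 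The lemma allows $\Gamma^\beta$ and $\Gamma^\gamma$ to consist entirely of scaling fields, e.g.\ $\Gamma^\beta\partial\Gamma^\gamma\psi = S^{10}\partial S^{3N/4}\psi = \partial(S-1)^{10}S^{3N/4}\psi$, whose constituents $\partial S^{3N/4+k}\psi$ contain more than $\tfrac{3N}{4}$ scaling fields and cannot be written in your factored form (as differential operators their coefficients have polynomial degree exceeding anything in your admissible span). For such terms your fixed-time, unit-ball Gagliardo--Nirenberg argument cannot even be formulated: the excess derivatives are powers of $S=t\partial_t+r\partial_r$, which is transversal to $\Sigma_t$ and carries weights of size $t$, so there is no interpolation inequality on a unit spatial ball in a single slice that mediates between the $L^\infty$ bound on $\partial\Gamma^{\mu_0}\psi$ and the $L^2(\Sigma_t)$ energy bound. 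And these terms do occur in the paper's application, since the commutations in Section~\ref{sec:closingpointwise} use general strings $\Gamma^{\sigma}$ containing $S$.

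This is precisely why the paper's proof does not interpolate slice by slice: it introduces the coordinates $\rho=t$, $\tilde X=x/t$, $\tilde Y=y/t$, in which $S=\rho\partial_\rho$ becomes a coordinate vector field, localizes around the point at unit spatial scale but at temporal scale comparable to $\rho_0=t_0$, and then interpolates and applies a (weighted) Sobolev embedding in the resulting spacetime slab. The high-order input there is not the fixed-time energy but the spacetime $L^2$ norm from the second bootstrap assumption (the Morawetz-type bound of Lemma~\ref{lem:spacetimeL2}), integrated over $0\le t\le t_0$ --- this is exactly what makes powers of $\rho_0\partial_\rho\sim S$ interpolable. Your argument, as written, only covers the case in which all excess fields are translations; to repair it you would have to either restrict the lemma (and then strengthen the commutation bookkeeping in Section~\ref{sec:closingpointwise} so only spatial $\partial^\beta$ ever sit outside), or adopt a spacetime interpolation region adapted to $S$ as the paper does.
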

\begin{proof}
We must only consider the region where $t \ge 10$ because the weights do not matter for small $t$. Let $h = \partial \Gamma^\gamma \psi$. We define a coordinate system on $t > 0$ in $\R^{2 + 1}$ as follows. We associate the coordinates $(\rho,\tilde{X},\tilde{Y})$ to the point $(t,x,y)$ in the usual flat coordinates by $\rho = t$, $\tilde{X} = {x \over t}$, and $\tilde{Y} = {y \over t}$. These coordinates are motivated simply by the fact that the scaling vector field points in the same direction as one of the coordinate vector fields (specifically, $\partial_\rho$, as is shown in \eqref{eq:Spartial_rho}).


We shall use Sobolev embedding plus an interpolation result in these coordinates in order to prove the desired result. We note that these coordinates are motivated by the fact that we require a coordinate system which will allow us to effectively interpolate with $S = t \partial_t + r \partial_r$.

We must first examine the coordinate system a bit more carefully. We have that
\[
d \rho = d t, \hspace{5 mm} d \tilde{X} = {1 \over \rho} d x - {x \over t^2} d t, \hspace{5 mm} d \tilde{Y} = {1 \over \rho} d y - {y \over t^2} d t.
\]
Thus, we have that
\[
d vol = d t \wedge d x \wedge d y = d \rho \wedge \left (\rho d \tilde{X} + {\tilde{X} \over \rho} d \rho \right ) \wedge \left (\rho d \tilde{Y} + {\tilde{Y} \over \rho} d \rho \right ) = \rho^2 d \rho \wedge d \tilde{X} \wedge d \tilde{Y}.
\]
Similarly, we have that
\begin{equation} \label{eq:Spartial_rho}
    \begin{aligned}
    \partial_\rho = {\partial t \over \partial \rho} \partial_t + {\partial x \over \partial \rho} \partial_x + {\partial y \over \partial \rho} \partial_y = \partial_t + \tilde{X} \partial_x + \tilde{Y} \partial_y.
    \end{aligned}
\end{equation}
Thus, we have that
\[
\rho \partial_\rho = \rho \partial_t + \rho \tilde{X} \partial_x + \rho \tilde{Y} \partial_y = t \partial_t + x \partial_x + y \partial_y = S.
\]
We also have that
\[
\partial_{\tilde{X}} = {\partial t \over \partial \tilde{X}} \partial_t + {\partial x \over \partial \tilde{X}} \partial_x + {\partial y \over \partial \tilde{X}} \partial_y = \rho \partial_x,
\]
and similarly that
\[
\partial_{\tilde{Y}} = \rho \partial_y.
\]

Let $p$ be a point where we want to show the desired estimate. We shall let $(t_0,x_0,y_0)$ denote its usual coordinates and $(\rho_0,\tilde{X}_0,\tilde{Y}_0)$ denote its coordinates in this modified coordinate system. We shall also denote by $u_0$ its $u$ coordinate. Now, we take two smooth cutoff functions. The first $\chi_\rho (s)$ is defined to be $1$ for $s \ge {3 \over 4}$ and $0$ for $s \le {1 \over 2}$. The second $\chi_{x,y} (s)$ is defined to be $1$ for $|s| \le {1 \over 4}$ and $0$ for $|s| \ge {1 \over 2}$. We then consider the function $\tilde{h} = \chi_\rho \left ({\rho \over \rho_0} \right ) \chi_{x,y} (\rho_0 \sqrt{(\tilde{X} - \tilde{X}_0)^2 + (\tilde{Y} - \tilde{Y}_0)^2}) h$. This has localized $h$ around the point $(\rho_0,\tilde{X}_0,\tilde{Y}_0)$ at scale $1$ in the $\tilde{X}$ and $\tilde{Y}$ directions and at scale $\rho_0$ in the $\rho$ direction (by scale, we mean relative to the $(t,x,y)$ coordinate system). Setting $X = \rho_0 \tilde{X}$ and $Y = \rho_0 \tilde{Y}$ renormalizes the length of partial derivatives to have length comparable to $1$. Indeed, in the support of $\tilde{h}$, we note that the volume form in $(\rho,X,Y)$ coordinates is equal to
\[
{\rho^2 \over \rho_0^2} d \rho \wedge d X \wedge d Y.
\]
This is comparable to $d \rho \wedge d X \wedge d Y$. We also have that $\partial_X = {\rho \over \rho_0} \partial_x$ is comparable to $\partial_x$ are comparable, and similarly with $\partial_Y$ and $\partial_y$.

Now, the function $\tilde{h}$ can be thought of as being defined in $(\rho,X,Y)$ in the region where $\rho \le \rho_0$. Moreover, the pointwise bootstrap assumptions tell us that
\[
\int_{\rho \le \rho_0} \rho_0^{-1 - {\delta \over 2}} \tilde{h}^2 {\rho^2 \over \rho_0^2} d \rho d X d Y \le C \epsilon^{{3 \over 2}} {1 \over (1 + t_0)^{1 + {\delta \over 2}} (1 + |u_0|^{3 - 2 \delta})},
\]
where we have used the fact that $t$ and $1 + |u|$ are comparable to $t_0$ and $1 + |u_0|$, respectively, in the support of $\tilde{h}$, and where we have used the fact that the volume of the support of $h$ is comparable to $t_0$. Moreover, the energy bootstrap assumptions tell us that
\[
\sum_{|\beta| \le N - |\alpha|} \int_{\rho \le \rho_0} \rho_0^{-1 - {\delta \over 2}} (\Gamma^\beta \tilde{h})^2 {\rho^2 \over \rho_0^2} d \rho d x'' d y'' \le C \sum_{|\beta| \le N - |\alpha|} \int_0^{t_0} \int_{\Sigma_t} (1 + t)^{-1 - {\delta \over 2}} (\Gamma^\beta \partial \psi)^2 d x d t \le C \epsilon^{{3 \over 2}}.
\]
Let us consider the function
\[
H = \tilde{h}^2 {\rho^2 \over \rho_0^2}.
\]
We shall show that
\[
\int_{\rho \le \rho_0} (\Gamma_{\rho_0}^\alpha H)^2 d \rho d X d Y \le C \sum_{|\beta| \le |\alpha|} \int_{\rho \le \rho_0} (\Gamma^\beta \tilde{h})^2 {\rho^2 \over \rho_0^2} d \rho d X d Y,
\]
where $\Gamma_{\rho_0}^\alpha$ corresponds to strings of the operators $\rho_0 \partial_\rho$, $\partial_X$, and $\partial_Y$. Indeed, we recall that $\Gamma$ consists of translation vector fields along with $S = t \partial_t + r \partial_r$. This means that $\partial_X = {t \over \rho_0} \partial_x$, $\partial_Y = {t \over \rho_0} \partial_y$, and $\rho_0 \partial_\rho = {\rho_0 \over t} S = \rho_0 \partial_t + {r \rho_0 \over t} \partial_r$. This means that $\Gamma_{\rho_0} = Q \Gamma$ for an appropriate coefficient $Q$. Because $\rho_0$ is comparable to $t$ in the support of $H$, we see that we have that
\[
|\Gamma_{\rho_0} H| \le C |\Gamma H|,
\]
i.e., these coefficients are bounded. 
Moreover, we see that we have that
\[
\Gamma^\beta \left ({t \over \rho_0} \right ) \le C,
\]
and that
\[
\Gamma^\beta \left ({\rho_0 \over t} \right ) \le C.
\]

These facts imply the desired result by expanding the $\Gamma_{\rho_0}$ operators in terms of the $\Gamma$ operators. Indeed, in the case of two operators, we schematically have that
\[
|\Gamma_{\rho_0} (\Gamma_{\rho_0} H))| = |Q \Gamma ( Q \Gamma H)| \le |Q| |\Gamma(Q)| |\Gamma(H)| + |Q| |Q| |\Gamma (\Gamma (H))|,
\]
and the bounds on the coefficients above implies the desired result. Thus, we have in fact shown that
\[
|\Gamma_{\rho_0}^\alpha H| \le C \sum_{|\beta| \le |\alpha|} |\Gamma^\beta H|.
\]
A similar argument shows us that
\begin{equation} \label{eq:commutatorsrescaled}
    \begin{aligned}
    |\Gamma^\alpha H| \le C \sum_{|\beta| \le |\alpha|} |\Gamma^\beta H|.
    \end{aligned}
\end{equation}

Thus, interpolating between these $L^2$ based bounds (we can use an interpolation result in the half spaces $\rho \le \rho_0$ in $(\rho,X,Y)$ coordinate space because they have a nice boundary, see \cite{AdaFou03}) tells us that, for $N$ sufficiently large in terms of $\delta$, we have that
\[
\sum_{|\beta| \le 10} \int_{\rho \le \rho_0} \rho_0^{-1 - {\delta \over 2}} (\Gamma_{\rho_0}^\beta H)^2 d \rho d X d Y \le C \epsilon^{{3 \over 2}} {1 \over (1 + t_0) (1 + |u_0|^{3 - 3 \delta})}.
\]
Then, applying a Sobolev inequality that is weighted in the $\rho$ direction by $\rho_0$ in the $(\rho,X,Y)$ coordinate system tells us that
\[
\sum_{|\beta| \le 8} |\rho_0^{-{1 \over 2} - {\delta \over 4}} \Gamma_{\rho_0}^\beta \tilde{h}| \le C \epsilon^{{3 \over 4}} {1 \over (1 + t_0) (1 + |u_0|)^{{3 \over 2} - {3 \over 2} \delta}}.
\]
Thus, we altogether have that
\[
\sum_{|\beta| \le 8} |\Gamma_{\rho_0}^\beta \tilde{h}| \le C \epsilon^{{3 \over 4}} {1 \over (1 + t_0)^{{1 \over 2} - {\delta \over 2}} (1 + |u_0|)^{{3 \over 2} - 2 \delta}}.
\]
Using \eqref{eq:commutatorsrescaled} above then gives us the desired result.
\end{proof}

We shall also use the fact that $\psi$ is supported where $u \ge -1$ and $\phi$ is supported where $\overline{u} \ge -1$. As discussed in Section~\ref{sec:relateddirections}, we do not believe that having compact support is fundamental to the argument working. We also note that this is trivial when the equation is semilinear instead of quasilinear.

\begin{proposition} \label{prop:compactsupport}
Let $0 \le t \le T$. The bootstrap assumptions imply that $\psi$ is supported where $u \ge -1$ and that $\phi$ is supported where $\overline{u} \ge -1$.
\end{proposition}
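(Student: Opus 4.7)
The plan is to invoke finite speed of propagation for the quasilinear operators governing $\psi$ and $\phi$, after verifying that the quasilinear corrections to the principal symbols are uniformly small in time and therefore cannot deform the outgoing characteristic cones by more than $9/10$. Rewriting the equation for $\psi$ in the form
\[
-\bigl(1 - (\partial_t \phi)^2(\partial_t \psi)\bigr)\partial_t^2 \psi + \partial_x^2 \psi + \partial_y^2 \psi = F_\psi,
\]
and the equation for $\phi$ analogously in the $(t,\overline{x},\overline{y})$ coordinate system in which $\Box'$ becomes the standard flat wave operator, the crucial structural observation is that every term appearing in $F_\psi$ contains a factor of $\partial_t \psi$, and every term appearing in $F_\phi$ contains a factor of $\partial_t \phi$. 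Hence $F_\psi$ vanishes identically wherever $\psi$ is locally constant, and likewise for $F_\phi$, which is exactly what is needed to run a domain of dependence argument for each equation independently.

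Next, under the bootstrap pointwise assumptions from Section~\ref{sec:bootstrap}, one has
\[
|(\partial_t \phi)^2(\partial_t \psi)| + |(\partial_t \psi)^2(\partial_t \phi)| \leq \frac{C \epsilon^{9/4}}{(1+t)^{3/2}}
\]
uniformly on $[0,T] \times \R^2$, so both modified principal symbols remain strictly hyperbolic and their characteristic speeds in null directions differ from $1$ by at most $C\epsilon^{9/4}(1+t)^{-3/2}$. Applying the standard finite speed of propagation argument for such small quasilinear perturbations, either through an energy estimate in the exterior region $\{u \leq -c\}$ whose boundary is close to null for the modified metric, or directly through the domain of dependence theorem for quasilinear wave equations with small smooth perturbations of the coefficients, shows that $\psi$ vanishes outside the modified forward cone of influence of its initial support. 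The total excess expansion of this cone relative to the flat cone $\{r = t + 1/10\}$ is bounded by
\[
\int_0^\infty \frac{C\epsilon^{9/4}}{(1+t)^{3/2}}\, dt \leq C\epsilon^{9/4},
\]
which is less than $9/10$ once $\epsilon$ is chosen small.

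Consequently $\psi$ is supported in $r \leq t + 1/10 + C\epsilon^{9/4} \leq t + 1$, yielding the first claim. The argument for $\phi$ is identical in the $(t,\overline{x},\overline{y})$ coordinates: the initial data there lies in $\overline{r} \leq \lambda_2/10 \leq 1/5$ since $\lambda_2 \leq 2$, and the same speed estimate gives $\overline{r} \leq t + 1/5 + C\epsilon^{9/4} \leq t + 1$, i.e., $\overline{u} \geq -1$. The main delicate point is to verify that the cutoff hypersurfaces $\{u = -c\}$ and $\{\overline{u} = -c\}$ retain the correct causal character with respect to the modified metrics, so that the flux in the energy estimate has the favorable sign; this is immediate from the size bound on the quasilinear coefficient and can be handled uniformly for both signs of the perturbation by taking the cutoff level $c$ slightly larger than the minimum value imposed by the integrated speed defect.
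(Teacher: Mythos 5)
Your proposal is correct, but it handles the quasilinear term differently from the paper. The paper's proof keeps the flat operator $\Box$ (respectively $\Box'$ in the barred coordinates) as the principal part, moves \emph{both} the quasilinear and semilinear terms to the right-hand side, and runs a $\partial_t$ energy estimate with Gronwall on exactly flat backward cones: for a point with $u$-coordinate $\le -1$ the backward unit-speed cone misses the data ball at $t=0$, the boundary flux has a sign, the source terms are absorbed using the pointwise bootstrap bounds (including the bound on $\partial_t^2\psi$, which is available since $\partial_t^2\psi = \partial\Gamma^\alpha\psi$ with $|\alpha|=1$), and $E(0)=0$ forces $E\equiv 0$. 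You instead retain the quasilinear term in the principal symbol, so the relevant cones are those of the perturbed operator $-(1-(\partial_t\phi)^2\partial_t\psi)\partial_t^2 + \Delta$, and you control the support by integrating the speed defect $C\epsilon^{9/4}(1+t)^{-3/2}$ in time to get a total spreading of $O(\epsilon^{9/4})$, well within the margin between the data radius $1/10$ (or $\lambda_2/10\le 1/5$ for $\phi$) and the stated threshold $u,\overline{u}\ge -1$. Both arguments rest on the same two pillars you identify: every nonlinear term in the $\psi$ (resp.\ $\phi$) equation carries a factor of $\partial_t\psi$ (resp.\ $\partial_t\phi$), so zero is a solution and each equation can be treated separately with the other unknown as a prescribed coefficient, together with the uniform pointwise smallness from the bootstrap. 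The paper's version requires no tracking of perturbed characteristics and in fact yields the sharper localization $u\ge -1/10$ inside the exact flat cone; your version pays a small, explicitly quantified cone enlargement but makes transparent why the slack from $1/10$ to $1$ is more than sufficient, and it would carry over to settings where the effective characteristics genuinely move. The only detail worth making explicit if you write this up is the energy identity for the variable-coefficient principal part: the multiplier argument produces a term with $\partial_t$ of the coefficient $(\partial_t\phi)^2\partial_t\psi$, which is again bounded pointwise by the bootstrap assumptions and absorbed by Gronwall, and the slightly superluminal mantle must be checked to be non-timelike for the effective metric, which you already note follows from the same smallness.
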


\begin{proof}
Let $0 \le s \le T$. We take a point $(s,x_0,y_0)$ in $\Sigma_s$ with $s - \sqrt{x_0^2 - y_0^2} \le -1$. We then take the ball of radius $\delta$ around this point in $\Sigma_s$. We look at the domain of influence of this ball intersected with $\Sigma_t$ for $0 \le t \le s$. In every $\Sigma_t$, this is a ball whose radius is equal to $s - t + \delta$ and whose center is $(s - t,x_0,y_0)$. By construction, at $t = 0$, this does not intersect the support of the data for $\psi$ for $\delta$ sufficiently small.

We set
\[
E(t) = \int_{B_{s - t + \delta} (x_0,y_0)} (\partial_t \psi)^2 + (\partial_x \psi)^2 + (\partial_y \psi)^2 d x.
\]
This is the energy in the intersection of the domain of influence of the small ball chosen above and the time slice $\Sigma_t$. We now think of the equation for $\psi$ as a wave equation whose principal part if $\Box$, putting both the quasilinear and semilinar terms on the right hand side. We do a $\partial_t$ energy estimate on the truncated cone determined by this construction and ignore the positive terms on the side of the cone, giving us that
\[
E(s) \le E(0) + \int_0^s {C \epsilon^{{3 \over 2}} \over 1 + t} E(t) d t,
\]
where we have used the pointwise bootstrap assumptions. Gronwall's inequality then gives us that $E(s) = 0$ because $E(0) = 0$. An analogous argument works for $\phi$.
\end{proof}

\subsection{Closing the pointwise estimates} \label{sec:closingpointwise}
In order to recover the bootstrap assumptions for the pointwise estimates, we must show that
\[
|\partial_t \Gamma^{\sigma_1} \psi| (t,x) \le {C \epsilon \over (1 + t)^{{1 \over 2}} (1 + |u|)^{{3 \over 2} - \delta}}
\]
for all $|\sigma_1| \le {3 N \over 4}$. Commuting the equation with $\Gamma^\alpha$, we get the equation
\[
\Box \Gamma^\alpha \psi + (\partial_t \psi) (\partial_t^2 \phi) (\partial_t \Gamma^\alpha \psi) = F^\alpha
\]
for an appropriate $F^\alpha$ which contains only semilinear terms.

Now, if we want to apply Proposition~\ref{prop:decay} in order to show pointwise decay for this quantity, we must additionally commute with the operators $\partial^{\sigma_2}$ tangent to $\Sigma_t$ for all $|\sigma_2| \le 6$. Thus, it suffices to commute with $\Gamma^{\sigma_2}$ for all $|\sigma_2| \le 6$. We thus commute the equation with $\Gamma^{\sigma_2} \Gamma^{\sigma_1}$. Thus, in order to recover the pointwise bootstrap assumptions, it suffices to show that
\[
M [\Gamma^\sigma \psi] (s,x_0^i) \le {C \epsilon \over (1 + s)^{{1 \over 2}} (1 + |\tau|^{{3 \over 2} - \delta})}
\]
for all $|\sigma| \le {3 N \over 4} + 6$ and for all $s \le T$ (see Proposition~\ref{prop:decay} for a description of $M$).

Now, when controlling $M$, we note that the estimate on the term from data follows immediately from assuming linear decay for the auxiliary multipliers $f$. Thus, for any admissible auxiliary multiplier $f$, we must simply control the error integrals
\begin{equation} \label{eq:errorint}
    \begin{aligned}
    \int_0^s \int_{\Sigma_t} F^\sigma (\partial_t f) d x d t - \int_0^s \int_{\Sigma_t} (\partial_t \phi)^2 (\partial_t \psi) (\partial_t^2 \Gamma^\sigma \psi) (\partial_t f) d x d t.
    \end{aligned}
\end{equation}
Now, by examining the equations \eqref{eq:anisotropic}, we note that $F^\sigma$ arises from applying vector fields to one of three different semilinear terms. The first is
\[
(\partial_t \psi)^2 (\partial_t \phi),
\]
the second is
\[
(\partial_t \psi) (\partial_t \phi)^2,
\]
and the third is
\[
(\partial_t \psi)^4.
\]
Thus, in addition to the quasilinear term in \eqref{eq:errorint}, we must control the error integrals arising from these semilinear terms. We discussed in the remarks following the statement of Theorem~\ref{thm:mainthm} that the same proof works for other nonlinearities. The other nonlinear terms can all be controlled in essentially the same way as one of these terms.

The error integral \eqref{eq:errorint} can thus be controlled by
\begin{equation} \label{eq:ErrorIntegral}
    \begin{aligned}
    \sum_{\sigma' \le \sigma} \left |\int_0^s \int_{\Sigma_t} \Gamma^{\sigma'} ((\partial_t \psi) (\partial_t \phi)^2) (\partial_t f) d x d t \right | + \sum_{\sigma' \le \sigma} \left |\int_0^s \int_{\Sigma_t} \Gamma^{\sigma'} ((\partial_t \psi)^4) (\partial_t f) d x d t \right |
    \\ + \sum_{\sigma' \le \sigma} \left |\int_0^s \int_{\Sigma_t} \Gamma^{\sigma'} ((\partial_t \psi)^2 (\partial_t \phi)) (\partial_t f) d x d t \right | + \left |\int_0^s \int_{\Sigma_t} (\partial_t \phi)^2 (\partial_t \psi) (\partial_t^2 \Gamma^\sigma \psi) (\partial_t f) d x d t \right |
    \end{aligned}
\end{equation}


We recall that the data for $f$ is contained in the ball of radius $1$ in $\Sigma_s$ whose center is $(s,x_0,y_0)$. Moreover, we recall that the $u$ coordinate of the center of this ball is given by $\tau$, meaning that $\tau = s - \sqrt{x_0^2 + y_0^2}$.

In the following, we shall always use the pointwise estimates for every term up to a ${\delta \over 2}$ loss. These interpolation arguments allowing the use of pointwise estimates (with small losses) on all factors in the nonlinearity when recovering estimates that do not involve the most number of derivatives are standard, but we describe it here for completeness. We note that this argument is far from optimal in terms of the number of derivatives required.

We recall that using Proposition~\ref{prop:decay} to recover the pointwise bootstrap assumptions requires commuting with a total of ${3 N \over 4} + 6$ derivatives. This means that cubic nonlinearities will have at least two factors having no more than ${3 N \over 8} + 5$ derivatives on them while quartic nonlinearities will have at least three factors with this property. Thus, for ${3 N \over 8} + 5 \le {3 N \over 4}$, we can directly apply the bootstrap assumptions to use pointwise estimates on these factors. Moreover, we note that there can be at most one factor in every nonlinearity having at most ${3 N \over 4} + 8$ derivatives on it. For this factor, we can use the interpolation result in Lemma~\ref{lem:interpolation} to give us pointwise decay up to ${\delta \over 2}$ losses for this factor. We see that the resulting pointwise bounds are worst for one of the three semilinear terms described above. Thus, it suffices to control the error integrals involving these terms in \eqref{eq:ErrorIntegral}. We shall now consider each kind of term separately. Moreover, we shall assume throughout that $\tau \ge 100$ and that $s \ge {1 \over \delta_0^{10}}$. When $s \le {1 \over \delta_0^{10}}$, we may control terms simply by choosing $\epsilon$ sufficiently small (the decay does not really matter at this point for controlling the nonlinearity because we can use the higher power of $\epsilon$ and the fact that $s$ is uniformly bounded to absorb everything). Similarly, when $\tau \le 100$, the dependence on $\tau$ no longer matters, as it can be absorbed by choosing $\epsilon$ smaller. The estimates when $\tau \le 100$ can be recovered from similar considerations as in the following (and by taking $\epsilon$ sufficiently small).

We begin with terms of the form
\begin{equation} \label{eq:psi2phi1}
    \begin{aligned}
    \left |\int_0^s \int_{\Sigma_t} \Gamma^\sigma ((\partial_t \phi) (\partial_t \psi)^2) (\partial_t f) d x d t \right |.
    \end{aligned}
\end{equation}
The error integrals must be broken up into several regions. Broadly speaking, there are three main regions. The first and most delicate region is the region close to the light cones for both $\psi$ and $f$, where ``close" is measured with respect to $\tau$. In this region, we integrate by parts using the scaling vector field $S$ and use the results of Section~\ref{sec:SGeometry}. The second region is the region away from the light cone for $\psi$, and the third region is away from the light cone for $f$. These regions are better because of decay in $u$ and $u'$. All three of these main regions must be further decomposed mainly for the technical fact that Lemma~\ref{lem:rrbar} means that the Jacobian in $(t,r,\overline{r})$ coordinates on $\R^{2 + 1}$ is bounded only when we are comparable to $t$ close to the light cones for both $\psi$ and $\phi$. Thus, we must for example decompose depending on how far away we are from the light cone associated to $\phi$.

We must introduce several cutoff functions in the proof, so let us now describe the notation a bit. These cutoff functions will allow us to effectively use the geometric estimates from Sections~\ref{sec:geometry} and \ref{sec:SGeometry}. There will be times when these cutoff functions will be hit by derivatives, and it will be appropriate to bound them in terms of cutoff functions adapted to slightly larger regions. In these cases, we shall abuse notation and simply use the same cutoff functions. These cutoff functions will be pullbacks of cutoff functions on $\R$. Thus, for example, a cutoff function may be of the form $\chi \left ({u \over t} \right )$ where $\chi$ is a smooth function from $\R \rightarrow \R$ and $u$ and $t$ are the usual coordinates. Thus, given such a cutoff function $\chi$, a vector field $V$ hitting $\chi$ will always result in something of the form $h \chi'$ for some smooth function $h$. In the example above, if $V = S$, we have that
\[
S \chi \left ({u \over t} \right ) = -{u \over t} \chi' \left ({u \over t} \right ) - {r \over t} \chi' \left ({u \over t} \right ) = -{1 \over t} \chi' \left ({u \over t} \right ).
\]

Let $\chi : \R \rightarrow \R$ be a smooth cutoff function equal to $1$ for $x \le {\delta_0 \over 2}$ and equal to $0$ for $x \ge \delta_0$. Then, we take the functions $\chi_{\phi,C} = \chi \left ({\overline{u} \over t} \right )$, $\chi_\psi = \chi \left ({u \over \tau} \right )$, $\chi_{\psi,C} = \chi \left ({u \over t} \right )$, and $\chi_f = \chi \left ({u' \over \tau} \right )$. We also define the cutoff functions $\chi_\psi^c = 1 - \chi_\psi$, and similarly for the others. We shall sometimes need other analogous cutoffs, such as $\chi_\phi = \chi \left ({\overline{u} \over \tau} \right )$, and we shall introduce them following the notational conventions set above. These cutoff functions will be used to localize the error integral to the regions described above. The most delicate regions are those along the light cones for both $\psi$ and $f$ because they require decomposition $\partial_{u'}$ in terms of $S$ and $\overline{\partial}_f$. The other regions can be bounded directly using the bootstrap assumptions. We note that the region along all three light cones will be the only one that returns ${1 \over (1 + s)^{{1 \over 2}} \tau^{{3 \over 2} - \delta}}$ up to a power of $\delta$. The other regions are all better.



We first describe the region away from the light cone for $f$. This is given by the integral
\begin{equation} \label{eq:psi2phi1fc}
    \begin{aligned}
    \left |\int_0^s \int_{\Sigma_t} \chi_f^c \Gamma^\sigma ((\partial_t \phi) (\partial_t \psi)^2) (\partial_t f) d x d t \right |.
    \end{aligned}
\end{equation}
When also along the light cones for $\psi$ and $\phi$, we use the bootstrap assumptions, the interpolation result Lemma~\ref{lem:interpolation}, and the assumed decay rates for $f$ (see \eqref{eq:assumeddecay0} and \eqref{eq:assumeddecay1}) to give us that this integral is controlled by
\begin{equation}
    \begin{aligned}
    \left |\int_0^s \int_{\Sigma_t} \chi_{\psi,C} \chi_{\phi,C} \chi_f^c \Gamma^\sigma ((\partial_t \phi) (\partial_t \psi)^2) (\partial_t f) d x d t \right |
    \\ \le {C \epsilon^{{9 \over 4}} \over \tau^{{3 \over 2}}} \int_0^s \int_{\Sigma_t} \chi_{r \le t + 1} \chi_{\psi,C} \chi_{\phi,C} {1 \over (1 + t)^{{3 \over 2} - \delta}} {1 \over 1 + |u|^{3 - 3 \delta}} {1 \over 1 + |\overline{u}|^{{3 \over 2} - 2 \delta}} {1 \over (1 + s - t)^{{1 \over 2}}} d x d t.
    \end{aligned}
\end{equation}
We ignore the power of $\epsilon$ because it is already large enough to recover the bootstrap assumption. Going into $r \overline{r}$ coordinates as in Lemma~\ref{lem:rrbar} and integrating in each $\Sigma_t$, the remaining quantity is controlled by
\[
{C \over \tau^{{3 \over 2}}} \int_0^s {1 \over (1 + t)^{{3 \over 2} - \delta}} {1 \over (1 + s - t)^{{1 \over 2}}} d t \le {C \over (1 + s)^{{1 \over 2}} \tau^{{3 \over 2}}}.
\]
Now, in the region which is instead away from the light cone for $\phi$, we have that
\begin{equation}
\begin{aligned}
\left |\int_0^s \int_{\Sigma_t} \chi_{\psi,C} \chi_{\phi,C}^c \chi_f^c \Gamma^\sigma ((\partial_t \phi) (\partial_t \psi)^2) (\partial_t f) d x d t \right |
\\ \le {C \over \tau^{{3 \over 2}}} \int_0^s \int_{\Sigma_t} \chi_{r \le t + 1} \chi_{\psi,C} \chi_{\phi,C}^c {1 \over (1 + t)^{3 - 3 \delta}} {1 \over 1 + |u|^{3 - 3 \delta}} {1 \over (1 + s - t)^{{1 \over 2}}} d x d t
\\ \le {C \over \tau^{{3 \over 2}}} \int_0^s {1 \over (1 + t)^{2 - 3 \delta}} {1 \over (1 + s - t)^{{1 \over 2}}} d t \le {C \over (1 + s)^{{1 \over 2}} \tau^{{3 \over 2}}}.
\end{aligned}
\end{equation}
Then, in the region which is instead away from the light cone for $\psi$, we have that
\begin{equation} \label{eq:psiIc}
\begin{aligned}
\left |\int_0^s \int_{\Sigma_t} \chi_{\psi,C}^c \chi_f^c \Gamma^\sigma ((\partial_t \phi) (\partial_t \psi)^2) (\partial_t f) d x d t \right |
\\ \le {C \epsilon^{{9 \over 4}} \over \tau^{{3 \over 2}}} \int_0^s \int_{\Sigma_t} \chi_{\psi,C}^c {1 \over (1 + t)^{{9 \over 2} - 3 \delta}} {1 \over 1 + |\overline{u}|^{{3 \over 2} - \delta}} {1 \over (1 + s - t)^{{1 \over 2}}} d x d t
\\ \le {C \epsilon^{{9 \over 4}} \over \tau^{{3 \over 2}}} \int_0^s {1 \over (1 + t)^{{7 \over 2} - 3 \delta}} {1 \over (1 + s - t)^{{1 \over 2}}} d t \le {C \epsilon^{{9 \over 4}} \over (1 + s)^{{1 \over 2}} \tau^{{3 \over 2}}}.
\end{aligned}
\end{equation}

This has completely treated the integrals in the region away from the light cone for $f$. We now consider the region away from the light cone for $\psi$. We must in fact decompose this into two regions depending on how far away we are. We first consider the regions that are far from the light cone for $\psi$, but not too far. This corresponds to using the cuttoffs $\chi_\psi^c$ and $\chi_{\psi,C}$. When also close to the light cone for $\phi$, we can write
\begin{equation} \label{eq:errorpsifarphiclose11}
    \begin{aligned}
    \left |\int_0^s \int_{\Sigma_t} \chi_\psi^c \chi_{\psi,C} \chi_{\phi,C} \Gamma^\sigma ((\partial_t \phi) (\partial_t \psi)^2) (\partial_t f) d x d t \right |
    \\ \le C \epsilon^{{9 \over 4}} \int_0^s \int_{\Sigma_t} \chi_\psi^c \chi_{\psi,C} \chi_{\phi,C} {1 \over (1 + t)^{{3 \over 2} - \delta}} {1 \over 1 + |u|^{3 - 3 \delta}} {1 \over 1 + |\overline{u}|^{{3 \over 2} - 2 \delta}} {1 \over (1 + s - t)^{{1 \over 2}}} d x d t.
    \end{aligned}
\end{equation}
Going into $r \overline{r}$ coordinates once again and using the fact that $u \ge {\delta_0 \over 10} \tau$ in this region gives us that the integral is in fact controlled by
\[
{C \over \tau^{2 - 3 \delta}} \int_0^s {1 \over (1 + t)^{{3 \over 2} - \delta}} {1 \over (1 + s - t)^{{1 \over 2}}} d t \le {C \over (1 + s)^{{1 \over 2}} \tau^{2 - 3 \delta}}.
\]

Now, when far from the light cone for $\phi$, we have that
\begin{equation} \label{eq:errorpsifarphiclose12}
\begin{aligned}
\left |\int_0^s \int_{\Sigma_t} \chi_\psi^c \chi_{\psi,C} \chi_{\phi,C}^c \Gamma^\sigma ((\partial_t \phi) (\partial_t \psi)^2) (\partial_t f) d x d t \right |
\\ \le C \epsilon^{{9 \over 4}} \int_0^s \int_{\Sigma_t} \chi_\psi^c \chi_{\psi,C} \chi_{\phi,C}^c {1 \over (1 + t)^{3 - 3 \delta}} {1 \over 1 + |u|^{3 - 3 \delta}} {1 \over (1 + s - t)^{{1 \over 2}}} d x d t
\\ \le {C \epsilon^{{9 \over 4}} \over \tau^{3 - 3 \delta}} \int_0^s {1 \over (1 + t)^{2 - 3 \delta}} {1 \over (1 + s - t)^{{1 \over 2}}} d t \le {C \epsilon^{{9 \over 4}} \over (1 + s)^{{1 \over 2}} \tau^{3 - 3 \delta}}.
\end{aligned}
\end{equation}

We now consider the same region except with $\chi_{\psi,C}^c$ instead. Using that ${1 \over 1 + |u|^{3 - 3 \delta}} \le C {1 \over (1 + t)} {1 \over \tau^{2 - 3 \delta}}$ in this region, we then have that
\begin{equation} \label{eq:psiveryfar1}
    \begin{aligned}
    \left |\int_0^s \int_{\Sigma_t} \chi_\psi^c \chi_{\psi,C}^c \Gamma^\sigma ((\partial_t \phi) (\partial_t \psi)^2) (\partial_t f) d x d t \right |
    \\ \le {C \epsilon^{{9 \over 4}} \over \tau^{2 - 3 \delta}} \int_0^s \int_{\Sigma_t} \chi_{\overline{r} \le t + 1} {1 \over (1 + t)^{{5 \over 2} - \delta}} {1 \over 1 + |\overline{u}|^{{3 \over 2} - 2 \delta}} {1 \over (1 + s - t)^{{1 \over 2}}} d x d t.
    \end{aligned}
\end{equation}
Going into $(t,\overline{r},\overline{\theta})$ coordinates and using the decay in $\overline{u}$ to control the integrals on $\Sigma_t$ by $C (1 + t)$, the integral is then bounded by
\[
{C \epsilon^{{9 \over 4}} \over \tau^{2 - 3 \delta}} \int_0^s {1 \over (1 + t)^{{3 \over 2} - \delta}} {1 \over (1 + s - t)^{{1 \over 2}}} d t \le {C \epsilon^{{9 \over 4}} \over (1 + s)^{{1 \over 2}} \tau^{2 - 3 \delta}}.
\]

The only region that remains is the region along the light cones for both $\psi$ and $f$. This integral is given by
\begin{equation} \label{eq:errorpsiphif}
    \begin{aligned}
    \int_0^s \int_{\Sigma_t} \chi_\psi \chi_f \Gamma^\sigma ((\partial_t \phi) (\partial_t \psi)^2) (\partial_t f) d x d t.
    \end{aligned}
\end{equation}
We first pick coordinates where $b = 0$, meaning that the center adapted to $r'$ lies on the new $x$ axis as is described in Section~\ref{sec:coordinates} (we shall do this freely below without mentioning it again). Now, we write
\begin{equation} \label{eq:dtfdecomp}
    \begin{aligned}
    \partial_t = -{1 \over 2} (L' + \underline{L}') = -{1 \over 2} L' - \gamma S - {1 \over 2} \gamma \left (t - r' - {a (x - a) \over r'} \right ) L' - \gamma {a y \over (r')^2} \partial_{\theta'},
    \end{aligned}
\end{equation}
where we have used Lemma~\ref{lem:du'S}. This results in the integral
\begin{equation} \label{eq:errorintworst1}
    \begin{aligned}
    \int_0^s \int_{\Sigma_t} \chi_\psi \chi_f \Gamma^\sigma ((\partial_t \phi) (\partial_t \psi)^2) \left [-{1 \over 2} L' - \gamma S - {1 \over 2} \gamma \left (t - r' - {a (x - a) \over r'} \right ) L' - \gamma {a y \over (r')^2} \partial_{\theta'} \right ] f d x d t.
    \end{aligned}
\end{equation}

We focus first on the term with $S$. The integral we must control is
\[
\int_0^s \int_{\Sigma_t} \chi_\psi \chi_f \Gamma^\sigma ((\partial_t \phi) (\partial_t \psi)^2) (\gamma S f) d x d t.
\]
We integrate this term by parts to put the scaling vector field on the other terms. Doing so shows that it suffices to control
\begin{equation} \label{eq:Sibp}
\begin{aligned}
\left |\int_0^s \int_{\Sigma_t} \chi_\psi \chi_f \Gamma^\sigma ((\partial_t \phi) (\partial_t \psi)^2) (S \gamma) f d x d t\right | + \left |\int_0^s \int_{\Sigma_t} \gamma \chi_\psi \chi_f S \Gamma^\sigma ((\partial_t \phi) (\partial_t \psi)^2) f d x d t \right |
\\ + \left |\int_0^s \int_{\Sigma_t} \gamma S(\chi_\psi \chi_f) \Gamma^\sigma ((\partial_t \phi) (\partial_t \psi)^2) f d x d t \right | + \left |\int_0^s \int_{\Sigma_t} \gamma \chi_\psi \chi_f \Gamma^\sigma ((\partial_t \phi) (\partial_t \psi)^2) f d x d t \right |,
\end{aligned}
\end{equation}
where we have used the fact that the integrand is compactly supported where ${\tau \over 10} \le t \le s - {\tau \over 10}$, meaning that there are no boundary terms from integrating by parts. We begin by estimating $\gamma S (\chi_\psi)$ and $\gamma S (\chi_f)$. We note that $\partial_v \chi_\psi = 0$ and $|\partial_u \chi_\psi| \le {C \over \tau}$. Thus, we have that
\[
|\gamma S (\chi_\psi)| = |\gamma| |v \partial_v \chi_\psi + u \partial_u \chi_\psi| \le C |\gamma|.
\]
An analogous argument works to control $\gamma S$ applied to cutoffs adapted to $\phi$ (see also Section~\ref{sec:coordinates}), and a similar argument works for the other kinds of cutoffs adapted to $\psi$ and $\phi$. Now, for $\chi_f$, we shall use the background Euclidean structure on $\R^3$. We note that $\nabla_e \chi_f = h \underline{L}'$ for some smooth function $h$ with $|h| \le {C \over \tau}$ where $\nabla_e \chi_f$ is the Euclidean gradient of $\chi_f$. We have that
\[
|S (\chi_f)| = |\langle S,\nabla_e \chi_f \rangle_e| = |h| |\langle S,\underline{L}' \rangle_e|,
\]
where $\langle \cdot,\cdot \rangle_e$ denotes the Euclidean inner product on $\R^3$. Now, using Lemma~\ref{lem:du'S} to write $S$ in terms of the frame $L'$, $\underline{L}'$, and ${1 \over r'} \partial_{\theta'}$, and noting that these vectors are mutually orthogonal with respect to the Euclidean inner product, we have that
\[
|\langle S,\underline{L}' \rangle_e| \le {C \over \gamma}.
\]
Thus, we altogether have that
\[
|S(\chi_f)| \le {C \over \tau \gamma},
\]
meaning that
\[
|\gamma S(\chi_f)| \le {C \over \tau}.
\]

Using these estimates along with Lemma~\ref{lem:gammaSgammabound} gives us that these integrals are all controlled by
\begin{equation} \label{eq:worsterrorint1}
    \begin{aligned}
    {C \epsilon^{{9 \over 4}} \over \tau} \int_{\tau \over 10}^{s - {\tau \over 10}} \int_{\Sigma_t} \chi_{r \le t + 1} \chi_f \chi_\psi {1 \over (1 + t)^{{3 \over 2} - 2 \delta}} {1 \over 1 + |u|^{3 - 3 \delta}} {1 \over 1 + |\overline{u}|^{{3 \over 2} - 2 \delta}} {1 \over (1 + s - t)^{{1 \over 2}}} d x d t.
    \end{aligned}
\end{equation}
We note that we have used the fact that the integrand is only supported for ${\tau \over 10} \le t \le s - {\tau \over 10}$ (see Lemma~\ref{lem:rtr's-t}).

Now, we further decompose this integral depending on the distance to the light cone for $\phi$. When close to the light cone for $\phi$, we can go into $r \overline{r}$ coordinates and the Jacobian will be bounded by Lemma~\ref{lem:rrbar}. We thus have that
\begin{equation}
\begin{aligned}
{1 \over \tau} \int_{\tau \over 10}^{s - {\tau \over 10}} \int_{\Sigma_t} \chi_{r \le t + 1} \chi_f \chi_\psi \chi_{\phi,C} {1 \over (1 + t)^{{3 \over 2} - \delta}} {1 \over 1 + |u|^{3 - 3 \delta}} {1 \over 1 + |\overline{u}|^{{3 \over 2} - 2 \delta}} {1 \over (1 + s - t)^{{1 \over 2}}} d x d t
\\ \le {C \over \tau} \int_{\tau \over 10}^{s - {\tau \over 10}} \int_{\Sigma_t} \chi_{r \le t + 1} \chi_f \chi_\psi \chi_{\phi,C} {1 \over (1 + t)^{{3 \over 2} - \delta}} {1 \over 1 + |u|^{3 - 3 \delta}} {1 \over 1 + |\overline{u}|^{{3 \over 2} - 2 \delta}} {1 \over (1 + s - t)^{{1 \over 2}}} d r d \overline{r} d t
\\ \le {C \over \tau} \int_{{\tau \over 10}}^{s - {\tau \over 10}} {1 \over (1 + t)^{{3 \over 2} - \delta}} {1 \over (1 + s - t)^{{1 \over 2}}} d t \le {C \over (1 + s)^{{1 \over 2}} \tau^{{3 \over 2} - \delta}}.
\end{aligned}
\end{equation}

We now consider the region far from the light cone of $\phi$. We have that
\begin{equation}
\begin{aligned}
{C \over \tau} \int_{\tau \over 10}^{s - {\tau \over 10}} \int_{\Sigma_t} \chi_{r \le t + 1} \chi_f \chi_\psi \chi_{\phi,C}^c {1 \over (1 + t)^{{3 \over 2} - \delta}} {1 \over 1 + |u|^{3 - 3 \delta}} {1 \over 1 + |\overline{u}|^{{3 \over 2} - 2 \delta}} {1 \over (1 + s - t)^{{1 \over 2}}} d x d t
\\ \le {C \over \tau} \int_{\tau \over 10}^{s - {\tau \over 10}} \int_{\Sigma_t} \chi_{r \le t + 1} \chi_f \chi_\psi {1 \over (1 + t)^{3 - 3 \delta}} {1 \over 1 + |u|^{3 - 3 \delta}} {1 \over (1 + s - t)^{{1 \over 2}}} d x d t
\\ \le {C \over \tau} \int_{{\tau \over 10}}^{s - {\tau \over 10}} {1 \over (1 + t)^{2 - 3 \delta}} {1 \over (1 + s - t)^{{1 \over 2}}} d t \le {C \over (1 + s)^{{1 \over 2}} \tau^{2 - 3 \delta}}
\end{aligned}
\end{equation}

We now consider the terms without $S$ in \eqref{eq:errorintworst1}. Using Lemma~\ref{lem:du'dv'dtheta'} and improved decay of good derivatives of $f$ (see \eqref{eq:assumeddecay2}), we see that the integrals are once again controlled by
\[
{C \epsilon^{{9 \over 4}} \over \tau} \int_{{\tau \over 10}}^{s - {\tau \over 10}} \int_{\Sigma_t} \chi_{r \le t + 1} \chi_\psi \chi_f {1 \over (1 + t)^{{3 \over 2} - \delta}} {1 \over 1 + |u|^{3 - 3 \delta}} {1 \over 1 + |\overline{u}|^{{3 \over 2} - 2 \delta}} {1 \over (1 + s - t)^{{1 \over 2}}} d x d t.
\]
Thus, the bounds for these terms follow in the same way as for \eqref{eq:worsterrorint1}.



We now turn to control the terms of the form
\[
\left |\int_0^s \int_{\Sigma_t} \Gamma^\sigma ((\partial_t \psi) (\partial_t \phi)^2) (\partial_t f) d x d t \right |.
\]
We once again introduce cutoff functions adapted to the light cones for $\psi$, $\phi$, and $f$. We first consider the region along all three light cones. This is given by
\[
\left |\int_0^s \int_{\Sigma_t} \chi_\psi \chi_\phi \chi_f \Gamma^\sigma ((\partial_t \psi) (\partial_t \phi)^2) (\partial_t f) d x d t \right |
\]
This integral can be handled in exactly the same way as \eqref{eq:errorpsiphif}. Moreover, the integral over the region away from the light cone for $f$ can be controlled in a similar way to the term \eqref{eq:psi2phi1fc}, and the integral away from the light cone for $\phi$ can be controlled in a similar way to the terms \eqref{eq:errorpsifarphiclose11}, \eqref{eq:errorpsifarphiclose12}, and \eqref{eq:psiveryfar1}.


We now turn to the final region (which is the most difficult new region for these terms compared to the terms of the form \eqref{eq:psi2phi1}). This is the region away from the light cone of $\psi$ but still along the light cones for $\phi$ and $f$. This is given by
\[
\left |\int_0^s \int_{\Sigma_t} \chi_\psi^c \chi_\phi \chi_f \Gamma^\sigma ((\partial_t \psi) (\partial_t \phi)^2) (\partial_t f) d x d t \right|
\]
Let us briefly describe why this term is problematic for this cubic nonlinearity but not the other one. The worst region is when we are along the light cone for $f$ and the linear factor in the nonlinearity (which is composed of one linear and one quadratic factor in both cases). In this region, we want to decompose $\partial_t$ in terms of $S$ and $\overline{\partial}_f$, as was done for the term \eqref{eq:errorintworst1}. For that term, because we are restricted to $u$ being small relative to $\tau$, we have good estimates for $S$ by Lemma~\ref{lem:gammaSgammabound}. However, in this term, $u$ is comparable to $\tau$, so we do not have good estimates for $S$ everywhere. We do, however, have a good power of $\tau$ (this is in fact almost enough already to close the argument). Thus, we must decompose into regions where $S$ is still useful and other regions where the geometry helps us.

For technical reasons (to avoid $r' = 0$), we must further decompose this into a region where $r'$ is comparable to $s - t$ and a region where $r'$ is much smaller than $s - t$. We thus define the function
\[
\chi_{f,I} = \chi \left ({r' - 100 \over 1 + s - t} \right ).
\]
Then, in the support of $\chi_{f,I}$ and where $s - t \ge 0$, we note that $1 + |u'| \ge c (1 + s - t)$. We now consider first the region which is within the light cone for $f$ in this sense. This is given by
\begin{equation} \label{eq:errorIntf2}
    \begin{aligned}
    \left |\int_0^s \int_{\Sigma_t} \chi_\psi^c \chi_{f,I} \chi_\phi \chi_f \Gamma^\sigma ((\partial_t \psi) (\partial_t \phi)^2) (\partial_t f) d x d t \right|.
    \end{aligned}
\end{equation}
Now, using that $1 + |u'| \ge c (1 + s - t)$ in this region, we have that this integral is controlled by
\[
C \epsilon^{{9 \over 4}} \int_0^s \int_{\Sigma_t} \chi_\psi^c \chi_{f,I} \chi_\phi \chi_f {1 \over (1 + t)^{{3 \over 2} - \delta}} {1 \over 1 + \tau^{{3 \over 2} - 2 \delta}} {1 \over 1 + |\overline{u}|^{3 - 2 \delta}} {1 \over (1 + s - t)^{{3 \over 2}}} d x d t.
\]
We in fact could improve the power of $1 + s - t$ from ${3 \over 2}$ to $2$, but we use ${3 \over 2}$ in order to treat this term the same way as terms that follow. We ignore the power of $\epsilon$ which is larger than $\epsilon^{{3 \over 4}}$, and we focus on the powers of $s$ and $\tau$. We break this integral up into two regions, one where $t \ge (1 - 10 \delta_0) s$ and another where $t \le (1 - 10 \delta_0) s$. In the first region, we note that the support of $\chi_f \chi_{f,I}$ is a set of very small diameter compared to the scale of the ellipses which are the level sets of $r'$ in the region where $u' \le \delta_0 \tau$. Thus, we use Lemma~\ref{lem:largecirclearc} (or more specifically, the version for an ellipse described after Lemma~\ref{lem:largecirclearc}) to note that the integral is controlled by
\[
C {1 \over (1 + s)^{{3 \over 2} - \delta}} {1 \over 1 + \tau^{{3 \over 2} - 2 \delta}} \int_{(1 - 10 \delta_0) s}^s {1 \over (1 + s - t)^{{1 \over 2}}} d t \le C {1 \over (1 + s)^{1 - \delta}} {1 \over 1 + \tau^{{3 \over 2} - 2 \delta}}
\]
Then, in the region where $t \le (1 - 10 \delta_0) s$, we note that $1 + s - t \ge c (1 + s)$. Thus, the integral is controlled by
\[
C {1 \over (1 + s)^{{3 \over 2}}} {1 \over 1 + \tau^{{3 \over2} - 2 \delta}} \int_0^{(1 - 10 \delta_0) s} \int_{\Sigma_t} \chi_{\overline{r} \le t + 1} {1 \over (1 + t)^{{3 \over 2} - \delta}} {1 \over 1 + |\overline{u}|^{3 - 2 \delta}} d x d t.
\]
Going into $(t,\overline{r},\overline{\theta})$ coordinates then gives us that this is controlled by
\[
C {1 \over (1 + s)^{{3 \over2}}} {1 \over 1 + \tau^{{3 \over 2} - 2 \delta}} \int_0^{(1 - 10 \delta_0) s} {1 \over (1 + t)^{{1 \over 2} - \delta}} d t \le C {1 \over (1 + s)^{1 - \delta}} {1 \over 1 + \tau^{{3 \over 2} - 2 \delta}}.
\]

We now consider the region associated with $\chi_{f,I}^c$ instead, and we note that $r' \ge 10$ and $s - t \ge 10$ in the support of $\chi_{f,I}^c$. The integral we must control is then

\[
\left |\int_0^s \int_{\Sigma_t} \chi_\psi^c \chi_{f,I}^c \chi_\phi \chi_f \Gamma^\sigma ((\partial_t \psi) (\partial_t \phi)^2) (\partial_t f) d x d t \right|
\]

Now, using \eqref{eq:dtfdecomp} to decompose $\partial_t f$, we are left with an integral of the form
\begin{equation} \label{eq:errorintworst2}
    \begin{aligned}
    \int_0^s \int_{\Sigma_t} \chi_\psi^c \chi_{f,I}^c \chi_f \chi_\phi \Gamma^\sigma ((\partial_t \phi)^2 (\partial_t \psi)) \left [-{1 \over 2} L' - \gamma S - {1 \over 2} \gamma \left (t - r' - {a (x - a) \over r'} \right ) L' - \gamma {a y \over (r')^2} \partial_{\theta'} \right ] f d x d t.
    \end{aligned}
\end{equation}
Most of the regions that follow will use this decomposition of $\partial_t$ in terms of $S$ and $\overline{\partial}_f$. There is only one region that will not.

We must now consider two cases, one in which $\tau \le \delta_0 s$ and one in which $\tau \ge \delta_0 s$. We begin with the easier case of $\tau \ge \delta_0 s$. We first consider the terms not involving $S$. By Lemma~\ref{lem:du'dv'dtheta'}, we know that the integral of these terms is controlled by
\[
C \int_0^s \int_{\Sigma_t} \chi_\psi^c \chi_{f,I}^c \chi_f \chi_\phi \left |\Gamma^\sigma ((\partial_t \phi)^2 (\partial_t \psi)) \right | {1 \over r'} {1 \over (1 + s - t)^{{1 \over 2}}} {1 \over 1 + |u'|^{{1 \over 2}}} d x d t.
\]
These terms can thus be controlled in the same way as \eqref{eq:errorIntf2} above because $r'$ is comparable to $1 + s - t$ in the support of the integrand.

For the term involving $S$, we integrate by parts in $S$ as in \eqref{eq:Sibp} and note that, by Lemma~\ref{lem:gammaSgammabound}, we have that $|S(\gamma)| \le {C \over r'}$. Thus, the integral is controlled by
\begin{equation}
    \begin{aligned}
    C \int_0^s \int_{\Sigma_t} |\gamma| |S(\chi_\psi^c \chi_{f,I}^c \chi_\phi \chi_f)| |\Gamma^\sigma ((\partial_t \psi) (\partial_t \phi)^2)| |f| d x d t
    \\ + C \int_0^s \int_{\Sigma_t} \chi_\psi^c \chi_{f,I}^c \chi_\phi \chi_f |\Gamma^\sigma ((\partial_t \psi) (\partial_t \phi)^2)| {1 \over r'} |f| d x d t
    \\ + C \int_0^s \int_{\Sigma_t} \chi_\psi^c \chi_{f,I}^c \chi_\phi \chi_f |S \Gamma^\sigma ((\partial_t \psi) (\partial_t \phi)^2)| {1 \over r'} |f| d x d t.
    \end{aligned}
\end{equation}
We note that $S$ is well behaved when applied to any of the cutoff functions involved. Indeed, the discussion after \eqref{eq:Sibp} shows that $S$ is well behaved when applied to some of the cutoff functions involved, and controlling the others follows similarly after writing $S$ in terms of $L$, $\underline{L}'$, and $\partial_{\theta'}$ and then using Lemma~\ref{lem:du'dv'dtheta'}. More precisely, we have that
\begin{equation} \label{eq:Scutoffs1}
    \begin{aligned}
    |\gamma| |S(\chi_\psi^c \chi_{f,I}^c \chi_\phi \chi_f)| \le \left ({C \over \tau} + {C \over r'} \right ) \chi_\psi^c \chi_{f,I}^c \chi_\phi \chi_f,
    \end{aligned}
\end{equation}
where we recall that we are abusing notation and using the cutoffs on the right hand side to represent appropriate cutoffs over slightly larger regions. Thus, these terms can be controlled in the same way as \eqref{eq:errorIntf2} above. We also note that there are no boundary terms from the integration by parts because the integrand is compactly supported where $s - t \ge 10$ (because of the cutoff $\chi_{f,I}^c$) and $t \ge {\tau \over 10}$ (by Lemma~\ref{lem:rtr's-t}).

When $\tau \le \delta_0 s$, we must further decompose. We first consider the case of small $t$ (where $s - t$ is comparable to $s$) and the case of large $t$. In the case of large $t$, we will have to further decompose in terms of the behavior of $f$. This motivates us to introduce the cutoff function $\chi_t = \chi \left ({s - t \over 20 s} \right )$. This function is supported where $t \ge (1 - 20 \delta_0) s$, and it is equal to $1$ when $t \ge (1 - 10 \delta_0) s$. Thus, the function $\chi_t^c = 1 - \chi_t$ is supported where $t \le (1 - 10 \delta_0) s$.

When $t \le (1 - 10 \delta_0) s$, we note that $|S(\gamma)| \le {C \over \tau}$ by Lemma~\ref{lem:gammaSgammabound}. We then decompose $\partial_t$ in terms of $S$ as we have done several times before, giving us the integral

\begin{equation} \label{eq:worstint2tsmall}
    \begin{aligned}
    \int_0^s \int_{\Sigma_t} \chi_t^c \chi_\psi^c \chi_{f,I}^c \chi_f \chi_\phi \Gamma^\sigma ((\partial_t \phi)^2 (\partial_t \psi))
    \\ \times \left [-{1 \over 2} L' - \gamma S - {1 \over 2} \gamma \left (t - r' - {a (x - a) \over r'} \right ) L' - \gamma {a y \over (r')^2} \partial_{\theta'} \right ] f d x d t.
    \end{aligned}
\end{equation}

For the term involving $S$, we integrate by parts as we have done before, and the bound on $|\gamma|$ and $|S(\gamma)|$ given by Lemma~\ref{lem:gammaSgammabound} in the support of the integrand gives us that the resulting integral is controlled by
\begin{equation}
    \begin{aligned}
    \int_0^s \int_{\Sigma_t} |\gamma| |S(\chi_\psi^c \chi_{f,I}^c \chi_\phi \chi_f \chi_t^c)| |\Gamma^\sigma ((\partial_t \psi) (\partial_t \phi)^2)| |f| d x d t
    \\ + {C \over \tau} \int_0^s \int_{\Sigma_t} \chi_\psi^c \chi_{f,I}^c \chi_\phi \chi_f \chi_t^c |S \Gamma^\sigma ((\partial_t \psi) (\partial_t \phi)^2)| |f| d x d t
    \\ + {C \over \tau} \int_0^s \int_{\Sigma_t} \chi_\psi^c \chi_{f,I}^c \chi_\phi \chi_f \chi_t^c |\Gamma^\sigma ((\partial_t \psi) (\partial_t \phi)^2)| |f| d x d t.
    \end{aligned}
\end{equation}
We recall that we have good control over $|\gamma| |S(\chi_\psi^c \chi_{f,I}^c \chi_\phi \chi_f \chi_t^c)|$ (see \eqref{eq:Scutoffs1} and note that $S \chi_t \le C$).

Now, by Lemma~\ref{lem:rtr's-t}, we note that these integrals are supported in the region where $t \ge {\tau \over 10}$. We may thus control these terms in a similar same way as either \eqref{eq:worsterrorint1} or \eqref{eq:errorIntf2} above, giving us that the integrals are controlled by
\[
C \epsilon^{{9 \over 4}} {1 \over (1 + s)^{{1 \over 2}} (1 + \tau^{{3 \over 2} - \delta})},
\]
as desired (we note that we are in fact throwing away good powers of $\tau$ that we do not need). We can then use Lemma~\ref{lem:du'dv'dtheta'} and improved decay of good derivatives of $f$ to control the other terms in the error integral which do not involve $S$ in a similar way. Indeed, this gives us that those integrals are controlled by
\[
C \epsilon^{{9 \over 4}} \int_0^s \int_{\Sigma_t} \left ({1 \over \tau} + {1 \over r'} \right ) \chi_\psi^c \chi_{f,I}^c \chi_\phi \chi_f \chi_t^c |\Gamma^\sigma ((\partial_t \psi) (\partial_t \phi)^2)| {1 \over (1 + s - t)^{{1 \over 2}}} {1 \over 1 + |u'|^{{1 \over 2}}} d x d t,
\]
which can be dealt with in the same way as the terms above.

In the remaining region where $t \ge (1 - 20 \delta_0) s$, we introduce more cutoff functions adapted to $f$. We take $\chi_{f,\theta'} (\theta') = \chi(|\vartheta'|) = \chi(|\pi - \theta'|)$. This localizes us to the set of points where $|\vartheta'| \le \delta_0$.



We note that, in the support of $\chi_{f,I}'$ and $(\chi_{f,I} ^c)'$, we have that there exists some constant $c$ such that $1 + |u'| \ge c (1 + s - t)$.

Now, decomposing $\partial_t$ in terms of $S$ as we have done before, we have that
\[
\chi_f \chi_t \chi_{f,\theta'}^c \chi_{f,I} ^c \partial_t f = -{1 \over 2} \chi_f \chi_t \chi_{f,\theta'}^c \chi_{f,I} ^c (L' + \underline{L}') f.
\]
The term with the $L'$ derivative on $f$ is better, so we focus on the other term. We have that
\[
\chi_f \chi_t \chi_{f,\theta'}^c \chi_{f,I} ^c \underline{L}' f = \underline{L}' (\chi_f \chi_t \chi_{f,\theta'}^c \chi_{f,I} ^c f) - \underline{L'} (\chi_f \chi_t \chi_{f,\theta'}^c \chi_{f,I} ^c) f.
\]
Now, we have that
\[
|\underline{L}' (\chi_f \chi_t \chi_{f,\theta'}^c \chi_{f,I} ^c)| \le {C \over \tau} + {C \over s} + {C \over 1 + s - t}.
\]
Let $h = \chi_f \chi_t \chi_{f,\theta'}^c \chi_{f,I} ^c f$. We now use Lemma~\ref{lem:du'S} to write that
\[
\underline{L}' (h) = 2 \gamma S(h) + {\gamma \over r'} \left (t - r' - {a (x - a) \over r'} \right ) r' L' (h) + \gamma {2 a y \over (r')^2} \partial_{\theta'} (h)
\]
Now, we note that
\[
|r' L'(\chi_f \chi_t \chi_{f,\theta'}^c \chi_{f,I} ^c)| \le C,
\]
and similarly, we note that
\[
|\partial_{\theta'} (\chi_f \chi_t \chi_{f,\theta'}^c \chi_{f,I} ^c)| \le C.
\]
Thus, by Lemma~\ref{lem:du'dv'dtheta'} (note that $\vartheta' \ge \delta_0 / 2$ in the support of the integrand), we have that
\begin{equation}
    \begin{aligned}
    \left |\int_0^s \int_{\Sigma_t} \chi_\psi^c \chi_{f,I}^c \chi_\phi \chi_f \Gamma^\sigma ((\partial_t \psi) (\partial_t \phi)^2) (\partial_t f) d x d t \right |
    \\ \le C \left |\int_0^s \int_{\Sigma_t} \chi_\phi \chi_\psi^c \Gamma^\sigma ((\partial_t \psi) (\partial_t \phi)^2) \gamma S(\chi_f \chi_{f,I}^c \chi_t \chi_{f,\theta'}^c f) d x d t \right |
    \\ + \int_0^s \int_{\Sigma_t} \left ({C \over \tau} + {C \over 1 + s - t} \right ) \chi_\psi^c \chi_\phi \chi_f \chi_{f,I}^c \chi_t \chi_{f,\theta'}^c |\Gamma^\sigma ((\partial_t \psi) (\partial_t \phi)^2)| {1 \over (1 + s - t)^{{1 \over 2}}} {1 \over 1 + |u'|^{{1 \over 2}}} d x d t.
    \end{aligned}
\end{equation}
The second of these terms can be controlled in the same way as the terms above (see \eqref{eq:worsterrorint1} and \eqref{eq:errorIntf2}). For the term involving $S$, we integrate by parts in $S$ just as before. Because we have appropriate control over $S(\chi_\phi)$, $S(\chi_\psi^c)$, and $S(\gamma)$ in the region in question by Lemma~\ref{lem:gammaSgammabound}, the desired result follows in an analogous way as the terms above (more precisely, the integral can once again be controlled in the same way as either \eqref{eq:worsterrorint1} or \eqref{eq:errorIntf2}).

We now consider the same region where $|\vartheta'| \le \delta_0$. For this term, we shall not decompose $\partial_t$ in terms of $S$ and $\overline{\partial}_f$, and the integral is instead given by

\[
\int_0^s \int_{\Sigma_t} \chi_\psi^c \chi_{f,I}^c \chi_\phi \chi_f \chi_{f,\theta'} \Gamma^\sigma ((\partial_t \psi) (\partial_t \phi)^2) (\partial_t f) d x d t.
\]
In this region, we note that the Jacobian in $(\overline{r},r')$ coordinates is well behaved by Lemma~\ref{lem:dr'drbar}. Thus, we have that
\begin{equation}
    \begin{aligned}
    \left |\int_0^s \int_{\Sigma_t} \chi_\psi^c \chi_{f,I}^c \chi_\phi \chi_t \chi_f \chi_{f,\theta'} \Gamma^\sigma ((\partial_t \psi) (\partial_t \phi)^2) (\partial_t f) d x d t \right |
    \\ \le C \epsilon^{{9 \over 4}} {1 \over (1 + s)^{{3 \over 2} - \delta}} {1 \over 1 + \tau^{{3 \over 2} - 2 \delta}} \int_{{s \over 2}}^s \int_{\Sigma_t} \chi_\psi^c \chi_{f,I}^c \chi_\phi \chi_f \chi_{f,\theta'} {1 \over 1 + |\overline{u}|^{3 - 4 \delta}} {1 \over (1 + s - t)^{{1 \over 2}}} {1 \over 1 + |u'|^{{3 \over 2}}} d x d t
    \\ \le C \epsilon^{{9 \over 4}} {1 \over (1 + s)^{{3 \over 2} - \delta}} {1 \over 1 + \tau^{{3 \over 2} - 2 \delta}} \int_{s \over 2}^s \int_{\Sigma_t} {1 \over 1 + |\overline{u}|^{3 - 4 \delta}} {1 \over 1 + |u'|^{{3 \over 2}}} {1 \over (1 + s - t)^{{1 \over 2}}} d \overline{r} d r' d t
    \\ \le C \epsilon^{{9 \over 4}} {1 \over (1 + s)^{{3 \over 2} - \delta}} {1 \over 1 + \tau^{{3 \over 2} - 2 \delta}} \int_{{s \over 2}}^s {1 \over (1 + s - t)^{{1 \over 2}}} d t \le C \epsilon^{{9 \over 4}} {1 \over (1 + s)^{1 - \delta}} {1 \over 1 + \tau^{{3 \over 2} - 2 \delta}},
    \end{aligned}
\end{equation}
as desired.

We finally consider quartic terms. The integrals we must control are of the form
\[
\int_0^s \int_{\Sigma_t} \Gamma^\sigma ((\partial_t \psi)^4) (\partial_t f) d x d t.
\]
We begin with the region away from the light cone for $\psi$, which is the easiest to control.

We first assume that $\tau \le \delta_0 s$. Let $j_\tau$ denote the greatest integer which is less than or equal to ${\delta_0 \tau \over 10}$. After discretizing the integral within each $\Sigma_t$ in terms of $u$ and $u'$, we have that
\begin{equation}
    \begin{aligned}
    \left |\int_0^s \int_{\Sigma_t} \chi_\psi^c \Gamma^\sigma ((\partial_t \psi)^4) (\partial_t f) d x d t \right |
    \\ \le C \epsilon^3 \int_0^s \sum_{j = j_\tau}^\infty \sum_{k = 1}^\infty \int_{\Sigma_t} \chi_{j - 1 \le u \le j + 1} \chi_{k - 1 \le u' \le k + 1} {1 \over (1 + j)^{6 - 5 \delta}} {1 \over (1 + k)^{{3 \over 2}}} {1 \over (1 + t)^{2 - \delta}} {1 \over (1 + s - t)^{{1 \over 2}}} d x d t.
    \end{aligned}
\end{equation}

We note that the lower bound $j = j_\tau$ comes from the fact that $\chi_\psi^c = 0$ outside of this region.

Now, by Lemma~\ref{lem:annuliarea}, we have that
\[
\int_{\Sigma_t} \chi_{j - 1 \le u \le j + 1} \chi_{k - 1 \le u' \le k + 1} d x \le C (1 + t)^{{1 \over 2}}.
\]
Thus, we have that
\begin{equation}
    \begin{aligned}
    \left |\int_0^s \int_{\Sigma_t} \chi_\psi^c \Gamma^\sigma ((\partial_t \psi)^4) (\partial_t f) d x d t \right | &\le C \epsilon^3 {1 \over (1 + \tau)^{5 - 5 \delta}} \int_0^s {1 \over (1 + t)^{{3 \over 2} - \delta}} {1 \over (1 + s - t)^{{1 \over 2}}} d t
    \\ &\le C \epsilon^3 {1 \over (1 + \tau)^{5 - 5 \delta}} {1 \over (1 + s)^{{1 \over 2}}}.
    \end{aligned}
\end{equation}

We now assume that $\tau \ge \delta_0 s$. We then have that $1 + |u| \ge c (1 + s)$, meaning that we have that
\begin{equation}
    \begin{aligned}
    \left |\int_0^s \int_{\Sigma_t} \chi_\psi^c \Gamma^\sigma ((\partial_t \psi)^4) (\partial_t f) d x d t \right | &\le C \epsilon^3 {1 \over (1 + s)^{5 - 5 \delta}} \int_0^s {1 \over (1 + t)^{1 - \delta}} {1 \over (1 + s - t)^{{1 \over 2}}} d t
    \\ &\le C \epsilon^3 {1 \over (1 + s)^{{11 \over 2} - 6 \delta}},
    \end{aligned}
\end{equation}
as desired.

We now consider the integral in the region determined by $\chi_\psi$. We must once again consider two cases depending on the size of $\tau$ relative to $s$. We begin with the easier case given by $\tau \ge \delta_0 s$ when $\tau$ is comparable to $s$.

When $\tau \ge \delta_0 s$, we further consider two cases depending on how large $u'$ is compared to $\tau$ in the usual way. When $u' \le \delta_0 \tau$, we must control the integral
\[
\left |\int_0^s \int_{\Sigma_t} \chi_\psi \chi_f \Gamma^\sigma ((\partial_t \psi)^4) (\partial_t f) d x d t \right |.
\]
Decomposing $\partial_t$ in terms of the frame consisting of $S$ and good derivatives for $f$ as we have done several times, we are left with
\[
\left |\int_0^s \int_{\Sigma_t} \chi_\psi \chi_f \Gamma^\sigma ((\partial_t \psi)^4) \left [-{1 \over 2} L' - \gamma S - {1 \over 2} \gamma \left (t - r' - {a (x - a) \over r'} \right ) L' - \gamma {a y \over (r')^2} \partial_{\theta'} \right ] d x d t \right |.
\]

We integrate by parts in $S$ as we have done several times before and use Lemma~\ref{lem:du'dv'dtheta'} to bound the other terms directly. Using in addition Lemma~\ref{lem:gammaSgammabound}, we get that the resulting integrals are controlled by
\[
C \epsilon^3 {1 \over s} \int_0^s \int_{\Sigma_t} \chi_\psi \chi_f {1 \over (1 + t)^{2 - \delta}} {1 \over 1 + |u|^{6 - 8 \delta}} {1 \over (1 + s - t)^{{1 \over 2}}} {1 \over 1 + |u'|^{{1 \over 2}}} d x d t,
\]
where we have used that $r$ and $r'$ are comparable to $s$ where the integrand us supported by Lemma~\ref{lem:rtr's-t} (note that this also implies that $t$ and $s - t$ are comparable to $s$ in this region). We then go into $(\theta,u,u')$ coordinates as in Lemma~\ref{lem:thetauu'}, giving us that the integral is controlled by
\[
C \epsilon^3 {1 \over (1 + s)^{{5 \over 2} - \delta}} \int_0^{2 \pi} \int_{-1}^{\delta_0 \tau} \int_{-1}^{\delta_0 \tau} {1 \over 1 + |u|^{6 - 8 \delta}} {1 \over 1 + |u'|^{{1 \over 2}}} d u' d u d \theta \le C \epsilon^3 {1 \over (1 + s)^{2 - \delta}},
\]
as desired.

Now, when $u' \ge \delta_0 \tau$, we note that the integral is controlled by
\[
C \epsilon^3 {1 \over (1 + s)^{{3 \over 2}}} \int_0^s \int_{\Sigma_t} \chi_{r \le t + 1} {1 \over (1 + t)^{2 - \delta}} {1 \over 1 + |u|^{6 - 8 \delta}} {1 \over (1 + s - t)^{{1 \over 2}}} d x d t,
\]
where we are using that $\tau \ge \delta_0 s$. Thus, using the decay in $u$ to bound the integral on each $\Sigma_t$ by $C (1 + t)$, we have that this is controlled by
\[
C \epsilon^3 {1 \over (1 + s)^{{3 \over 2}}} \int_0^s {1 \over (1 + t)^{1 - \delta}} {1 \over (1 + s - t)^{{1 \over 2}}} d t \le C \epsilon^3 {1 \over (1 + s)^{2 - \delta}},
\]
as desired.


We finally consider the case where $\tau \le \delta_0 s$. We once again decompose relative to the distance to the light cone adapted to $f$. We first consider the region where $u' \ge \delta_0 \tau$. Now, we note that when $u' \ge 10 \tau$, we have that $\psi$ is identically $0$ by domain of dependence. Thus, we have that the integrand in question is supported where $\delta_0 \tau \le u' \le 10 \tau$. We shall use the fact that this interval in $u'$ is comparable to $\tau$ in length.


The integral we must control is given by
\[
\int_0^s \int_{\Sigma_t} \chi_\psi \chi_{\delta_0 \tau \le u' \le 10 \tau} \Gamma^\sigma ((\partial_t \psi)^4) (\partial_t f) d x d t.
\]
This integral is controlled by
\[
C \epsilon^3 {1 \over \tau^{{3 \over 2}}} \int_0^s \int_{\Sigma_t} \chi_\psi \chi_{\delta_0 \tau \le u' \le 10 \tau} {1 \over (1 + t)^{2 - \delta}} {1 \over 1 + |u|^{6 - 8 \delta}} {1 \over (1 + s - t)^{{1 \over 2}}} d x d t.
\]
We break up the integral into two regions in $t$, one from $0$ to $\tau$ and the other from $\tau$ to $s$. In the first region, we use the decay in $u$ to bound the integral on each $\Sigma_t$ by $C (1 + t)$, giving us that the integral is controlled by
\[
C \epsilon^3 {1 \over (1 + s)^{{1 \over 2}}} {1 \over \tau^{{3 \over 2}}} \int_0^\tau {1 \over (1 + t)^{1 - \delta}} d t \le C \epsilon^3 {1 \over (1 + s)^{{1 \over 2}}} {1 \over \tau^{{3 \over 2} - \delta}}.
\]

The remaining integral is from $\tau$ to $s$. Discretizing this integral in $u$ within each $\Sigma_t$ then gives us that this is controlled by
\[
C \epsilon^3 \sum_{j = -1}^\infty {1 \over \tau^{{3 \over 2}}} \int_\tau^s \int_{\Sigma_t} \chi_\psi \chi_{\delta_0 \tau \le u' \le 10 \tau} \chi_{j \le u \le j + 1} {1 \over (1 + t)^{2 - \delta}} {1 \over (2 + j)^{6 - 8 \delta}} {1 \over (1 + s - t)^{{1 \over 2}}} d x d t.
\]
Now, for $j$ fixed, we note that the inner integral in $x$ in the expression
\[
\int_\tau^s \int_{\Sigma_t} \chi_\psi \chi_{u'} \chi_{j \le u \le j + 1} {1 \over (1 + t)^{2 - \delta}} {1 \over (1 + s - t)^{{1 \over 2}}} d x d t
\]
is an integral over the intersection of two annular regions, one adapted to $\psi$ having thickness $1$ and the other adapted to $f$ having thickness comparable to $\tau$. Thus, by Lemma~\ref{lem:annuliarea}, we have that
\[
\int_{\Sigma_t} \chi_\psi \chi_{u'} \chi_{j \le u \le j + 1} d x \le C (1 + \sqrt{t}) \sqrt{\tau}
\]

Integrating the remaining integral in $t$ and summing in $j$ then gives us that the integral we desired to bound is controlled by
\[
C \epsilon^3 {1 \over (1 + s)^{{1 \over 2}}} {1 \over \tau^{{3 \over 2} - \delta}},
\]
as desired.

The only remaining region is where $u' \le \delta_0 \tau$. The integral we must control is given by
\[
\int_0^s \int_{\Sigma_t} \chi_\psi \chi_f \Gamma^\sigma ((\partial_t \psi)^4) (\partial_t f) d x d t.
\]
Decomposing $\partial_t$ in terms of $S$ and good derivatives for $f$ as we have done before results in the integral
\[
\int_0^s \int_{\Sigma_t} \chi_\psi \chi_f \Gamma^\sigma ((\partial_t \psi)^4) \left [-{1 \over 2} L' - \gamma S - {1 \over 2} \gamma \left (t - r' - {a (x - a) \over r'} \right ) \partial_{v'} - \gamma {a y \over (r')^2} \partial_{\theta'} \right ] f d x d t.
\]
We integrate by parts the term having $S$ and bound the others directly. We first focus on the term with $S$. Integrating by parts as we have several times before, we have that the integral is controlled by
\[
C \epsilon^3 {1 \over \tau} \int_{{\tau \over 10}}^{s - {\tau \over 10}} \int_{\Sigma_t} \chi_\psi \chi_f {1 \over (1 + t)^{2 - {\delta \over 2}}} {1 \over 1 + |u|^{6 - 8 \delta}} {1 \over (1 + s - t)^{{1 \over 2}}} {1 \over 1 + |u'|^{{1 \over 2}}} d x d t.
\]
We note that we are now using the slightly improved interpolation for the power of $t$ (i.e., a ${\delta \over 2}$ loss instead of a $\delta$ loss). This is because this integral will have a further logarithmic loss coming from Lemma~\ref{lem:annuliu'dec}. Indeed, discretizing the integral in $u$ (and taking $j_\tau$ to be the smallest integer such that $j_\tau \ge \delta_0 \tau$) gives us that this is controlled by
\[
C \epsilon^3 {1 \over \tau} \sum_{j = -1}^{j_\tau} {1 \over (2 + j)^{6 - 8 \delta}} \int_{{\tau \over 10}}^{s - {\tau \over 10}} \int_{\Sigma_t} \chi_f \chi_{j \le u \le j + 1} {1 \over (1 + t)^{2 - {\delta \over 2}}} {1 \over (1 + s - t)^{{1 \over 2}}} {1 \over 1 + |u'|^{{1 \over 2}}} d x d t.
\]
Now, by Lemma~\ref{lem:annuliu'dec}, we note that
\[
\int_{\Sigma_t} \chi_f \chi_{j \le u \le j + 1} {1 \over 1 + |u'|^{{1 \over 2}}} d x \le C (1 + \sqrt{t}) \log(1 + t) \le C (1 + t)^{{1 \over 2} + {\delta \over 2}}.
\]
Thus, the remaining integral is controlled by
\[
\int_{{\tau \over 10}}^{s - {\tau \over 10}} {1 \over (1 + t)^{{3 \over 2} - \delta}} {1 \over (1 + s - t)^{{1 \over 2}}} d t \le C (1 + s)^{-{1 \over 2}} \tau^{-{1 \over 2} + \delta}.
\]
Summing in $j$ then gives us that the whole integral is controlled by
\[
C \epsilon^3 {1 \over (1 + s)^{{1 \over 2}} \tau^{{3 \over 2} - \delta}},
\]
as desired. The terms not involving $S$ can be controlled in a similar way after using Lemma~\ref{lem:du'dv'dtheta'}.

Because we have now controlled all of the error integrals required to use Proposition~\ref{prop:decay}, we have recovered the pointwise bootstrap assumptions, and we have thus completed the proof of Theorem~\ref{thm:mainthm}.

\bibliographystyle{abbrv}
\bibliography{referencesmlc}

\begin{thebibliography}{10}

\bibitem{AdaFou03}
R.~A. Adams and J.~J.~F. Fournier.
\newblock {\em Sobolev spaces}, volume 140 of {\em Pure and Applied Mathematics
  (Amsterdam)}.
\newblock Elsevier/Academic Press, Amsterdam, second edition, 2003.

\bibitem{Ali10}
S.~Alinhac.
\newblock {\em Geometric analysis of hyperbolic differential equations: an
  introduction}, volume 374 of {\em London Mathematical Society Lecture Note
  Series}.
\newblock Cambridge University Press, Cambridge, 2010.

\bibitem{AndKla21}
J.~Anderson and S.~Klainerman.
\newblock Hyperbolic equations with multiple characteristics, In preparation.

\bibitem{AndPas19}
J.~Anderson and F.~Pasqualotto.
\newblock Global stability for nonlinear wave equations with multi-localized
  initial data, 2019.

\bibitem{AndZba20}
J.~Anderson and S.~Zbarsky.
\newblock Stability and instability of traveling wave solutions to nonlinear
  wave equations, 2020.

\bibitem{AndBlu15}
L.~Andersson and P.~Blue.
\newblock Hidden symmetries and decay for the wave equation on the {K}err
  spacetime.
\newblock {\em Ann. of Math. (2)}, 182(3):787--853, 2015.

\bibitem{Chr86}
D.~Christodoulou.
\newblock Global solutions of nonlinear hyperbolic equations for small initial
  data.
\newblock {\em Comm. Pure Appl. Math.}, 39(2):267--282, 1986.

\bibitem{Chr98}
D.~Christodoulou.
\newblock On the geometry and dynamics of crystalline continua.
\newblock {\em Ann. Inst. H. Poincar\'{e} Phys. Th\'{e}or.}, 69(3):335--358,
  1998.

\bibitem{ChrKl93}
D.~Christodoulou and S.~Klainerman.
\newblock {\em {The global nonlinear stability of the {M}inkowski space}},
  volume~41 of {\em {Princeton Mathematical Series}}.
\newblock Princeton University Press, Princeton, NJ, 1993.

\bibitem{CouHil62}
R.~Courant and D.~Hilbert.
\newblock {\em Methods of mathematical physics. {V}ol. {II}: {P}artial
  differential equations}.
\newblock (Vol. II by R. Courant.). Interscience Publishers (a division of John
  Wiley \& Sons), New York-Lon don, 1962.

\bibitem{DafHolRod19}
M.~Dafermos, G.~Holzegel, and I.~Rodnianski.
\newblock The linear stability of the {S}chwarzschild solution to gravitational
  perturbations.
\newblock {\em Acta Math.}, 222(1):1--214, 2019.

\bibitem{DafRodShl16}
M.~Dafermos, I.~Rodnianski, and Y.~Shlapentokh-Rothman.
\newblock Decay for solutions of the wave equation on {K}err exterior
  spacetimes {III}: {T}he full subextremal case {$|a|<M$}.
\newblock {\em Ann. of Math. (2)}, 183(3):787--913, 2016.

\bibitem{DenPus20}
Y.~Deng and F.~Pusateri.
\newblock On the global behavior of weak null quasilinear wave equations.
\newblock {\em Comm. Pure Appl. Math.}, 73(5):1035--1099, 2020.

\bibitem{GerMasSha09}
P.~Germain, N.~Masmoudi, and J.~Shatah.
\newblock Global solutions for 3{D} quadratic {S}chr\"{o}dinger equations.
\newblock {\em Int. Math. Res. Not. IMRN}, (3):414--432, 2009.

\bibitem{Hor97}
L.~H\"{o}rmander.
\newblock {\em Lectures on nonlinear hyperbolic differential equations},
  volume~26 of {\em Math\'{e}matiques \& Applications (Berlin) [Mathematics \&
  Applications]}.
\newblock Springer-Verlag, Berlin, 1997.

\bibitem{IfrTat15}
M.~Ifrim and D.~Tataru.
\newblock Global bounds for the cubic nonlinear {S}chr\"{o}dinger equation
  ({NLS}) in one space dimension.
\newblock {\em Nonlinearity}, 28(8):2661--2675, 2015.

\bibitem{JeoOh19}
I.-J. Jeong and S.-J. Oh.
\newblock On the cauchy problem for the hall and electron magnetohydrodynamic
  equations without resistivity i: illposedness near degenerate stationary
  solutions, 2019.

\bibitem{Joh81}
F.~John.
\newblock Blow-up for quasilinear wave equations in three space dimensions.
\newblock {\em Comm. Pure Appl. Math.}, 34(1):29--51, 1981.

\bibitem{JohKla84}
F.~John and S.~Klainerman.
\newblock Almost global existence to nonlinear wave equations in three space
  dimensions.
\newblock {\em Comm. Pure Appl. Math.}, 37(4):443--455, 1984.

\bibitem{Kei18}
J.~Keir.
\newblock The weak null condition and global existence using the p-weighted
  energy method, 2018.

\bibitem{Kla80}
S.~Klainerman.
\newblock Global existence for nonlinear wave equations.
\newblock {\em Comm. Pure Appl. Math.}, 33(1):43--101, 1980.

\bibitem{Kla82}
S.~Klainerman.
\newblock Long time behavior of solutions to nonlinear wave equations.
\newblock {\em Proceedings of the International Congress of Mathematicians,
  Warsaw, (1983)}, pages 1209--1215, 1983.

\bibitem{Kl85}
S.~Klainerman.
\newblock Uniform decay estimates and the {L}orentz invariance of the classical
  wave equation.
\newblock {\em Comm. Pure Appl. Math.}, 38(3):321--332, 1985.

\bibitem{Kla86}
S.~Klainerman.
\newblock The null condition and global existence to nonlinear wave equations.
\newblock In {\em Nonlinear systems of partial differential equations in
  applied mathematics, {P}art 1 ({S}anta {F}e, {N}.{M}., 1984)}, volume~23 of
  {\em Lectures in Appl. Math.}, pages 293--326. Amer. Math. Soc., Providence,
  RI, 1986.

\bibitem{KlaMac93}
S.~Klainerman and M.~Machedon.
\newblock Space-time estimates for null forms and the local existence theorem.
\newblock {\em Comm. Pure Appl. Math.}, 46(9):1221--1268, 1993.

\bibitem{KlaPon83}
S.~Klainerman and G.~Ponce.
\newblock Global, small amplitude solutions to nonlinear evolution equations.
\newblock {\em Comm. Pure Appl. Math.}, 36(1):133--141, 1983.

\bibitem{KlaRodTao02}
S.~Klainerman, I.~Rodnianski, and T.~Tao.
\newblock A physical space approach to wave equation bilinear estimates.
\newblock volume~87, pages 299--336. 2002.
\newblock Dedicated to the memory of Thomas H. Wolff.

\bibitem{KlaSid96}
S.~Klainerman and T.~C. Sideris.
\newblock On almost global existence for nonrelativistic wave equations in
  {$3$}{D}.
\newblock {\em Comm. Pure Appl. Math.}, 49(3):307--321, 1996.

\bibitem{KlaSze20}
S.~Klainerman and J.~Szeftel.
\newblock {\em {Global Nonlinear Stability of Schwarzschild Spacetime under
  Polarized Perturbations}}, volume 210 of {\em {Annals of Mathematics
  Studies}}.
\newblock Princeton University Press, Princeton, NJ, 2020.

\bibitem{Lie89}
O.~Liess.
\newblock Global existence for the nonlinear equations of crystal optics.
\newblock In {\em Journ\'{e}es ``\'{E}quations aux {D}\'{e}riv\'{e}es
  {P}artielles'' ({S}aint {J}ean de {M}onts, 1989)}, pages Exp. No. V, 11.
  \'{E}cole Polytech., Palaiseau, 1989.

\bibitem{Lie91}
O.~Liess.
\newblock Decay estimates for the solutions of the system of crystal optics.
\newblock {\em Asymptotic Anal.}, 4(1):61--95, 1991.

\bibitem{LinRod03}
H.~Lindblad and I.~Rodnianski.
\newblock The weak null condition for {E}instein's equations.
\newblock {\em C. R. Math. Acad. Sci. Paris}, 336(11):901--906, 2003.

\bibitem{LinRod05}
H.~Lindblad and I.~Rodnianski.
\newblock Global existence for the {E}instein vacuum equations in wave
  coordinates.
\newblock {\em Comm. Math. Phys.}, 256(1):43--110, 2005.

\bibitem{LinRod10}
H.~Lindblad and I.~Rodnianski.
\newblock The global stability of {M}inkowski space-time in harmonic gauge.
\newblock {\em Ann. of Math. (2)}, 171(3):1401--1477, 2010.

\bibitem{PusSha13}
F.~Pusateri and J.~Shatah.
\newblock Space-time resonances and the null condition for first-order systems
  of wave equations.
\newblock {\em Comm. Pure Appl. Math.}, 66(10):1495--1540, 2013.

\bibitem{SmiTat05}
H.~F. Smith and D.~Tataru.
\newblock Sharp local well-posedness results for the nonlinear wave equation.
\newblock {\em Ann. of Math. (2)}, 162(1):291--366, 2005.

\bibitem{Sogge08}
C.~D. Sogge.
\newblock {\em Lectures on non-linear wave equations}.
\newblock International Press, Boston, MA, second edition, 2008.

\end{thebibliography}

\end{document}